\newcommand{\myquad}[1][1]{\hspace*{#1em}\ignorespaces}
\newcounter{cprop}[section]
\newtheorem{theorem}[cprop]{Theorem}
\theoremstyle{plain}
\newtheorem{corollary}[cprop]{Corollary}
\newtheorem{lemma}[cprop]{Lemma}
\newtheorem{proposition}[cprop]{Proposition}
\newtheorem{assumption}[cprop]{Assumption}
\numberwithin{equation}{section}
\theoremstyle{definition}
\newtheorem{definition}[cprop]{Definition}
\theoremstyle{remark}
\newtheorem{remark}[cprop]{Remark}
\newcommand{\E}{\mathbb{E}}
\renewcommand{\P}{\mathbb{P}}
\newcommand{\R}{\mathbb{R}}
\newcommand{\N}{\mathbb{N}}
\newcommand{\Z}{\mathbb{Z}}
\renewcommand{\d}{\mathrm{d}}
\newcommand{\vertiii}[1]{{\left\vert\kern-0.25ex\left\vert\kern-0.25ex\left\vert #1 
		\right\vert\kern-0.25ex\right\vert\kern-0.25ex\right\vert}}
\begin{document}
	\title[Delay equations driven by fBm]{Singular stochastic delay equations driven by fractional Brownian motion: Dynamics, longtime behaviour, and pathwise stability}
	
	\author{M. Ghani Varzaneh}
	\address{Mazyar Ghani Varzaneh\\
		Fachbereich Mathematik und Statistik, Universität Konstanz, Konstanz, Germany}
	\email{mazyarghani69@gmail.com}

	\author{S. Riedel}
	\address{Sebastian Riedel \\
		Fakult\"at f\"ur  Mathematik und Informatik, FernUniversit\"at in Hagen, Hagen, Germany}
	\email{sebastian.riedel@fernuni-hagen.de }

	\keywords{rough paths, stochastic delay differential equations, fractional Brownian motion, random dynamical systems, invariant manifolds, pathwise stability, Multiplicative Rrgodic Theorem, semigroup theory}
	
	\subjclass[2020]{60L20, 60L50, 37H10, 37H15, 37H30, 34K19, 34K20, 34K50, 60G22}
	
	\begin{abstract}
		We study differential equations with a linear, path dependent drift and discrete delay in the diffusion term driven by a $\gamma$-H\"older rough path for $\gamma > \frac{1}{3}$. We prove well-posedness of these systems and establish a priori bounds for their solutions. Applying these results to an equation driven by a multidimensional fractional Brownian motion with Hurst paramter $H \in \big(\frac{1}{3},1\big)$, we can prove the existence of a Lyapunov spectrum for the linearized system that describes its long-time behaviour. Furthermore, we can deduce the existence of local stable, unstable, and center manifolds for the nonlinear equation. As an application, we can prove that under suitable conditions, the solution exhibits pathwise local exponential stability. En passant, we present new, concise and relatively short proofs for some classical results for $\mathcal{C}_0$-semigroups that build on the Multiplicative Ergodic Theorem formulated on Banach spaces.
	\end{abstract}
	
	\maketitle
	
	\section*{Introduction}
	
	Delay differential equations (DDEs) are used to model complex processes for which the dynamics include aftereffect phenomena. In its most general formulation, they can be written as
	\begin{align*}
		\d y_t = f(t,y_t,y |_{(-\infty,t]})\, \d t
	\end{align*}
	where the solution $y$ is a function taking values in $\R^n$. In many cases, delay equations offer a more accurate description of a real-world phenomenon than a simple ordinary differential equation. Applications of delay differential equations exist in various areas including biology, epidemiology, population dynamics, chemistry, nonlinear optics and lasers, fluid dynamics, economics, and mechanical engineering (cf. \cite{Ern09} where various applications are discussed in more detail). From a mathematical point of view, a delay differential equation is usually understood as an evolution equation on an infinite dimensional path space (cf. \cite{HL93}) which renders its analysis significantly more complicated compared to an ordinary differential equation. Furthermore, effects like oscillatory instabilities are often observed within delayed systems that induce severe challenges regarding their control and observability \cite{Bri15} or their numerical approximation \cite{BZ13}. \smallskip
	
	Stochastic DDEs constitute a natural generalization of the deterministic case. Not surprisingly, these models have numerous applications as well and we refer the reader to \cite{Moh84, Moh98, Buc00} and the references therein where examples of stochastic DDEs are discussed in more detail. The noise in a stochastic DDE is typically modelled by a Brownian motion. This is justified in many cases where the noise is supposed to be \emph{white}, i.e. if all time instances are assumed to be uncorrelated. From a mathematical point of view, this is also convenient because standard It\=o theory applies in this case \cite{Mao08}. However, it is not hard to imagine cases (e.g. in financial modelling) where considering memory within the noise is more natural. A typical process that is used for these purposes is the \emph{fractional Brownian motion} (fBm), a generalization of the Brownian motion that was introduced by Mandelbrot and Van Ness \cite{MvN68}. This process has the advantage that it is still Gaussian and furthermore possesses stationary increments, leading to a stationary noise process. However, on the mathematical side, solving a stochastic differential equation driven by a fractional Brownian motion is significantly more challenging than in the classical Brownian motion case. Indeed, since the fBm is not a semimartingale, It\=o's theory of stochastic integration completely breaks down. Fortunately, since the fBm is a Gaussian process, Malliavin calculus can still be applied \cite{Nua05, BHOZ08} and in some cases, the Skorokhod integral can be used as a substitute for the It\=o integral. However, for generic stochastic differential equation, let alone equations with time delay, this theory is not easily applicable. A more elegant solution is provided by Lyons' theory of \emph{rough paths} \cite{LQ02, LCL07, FV10, FH20}. In fact, rough path theory clearly separates the two problems of solving a differential equation (typically by a fixed point argument) and making sense of a stochastic integral. Without going too much into detail here, rough paths theory requires to make sense of iterated stochastic integrals only which simplifies the problem a lot. In recent years, rough path theory became the standard tool for studying stochastic differential equations driven by a fractional Brownian motion, and the reader may consult \cite[Chapter 15]{FV10}, \cite[Chapter 10]{FH20}, and \cite[Chapter 8]{GVS23} for an overview of various problems where this theory was successfully applied to. Despite the success of rough path theory to study differential equations driven by an fBm, there are relatively few results that use this approach to study equations containing a time delay. The article that laid the groundwork to do this was written by Neuenkirch, Nourdin, and Tindel \cite{NNT08}. These authors study equations with finitely many discrete delays, e.g.
	\begin{align}\label{eqn:rough_delay_NNT}
		\begin{split}
			\mathrm{d}y_{t} &= b(y_{t},y_{t-r_1}, \ldots, y_{t-r_k})\, \d t + G (y_{t},y_{t-r_1}, \ldots, y_{t-r_k}) \, \mathrm{d}\mathbf{X}_{t}; \quad t \geq 0\\
			\qquad y_{s} &= \xi_{s}; \quad -r_k\leq s\leq 0,\ \ 
		\end{split}
	\end{align}
	where $0 < r_1 < \ldots < r_k < \infty$ and $\mathbf{X}$ is a $\gamma$-H\"older rough path, $\gamma > \frac{1}{3}$, possessing a so-called \emph{delayed L\'evy area} (see Definition \ref{SAZAS} below). They are able to prove existence and uniqueness of solutions to \eqref{eqn:rough_delay_NNT} and can show that an fBm can be enhanced with a delayed L\'evy area, resulting in a theory capable to deal with stochastic DDEs with discrete delay driven by an fBm. However, one weakness of the results obtained in \cite{NNT08} is that the authors have to assume that the drift term $b$ in \eqref{eqn:rough_delay_NNT} is globally bounded. In applications, this is hardly ever the case as the reader may see by studying the examples presented in the references above. Furthermore, the theory in \cite{NNT08} is formulated for discrete delays only, whereas in many applications, the dynamics may be influenced by the whole path of the solution until present time or at least by a path segment. The first main contribution of this work is an extension of the results in \cite{NNT08} that allows to study a significant larger class of equations. To wit, we will be able to prove existence and uniqueness for equations of the form
	\begin{align}\label{eqn:rough_delay_with_linear_drift}
		\begin{split}
			\mathrm{d}y_{t} &=  A \bigg( y_t,\int_{-r_1}^{0} y_{\theta+t} \, \pi_1(\mathrm{d}\theta), \ldots, \int_{-r_k}^{0}y_{\theta+t} \, \pi_k(\mathrm{d}\theta) \bigg) \, \mathrm{d}t + b(y_{t},y_{t-r_1}, \ldots, y_{t-r_k})\, \d t \\
			&\qquad  + G (y_{t},y_{t-r_1}, \ldots, y_{t-r_k}) \, \mathrm{d}\mathbf{X}_{t}; \quad t \geq 0\\
			\qquad y_{s} &= \xi_{s}; \quad -r_k\leq s\leq 0,\ \ 
		\end{split}
	\end{align}
	where $A \colon \R^{n \times (1 + k)} \to \R^n$ is linear and $\pi_1,\ldots,\pi_k$ are finite signed measures, see Theorem \ref{SSS}. We furthermore provide a priori bounds for the solution that are useful in applications.\smallskip
	
	Having established well-posedness for \eqref{eqn:rough_delay_with_linear_drift} makes it possible to study a much more general class of stochastic DDEs driven by an fBm than in \cite{NNT08}. However, our theorem is even interesting for the Brownian motion case since our result is the first one that gives a \emph{pathwise} meaning to equations of the form \eqref{eqn:rough_delay_with_linear_drift}. In \cite{GVRS22}, we observed that in the Brownian motion case, equations of the form \eqref{eqn:rough_delay_with_linear_drift} without the linear term $A$ induce a stochastic semiflow on random spaces. This, in turn, can be reformulated within L.~Arnold's theory of random dynamical systems (RDS; cf. \cite{Arn98}) by saying that \eqref{eqn:rough_delay_with_linear_drift} induces a \emph{cocycle} on a fiber-type bundle of Banach spaces, called a \emph{field of Banach spaces} (see Definition \ref{MESSSAA}). It turned out that this structure is extremely useful to study the long-time behaviour of stochastic DDEs. Let us mention here that for a long time, it was believed that DDEs of the form \eqref{eqn:rough_delay_with_linear_drift} do not generate a stochastic semiflow (or a cocycle), cf. e.g. the introduction in \cite{MS03}. It was understood in \cite{MS03} that there is an issue caused by the type of delay in the diffusion term $G$ in the equation. A stochastic DDE with a discrete delay in the diffusion term, as it is considered here, was therefore called \emph{singular} in \cite{MS03}, and we adopt this term. On the other hand, if the diffusion term depends smoothly on the past, the equation was called \emph{regular} in \cite{MS03}, and in this case, a stochastic semiflow indeed exists on a deterministic space. The articles \cite{MS03, MS04} exclusively study the regular case, whereas this article and its predecessors \cite{GVRS22, GVR21} concentrate on singular equations. Using these terms, the second main contribution of our article is that also the solution to the singular equation \eqref{eqn:rough_delay_with_linear_drift} driven by a fractional Brownian motion induces a cocycle on a field of Banach spaces, cf. Theorem \ref{SSRR}, and therefore generates a random dynamical system. We emphasize here that fractional Brownian motion introduces additional challenges for this objective. \smallskip
	
	As we already mentioned above, random dynamical systems are extremely useful when studying the long-time behaviour of a stochastic flow. They also have the great advantage that their usage does not depend on the Markov property which does not hold in the case of a fractional noise \cite{BRS17}. In the linear case, crucial information about the long time behaviour is encoded in the \emph{Lyapunov spectrum}. The fundamental theorem that assures its existence is the \emph{Multiplicative Ergodic Theorem} (MET), originally proved by Oseledets for matrix cocycles \cite{Ose68}. Using a version of the MET we proved for RDS defined on fields of Banach spaces \cite{GVR23}, we can deduce that linear stochastic DDEs possess a Lyapunov spectrum. For a general nonlinear equation as considered in \eqref{eqn:rough_delay_with_linear_drift}, we can apply the MET to the linearized equation provided a \emph{stationary point} (which can be seen as a generalized concept of a fixed point for a determinstic equation, cf. Definition \ref{STATA}) exists. The spectrum of the linearized cocycle can be used to prove the existence of invariant manifolds, i.e. stable, unstable, and center manifolds, for the nonlinear equation around a stationary point. These are the next important results in this paper, and we formulate them in Theorem \ref{thm:stable_manifold}, Theorem \ref{thm:unstable_manifold}, and Theorem \ref{thm:center_manifold}. Let us briefly comment on the existing literature in this field now. In the Brownian motion case, the existence of stable and unstable manifolds for regular stochastic DDEs were aready proven in \cite{MS04} and in the singular case in \cite{GVR21}. We believe that Theorem \ref{thm:center_manifold} is the first center manifold theorem formulated for stochastic DDEs even in the Brownian case. In this context, we want to underline the importance of center manifold theory since it is frequently used to study \emph{bifurcation}, a very interesting topic for DDEs due to their unexpected oscillatory instabilities. For a fractional Brownian motion, stable and unstable manifolds for rough ordinary differential equations were recently obtained in \cite{GVR23A} and center manifolds are studied in \cite{NK21, GVR23A}. Invariant manifold theorems for stochastic partial differential equations driven by a fractional Brownian motion can be found in \cite{GLS10, MG22, KN23, GVR23C, LNZ24}. We are not aware of any invariant manifold theorem for stochastic DDEs driven by an fBm up to now. \smallskip
	
	The last part of our paper deals with the question of pathwise exponential stability for stochastic DDEs, a very interesting topic in the field of delay differential equations. Using the stable manifold theorem, we can formulate several results in this direction both for linear and for nonlinear equations, cf. Section \ref{sec:stability}. In this context, we are able to reprove some classical results concerning exponential stability of abstract $\mathcal{C}_0$-semigroups that are well-known, yet difficult to obtain. We present new, short, and concise proofs for these theorems by using the MET. Since we believe that this new approach is interesting in its own right, we devoted a whole section to it, cf. Section \ref{sec:MET_approach}. \smallskip
	
	Let us summarize the main contributions of this article:
	\begin{itemize}
		\item The results in \cite{NNT08} are substantially extended by establishing the existence and uniqueness of solutions for the rough delay differential equation \eqref{eqn:rough_delay_with_linear_drift}. Additionally, we derive bounds for the solution (see Theorem \ref{SSS}). To control these bounds and prevent them from becoming excessively large—especially in the presence of an unbounded linear drift—we employ indirect techniques. Standard direct calculations often yield bounds that are too large for our intended future applications.
		\item 
		We demonstrate that the solution to \eqref{eqn:rough_delay_with_linear_drift} induces a random dynamical system on a continuous (measurable) field of Banach spaces (see Theorem \ref{SSRR}). Constructing such a system for delay equations driven by fractional noise is notably more challenging than for those driven by Brownian motion. In this work, we overcome this difficulty through the application of Malliavin calculus.
		\item The existence of stable, unstable, and center manifolds around stationary points are deduced for the semiflow induced by \eqref{eqn:rough_delay_with_linear_drift} (Theorem \ref{thm:stable_manifold}, \ref{thm:unstable_manifold}, and \ref{thm:center_manifold}). Our calculations enable these results to be derived using certain abstract findings. Direct approaches to constructing invariant manifolds are typically lengthy and complex.
		\item Several results regarding almost sure exponential stability for both linear and nonlinear stochastic delay differential equations are established (see Section Section \ref{sec:stability}). Proving exponential stability in non-Markovian cases is generally challenging, even in finite-dimensional settings. In our context, these difficulties are compounded by the fact that the equation is formulated in infinite-dimensional Banach spaces, where a classical dynamical system (typically defined on a fixed Banach space) cannot be established. To address these challenges, consistent with our other results, we rely on additional insights into the geometry of the underlying Banach spaces, which we refer to as fields of Banach spaces. In particular, we have removed the parameter restriction that we previously imposed for the special case of the Brownian motion in \cite{GVR21} and \cite{GVRS22}.
		\item Using the Multiplicative Ergodic Theorem for Banach spaces, we present new and short proofs of classical results about the exponential stability of abstract $\mathcal{C}_0$-semigroups (Section \ref{sec:MET_approach}).
	\end{itemize}

	The article is structured as follows: In Section \ref{sec:preliminaries}, we introduce the notation used throughout our work and present some basic notions in rough paths theory and random dynamical systems. Furthermore, we recall the approach to rough delay differential equations introduced in \cite{NNT08}. Section \ref{sec:delay_levy_area} discusses the delayed L\'evy area for the fractional Brownian motion and proves an approximation result with Malliavin calculus techniques. Afterwards, we prove existence, uniqueness, and an a priori bound for solutions to \eqref{eqn:rough_delay_with_linear_drift} in Section \ref{sec:well-posedness}. We furthermore discuss Fr\'echet-differentiability of the solution and some technical bounds for it. In Section \ref{sec:RDS}, we prove that \eqref{eqn:rough_delay_with_linear_drift} generates a random dynamical system on a continuous (measurable) field of Banach spaces and prove the invariant manifold theorems. Section \ref{sec:MET_approach} contains stability results for abstract $\mathcal{C}_0$-semigroups, proven with the MET, that are used in Section  \ref{sec:stability} to deduce stability results for linear and nonlinear stochastic delay differential equations. In the appendix, we collect some basic facts about Malliavin calculus and fractional calculus, introduce some related stochastic integrals that are used to define the delayed L\'evy area in Section \ref{sec:delay_levy_area} and prove a technical Lemma.
	\section{Preliminaries}\label{sec:preliminaries}
	\subsection*{Notation and basic definitions}
	In this part, we present some notations that will be used throughout the paper.
	\begin{itemize}
		\item Unless otherwise specified, \( U \),  \( W \) will always represent finite-dimensional, normed vector spaces over the real numbers, with the norm denoted by \( |\cdot| \). The notation \( L(U, W) \) refers to the set of linear and continuous functions from \( U \) to \( W \), equipped with the standard operator norm.
		\item In estimates, the notation \( a \lesssim b \) means that there exists a deterministic constant \( C \), which may depend on some irrelevant parameters, such that \( a \leq C b \).
		\item Let $J$ be an interval in $\mathbb{R}$. A map $m: J \to U$ is referred to as a \emph{path}. For a path $m$, its increment is denoted by $(\delta m)_{s,t} := m_{t} - m_{s}$, where $m_{t}$ represents $m(t)$. We define the supremum norm of $m$ over the interval $J$ as
		\[
		\Vert m\Vert_{\infty; J} := \sup_{s \in J} |m_t|.
		\]
		Additionally, the $\gamma$-H\"older seminorm, for $\gamma \in (0,1]$, is defined by 
		\[
		\Vert m \Vert_{\gamma;J} := \sup_{s, t \in J; s \neq t} \frac{\vert m_t - m_s \vert}{\vert t - s \vert^{\gamma}}.
		\]
		For a general \( 2 \)-parameter function \( m^{\#} \colon J \times J \to U \), we use similar notation. If the domain is clear from the context, we may omit \( J \) as a subscript. The space \( C^0(I, U) \) consists of all continuous paths \( m \colon J \to U \) equipped with the uniform norm, while \( C^{\gamma}(J, U) \) denotes the space of \( \gamma \)-Hölder continuous functions, equipped with the norm
		\[
		\Vert \cdot \Vert_{C^{\gamma};J} := \Vert \cdot \Vert_{\infty;J} + \Vert \cdot \Vert_{\gamma;J}.
		\]
		The space \( C^{\infty}(J, U) \) consists of functions that are infinitely differentiable. If \( 0 \in J \), using \( 0 \) as a subscript, such as in \( C^{\gamma}_0(J, U) \), denotes the subspace of functions for which \( x_0 = 0 \). An upper index, such as \( C^{0, \gamma}(J, U) \), indicates the closure of smooth functions under the corresponding norms.
		\item Let $C^n_b(U, {W})$ denote the space of functions $G \colon U \to {W}$ that have $n$ bounded derivatives, where each derivative up to the $n$-th order is continuous and bounded, with $n \geq 0$. In many cases, we will omit the specific domain and codomain and use the notation $C^n_b$ for simplicity.
	\end{itemize}
	We now provide some fundamental concepts from rough paths theory that are essential for this article. For a comprehensive overview, we direct the reader to \cite{FH20}.
	\begin{itemize}
		\item Let $X \colon \R \to \R^d$ be a locally $\gamma$-H\"older continuous path, with $\gamma \in (0,1]$. A \emph{L\'evy area} associated with $X$ is a continuous function $\mathbb{X} \colon \R \times \R \to \R^d \otimes \R^d$ satisfying the algebraic identity
		\[
		\mathbb{X}_{s,t} = \mathbb{X}_{s,u} + \mathbb{X}_{u,t} + (\delta X)_{s,u} \otimes (\delta X)_{u,t}
		\]
		for every $s, u, t \in \R$ and for which $\|\mathbb{X}\|_{2\gamma; J} < \infty$ holds on every compact interval $I \subset \R$. If $\gamma \in (1/3, 1/2]$ and $X$ admits a L\'evy area $\mathbb{X}$, we refer to the pair $ (X, \mathbb{X})$ as a \emph{$\gamma$-rough path}. 
		\item  Let $I = [a,b]$ be a compact interval. A path $m \colon J \to U$ is called a \emph{controlled path} based on $X$ over the interval $J$ if there exists a $\gamma$-H\"older continuous path $m' \colon J \to L(\R^d, U)$ such that, for all $s, t \in J$, the relation
		\[
		(\delta m)_{s,t} = m'_s (\delta X)_{s,t} + m_{s,t}^{\#}
		\]
		holds, where $m^{\#} \colon J \times J \to U$ satisfies $\| m^{\#} \|_{2\gamma; J} < \infty$. The function $m'$ is referred to as the \emph{Gubinelli derivative} of $m$. 
		We denote the space of controlled paths based on $X$ over the interval $J$ by $\mathscr{D}_{X}^{\gamma}(J, U)$. This space is a Banach space with the norm
		\begin{align}\label{FAS}
			\|m\|_{\mathscr{D}_{X}^{\gamma}} := \|(m,m')\|_{\mathscr{D}_{X}^{\gamma}} := |m|_{\infty,J} + |m'|_{\infty,J} + \|m'\|_{\gamma;J} + \| m^{\#} \|_{2\gamma; J}.
		\end{align}
		In this paper, when the context makes it clear what $U$ represents, we may use the notation $\mathscr{D}_{X}^{\gamma}(J)$ instead of $\mathscr{D}_{X}^{\gamma}(J, U)$. Additionally, when we refer to $m_{s}^{\prime}X_{s,t}$, we actually mean $m_{s}^{\prime} \circ X_{s,t}$. When it is clear from the context, we may omit the symbol $\circ$ for function composition, in accordance with our conventions.
	\end{itemize}
	Finally, we recall the definition of a random dynamical system as introduced by L.~Arnold in \cite{Arn98}. 
	\begin{itemize}
		\item Let $(\Omega, \mathcal{F})$ and $(X, \mathcal{B})$ be measurable spaces. Let $\mathbb{T}$ be either $\R$ or $\Z$, equipped with a $\sigma$-algebra $\mathcal{I}$, which is the Borel $\sigma$-algebra $\mathcal{B}(\R)$ when $\mathbb{T} = \R$ and the power set $\mathcal{P}(\Z)$ when $\mathbb{T} = \Z$. 
		
		A family $\theta = (\theta_t)_{t \in \mathbb{T}}$ of maps from $\Omega$ to itself is called a \emph{measurable dynamical system} if:
		\begin{itemize}
			\item[(i)] The map $(\omega, t) \mapsto \theta_t \omega$ is $\mathcal{F} \otimes \mathcal{I} / \mathcal{F}$-measurable, \vspace{0.07cm}
			\item[(ii)] $\theta_0 = \operatorname{Id}$, \vspace{0.07cm}
			\item[(iii)] $\theta_{s + t} = \theta_s \circ \theta_t$, for all $s, t \in \mathbb{T}$. \vspace{0.1cm}
		\end{itemize}
		If $\P$ is a probability measure on $(\Omega, \mathcal{F})$ that is invariant under each $\theta_t$, i.e., $\P \circ \theta_t^{-1} = \P$ for every $t \in \mathbb{T}$, then the tuple $\big(\Omega, \mathcal{F}, \P, \theta\big)$ is called a \emph{measurable metric dynamical system}. The system is called \emph{ergodic} if every $\theta$-invariant set has probability $0$ or $1$.
		
		\item Let $\mathbb{T}^+ := \{t \in \mathbb{T} \, : \, t \geq 0\}$, equipped with the trace $\sigma$-algebra. An \emph{(ergodic) measurable random dynamical system} on $(X, \mathcal{B})$ is an (ergodic) measurable metric dynamical system $\big(\Omega, \mathcal{F}, \P, \theta\big)$ along with a measurable map $\varphi \colon \mathbb{T}^+ \times \Omega \times X \to X$ that satisfies the \emph{cocycle property}, meaning $\varphi(0, \omega, \cdot) = \operatorname{Id}_X$ for all $\omega \in \Omega$, and
		\[
		\varphi(t+s, \omega, \cdot) = \varphi(t, \theta_s \omega, \cdot) \circ \varphi(s, \omega, \cdot),
		\]
		for all $s, t \in \mathbb{T}^+$ and $\omega \in \Omega$. The map $\varphi$ is referred to as a \emph{cocycle}. If $X$ is a topological space with $\mathcal{B}$ as its Borel $\sigma$-algebra, and the map $\varphi_{\cdot}(\omega, \cdot) \colon \mathbb{T}^+ \times X \to X$ is continuous for every $\omega \in \Omega$, then $\varphi$ is called a \emph{continuous (ergodic) random dynamical system}. Generally, we say that \emph{$\varphi$ possesses property $P$} if and only if $\varphi(t, \omega, \cdot) \colon X \to X$ exhibits property $P$ for every $t \in \mathbb{T}^+$ and $\omega \in \Omega$, provided this statement is well-defined. For simplicity and clarity, we will also use the notation $\varphi^s_\omega(\cdot)$ in place of $\varphi(s, \omega, \cdot)$ to streamline our presentation.
	\end{itemize}

	
	\subsection{Basic properties of rough delay equations}\label{sec:basics_rough_delay}
	The main equation that we are interested in for this paper is
	\begin{align}\label{equation+drift}
		\begin{split}
			\mathrm{d}y_{t} &= A \bigg( y_t,\int_{-r}^{0}y_{\theta+t} \, \pi(\mathrm{d}\theta) \bigg) \, \mathrm{d}t + G \big(y_{t},y_{t-r}\big) \, \mathrm{d}\mathbf{X}_{t},\\
			y_{s} &= \xi_{s}, \quad -r\leq s\leq 0,
		\end{split}
	\end{align}
	where \(\mathbf{X}\) is a \(\gamma\)-Hölder rough path with \(\gamma > \frac{1}{3}\) (and later a \emph{multidimensional fractional Brownian motion} with Hurst parameter \(H > \frac{1}{3}\)). Here, \(A\) represents a linear drift, and \(\pi\) is a finite signed measure on \([-r,0]\) with \(r > 0\).
	Note that we can add more delay components in the diffusion part in terms of Dirac measures or in the drift part in terms of finite signed measures. However, we do not explore this generality in order to keep the results more focused. The assumptions on \(A\), \(\pi\), $\mathbf{X}$, and \(G\) are detailed in Section \ref{sec:well-posedness}. We will study several aspects of this equation, including well-posedness and dynamics.
	We provide a brief overview of the concept of rough delay equations, along with the fundamental objects and definitions required to solve this class of equations.
	\begin{remark}
	We work with Equation \eqref{equation+drift} instead of \eqref{eqn:rough_delay_with_linear_drift} to enhance the readability of our results. Note that, for the linear drift term, the drift component of \eqref{eqn:rough_delay_with_linear_drift} can be transformed into the drift part of \eqref{equation+drift} using an appropriate measure. For the remaining terms in Equation \eqref{eqn:rough_delay_with_linear_drift}, by assuming \(0 < r_1 < \ldots < r_k\), we first solve the equation on \([0, r_1]\) and then extend the solution successively to \([0, r_2]\), and so forth, until reaching \([0, r_k]\). Moreover, by considering the new path \( t \mapsto (t, X_t) \), we can incorporate \( b \) into the diffusion term by a new rough path generated by \( \mathbf{X} \) and the identity path. Since the main idea remains consistent with the case where \( k = 1 \) and \( b = 0 \), we will focus on Equation \eqref{equation+drift} throughout this work.
	\end{remark}
	\begin{definition}\label{SAZAS}
		Let $\frac{1}{3} < \gamma \leq \frac{1}{2}$, and let $X:\mathbb{R} \to \mathbb{R}^d$ be such that $(X, \mathbb{X})$ is a \emph{$\gamma$-rough path}. Fix $r > 0$. We say that $\mathbf{X}$ is a \emph{delayed $\gamma$-rough path} if there exists a continuous function $\mathbb{X}(-r)$ given by
		\begin{align*}
			\mathbb{X}(-r): \mathbb{R} \times \mathbb{R} \to \mathbb{R}^d \otimes \mathbb{R}^d
		\end{align*}
		such that for every interval $[a,b] \subset \mathbb{R}$,
		\begin{align}\label{VBBN}
			\begin{split}
				&\Vert\mathbb{X}(-r)\Vert_{2\gamma,[a,b]}< \infty, \\
				& \quad \mathbb{X}_{s,t}(-r) = \mathbb{X}_{s,u}(-r) + \mathbb{X}_{u,t}(-r) + X_{s-r,u-r} \otimes X_{u,t}, \quad \forall s, u, t \in \mathbb{R}, \ s < u < t.
			\end{split}
		\end{align}
		We denote a delayed $\gamma$-rough path by $\mathbf{X} = (X, \mathbb{X}, \mathbb{X}(-r))$. For an interval $J = [a,b]$, we define the norm of $\mathbf{X}$ as
		\begin{align*}
			\|\mathbf{X}\|_{\gamma, J} := \|X\|_{\gamma,J} +\Vert\mathbb{X}\Vert_{2\gamma,J}+ \Vert\mathbb{X}(-r)\Vert_{2\gamma,J}.
		\end{align*}
		We use $\mathscr{C}^{\gamma}_2(\mathbb{R}^d)$ to denote the set of delayed $\gamma$-rough paths.
	\end{definition} 
	Next, let's recall what a delayed controlled path is.
	\begin{definition}
		Assume $J = [a,b]$ is a compact interval and $\frac{1}{3} < \beta \leq \gamma$. Let $\mathbf{X}$ be a delayed $\gamma$-rough path. We say that a $\beta$-H\"older path $\zeta \colon J \to W$ is a \emph{delayed controlled path} based on $\mathbf{X}$ over the interval $J$ if there exist $\gamma$-H\"older paths $\zeta^0, \zeta^1 \colon J \to L(\mathbb{R}^d, W)$ such that
		\begin{align}\label{eqn:delay_controlled}
			\zeta_{s,t} = \zeta^0_s (\delta X)_{s,t} + \zeta^1_s(\delta X)_{s-r,t-r} + \zeta^{\#}_{s,t}
		\end{align}
		for all $s,t \in J$, where $\zeta^{\#} \colon J \times J \to W$ satisfies $\|\zeta^{\#}\|_{2\gamma ; J} < \infty$.
		The paths $\zeta^0$ and $\zeta^1$ are referred to as the \emph{Gubinelli derivatives} of $\zeta$. We denote the space of delayed controlled paths based on $\mathbf{X}$ over the interval $J$ by $\mathcal{D}_{\mathbf{X}}^\beta(J, W)$. A norm on this space is defined as
		\begin{align}\label{eqn:norm_delayed_controlled_path}
			\|\zeta\|_{\mathcal{D}_{X}^{\beta}(J,W)} := \|(\zeta,\zeta^0,\zeta^1)\|_{\mathcal{D}_{X}^{\beta}(J,W)} := |\zeta_a| + |\zeta^0_a| + |\zeta^1_a| + \|\zeta^0\|_{\beta; J} + \|\zeta^1\|_{\beta; J} + \|\zeta^{\#}\|_{2\beta; J}.
		\end{align}
	\end{definition}
	\begin{remark}
		Assume $J = [a,b]$ and let $G: W \times W \to L(\mathbb{R}^d, W)$ be a $C^2$ function. Suppose $\frac{1}{3} < \beta \leq \gamma$, $\xi \in \mathscr{D}_{\mathbf{X}}^{\beta}([a,b], W)$, and $\tilde{\xi} \in \mathscr{D}_{\mathbf{X}}^{\beta}([a-r, b-r], W)$. Then, for $\zeta: [a,b] \to L(\mathbb{R}^d, W)$ defined by $\zeta_{t} := G(\xi_{t}, \tilde{\xi}_{t-r})$, we have $\zeta \in \mathcal{D}_{X}^{\beta}(J, L(\mathbb{R}^d, W))$. Furthermore,
		\begin{align}\label{TYCZ}
			\begin{split}
				\zeta^{0}_s &= \frac{\partial G}{\partial x}(\xi_s, \tilde{\xi}_{s-r}) \xi^{\prime}_s, \\
				\zeta^{1}_s &= \frac{\partial G}{\partial y}(\xi_s, \tilde{\xi}_{s-r}) \tilde{\xi}^{\prime}_{s-r},
			\end{split}
		\end{align}
		for all $s \in J$. Additionally, for $\theta(\xi,\tilde{\xi})(s,t) := \theta(\xi_{t}, \tilde{\xi}_{t-r}) + (1-\theta)(\xi_{s}, \tilde{\xi}_{s-r})$, the remainder term of $\zeta$, denoted by $\zeta^{\#}_{s,t}$, is given by
		\begin{align*}
			\zeta^{\#}_{s,t} &= \underbrace{D_{(\xi_s, \tilde{\xi}_{s-r})}G \, (\xi^{\#}_{s,t}, \tilde{\xi}^{\#}_{s-r,t-r})}_{\frac{\partial G}{\partial x}(\xi_s, \tilde{\xi}_{s-r}) \xi^{\#}_{s,t} + \frac{\partial G}{\partial y}(\xi_s, \tilde{\xi}_{s-r}) \tilde{\xi}^{\#}_{s-r,t-r}} \\
			&\quad + \int_{0}^{1} (1-\theta) D^{2}G_{\theta(\xi, \tilde{\xi})(s,t)}\left((\delta (\xi, \tilde{\xi}_{\cdot-r}))_{s,t}, (\delta (\xi, \tilde{\xi}_{\cdot-r}))_{s,t}\right) \, \mathrm{d}\theta,
		\end{align*}
		for $s, t \in J$, where $(\delta (\xi, \tilde{\xi}_{\cdot-r}))_{s,t} = (\xi_{t}, \tilde{\xi}_{t-r}) - (\xi_{s}, \tilde{\xi}_{s-r})$. Note that the proof is a straightforward consequence of the following Taylor expansion:
		\begin{align*}
			G(\xi_t,\tilde{\xi}_t)-G(\xi_s,\tilde{\xi}_s)=D_{(\xi_s,\tilde{\xi}_{s-r})}G(\delta (\xi, \tilde{\xi}_{\cdot-r})_{s,t})+\int_{0}^{1} (1-\theta) D^{2}G_{\theta(\xi, \tilde{\xi})(s,t)}\left((\delta (\xi, \tilde{\xi}_{\cdot-r}))_{s,t}, (\delta (\xi, \tilde{\xi}_{\cdot-r}))_{s,t}\right) \, \mathrm{d}\theta.
		\end{align*}
	\end{remark}
	The following result allows us to extend the concept of rough integration to $\mathcal{D}_{X}^\gamma(J, U)$.
	\begin{theorem}\label{DCVZVXZ}
		Assume $\mathbf{X} = \big(X, \mathbb{X}, \mathbb{X}(-r)\big)$ is a delayed $\gamma$-rough path, and let $\frac{1}{3} < \beta \leq \gamma$ such that $2\beta + \gamma > 1$. Further assume that $\zeta$ is a $\beta$-H\"older, $L(\mathbb{R}^d, U)$-valued delayed controlled path based on $\mathbf{X}$, with the decomposition given by \eqref{eqn:delay_controlled} for $W = L(\mathbb{R}^d, U)$ on the interval $J = [a, b]$. Then the following limit
		\begin{align}\label{dfn}
			\int_{a}^{b} \zeta_{s}\, d\mathbf{X}_s := \lim_{|\Pi| \to 0}  \sum_{t_j \in \Pi} \left( \zeta_{t_{j}} X_{t_{j}, t_{j+1}} + \zeta^{0}_{t_{j}} \mathbb{X}_{t_{j}, t_{j+1}} + \zeta^{1}_{t_{j}} \mathbb{X}_{t_{j}, t_{j+1}}(-r) \right)
		\end{align}
		exists, where $\Pi$ is a partition of $[a, b]$. Moreover, there exists a constant $C$, depending only on $\gamma$, $\beta$, and $(b-a)$, such that for all $s < t \in [a, b]$, the estimate
		\begin{align}\label{REER}
			\begin{split}
				&\left| \int_s^t \zeta_{u}\, d\mathbf{X}_u - \zeta_s X_{s,t} - \zeta^{0}_s \mathbb{X}_{s,t} - \zeta^{1}_{s} \mathbb{X}_{s,t}(-r) \right| \\
				&\quad \leq C\left( \|\zeta^{\#} \|_{2\beta} \|X\|_{\gamma}  + \|\zeta^0\|_{\beta} \|\mathbb{X}\|_{2\gamma}  +  \|\zeta^1\|_{\beta} \|\mathbb{X}(-r)\|_{2\gamma} \right) |t-s|^{2\beta + \gamma}
			\end{split}
		\end{align}
		holds. In particular,
		\begin{align*}
			t \mapsto \int_s^t \zeta_{u}\, d\mathbf{X}_u
		\end{align*}
		is controlled by $X$ with Gubinelli derivative $\zeta$.
	\end{theorem}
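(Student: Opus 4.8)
The plan is to reduce everything to the sewing lemma (see, e.g., \cite{FH20}). For $a \le s \le t \le b$ I would introduce the local approximation (germ)
\[
\Xi_{s,t} := \zeta_s X_{s,t} + \zeta^0_s \mathbb{X}_{s,t} + \zeta^1_s \mathbb{X}_{s,t}(-r) \in U ,
\]
where $\zeta^0_s,\zeta^1_s \in L(\R^d, L(\R^d,U))$ are identified with elements of $L(\R^d\otimes\R^d,U)$ acting on $\mathbb{X}_{s,t},\mathbb{X}_{s,t}(-r)\in\R^d\otimes\R^d$ by the natural contraction. The sewing lemma says that if $|\delta\Xi_{s,u,t}| := |\Xi_{s,t}-\Xi_{s,u}-\Xi_{u,t}| \le M\,|t-s|^\kappa$ for all $a\le s\le u\le t\le b$ and some $\kappa>1$, then there is a unique additive two-parameter map $\mathcal I_{s,t}$ (that is, $\mathcal I_{s,t}=\mathcal I_{s,u}+\mathcal I_{u,t}$) with $|\mathcal I_{s,t}-\Xi_{s,t}| \le C_\kappa M\,|t-s|^\kappa$, and moreover $\mathcal I_{s,t}$ is the limit of the compensated Riemann sums $\sum_{[u,v]\in\Pi}\Xi_{u,v}$ as the mesh $|\Pi|\to 0$; note that this last sum is exactly the sum appearing in \eqref{dfn}. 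Setting $\int_s^t \zeta_u\,\d\mathbf X_u := \mathcal I_{s,t}$ then gives both the existence of the limit \eqref{dfn} and the bound \eqref{REER}, with $\kappa = 2\beta+\gamma$ (which is $>1$ by hypothesis).

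The only genuinely delicate step is therefore the bound on $\delta\Xi$, and this is where the three algebraic relations enter: additivity of $\delta X$, the Chen relation $\mathbb{X}_{s,t}=\mathbb{X}_{s,u}+\mathbb{X}_{u,t}+X_{s,u}\otimes X_{u,t}$, and the delayed relation \eqref{VBBN}. Expanding $\Xi_{s,t}-\Xi_{s,u}-\Xi_{u,t}$ termwise, substituting $\zeta_s-\zeta_u = -(\zeta^0_s X_{s,u}+\zeta^1_s X_{s-r,u-r}+\zeta^\#_{s,u})$ (from \eqref{eqn:delay_controlled}) into the $\zeta X$-part and using the two Chen relations for the $\zeta^0\mathbb{X}$- and $\zeta^1\mathbb{X}(-r)$-parts, I expect the two "second order" cross terms $\zeta^0_s(X_{s,u}\otimes X_{u,t})$ and $\zeta^1_s(X_{s-r,u-r}\otimes X_{u,t})$ produced by the Chen relations to cancel exactly against the terms produced by the expansion of $\zeta_s-\zeta_u$, leaving the clean identity
\[
\delta\Xi_{s,u,t} = -\,\zeta^\#_{s,u}\,X_{u,t} \;-\; (\delta\zeta^0)_{s,u}\,\mathbb{X}_{u,t} \;-\; (\delta\zeta^1)_{s,u}\,\mathbb{X}_{u,t}(-r).
\]
The three summands are then bounded by $\|\zeta^\#\|_{2\beta}\|X\|_\gamma|t-s|^{2\beta+\gamma}$, $\|\zeta^0\|_\beta\|\mathbb{X}\|_{2\gamma}|t-s|^{\beta+2\gamma}$ and $\|\zeta^1\|_\beta\|\mathbb{X}(-r)\|_{2\gamma}|t-s|^{\beta+2\gamma}$; absorbing the surplus factor $|t-s|^{\gamma-\beta}\le(b-a)^{\gamma-\beta}$ into the constant (this is where $\beta\le\gamma$ is used) puts $|\delta\Xi_{s,u,t}|$ into the form required above with $\kappa=2\beta+\gamma$, and the resulting constant depends only on $\gamma$, $\beta$ and $b-a$, as claimed.

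Finally, for the last assertion I would fix the left endpoint and set $F_t := \int_a^t\zeta_u\,\d\mathbf X_u$. Additivity of $\mathcal I$ gives $(\delta F)_{s,t}=\mathcal I_{s,t}=\zeta_s X_{s,t}+F^\#_{s,t}$ with $F^\#_{s,t}:=\zeta^0_s\mathbb{X}_{s,t}+\zeta^1_s\mathbb{X}_{s,t}(-r)+(\mathcal I_{s,t}-\Xi_{s,t})$; since $\|\mathbb{X}\|_{2\gamma},\|\mathbb{X}(-r)\|_{2\gamma}<\infty$ and $|\mathcal I_{s,t}-\Xi_{s,t}|\lesssim|t-s|^{2\beta+\gamma}$ with $2\beta>\gamma$ (because $\beta>\tfrac13$ and $\gamma\le\tfrac12$), one gets $\|F^\#\|_{2\gamma;J}<\infty$, so $F$ is controlled by $X$ on $J$ with Gubinelli derivative $\zeta$. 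In summary, all of the analytic content sits in the sewing lemma, the hypothesis $2\beta+\gamma>1$ is precisely the integrability threshold that makes it applicable, and the one thing that must be checked by hand is the algebraic cancellation producing the displayed formula for $\delta\Xi$.
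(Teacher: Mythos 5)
Your proof is correct and follows the standard sewing-lemma argument: the algebraic cancellation you identify (the Chen relations for $\mathbb{X}$ and $\mathbb{X}(-r)$ producing cross-terms that exactly cancel against the contributions of $\zeta^0_s X_{s,u}$ and $\zeta^1_s X_{s-r,u-r}$ from the controlled decomposition) yields precisely $\delta\Xi_{s,u,t} = -\zeta^\#_{s,u}X_{u,t} - (\delta\zeta^0)_{s,u}\mathbb{X}_{u,t} - (\delta\zeta^1)_{s,u}\mathbb{X}_{u,t}(-r)$, which is $O(|t-s|^{2\beta+\gamma})$ with $2\beta+\gamma>1$, so the sewing lemma applies. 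The paper itself gives no proof and merely cites \cite[Theorem 1.5]{GVRS22}, but that result is established by exactly this sewing argument, so your proposal fills in the deferred details along the intended route.
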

	\begin{proof}
		Cf.\cite[Theorem 1.5.]{GVRS22}
	\end{proof}

	\section{Delayed Lévy Area for Fractional Brownian Motion with Smooth Approximations}\label{sec:delay_levy_area}
	Recall from Definition \ref{SAZAS} that a delayed rough path, in comparison to a classical rough path, includes an additional term which we refer to as the \emph{delayed Lévy area}. This extra element must satisfy the condition given in \eqref{VBBN}.
	Assume that \((B^i)_{1 \leq i \leq d}\) are independent fractional Brownian motions. While the existence of the Lévy area for this process is well established, the existence of the delayed Lévy area \(\mathbb{B}(-r)\) is more intricate. Indeed, as outlined in \cite{NNT08} [Pages 34-37], if we use the symmetric integral (cf. Definition \ref{symmetric_integral}) and define
	\begin{align}\label{DElY}
		\mathbb{B}_{s,t}(-r):=\int_{s}^{t}B_{s-r,u-r}\otimes \mathrm{d}^{\circ} B_{u},
	\end{align}
	then $\mathbb{B}_{s,t}(-r) $ is a process that belongs to the second chaos of the fractional Brownian motion $B$. Also
	\begin{align}\label{AAA_1}
		\begin{split}
			&\int_{s}^{t}B_{s-r,u-r}\otimes \mathrm{d}^{\circ} B_{u}\\&\quad=\sum_{1\leq i,j\leq d}\left(\delta^{B^{j}}(B^{i}_{s-r,.-r}\chi_{[s,t]}(.))+\delta_{i=j}\left(-Hr^{2H-1}(t-s)+\frac{1}{2}((t-s+r)^{2H}-r^{2H})\right)\right)e_i\otimes e_j
		\end{split}
	\end{align}
	and
	\begin{align}\label{BBB_1}
		\sup_{r\in [0,1]}\E | \delta^{B^{i}}(B^{j}_{s-r,.-r}\chi_{[s,t]}(.))|^2\lesssim (t-s)^{4H}.
	\end{align}
	Here, \(\delta_{i=j}\) denotes the Kronecker delta, which is equal to one if \(i = j\) and zero otherwise.
	By the Kolmogorov continuity theorem, we can demonstrate that $\mathbb{B}(-r)$ has a continuous version that satisfies the condition \eqref{VBBN} for every $\gamma < H$. Moreover, for this version, we can establish that
	\begin{align}\label{key}
		\Vert\mathbb{B}(-r)\Vert_{2\gamma,[0,r]} \in \bigcap_{p \geq 0} L^{p}(\Omega).
	\end{align}
	To define a random dynamical system, we need to ensure that \(\mathbb{B}_{s,t}(-r)\) can be approximated by smooth functions. We will use tools from Malliavin calculus to achieve this. The appendix provides basic background information and definitions for the notations used in this section. For the remainder of this section, we accept the following assumption:	
	\begin{assumption}\label{FFFDDCCA}
		We assume that \(\frac{1}{3} < H < \frac{1}{2}\) and that \(B = (B^1, \ldots, B^d) \colon \mathbb{R} \to \mathbb{R}^d\) is an \(\mathbb{R}^d\)-valued two-sided fractional Brownian motion with Hurst parameter \(H\). This process is defined on a probability space \((\Omega, \mathcal{F}, \mathbb{P})\) and is adapted to some filtration \((\mathcal{F}^t)_{t \in \mathbb{R}}\), with \(B_0 = 0\) almost surely.
	\end{assumption}
	Let us start with the following definition:
	\begin{definition}\label{REGU}
		Let $ \rho :\mathbb{R}\rightarrow [0,2] $ be a smooth function such that $\operatorname{supp}(\rho)\subset [0,1] $ and  $\int_{\mathbb{R}}\rho(z)\mathrm{d}z=1$. We set
		\begin{align*}
			B^{\epsilon}_{t}:=\int_{0}^1B_{-\epsilon z,t-\epsilon z}\rho(z)d z, \quad \epsilon \in (0,1].
		\end{align*}
	\end{definition}
	It is not hard to see that
	\begin{align}\label{AZAZ}
		\E| B^{\epsilon}_{s,t}|^{2}\leqslant M(t-s)^{2H} \ \ \text{and} \ \ \ \lim_{\epsilon \rightarrow 0}\E| B^{\epsilon}_{s,t}-B_{s,t}|^{2} =0
	\end{align}
	It is natural to expect that \(\int_{s}^{t} B^{\epsilon}_{s-r, u-r} \otimes \mathrm{d}B_{u}^{\epsilon}\) converges to \(\mathbb{B}_{s,t}(-r)\). We will make this statement precise through several intermediate steps.
	\begin{lemma}\label{representation}
		We have the following pathwise identity on a set of full measure:
		\begin{align}\label{eqn:pathwise}
			\int _{s}^{t}B^{\epsilon}_{s-r,u-r}\otimes \mathrm{d} B_{u}^{\epsilon}=\int_{0}^1\rho(z)\int _{s-\epsilon z}^{t-\epsilon z}B^{\epsilon}_{s-r,u+\epsilon z-r}\otimes \mathrm{d}B_{u} \mathrm{d}z=\int _{0}^1\rho(z)\int _{s-\epsilon z}^{t-\epsilon z}B^{\epsilon}_{s-r,u+\epsilon z-r}\otimes \mathrm{d}^{\circ}B_{u} \mathrm{d}z.
		\end{align}
		\begin{proof}
			Note that the integrals in \eqref{eqn:pathwise} are defined pathwise, since \(B^{\epsilon}\) is smooth and \(B\) is a Hölder continuous path (almost surely for \(\gamma < H\)). Using integration by parts, we obtain:
			\begin{align*}
				\int _{s-\epsilon z}^{t-\epsilon z}(B^{\epsilon})^{i}_{s-r,u+\epsilon z-r}\mathrm{d}B^{j}_{u}=(B^{\epsilon})^{i}_{s-r,t-r}B^{j}_{t-\epsilon z}-\int_{s-r}^{t-r}B^{j}_{u+r-\epsilon z}\mathrm{d}(B^{\epsilon})^{i}_{u}.
			\end{align*}
			Consequently,
			\begin{align*}
				\int _{\mathbb{R}}\rho(z)&\int _{s-\epsilon z}^{t-\epsilon z}(B^{\epsilon})^{i}_{s-r,u+\epsilon z-r}\mathrm{d} B^{j}_{u} \mathrm{d}z\\&=\int_{\mathbb{R}}\rho(z)(B^{\epsilon})^{i}_{s-r,t-r}B^{j}_{t-\epsilon z}\mathrm{d}z-\int_{\mathbb{R}}\int_{s-r}^{t-r}\rho(z)B^{j}_{u+r-\epsilon z}\mathrm{d}(B^{\epsilon})^{i}_{u} \mathrm{d}z\\
				&=(B^{\epsilon})^{i}_{s-r,t-r}(B^{\epsilon})^{j}_{t}-\int_{s-r}^{t-r}(B^{\epsilon})^{j}_{u+r}\mathrm{d}(B^{\epsilon})^{i}_{u}.
			\end{align*}
			From integration by parts,
			\begin{align*}
				(B^{\epsilon})^{i}_{s-r,t-r}(B^{\epsilon})^{j}_{t}-\int_{s-r}^{t-r}(B^{\epsilon})^{j}_{u+r}\mathrm{d}(B^{\epsilon})^{i}_{u}
				=\int _{s}^{t}(B^{\epsilon})^{i}_{s-r,u-r}\mathrm{d} (B^{\epsilon})_{u}^{j},
			\end{align*}
			which implies the claim.
		\end{proof}
	\end{lemma}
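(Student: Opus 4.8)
The plan is to push everything into the deterministic, pathwise world and then chain two integration-by-parts computations together with a Fubini interchange. First I would fix a full-measure event $\Omega_0$ on which $B$ has a locally $\gamma$-Hölder continuous version for some (hence every) $\gamma \in \big(\frac{1}{3},H\big)$; on $\Omega_0$ the mollified path $B^{\epsilon}$ of Definition \ref{REGU} is of class $C^{\infty}$, so all three integrals in \eqref{eqn:pathwise} are simultaneously well defined: the leftmost is an ordinary Riemann--Stieltjes integral of a continuous integrand against the $C^{1}$ path $B^{\epsilon}$; the inner integral in the middle expression is a Young integral of the smooth (in particular Lipschitz) integrand $u \mapsto B^{\epsilon}_{s-r,u+\epsilon z-r}$ against the $\gamma$-Hölder path $B$, which exists since $1+\gamma>1$; and the rightmost inner integral is the symmetric integral (Definition \ref{symmetric_integral}) of the same integrand. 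The one mildly nonstandard point is that the symmetric integral agrees with the Young integral here, and this is elementary: writing the symmetrized Riemann sums and using that the integrand is $C^{1}$, the correction term is bounded by a constant times $\|B\|_{\gamma}\,|\Pi|^{\gamma}$ and vanishes as the mesh $|\Pi|\to 0$, leaving exactly the Young integral. This settles the second equality in \eqref{eqn:pathwise}, so it remains to prove the first.

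For the first equality I would argue component-wise, fixing $1\le i,j\le d$. The key step is deterministic integration by parts applied to the inner Young integral: since $u\mapsto (B^{\epsilon})^{i}_{s-r,u+\epsilon z-r}$ equals $(B^{\epsilon})^{i}_{u+\epsilon z-r}$ up to an additive constant and vanishes at the left endpoint $u=s-\epsilon z$, one obtains
\[
\int_{s-\epsilon z}^{t-\epsilon z}(B^{\epsilon})^{i}_{s-r,u+\epsilon z-r}\,\mathrm{d}B^{j}_{u}=(B^{\epsilon})^{i}_{s-r,t-r}B^{j}_{t-\epsilon z}-\int_{s-r}^{t-r}B^{j}_{u+r-\epsilon z}\,\mathrm{d}(B^{\epsilon})^{i}_{u},
\]
where the last integral has been rewritten by the change of variables $v=u+\epsilon z-r$. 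Both the integration-by-parts formula and the substitution rule used here are valid for Young integrals against a continuous path.

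Next I would multiply by $\rho(z)$, integrate in $z$ over $[0,1]$, and swap the order of integration; the Fubini interchange is justified because the integration domains are compact and, for fixed endpoints, the inner Young integral depends continuously on the parameter $z$ by the standard Young estimate. Since $(B^{\epsilon})^{i}_{s-r,t-r}$ is independent of $z$ and, by the very definition of $B^{\epsilon}$, $\int_{0}^{1}\rho(z)B^{j}_{t-\epsilon z}\,\mathrm{d}z$ and $\int_{0}^{1}\rho(z)B^{j}_{u+r-\epsilon z}\,\mathrm{d}z$ reproduce $(B^{\epsilon})^{j}_{t}$ and $(B^{\epsilon})^{j}_{u+r}$ (the additive constants cancel between the two resulting terms, one of which contributes a multiple of $(B^{\epsilon})^{i}_{s-r,t-r}$), the right-hand side collapses to $(B^{\epsilon})^{i}_{s-r,t-r}(B^{\epsilon})^{j}_{t}-\int_{s-r}^{t-r}(B^{\epsilon})^{j}_{u+r}\,\mathrm{d}(B^{\epsilon})^{i}_{u}$. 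One final integration by parts---now an ordinary one, since $B^{\epsilon}$ is $C^{1}$---together with the substitution $w=u+r$ turns this back into $\int_{s}^{t}(B^{\epsilon})^{i}_{s-r,w-r}\,\mathrm{d}(B^{\epsilon})^{j}_{w}$, i.e.\ the $(i,j)$ entry of the left-hand side of \eqref{eqn:pathwise}. Assembling the $d^{2}$ components finishes the proof.

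The analytic content is light, so the main obstacle is really bookkeeping: ensuring that the three kinds of integrals (Riemann--Stieltjes, Young, symmetric) are taken in compatible senses on one common full-measure event, and that the integration-by-parts and change-of-variables identities invoked are the pathwise ones, legitimate because $B^{\epsilon}$ has locally finite variation while $B$ is merely continuous. The identification of the symmetric integral with the Young integral for smooth integrands is the only ingredient that deserves to be spelled out explicitly; everything else is deterministic calculus plus a harmless Fubini.
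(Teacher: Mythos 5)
Your proof follows the same route as the paper's: an integration by parts on the inner Young integral, a Fubini interchange to bring the $z$-integral inside, and a final integration by parts to collapse back to the left-hand side; you also correctly flag that the additive constants arising from $\int_0^1\rho(z)B^j_{\cdot-\epsilon z}\,\mathrm{d}z$ versus $(B^\epsilon)^j_{\cdot}$ cancel, a point the paper passes over silently. The only blemish is your justification of the second equality in \eqref{eqn:pathwise}: you describe the symmetric integral via ``symmetrized Riemann sums'' with a mesh $|\Pi|\to 0$ and bound the correction by $\|B\|_\gamma |\Pi|^\gamma$, but Definition \ref{symmetric_integral} in this paper defines $\mathrm{d}^\circ B$ through the $\epsilon$-regularized increments $\frac{B_{x+\epsilon}-B_{x-\epsilon}}{2\epsilon}$ and an $L^2(\Omega)$-limit, not through Riemann sums, so that particular estimate does not directly apply; the relevant fact—that for a $C^1$ integrand the symmetric integral coincides with the pathwise Riemann--Stieltjes (hence Young) integral—is correct and is recorded as a Remark immediately after Definition \ref{symmetric_integral} in the appendix, so you should simply invoke it rather than re-derive it by a sketch aimed at the wrong definition.
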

	Before proceeding, we recall the following simple inequality that holds for every measure space \((Y, \mathcal{Y}, \mu)\) and every measurable function \(f: Y \times \Omega \to \mathbb{R}\):
	\begin{align}\label{ZVX5466}
		\left(\mathbb{E} \left( \int_{Y} f(y,\omega) \, \mathrm{d}\mu(y) \right)^2 \right)^{\frac{1}{2}} \leq \int_{Y} \left( \mathbb{E} \left( f^2(y,\omega) \right) \right)^{\frac{1}{2}} \, \mathrm{d}\mu(y).
	\end{align}
	The symmetric integral defined in Definition \ref{symmetric_integral} is closely related to the Stratonovich integral for Brownian motion. We can then derive the following useful bound, which is crucial for our subsequent objectives.
	
	\begin{lemma}\label{malliavin_2}
		For $\tau \in (-\infty,\sigma)$, we have
		\begin{align}\label{A_MALL}
			\E\left(\left\vert \int_{s}^{t}B_{u-\sigma,u-\tau}\otimes \mathrm{d}^{\circ}B_{u}\right\vert^{2}\right)\lesssim \big{(}sgn(\sigma)\vert \sigma\vert ^{2H-1}-sgn(\tau)\vert\tau\vert^{2H-1}\big{)}^2(t-s)^{2}+\sum_{1\leqslant i\leqslant 7}(t-s)^{\frac{i}{2}H}(\sigma-\tau)^{(4-\frac{i}{2})H} .
		\end{align}
	\end{lemma}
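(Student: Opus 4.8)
The plan is to convert the symmetric integral into a Skorokhod (divergence) integral plus an explicit deterministic trace, and to estimate the two contributions separately. Write $f^i_u := B^i_{u-\sigma,u-\tau} = B^i_{u-\tau} - B^i_{u-\sigma}$. Applying componentwise the conversion rule from a symmetric integral to a divergence integral --- exactly as in the derivation of \eqref{AAA_1} in \cite{NNT08}, cf.\ also Definition \ref{symmetric_integral} --- yields
\begin{align*}
	\int_s^t B^i_{u-\sigma,u-\tau}\, \mathrm{d}^{\circ} B^j_u = \delta^{B^j}\big(f^i_{\cdot}\,\chi_{[s,t]}(\cdot)\big) + \delta_{i=j}\,\mathrm{Tr}_{s,t}(\sigma,\tau),
\end{align*}
where the trace density at $u$ is the limit, as $\epsilon \downarrow 0$, of $\tfrac{1}{2\epsilon}\,\E\big[f^i_u\,B^i_{u-\epsilon,u+\epsilon}\big]$. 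Since $\E\big[f^i_u B^i_w\big] = R(u-\tau,w) - R(u-\sigma,w)$ with $R(a,b) = \tfrac12(|a|^{2H}+|b|^{2H}-|a-b|^{2H})$, differentiating in $w$ at $w=u$ shows this density equals $H\big(\mathrm{sgn}(\sigma)|\sigma|^{2H-1} - \mathrm{sgn}(\tau)|\tau|^{2H-1}\big)$, which is \emph{independent of} $u$; hence $\mathrm{Tr}_{s,t}(\sigma,\tau) = H\big(\mathrm{sgn}(\sigma)|\sigma|^{2H-1} - \mathrm{sgn}(\tau)|\tau|^{2H-1}\big)(t-s)$, and the first term on the right-hand side of \eqref{A_MALL} follows at once, with room to spare. (As a consistency check, the same computation performed with the anchored integrand $B_{s-r,u-r}$ reproduces the explicit trace term of \eqref{AAA_1}.)

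For the divergence part we invoke the standard $L^2$-bound for the Skorokhod integral,
\begin{align*}
	\E\big|\delta^{B^j}(f^i_{\cdot}\chi_{[s,t]})\big|^2 \leq \E\big\|f^i_{\cdot}\chi_{[s,t]}\big\|_{\mathcal{H}}^2 + \E\big\|D(f^i_{\cdot}\chi_{[s,t]})\big\|_{\mathcal{H}\otimes\mathcal{H}}^2 ,
\end{align*}
$\mathcal{H}$ being the Cameron--Martin space of the fBm. For the first summand, using that $\mathcal{H}$-pairings of sufficiently regular functions are obtained by integrating against the (distributional) covariance $\mu$ of the fBm, one has $\E\|f^i_{\cdot}\chi_{[s,t]}\|_{\mathcal{H}}^2 = \iint_{[s,t]^2} \E\big[B^i_{u-\sigma,u-\tau}B^i_{v-\sigma,v-\tau}\big]\,\mathrm{d}\mu(u,v)$, into which one feeds the elementary bounds $\big|\E[B_{a,a+\ell}B_{b,b+\ell}]\big| \lesssim \ell^{2H}$ and $\big|\E[B_{a,a+\ell}B_{b,b+\ell}]\big| \lesssim \ell^{2}\,|a-b|^{2H-2}$ together with \eqref{ZVX5466}. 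For the second summand, $D_{\cdot}f^i_u = \chi_{(u-\sigma,u-\tau]}(\cdot)$ is the indicator of an interval of length $\sigma-\tau$, so $D(f^i_{\cdot}\chi_{[s,t]})$ is the indicator of a slanted strip of width $\sigma-\tau$ over the window $[s,t]$, and its $\mathcal{H}\otimes\mathcal{H}$-norm is evaluated from the same covariance kernel.

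In both summands one then partitions the domain of integration according to the relative position of $s$, $t$ and the shifted points $u-\sigma$, $u-\tau$; on each piece the integrand is dominated by a product of a power of the ``local'' length $\sigma-\tau$ and a power of the ``global'' length $t-s$, the two exponents always summing --- once the scaling $2H-2<0$ of the kernel is taken into account --- to total order $4H$. Collecting the finitely many contributions gives $\E\big|\delta^{B^j}(f^i_{\cdot}\chi_{[s,t]})\big|^2 \lesssim \sum_{1\leq i\leq 7}(t-s)^{\frac{i}{2}H}(\sigma-\tau)^{(4-\frac{i}{2})H}$, and summing over the $d^2$ component pairs and combining with the trace estimate via $|a+b|^2\leq 2|a|^2+2|b|^2$ produces \eqref{A_MALL}. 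The main obstacle is precisely this last step: because $H<\tfrac12$, the space $\mathcal{H}$ consists of distributions and the naive kernel $|u-v|^{2H-2}$ is not integrable, so the pairings above must be justified through the Malliavin-calculus tools recalled in the appendix; moreover the region decomposition has to be carried out carefully enough to confirm that neither a pure term $(t-s)^{4H}$ nor a pure term $(\sigma-\tau)^{4H}$ survives --- every surviving contribution is genuinely mixed, which is why the sum runs over $i=1,\dots,7$ rather than $i=0,\dots,8$.
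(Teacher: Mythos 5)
Your overall strategy coincides with the paper's: decompose the symmetric integral into a Skorokhod divergence plus a deterministic trace, compute the trace explicitly, and estimate the divergence in $L^2$. The trace computation is identical and correct. Where you diverge is in \emph{how} the divergence is controlled. You invoke the standard Meyer-type bound
\begin{align*}
\E\big|\delta^{B^j}(\Phi^i)\big|^2 \leq \E\|\Phi^i\|^2_{\mathcal{H}} + \E\|D\Phi^i\|^2_{\mathcal{H}\otimes\mathcal{H}},
\end{align*}
whereas the paper uses the duality relation \eqref{MAL_A} together with \eqref{MAL_B} to write $D\,\delta^{B^i}(\Phi^j) = \Phi^j + \delta^{B^i}(D^{B^i}\Phi^j)$, and then applies Cauchy--Schwarz to obtain $\E\big|\delta^{B^i}(\Phi^j)\big|^2 \leq \E\|\Phi^j\|_{\mathcal{H}}^2 + \big(\E\|\delta^{B^i}(D^{B^i}\Phi^j)\|^2_{\mathcal{H}}\big)^{1/2}\big(\E\|\Phi^j\|^2_{\mathcal{H}}\big)^{1/2}$. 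This choice is not cosmetic. Because $D^{B^i}\Phi^j$ is deterministic, $\delta^{B^i}(D^{B^i}\Phi^j)(x)$ is an \emph{explicit} first-chaos increment, $\chi_{[s-\sigma,t-\tau]}(x)\,B^i_{\max(s,x+\tau),\min(t,x+\sigma)}$, so the paper only ever has to estimate one-variable $|\mathcal{H}|$-norms --- which is precisely what \eqref{BB_1}, \eqref{ZVX5466} and the case-splitting \eqref{maliavin_3} are built for. Your route requires estimating the two-variable $\mathcal{H}\otimes\mathcal{H}$-norm of the slanted-strip indicator $D\Phi^j(u,x) = \chi_{[u-\sigma,u-\tau]}(x)\chi_{[s,t]}(u)$. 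Since $H<\tfrac12$, $\mathcal{H}$ is described through the fractional derivative $\mathcal{M}^H_-$ rather than through an integrable covariance kernel, so your phrase ``evaluated from the same covariance kernel'' hides exactly the point the appendix addresses for one variable via \eqref{BB_1}; there is no tensor-product analogue of \eqref{BB_1} stated anywhere in the paper, and you would have to supply one. This is the one genuine gap in your outline. Two further remarks: your approach, carried through, would produce only integer-power terms $(t-s)^{kH}(\sigma-\tau)^{(4-k)H}$ --- the half-integer exponents $\tfrac{i}{2}H$ in \eqref{A_MALL} arise specifically from the paper's Cauchy--Schwarz step --- which is still consistent with the lemma (your bound would, if anything, be sharper). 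Finally, the claim that ``every surviving contribution is genuinely mixed, which is why the sum runs over $i=1,\dots,7$'' is a plausible heuristic, but the paper's own proof only produces a subset of those exponents; the full range is listed for readability and is not tight.
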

	\begin{proof}
		Set \(\Phi^{j}(u) := B_{u-\sigma, u-\tau}^j \chi_{[s,t]}(u)\) and \(\Phi(u) = (\Phi^{j}(u))_{1 \leq j \leq d}\). Note that for \(u \leq x\),
		\begin{align}\label{VACZXA54}
			\E\left(\left| \Phi(u) - \Phi(x) \right|^2\right) \lesssim (x-u)^{H}(\sigma - \tau)^H.
		\end{align}
		From \eqref{BB_1}, \eqref{ZVX5466}, and \eqref{VACZXA54},
		\begin{align}\label{F_MALL}
			\begin{split}
				&\E(\vert\Phi\vert_{\vert\mathcal{H}\vert^{ d}}^{2})\lesssim \int_{s}^{t}(\sigma-\tau)^{2H}(t-s)^{2H-1}\mathrm{d}u+\int_{-\infty}^{s}\left(\int_{s}^{t}\frac{(\sigma-\tau)^{H}}{(x-u)^{3/2-H}}\mathrm{d}x\right)^{2}\mathrm{d}u+\\&\int_{s}^{t}\left( \int_{u}^{t}\frac{\left(\E(\vert\Phi(x)-\Phi(u)\vert ^{2})\right)^{\frac{1}{2}}}{(x -u)^{\frac{3}{2}-H}}\mathrm{d}x\right)^{2}\mathrm{d}u\lesssim (\sigma-\tau)^{2H}(t-s)^{2H}+(\sigma-\tau)^{H}(t-s)^{3H} .
			\end{split}
		\end{align}
		Also, \((D^B \Phi(u))(x) = \chi_{[u-\sigma, u-\tau]}(x) \chi_{[s,t]}(u) I_{d \times d}\), where \(I_{d \times d}\) denotes the identity matrix. It is also easy to see 
		\begin{align*}
			\forall u\in [s,t]: \ \ \Vert D^{B}\Phi(u)\Vert_{\vert\mathcal{H}\vert^{ d}}^{2}:=\sum_{1\leq i\leq d}\Vert D^{B}\Phi^i(u)\Vert_{\vert\mathcal{H}\vert^d}^{2}\lesssim (\sigma-\tau)^{2H}.
		\end{align*}
		Consequently,	from Proposition \ref{MAL}
		\begin{align}\label{B_MALL}
			\int _{s}^{t}\Phi(u)\otimes\mathrm{d}^{\circ}{B_{u}}= \sum_{1\leq i,j\leq d}\big(\delta^{B^{j}}(\Phi^{i})+Tr_{[s,t]}(D^{B^{j}}\Phi^{i})) e_{i}\otimes e_j .
		\end{align}
		From \eqref{Trace}
		\begin{align}\label{trace}
			\begin{split}
				Tr_{[s,t]}(D^{B^{j}}\Phi^{i}) &=\lim_{\epsilon \rightarrow 0} \int_{s}^{t} \frac{\langle D^{B^j} \Phi^i(u), \chi_{[u-\epsilon, u+\epsilon] \cap [s,t]} \rangle_{\mathcal{H}}}{2\epsilon} \, \mathrm{d}u\\&\quad=\delta_{i=j}H\left(sgn(\sigma)\vert \sigma\vert ^{2H-1}-sgn(\tau)\vert\tau\vert^{2H-1}\right)(t-s).
			\end{split}
		\end{align}
		Recall that \((D^{B^{i}}\Phi^j)(x)(u) := (D^{B^{i}}\Phi^j(u))(x)\). Therefore, from \eqref{MAL_B},
		\begin{align}\label{C_MALL}
			\begin{split}
				&\delta^{B^{i}}((D^{B^{i}}\Phi^j)(x))=\delta_{i=j}\delta^{B^{i}}(\chi_{[s,t]\cap[x+\tau,x+\sigma]}(.))=\delta_{i=j}\chi_{[s-\sigma,t-\tau]}(x)B^{i}_{\max(s,x+\tau),\min(t,x+\sigma)}.
				\
				\\  
			\end{split}
		\end{align}
		Note that
		\begin{align}\label{maliavin_3}
			\begin{split}
				&\chi_{[s-\sigma,t-\tau]}(x)B^{i}_{\max(s,x+\tau),\min(t,x+\sigma)} = B^{i}_{\max(s,x+\tau),t}\chi_{[t-\sigma,t-\tau]}(x) + B^{i}_{\max(s,x+\tau),x+\sigma}\chi_{[s-\sigma,t-\sigma]}(x) \\
				&\quad = \begin{cases}
					B^{i}_{x+\tau,t}\chi_{[t-\sigma,t-\tau]}(x) + B^{i}_{x+\tau,x+\sigma}\chi_{[s-\tau,t-\sigma]}(x) + B^{i}_{s,x+\sigma}\chi_{[s-\sigma,s-\tau]}(x), & \text{if } \sigma-\tau \leq t-s, \\
					B^{i}_{x+\tau,t}\chi_{[s-\tau,t-\tau]}(x) + B^{i}_{s,t}\chi_{[t-\sigma,s-\tau]}(x) + B^{i}_{s,x+\sigma}\chi_{[s-\sigma,s-\tau]}(x), & \text{if } t-s \leq \sigma-\tau.
				\end{cases}
			\end{split}
		\end{align}
			In addition, from \eqref{MAL_A} and \eqref{MAL_B}, we have
			\begin{align}\label{E_MALL}
				&\E\left((\delta^{B^{i}}(\Phi^j))^{2}\right)\E\left((\delta^{B^{i}}(\Phi^j))^{2}\right) = \E\left(\langle \Phi^{j}, D^{B^{i}}(\delta^{B^{i}}(\Phi^j))\rangle_{\mathcal{H}}\right) = \E\left(\|\Phi^{j}\|_{\mathcal{H}}^{2}\right) + \E\left(\langle \delta^{B^{i}}(D^{B^{i}}\Phi^j), \Phi^j \rangle_{\mathcal{H}}\right)\\ \nonumber 
				&\leq \E\left(\|\Phi^j\|_{\mathcal{H}}^{2}\right) + \E\left(\|\delta^{B^{i}}(D^{B^{i}}\Phi^j)\|_{\mathcal{H}}^{2}\right)^{\frac{1}{2}} \E\left(\|\Phi^j\|_{\mathcal{H}}^{2}\right)^{\frac{1}{2}}.
			\end{align}
			Similar to \eqref{F_MALL}, from \eqref{maliavin_3}, we can obtain 
			\begin{align}\label{F_MALL2}
				\E\left(\|\delta^{B^{i}}(D^{B^{i}}\Phi^j)\|_{\vert\mathcal{H}\vert}^{2}\right) \lesssim \sum_{1\leqslant k\leqslant 3} (t-s)^{kH}(\sigma-\tau)^{(4-k)H}.
			\end{align}
			Now, \eqref{A_MALL} can be derived from \eqref{F_MALL}-\eqref{F_MALL2}.
		\end{proof}
		\begin{remark}
			During the proof of Lemma \ref{malliavin_2}, when $i \neq j$, due to the independence of $B^{i}$ and $B^{j}$, our calculation in \eqref{E_MALL} can be further simplified as follows: Let $\mathcal{F}_j$ be the filtration (on $\mathbb{R}$) to which $B^{j}$ is adapted. Then, by conditioning on $\mathcal{F}_j$ and using the fact that $B^{i}$ and $B^{j}$ are independent, we obtain:
			\[
			\E\left((\delta^{B^{i}}(\Phi^j))^{2}\right)=\E\left( \E\left( (\delta^{B^{i}}(\Phi^j))^{2} \mid \mathcal{F}_j \right) \right) = \E\left( \Vert \Phi^{j} \Vert_{\mathcal{H}}^2 \right).
			\]
			This coincides with the first line of our calculation in \eqref{malliavin_2}, since $D^{B^{i}}(\Phi^j)=0$ when $i \neq j$.
		\end{remark}
		\begin{proposition}\label{thm:approx_strat_delay_f}
			Let \(B^{\epsilon}\) be the regularized path in Definition \ref{REGU} and 	
			\begin{align*}
				\mathbf{B}_{s,t}^{\epsilon} := \left( B^{\epsilon}_{s,t}, \mathbb{B}_{s,t}^{\epsilon}, \mathbb{B}_{s,t}^{\epsilon}(-r) \right) := \left(B^{\epsilon}_{s,t}, \int_s^t B_{s,u}^{\epsilon}\, \otimes \mathrm{d}B^{\epsilon}_u, \int_s^t B^{\epsilon}_{s-r,u - r}\, \otimes \mathrm{d} B^{\epsilon}_u \right),
			\end{align*}
			Then 
			\begin{align*}
				\lim_{\epsilon \rightarrow\infty}\sup_{q\geqslant 1}\frac{\left\|d_{\gamma;J}\big{(} \mathbf{B}^{\epsilon} , \mathbf{B} \big{)}\right\|_{L^{q}}}{\sqrt{q}} = 0
			\end{align*}
			for each $\gamma <H$ and every compact interval $J \subset \R$ where $d_{\gamma;J}$ represents the homogeneous metric
			\begin{align*}
				d_{\gamma;J}(\mathbf{X},\mathbf{Y}) = \sup_{s,t \in J; s \neq t} \frac{\vert X_{s,t} - Y_{s,t}\vert}{|t-s|^{\gamma}} + \sqrt{ \sup_{s,t \in J; s \neq t} \frac{\vert\mathbb{X}_{s,t} - \mathbb{Y}_{s,t}\vert}{|t-s|^{2 \gamma}}} + \sqrt{ \sup_{s,t \in J; s \neq t} \frac{\vert\mathbb{X}_{s,t}(-r) - \mathbb{Y}_{s,t}(-r)\vert}{|t-s|^{2 \gamma}}} 
			\end{align*}
			and by \(\Vert \cdot \Vert_{L^{q}}\), we mean the usual \(L^{q}(\Omega)\) norm.
		\end{proposition}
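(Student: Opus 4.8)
We plan to reduce the statement to \emph{pointwise} (in $s,t$) $L^2$-bounds, with an explicit rate in $\epsilon$, for the three differences $B^\epsilon_{s,t}-B_{s,t}$, $\mathbb{B}^\epsilon_{s,t}-\mathbb{B}_{s,t}$ and $\mathbb{B}^\epsilon_{s,t}(-r)-\mathbb{B}_{s,t}(-r)$, and then to upgrade these to the $\sqrt q$-uniform control of $d_{\gamma;J}$ via a two-parameter Garsia--Rodemich--Rumsey/Kolmogorov criterion (as in \cite{FV10,FH20}) combined with Gaussian hypercontractivity. The point of the latter is that $B^\epsilon-B$ lies in the first, and $\mathbb{B}^\epsilon-\mathbb{B}$ and $\mathbb{B}^\epsilon(-r)-\mathbb{B}(-r)$ in the first-plus-second homogeneous Wiener chaos of $B$, so that $\|Z_{s,t}\|_{L^m}\lesssim m^{n/2}\|Z_{s,t}\|_{L^2}$ for $Z$ in the $n$-th chaos; applying GRR at integrability exponent $q$ to $B^\epsilon-B$ and to the square roots $\sqrt{|\mathbb{B}^\epsilon_{\cdot,\cdot}-\mathbb{B}_{\cdot,\cdot}|}$, $\sqrt{|\mathbb{B}^\epsilon_{\cdot,\cdot}(-r)-\mathbb{B}_{\cdot,\cdot}(-r)|}$ --- the form in which these two enter $d_{\gamma;J}$ --- turns the claim into the estimate of integrals $\int_0^{|J|}h^{-\gamma q-1}(\E|\Delta^\epsilon_h|^2)^{q/4}\,\d h$, where $\E|\Delta^\epsilon_h|^2$ denotes the pointwise $L^2$-bound at scale $h=|t-s|$. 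We will obtain bounds of the form $\E|\Delta^\epsilon_h|^2\lesssim\sum_j h^{a_j}\epsilon^{b_j}$ and then split $h\le\epsilon$ from $h>\epsilon$: on $[0,\epsilon]$ we use crude bounds exploiting the smoothness of $B^\epsilon$ (which carry enough positive powers of $h$ that $\int_0^\epsilon h^{-\gamma q-1}(\cdots)^{q/4}\,\d h$ is a positive power of $\epsilon$ provided $a_j+b_j>4\gamma$, even when $b_j<0$); on $[\epsilon,|J|]$, for each monomial with $a_j<4\gamma$ one has $\int_\epsilon^{|J|}h^{a_jq/4-\gamma q-1}\,\d h\lesssim\epsilon^{(a_j/4-\gamma)q}$, so the total power of $\epsilon$ becomes $(a_j+b_j)/4-\gamma$. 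Hence it suffices to produce pointwise $L^2$-bounds in which every monomial has total scaling weight $a_j+b_j\ge 4H$ (plus a crude short-scale bound), because $\gamma<H$ leaves room.

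For the first level this is elementary: from $B^\epsilon_{s,t}-B_{s,t}=\int_0^1\rho(z)\big(B_{s-\epsilon z,s}-B_{t-\epsilon z,t}\big)\,\d z$ one gets $\E|B^\epsilon_{s,t}-B_{s,t}|^2\lesssim|t-s|^{2H}$ and, from the same identity, $\E|B^\epsilon_{s,t}-B_{s,t}|^2\lesssim\epsilon^{2H}$ (both using \eqref{AZAZ}); interpolating, $\E|B^\epsilon_{s,t}-B_{s,t}|^2\lesssim\epsilon^{2\eta}|t-s|^{2(H-\eta)}$ for every $\eta\in[0,H]$, and any $\eta>0$ with $H-\eta>\gamma$ does the job. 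The ordinary Lévy area $\mathbb{B}^\epsilon-\mathbb{B}$ is handled by the classical computation for mollified Gaussian rough paths --- decompose the difference with the symmetric-integral representation exactly as for the delayed area below, only without the shift by $r$ --- where one only has to keep track of the $\epsilon$-rate, which the interpolation supplies; cf.\ \cite[Ch.~10]{FH20}, \cite{FV10,GVS23}.

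The core is the delayed area. By Lemma \ref{representation}, $\mathbb{B}^\epsilon_{s,t}(-r)=\int_0^1\rho(z)\int_{s-\epsilon z}^{t-\epsilon z}B^\epsilon_{s-r,u+\epsilon z-r}\otimes\d^\circ B_u\,\d z$, and, inserting $\int_0^1\rho(z)\,\d z=1$ in $\mathbb{B}_{s,t}(-r)=\int_s^t B_{s-r,u-r}\otimes\d^\circ B_u$, we decompose for each $z$
\begin{align*}
\int_{s-\epsilon z}^{t-\epsilon z}B^\epsilon_{s-r,u+\epsilon z-r}\otimes\d^\circ B_u-\int_s^t B_{s-r,u-r}\otimes\d^\circ B_u
&=\int_{s-\epsilon z}^{t-\epsilon z}\Big(B^\epsilon_{u-r,u+\epsilon z-r}+\big(B^\epsilon_{s-r,u-r}-B_{s-r,u-r}\big)\Big)\otimes\d^\circ B_u\\
&\qquad+\Big(\int_{s-\epsilon z}^{s}-\int_{t-\epsilon z}^{t}\Big)B_{s-r,u-r}\otimes\d^\circ B_u.
\end{align*}
In the first (bulk) term the integrand has $L^2$-size $\lesssim\epsilon^H$ uniformly in $u$ --- a strip of $B^\epsilon$ of width $\le\epsilon$, plus a regularisation error; expanding each occurrence of $B^\epsilon$ once more as an average $\int_0^1\rho(w)(\cdots)\,\d w$ of shifted increments of $B$ and using \eqref{ZVX5466} to pull the $z$- and $w$-averages out of the $L^2$-norm reduces it to (i) stationary strip integrals $\int B_{u-\sigma,u-\tau}\otimes\d^\circ B_u$ with $\sigma-\tau=O(\epsilon)$, controlled by Lemma \ref{malliavin_2} (with $|t-s|$ replaced by the length of the interval of integration, identical by stationarity of the increments), whose bound has exactly the homogeneity $\tfrac{i}{2}H+(4-\tfrac{i}{2})H=4H$ in $(|t-s|,\sigma-\tau)$ together with the term $(\operatorname{sgn}(\sigma)|\sigma|^{2H-1}-\operatorname{sgn}(\tau)|\tau|^{2H-1})^2(t-s)^2$ of weight $4$ (the $\operatorname{sgn}$-difference being $O(\epsilon)$ since $r>0$ keeps $|\sigma|^{2H-1}$ away from its singularity), plus (ii) products of fBm increments over time windows separated by $\approx r$, of weight $2H+2H$, controlled by an elementary Gaussian estimate using their near-decorrelation. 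The two boundary integrals of the second term do not individually vanish as $t\downarrow s$, so they must be grouped: shifting the second one back by $t-s$ and subtracting expresses their difference as symmetric integrals over an interval of length $\le\epsilon$ of $(t-s)$-scale increments of $B$, again of the type handled by Lemma \ref{malliavin_2} and the elementary estimates, with weight $\ge 4H$. Finally, the deterministic trace corrections --- given explicitly in \eqref{AAA_1} for $\mathbb{B}(-r)$, and for $\mathbb{B}^\epsilon(-r)$ obtained by taking expectations in the same decomposition --- are controlled by the same arguments (the expectation of a symmetric integral being its trace term) and contribute monomials of weight $\ge 4H$. Together with the crude short-scale bound $\E|\mathbb{B}^\epsilon_{s,t}(-r)|^2\lesssim|t-s|^{2+2\beta}\epsilon^{2\beta-2}$ for $\gamma<\beta<H$ --- from $|\mathbb{B}^\epsilon_{s,t}(-r)|\le\|B^\epsilon_{s-r,\cdot-r}\|_{\infty;[s,t]}\int_s^t|\dot B^\epsilon_u|\,\d u$ --- and $\E|\mathbb{B}_{s,t}(-r)|^2\lesssim|t-s|^{4\beta}$ from \eqref{key}, this yields the required pointwise $L^2$-estimate, and the first paragraph concludes.

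The step I expect to be the main obstacle is obtaining the bulk and boundary $L^2$-bounds for the delayed area with the \emph{correct} scaling weight. A naive application of Cauchy--Schwarz (for instance to the boundary integrals, or to the term that after integration by parts involves $\dot B^\epsilon$) produces monomials with $a_j+b_j<4H$ --- insufficient once $|t-s|$ is comparable to $\epsilon$ --- so one really has to exploit the decorrelation between the past window $\approx\,\cdot-r$ entering the integrand and the current window entering $\d^\circ B$, which is precisely what the $\sum_{i=1}^{7}$ expansion in Lemma \ref{malliavin_2} encodes; and one has to organise the boundary terms so that only the combinations that vanish as $t\downarrow s$ are estimated separately. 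The interpolation bound, the Fubini manipulations, and the GRR/hypercontractivity upgrade are then routine.
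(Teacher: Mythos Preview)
Your high-level architecture (pointwise $L^2$-bounds plus second-chaos hypercontractivity plus a Kolmogorov/GRR upgrade) matches the paper's, and you correctly identify Lemma \ref{malliavin_2} on strip integrals as the key analytic input. The first two components of $d_{\gamma;J}$ are indeed disposed of by citing the standard mollified-fBm result (the paper simply quotes \cite[Theorems 15.33 and 15.37]{FV10}).

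Where you diverge from the paper is in two places. First, the paper does \emph{not} track explicit $\epsilon$-rates or split $h\le\epsilon$ versus $h>\epsilon$. Instead it proves two facts separately: a uniform-in-$\epsilon$ bound $\E|\mathbb{B}^\epsilon_{s,t}(-r)-\mathbb{B}_{s,t}(-r)|^2\lesssim (t-s)^{4H}$ (from \eqref{AAA_1}--\eqref{BBB_1} and their analogues for $\mathbf{B}^\epsilon$) and a qualitative convergence $\sup_{s,t\in J}\E|\cdots|^2\to 0$; a one-line interpolation between these gives $\sup_{s,t}\E|\cdots|^2/(t-s)^{4\gamma}\to 0$ for every $\gamma<H$. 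This sidesteps your bookkeeping of monomial weights $a_j+b_j$ entirely.

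Second, your bulk/boundary decomposition of the delayed-area difference is correct but harder to close than the paper's route. The paper splits as $\text{I}^\epsilon+\text{II}^\epsilon$, where $\text{I}^\epsilon$ replaces the integrand $B^\epsilon$ by $B$ (this is your $(B^\epsilon_{s-r,u-r}-B_{s-r,u-r})$ piece after one more unfolding) and $\text{II}^\epsilon$ is the domain shift. The key trick is that $\text{II}^\epsilon$ is handled not by grouping boundary integrals but by \emph{integration by parts}:
\[
\int_{s-\epsilon z}^{t-\epsilon z} B^i_{s-r,u+\epsilon z-r}\,\d^\circ B^j_u-\int_s^t B^i_{s-r,u-r}\,\d^\circ B^j_u
=-B^i_{s-r,t-r}B^j_{t-\epsilon z,t}+\int_{s-r}^{t-r}B^j_{u+r-\epsilon z,u+r}\,\d^\circ B^i_u,
\]
which produces directly a product of two increments plus a strip integral over $[s-r,t-r]$ of width $\epsilon z$ --- exactly the form Lemma \ref{malliavin_2} controls, with the roles of integrand and integrator swapped. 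This avoids the delicate ``grouping'' of boundary terms you flagged as the main obstacle. Your approach can be made to work, but the integration-by-parts identity is the cleaner organising device.
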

		\begin{proof}
			Note that from \cite[Theorem 15.33 and Theorem 15.37]{FV10}, it is know that for every $\gamma<H$ and compact interval $J\subset\R$
			\begin{align*}
				\tilde{d}_{\gamma;J}(\mathbf{X},\mathbf{Y}) = \sup_{s,t \in J; s \neq t} \frac{\vert X_{s,t} - Y_{s,t}\vert}{|t-s|^{\gamma}} + \sqrt{ \sup_{s,t \in J; s \neq t} \frac{\vert\mathbb{X}_{s,t} - \mathbb{Y}_{s,t}\vert}{|t-s|^{2 \gamma}}},
			\end{align*}
			we have
			\begin{align*}
				\lim_{\epsilon \rightarrow\infty}\sup_{q\geqslant 1}\frac{\left\|d_{\gamma;J}\big{(} \mathbf{B}^{\epsilon} , \mathbf{B}^{\text{Strat}} \big{)}\right\|_{L^{q}}}{\sqrt{q}} = 0
			\end{align*}
			Therefore, we need to address the third item, in which the delay term appears.
			We claim  for every $\gamma<H$
			\begin{align}\label{III}
				\lim_{\epsilon\rightarrow 0}\sup_{s,t\in J}	\frac{\E\bigg{\vert}\int _{s}^{t}B^{\epsilon}_{s-r,u-r}\otimes \mathrm{d} B^{\epsilon}_{u}-\int_{s}^{t}B_{s-r,u-r}\mathrm{d}^{\circ}B_{u}\bigg{\vert}^{2}}{(t-s)^{4\gamma}}=0.
			\end{align}
			From Lemma \eqref{representation} 
			\begin{align*}
				&\E\left|\int _{s}^{t}B^{\epsilon}_{s-r,u-r} \otimes \mathrm{d}B^{\epsilon}_{u}-\int_{s}^{t}B_{s-r,u-r}\otimes \mathrm{d}^{\circ}B_{u}\right|^{2}\lesssim
				\\ &\underbrace{\E\left|\int_{0}^1\rho(z)\left(\int_{s-\epsilon z} ^{t-\epsilon z}\left(B^{\epsilon}_{s-r,u+\epsilon z-r}-B_{s-r,u+\epsilon z-r}\right)\otimes \mathrm{d}^{\circ}\ B_{u}\right)\mathrm{d}z \right|^{2}}_{\text{I}^{\epsilon}(s,t)}+\\  &\ \underbrace{\E\left|\int_{0}^1\rho(z)\left(\int_{s-\epsilon z}^{t-\epsilon z}B_{s-r,u+\epsilon z-r}\otimes \mathrm{d}^{\circ}B_{u} -\int_{s}^{t}B_{s-r,u-r}\otimes \mathrm{d}^{\circ}B_{u}\right)\mathrm{d}z\right|^{2}}_{\text{II}^{\epsilon}(s,t)}.
			\end{align*}
			The following identity can be easily verified using integration by parts:
			\begin{align*}
				\int_{s-\epsilon z}^{t-\epsilon z} B_{s-r, u+\epsilon z - r}^{i} \, \mathrm{d}^{\circ} B_{u}^{j} - \int_{s}^{t} B_{s-r, u-r}^{i} \, \mathrm{d}^{\circ} B_{u}^{j} &= \\
				-B_{s-r, t-r}^{i} B_{t-\epsilon z, t}^{j} + \int_{s-r}^{t-r} B_{u+r-\epsilon z, u+r}^{j} \, \mathrm{d}^{\circ} B_{u}^{i}.
			\end{align*}
			From \eqref{ZVX5466} and Lemma \ref{malliavin_2}
			\begin{align}\label{II}
				\begin{split}
					\lim_{\epsilon \to 0} \sup_{s,t \in J} \text{II}^\epsilon(s,t) 
					&\lesssim \lim_{\epsilon \to 0} \sup_{s,t \in J} \sum_{1 \leq i,j \leq d} \bigg\vert \int_{0}^1 \rho(z) \bigg( \E \big{(} -B_{s-r, t-r}^{i} B_{t - \epsilon z, t}^{j} \\
					&\quad + \int_{s-r}^{t-r} B_{u + r - \epsilon z, u + r}^{j} \, \mathrm{d}^{\circ} B_{u}^{i} \big{)}^{2} \bigg)^{\frac{1}{2}} \, \mathrm{d}z \bigg\vert^2 
					= 0.
				\end{split}
			\end{align}
			Also, from \eqref{AAA_1} and \eqref{BBB_1}, we have
			\[
			\sup_{s,t \in J} \frac{\text{II}^\epsilon(s,t)}{(t-s)^{4H}} < \infty.
			\]
			Therefore, using a simple interpolation technique, we obtain that for every \(\gamma < H\),
			\begin{align}\label{J_1}
				\lim_{\epsilon\rightarrow 0}\sup_{s,t\in J}\frac{\text{II}^\epsilon(s,t)}{(t-s)^{4\gamma}}= 0.
			\end{align}
			Similarly
			\begin{align*}
				\sup_{s,t\in J}\text{I}^\epsilon(s,t)\lesssim \sup_{s,t\in J}\sum_{1\leq i,j\leq d}\bigg|\int_{0}^1\rho(z)\bigg(\E\big(\int_{s-\epsilon z} ^{t-\epsilon z}((B^{\epsilon})^{i}_{s-r,u+\epsilon z-r}-B_{s-r,u+\epsilon z-r}^i)\mathrm{d}^{\circ}\ B_{u}^j\big)^{2}\bigg)^{\frac{1}{2}}\mathrm{d}z \bigg{\vert}^{2}.
			\end{align*}
			Set $ s_{1}=s-\epsilon z$ and $ t_{1}=t-\epsilon z $, then 
			\begin{align*}
				\E&\bigg{(} \int_{s_{1}}^{t_{1}}\big{(}(B^{\epsilon})^{i}_{s_{1},u+\epsilon z-r}-B_{s_{1},u+\epsilon z-r}^{i}\big{)}\mathrm{d}^{\circ}B_{u}^{j} \bigg{)}^{2}\\&\lesssim\bigg|\int_{0}^1\rho(y)\bigg(\E\big{(}\int_{s_{1}}^{t_{1}}\big{(}B_{s_{1}-\epsilon y,u+\epsilon z-r-\epsilon y}^{i}-B_{s_{1},u+\epsilon z-r}^{i} \big{)}\mathrm{d}^{\circ}B_{u}^{j} \big{)}^{2}\bigg)^{\frac{1}{2}}\mathrm{d}y\bigg|^2
			\end{align*} 
			We proceed similarly to the previous case and conclude that
			\begin{align}\label{J_2}
				\lim_{\epsilon\rightarrow 0}\sup_{s,t\in J}\frac{\text{I}^\epsilon(s,t)}{(t-s)^{4\gamma}}= 0.
			\end{align}
			Finally, from \eqref{J_1}, \eqref{J_2}, and the second chaos property of  $\mathbb{B}_{s,t}^{\epsilon}(-r), \mathbb{B}_{s,t}(-r) $, we conclude that
			\begin{align*}
				\lim_{\epsilon \rightarrow\infty}\sup_{q\geqslant 1}\frac{\left\vert\sqrt{ \sup_{s,t \in J; s \neq t} \frac{\vert\mathbb{X}_{s,t}(-r) - \mathbb{Y}_{s,t}(-r)\vert}{|t-s|^{2 \gamma}}}\right\Vert_{{L^{q}}}}{\sqrt{q}} = 0 .
			\end{align*}
			For more details, see \cite[Chapter 15]{FV10}.
		\end{proof}
		We will revisit this result later when we aim to prove that our solutions to Equation \eqref{equation+drift} generate a random dynamical system.
		
		\section{Existence, uniqueness, and some a priori bounds}\label{sec:well-posedness}
		In this section, we establish an existence and uniqueness result for Equation \eqref{equation+drift} driven by a general delayed rough path. We also obtain an a priori bound for the solution. For our future purposes, this bound must increase polynomially with respect to the initial value and the rough driver. Additionally, we derive several useful estimates for the Fréchet derivative of the solution.
		To obtain an a priori bound for our solution, there are several approaches available. We utilize a technique known as the decomposition of flows, which provides more straightforward estimates. Although it is possible to use a semigroup approach, this method is too complex for delay equations, and the bound obtained is difficult to control with a polynomial function. Let us start with the following assumption about the drift part.
		\begin{assumption}\label{SDDRRFF}
			\begin{itemize}
				\item Assume $A \in {L}(\mathbb{R}^n \times \mathbb{R}^n, \mathbb{R}^n)$ and let $\pi = (\pi^{i,j})_{1 \leq i,j \leq n}$ be a family of finite signed measures on $[-r,0]$. For $y \in C([-r,0], \mathbb{R}^n)$, assume that $\int_{-r}^{0} y_{\theta+t} \, \pi(\mathrm{d}\theta)$ denotes the vector
				\[
				\left(\sum_{1 \leq i \leq n} \int_{-r}^{0} y^i_{\theta+t} \, \pi^{i,j}(\mathrm{d}\theta)\right)_{1 \leq j \leq n}.
				\]
				\item Assume that for $\gamma > \frac{1}{3}$, $\mathbf{X}$ is a delayed $\gamma$-rough path.
			\end{itemize}
		\end{assumption}
		The following theorem is our main result in this section, which also generalizes \cite[Theorem 2.10]{GVR21}.
		\begin{theorem}\label{SSS}
			Let Assumption \ref{SDDRRFF} hold, and let \( \frac{1}{3} < \beta \leq \gamma \) such that \( 2\beta + \gamma > 1 \). Assume \( \xi \in \mathscr{D}_{\mathbf{X}}^{\beta}([-r,0]) \) and \( G \in C^{4}_{b}(\mathbb{R}^n \times \mathbb{R}^n, L(\mathbb{R}^d, \mathbb{R}^n)) \). Then, there exists a unique path \( y \in \mathscr{D}_{\mathbf{X}}^{\beta}([0, \infty)) \) such that for every \( 0 \leq s < t \), we have
			\begin{align*}
				(\delta y)_{s,t} &= \int_{s}^t A\left(y_\tau, \int_{-r}^{0} y_{\theta + \tau} \pi(\mathrm{d}\theta)\right) \mathrm{d}\tau \\
				&\quad + \int_{s}^{t} G \left(y_{\tau}, y_{\tau - r}\right) \mathrm{d}\mathbf{X}_{\tau}.
			\end{align*}
			Here, the last integral is defined in the sense of Theorem \ref{DCVZVXZ}. If \( A = 0 \), then the unique solution exists under weaker assumptions, specifically \( G \in C^{3}_{b} \). We denote the solution in both cases by \( {\phi}^{t}_{\mathbf{X}}(\xi) \). Also, there exists a polynomial \( Q \) (which only depends on \( \Vert A \Vert, G, \) and \( \pi \)) such that
			\begin{align}\label{BUN}
				\left\| \phi_{\mathbf{X}}(\xi) \right\|_{\mathscr{D}_{\mathbf{X}}^{\beta}([0,r])} \leq Q \left( \left\| \xi \right\|_{\mathscr{D}_{\mathbf{X}}^{\beta}([-r,0])}, \left\| \mathbf{X} \right\|_{\gamma,[0,r]} \right),
			\end{align}
			where \( \phi_{\mathbf{X}}(\xi) = (\phi_{\mathbf{X}}^{t}(\xi))_{0 \leq t \leq r} \). Additionally, considering the map
			\begin{align}\label{STTAASX}
				\begin{split}
					\phi_{\mathbf{X}}: \mathscr{D}_{\mathbf{X}}^{\beta}([-r,0]) &\rightarrow \mathscr{D}_{\mathbf{X}}^{\beta}([0,r]), \\
					\xi &\mapsto \phi_{\mathbf{X}}(\xi),
				\end{split}
			\end{align}
			we have that \( \phi_{\mathbf{X}} \) is continuous.
		\end{theorem}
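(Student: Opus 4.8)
The plan is to build the solution on the successive intervals $[0,r],[r,2r],\dots$ and then patch the pieces together. On $[0,r]$ the delayed diffusion argument is just the datum, $y_{\tau-r}=\xi_{\tau-r}$, so the equation reads
\[
(\delta y)_{s,t}=\int_s^t A\Big(y_\tau,\int_{-r}^{0}\widehat y_{\theta+\tau}\,\pi(\mathrm{d}\theta)\Big)\,\mathrm{d}\tau+\int_s^t G(y_\tau,\xi_{\tau-r})\,\mathrm{d}\mathbf X_\tau,\qquad y_0=\xi_0,
\]
with $\widehat y$ the solution extended by $\xi$ on $[-r,0]$; this is a non-delayed rough differential equation with a linear drift that depends only on the past of $y$. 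First I would set up a Banach fixed point argument for the map $z\mapsto\mathcal M(z)$ defined by the right-hand side, on a ball of $\mathscr{D}_{\mathbf X}^{\beta}([0,T])$ centred at the constant path $\xi_0$: the rough integral is controlled through Theorem~\ref{DCVZVXZ} together with the composition rule \eqref{TYCZ} (which makes $\tau\mapsto G(z_\tau,\xi_{\tau-r})$ a delayed controlled path whenever $z$ is a controlled path), while the drift is a Lipschitz operation on $\mathscr{D}_{\mathbf X}^{\beta}$ since $A$ is linear and $\pi$ has finite total variation. The decisive feature is that, because $G\in C^3_b$ (hence $G$ and its derivatives are bounded) and $A$ is globally Lipschitz, the contraction time $T$ can be taken to depend only on $\|\mathbf X\|_{\gamma,[0,r]}$ and not on $\xi$; thus finitely many steps, their number controlled by $\|\mathbf X\|_{\gamma,[0,r]}$, cover $[0,r]$, and repeating the construction on $[r,2r],[2r,3r],\dots$ produces a unique $y\in\mathscr{D}_{\mathbf X}^{\beta}([0,\infty))$. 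Uniqueness on each short interval follows from the usual difference/contraction estimate.

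The a priori bound \eqref{BUN} is the delicate part, and the point where a naive argument fails. Because the rough input forces one to work along a greedy partition of $[0,r]$ adapted to $\mathbf X$, whose number of cells is $\lesssim\|\mathbf X\|_{\gamma,[0,r]}^{1/\gamma}$, a direct Gronwall estimate turns the unbounded linear drift into a constant that grows exponentially in $\|\mathbf X\|_{\gamma,[0,r]}^{1/\gamma}$, whereas \eqref{BUN} demands polynomial growth in $\|\mathbf X\|_{\gamma,[0,r]}$. The mechanism I would use is that on a greedy cell (on which $\|\mathbf X\|_\gamma\approx1$) a Davie-type estimate gives an increment of $y$ of order $\lesssim 1+h_j\sup|y|$ — the first term because $G$ is bounded, the second because $A$ is linear — so that, summing over the cells, the bounded contributions accumulate only to $\lesssim$(number of cells)$\lesssim\|\mathbf X\|_{\gamma,[0,r]}^{1/\gamma}$, and the drift contributions are absorbed by a discrete Gronwall inequality over the \emph{fixed} horizon $[0,r]$, costing only a bounded factor $e^{Cr}$. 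To organise this cleanly — in particular, to upgrade the resulting $L^\infty$/Hölder bounds to the full controlled-path norm and to keep all estimates polynomial through the successive delay intervals — I would, as in \cite[Theorem~2.10]{GVR21}, use the decomposition-of-flows technique, representing $y$ on $[0,r]$ as $\psi_{0,\cdot}$ applied to the solution of an auxiliary equation \emph{without} rough integral, $\psi$ being the flow of the pure rough delay equation $\mathrm{d}y=G(y,\xi_{\cdot-r})\,\mathrm{d}\mathbf X$. This is where the assumption $G\in C^4_b$ enters (the rough flow must be differentiated once); when $A=0$ no such transformation is needed and the greedy-partition argument runs directly, which is why $G\in C^3_b$ already suffices in that case. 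The outcome is \eqref{BUN} with a polynomial $Q$ depending, through its coefficients, only on $\|A\|$, $G$ and $\pi$.

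Continuity of the map \eqref{STTAASX} then follows from the construction itself: on each short interval $\mathcal M$ depends locally Lipschitz continuously on the datum $\xi$ — through $G(z_\tau,\xi_{\tau-r})$, which is continuous in $\xi$ as a delayed controlled path by \eqref{TYCZ}, through the nonlocal drift term, and through the initial value $\xi_0$ — so its fixed point does as well; since the number of steps needed to exhaust $[0,r]$ is uniform over bounded sets of data, composing the finitely many step maps preserves continuity. The step I expect to be the genuine obstacle is the a priori bound: keeping the dependence on $\|\mathbf X\|_{\gamma,[0,r]}$ polynomial rather than exponential in the presence of the unbounded linear drift is exactly where the direct estimates are too crude, and it requires carrying out the greedy-partition bookkeeping together with the decomposition-of-flows transformation with some care.
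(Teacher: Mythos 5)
Your proposal matches the paper's proof: existence and uniqueness are handled via the fixed-point argument (for $A=0$ the paper simply invokes \cite[Theorem 2.8]{GVR21}), and the polynomial a priori bound is obtained through the decomposition-of-flows technique of \cite{RS17} combined with a discrete Gronwall iteration over subintervals of $[0,r]$, which is exactly Steps~1--3 of the paper's argument. The only cosmetic difference is that the paper does not use a greedy partition adapted to $\mathbf{X}$ but a uniform step $t_1$ depending polynomially on both $\|\xi\|_{\mathscr{D}_{\mathbf{X}}^{\beta}}$ and $\|\mathbf{X}\|_{\gamma}$ (see \eqref{t_1}); both choices keep the number of cells, and hence the discrete-Gronwall exponential, polynomially controlled.
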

		\begin{proof}
			For the case that $A=0$ and $G\in C^{3}_{b}$, our claim follows from \cite [Theorem 2.8]{GVR21}.
			To incorporate the case where \( A \neq 0 \), we employ the technique known as \emph{decomposition of flows}, as discussed in \cite{RS17}. Our proof will be divided into several steps and involves establishing well-posedness and obtaining a priori bounds for the solution. During the proof, when we refer to a polynomial bound, we always mean a two-variable polynomial in terms of $\Vert \xi \Vert_{\mathscr{D}_{\mathbf{X}}^{\beta}([-r,0])}$ and $\Vert \mathbf{X} \Vert_{\gamma,[0,r]}$ that bounds our term. Our bounds might also depend on \( G \), \( \pi \), and \( \Vert A \Vert \) without mentioning them explicitly.
			\smallskip
			
			\textbf{Step 1.} In this step, we first estimate  
			\[
			\sup_{t \in [0, r]} \|\phi^{t}_{\mathbf{X}}(\xi)\|
			\]
			and show that it can be bounded polynomially, as in \eqref{BUN}. Let's fix our feedback control, denoted by $(\xi_t)_{-r\leq t\leq0}$, and further assume $G\in C^{4}_{b}$. 
			For $0\leq s\leq t \leq r$ let $\tilde\phi_{\mathbf{X}}(s,t,y,\xi)$,  denote the solution to the following equation:
			\begin{align}\label{AZXSE}
				\begin{split}
					&\mathrm{d}z_{t}= G\big(z_{t},\xi_{t-r}\big)\mathrm{d}\mathbf{X}_{t},\\
					&\quad z_{s}=y ,
				\end{split}
			\end{align}
			for $t\geq s$.
			Using a standard fixed-point argument for rough differential equations, we can establish both the existence and uniqueness of a solution to the given equation. Additionally, for \(0 \leq s \leq u \leq t \leq r\), the flow property is satisfied, which means that
			
			\begin{align}\label{FFLLOOWW}
				\tilde{\phi}_{\mathbf{X}}(s, t, y, \xi) &= \tilde{\phi}_{\mathbf{X}}(u, t, \tilde{\phi}_{\mathbf{X}}(s, u, y, \xi), \xi).
			\end{align}
			Applying a standard argument for estimating (linear) rough differential equations, as detailed in \cite[Theorem 10.53]{FV10} and \cite[Pages 10-11]{GVR21}, we can show that for the polynomials \(P_1\) and \(P_2\), and a constant \(M\), the following bounds hold for every \(s \leq u \leq t \leq r\):
			\begin{align}\label{dffvv}
				\begin{split}
					&\sup_{\substack{y\in\mathbb{R}^n}}\vert\tilde{\phi}_{\mathbf{X}}(s,t,y,\xi)-y\vert\leq (t-s)^{\beta}P_{1}(\Vert\xi\Vert_{\mathscr{D}_{\mathbf{X}}^{\beta}([-r,0])},\Vert\mathbf{X}\Vert_{\gamma,[0,r] }),\\
					& \sup_{y\in\mathbb{ R}^n}\vert\tilde{\phi}_{\mathbf{X}}(s,t,y,\xi)-\tilde{\phi}_{\mathbf{X}}(s,u,y,\xi) -G(\tilde{\phi}_{\mathbf{X}}(s,u,y,\xi),\xi_{u-r})X_{u,t} \vert\leq (t-u)^{2\beta}P_{1}(\Vert\xi\Vert_{\mathscr{D}_{\mathbf{X}}^{\beta}([-r,0])},\Vert\mathbf{X}\Vert_{\gamma,[0,r] }),
				\end{split}
			\end{align}
			and 
			\begin{align}\label{inverse}
				\begin{split}
					&\max\big(\sup_{\substack{y\in\mathbb{R}^n}}\Vert(\frac{\partial \tilde{\phi}_{\mathbf{X}}}{\partial y}(s,t,y,\xi)-I\Vert,\sup_{\substack{y\in\mathbb{R}^n}}\Vert(\frac{\partial \tilde{\phi}_{\mathbf{X}}}{\partial y}(s,t,y,\xi))^{-1}-I\Vert\big)\leq\\&\quad M (t-s)^\beta P_{2}(\Vert\xi\Vert_{\mathscr{D}_{\mathbf{X}}^{\beta}([-r,0])},\Vert\mathbf{X}\Vert_{\gamma,[0,r]})\exp((t-s)P_{2}(\Vert\xi\Vert_{,\mathscr{D}_{\mathbf{X}}^{\beta}([-r,0])},\Vert\mathbf{X}\Vert_{\gamma,[0,r]})),
				\end{split}
			\end{align}
			for $	0\leq s\leq t \leq r$ , where \(I\) denotes the identity matrix in \(\mathbb{R}^n\). Note that the regularity assumption on \( G \) implies the differentiability of \( \tilde{\phi}_{\mathbf{X}}(s, t, y, \xi) \) with respect to \( y \), as stated in \cite[Theorem 2.6]{GVR21}.
			Let \( p_1 : [0, r] \rightarrow \mathbb{R}^n \) be a function that satisfies the following equation:
			\begin{align}\label{flow-decomposition_1}
				\begin{split}
					&\frac{\mathrm{d}p_1(t)}{\mathrm{d}t}=\big(\frac{\partial \tilde{\phi}_{\mathbf{X}}}{\partial y}(0,t,p_1(t),\xi)\big)^{-1}A\big(\tilde{\phi}_{\mathbf{X}}(0,t,p_1(t),\xi_t),\int_{-r}^{0}{\xi}_{\theta+t}^1\pi(\mathrm{d}\theta)\big),\\
					&\quad p_1(0)=\xi_0,
				\end{split}
			\end{align}
			where
			\begin{align*}
				{\xi}_t^1:=\begin{cases}
					\xi_t  & \text{for } -r\leq t<0,\\
					\tilde{\phi}_{\mathbf{X}}(0,t,p_1(t),\xi) & \text{for }  t\geq 0.
				\end{cases} \ \ 
			\end{align*}
			From the chain rule formula, we observe that \( \tilde{\phi}_{\mathbf{X}}(0, t, p_1(t), \xi) \) is a (local) solution to the equation \eqref{equation+drift}. 
			Let
			\begin{align}\label{B_DE}
				\begin{split}
					&B(\tau, \Vert \xi \Vert_{\mathscr{D}_{\mathbf{X}}^{\beta}([-r, 0])}, \Vert \mathbf{X} \Vert_{\gamma, [0, r]}) \\
					&\quad :=  M\tau^\beta P_{2}(\Vert \xi \Vert_{\mathscr{D}_{\mathbf{X}}^{\beta}([-r, 0])}, \Vert \mathbf{X} \Vert_{\gamma, [0, r]}) \exp\left(\tau P_{2}(\Vert \xi \Vert_{\mathscr{D}_{\mathbf{X}}^{\beta}([-r, 0])}, \Vert \mathbf{X} \Vert_{\gamma, [0, r]})\right).
				\end{split}
			\end{align}
			and define
			\begin{align}\label{t_1}
				\begin{split}
					&t_1^\beta := \frac{1}{P_{2}(\Vert \xi \Vert_{\mathscr{D}_{\mathbf{X}}^{\beta}([-r, 0])}, \Vert \mathbf{X} \Vert_{\gamma, [0, r]}) \vee P_{1}(\Vert \xi \Vert_{\mathscr{D}_{\mathbf{X}}^{\beta}([-r, 0])}, \Vert \mathbf{X} \Vert_{\gamma, [0, r]})} \wedge r, \\
					&N = \left\lfloor \frac{1}{t_1} \right\rfloor + 1.
				\end{split}
			\end{align}
			Note that with this choice for \( t_1 \), we have
			\begin{align}\label{ADCZx}
				B(t_1, \Vert \xi \Vert_{\mathscr{D}_{\mathbf{X}}^{\beta}([-r, 0])}, \Vert \mathbf{X} \Vert_{\gamma, [0, r]}) \leq  M \exp(r^{1 - \beta}),
			\end{align}
			therefore, from \eqref{inverse}, we get
			\begin{align}\label{inverse22}
				\begin{split}
					& s, t \in [0, r] \ \text{and} \ |t - s| \leq t_1: \\
					& \quad \max \left( \sup_{\substack{y \in \mathbb{R}^n}} \left\| \frac{\partial \tilde{\phi}_{\mathbf{X}}}{\partial y}(s, t, y, \xi)  \right\|, \sup_{\substack{y \in \mathbb{R}^n}} \left\| \left(\frac{\partial \tilde{\phi}_{\mathbf{X}}}{\partial y}(s, t, y, \xi)\right)^{-1}  \right\| \right) \leq 1 + M \exp(r^{1 - \beta}).
				\end{split}
			\end{align}
			Assume for $1\leq i<m\leq N$ , $p_{i}:[(i-1)t_1,it_1]\rightarrow\mathbb{R}^d$ is defined. We set
			\begin{align}\label{supp}
				\begin{split}
					&\tilde{p}_i:=:[(i-1)t_1,it_1]\rightarrow\mathbb{R},\\
					& \tilde{p}_i(t):=\sup_{(i-1)t_1\leq \tau\leq it_1 }\vert p_{i}(\tau)\vert.
				\end{split}
			\end{align}
			Let
			\begin{align}\label{inductively}
				\xi^m_t := \begin{cases}
					\xi_t, & \text{for } -r \leq t < 0, \\
					\tilde{\phi}_{\mathbf{X}}(it_1, t, p_{i+1}(t), \xi), & \text{for } i t_1 \leq t \leq (i+1)t_1 \text{ and } 0 \leq i \leq (m-2), \\
					\tilde{\phi}_{\mathbf{X}}((m-1)t_1, t, p_m(t), \xi), & \text{for } (m-1)t_1 \leq t \leq m t_1.
				\end{cases}
			\end{align}
			where \( p_m: [(m-1)t_1, mt_1] \to \mathbb{R}^d \) is given by the following equation:
			\begin{align}\label{flow-decomposition_m}
				\begin{split}
					&\frac{\mathrm{d}p_m(t)}{\mathrm{d}t}=\big(\frac{\partial\tilde{\phi}_{\mathbf{X}}}{\partial y}((m-1)t_1,t,p_m(t),\xi)\big)^{-1}A\big(\tilde{\phi}_{\mathbf{X}}((m-1)t_1,t,p_m(t),\xi),\int_{-r}^{0}{\xi}_{\theta+t}^m\pi(\mathrm{d}\theta)\big),\\
					&\quad p_m((m-1)t_1)=\tilde{\phi}_{\mathbf{X}}\big((m-2)t_1,(m-1)t_1,p_{m-1}((m-1)t_1),\xi\big).
				\end{split}
			\end{align}
			
			Similar to \eqref{supp}, we define \(\tilde{p}_m: [(m-1)t_1, mt_1] \to \mathbb{R}\) accordingly. Given the definition of $({\xi}^{m}_t)_{-r\leq 0\leq mt_1}$ and \eqref{dffvv}, for a constant ${M}_1$ depending on $\pi$ and for $(m-1)t_1 \leq t \leq mt_1$,
			\begin{align}\label{XII_m}
				\begin{split}
					&  \vert \int_{-r}^{0}{\xi}_{\theta+t}^m\pi(\mathrm{d}\theta)\vert\\ &\quad\leq {M_1}\left(\Vert\xi\Vert_{\mathscr{D}_{\mathbf{X}}^{\beta}([-r,0])}+ t_{1}^\beta P_{1}(\Vert\xi\Vert_{\mathscr{D}_{\mathbf{X}}^{\beta}([-r,0])},\Vert\mathbf{X}\Vert_{\gamma,[0,r]})+\sup_{1\leq i<m}\tilde{p}_{i}(it_1)+\tilde{p}_{m}(t)\right).
				\end{split}
			\end{align} 
			Therefore, based on our selection of \( t_1 \) in \eqref{t_1} and the bounds given in \eqref{dffvv}, we have:
			\begin{align}\label{delly}
				\vert \int_{-r}^{0}{\xi}_{\theta+t}^m\pi(\mathrm{d}\theta)\vert\leq M_1\big(1+\Vert\xi\Vert_{\mathscr{D}_{\mathbf{X}}^{\beta}([-r,0])}+\sup_{0\leq i<m}\tilde{p}_{i}(it_1)+\tilde{p}_{m}(t)\big).
			\end{align}
			From \eqref{dffvv} and \eqref{flow-decomposition_m} for a constant $M_2$ (which depends on the $\Vert A\Vert$)
			\begin{align}\label{BBVA}
				\begin{split}
					&	\sup_{(m-1)t_1 \leq \sigma \leq t}\vert p_m(\sigma)-p_{m}((m-1)t_1)\vert\leq \int_{(m-1)t_1}^{t}M_2\sup_{\substack{y \in \mathbb{R}^n}} \left\| \left(\frac{\partial \tilde{\phi}_{\mathbf{X}}}{\partial y}((m-1)t_1, \tau, y, \xi)\right)^{-1}\right\Vert \\&\quad\times \left(t_1^\beta P_{1}(\Vert\xi\Vert_{\mathscr{D}_{\mathbf{X}}^{\beta}([-r,0])},\Vert\mathbf{X}\Vert_{\gamma,[0,r]})+\tilde{p}_{m}(\tau)+\vert \int_{-r}^{0}{\xi}_{\theta+\tau}^m\pi(\mathrm{d}\theta)\vert\right)\mathrm{d}\tau
				\end{split}
			\end{align}
			Note that
			\begin{align*}
				\tilde{p}_{m}(t)-p_{m}((m-1)t_1)\leq \sup_{(m-1)t_1 \leq \sigma \leq t}\Vert p_m(\sigma)-p_{m}((m-1)t_1)\Vert
			\end{align*}
			Consequently, from \eqref{t_1}, \eqref{inverse22}, \eqref{delly}, and \eqref{BBVA}, and considering a constant \( M_3 \), the following inequalities hold:
			\begin{itemize}
				\item For \( m \geq 2 \) and \( (m-1)t_1 \leq t \leq mt_1 \):
				\begin{align}\label{REEDDSSAA}
					\tilde{p}_{m}(t) - p_{m}((m-1)t_1) \leq M_3 \int_{(m-1)t_1}^{t} \left( 1 + \|\xi\|_{\mathscr{D}_{\mathbf{X}}^{\beta}([-r,0])} + \sup_{1 \leq i < m} \tilde{p}_{i}(it_1) + \tilde{p}_{m}(\tau) \right) \mathrm{d}\tau.
				\end{align}
				\item 	For \( m = 1 \) and \( 0 \leq t \leq t_1 \):
				\begin{align}\label{RERT}
					\tilde{p}_{1}(t) - \|\xi_0\| \leq M_3 \int_{0}^{t} \left( 1 + \|\xi\|_{\mathscr{D}_{\mathbf{X}}^{\beta}([-r,0])} + \tilde{p}_{1}(\tau) \right) \mathrm{d}\tau.
				\end{align}
			\end{itemize}
			From Gronwall's Lemma applied to the inequality in \eqref{RERT}
			\begin{align}\label{HHGG}
				\tilde{p}_{1}(t_1)\leq \vert\xi_0\vert\exp(M_3t_1)+\big(1+\Vert\xi\Vert_{\mathscr{D}_{\mathbf{X}}^{\beta}([-r,0])}\big)(\exp(M_3t_1)-1).
			\end{align}
			For $m\geq 2$, first we set
			\begin{align}\label{aloh_m}
				\alpha_m(t)= p_{m}((m-1)t_1)+M_3\left(1+\Vert\xi\Vert_{\mathscr{D}_{\mathbf{X}}^{\beta}([-r,0])}+\sup_{1\leq i<m}\tilde{p}_{i}(it_1)\right)\left(t-(m-1)t_1\right).
			\end{align}
			Therefore, by substituting into \eqref{REEDDSSAA}, we obtain
			\begin{align*}
				\tilde{p}_{m}(t)\leq \alpha_{m}(t)+M_3\int_{(m-1)t_1}^{t}\tilde{p}_{m}(\tau)\mathrm{d}\tau.
			\end{align*}
			From Gronwall's Lemma
			\begin{align*}
				\tilde{p}_{m}(t)\leq \alpha_m(t)+\int_{(m-1)t_1}^{t}M_3\alpha_m(s)\exp(M_3(t-s))\mathrm{d}s .
			\end{align*}
			Consequently, from \eqref{aloh_m}
			\begin{align}\label{VBVNN}
				\begin{split}
					&\tilde{p}_{m}(mt_1)\leq p_{m}((m-1)t_1)+M_3 t_1\big(1+\Vert\xi\Vert_{\mathscr{D}_{\mathbf{X}}^{\beta}([-r,0])}+\sup_{1\leq i<m}\tilde{p}_{i}(it_1)\big)\\& + p_{m}((m-1)t_1)(\exp(M_3t_1)-1)+\big(1+\Vert\xi\Vert_{\mathscr{D}_{\mathbf{X}}^{\beta}([-r,0])}+\sup_{1\leq i<m}\tilde{p}_{i}(it_1)\big)(\exp(M_3t_1)-1-M_3t_1)\\
					&\quad	=p_{m}((m-1)t_1)\exp(M_3t_1)+\big(1+\Vert\xi\Vert_{\mathscr{D}_{\mathbf{X}}^{\beta}([-r,0])}+\sup_{1\leq i<m}\tilde{p}_{i}(it_1)\big)(\exp(M_3t_1)-1).
				\end{split}
			\end{align}
			Also, from and \eqref{dffvv} and \eqref{t_1}
			\begin{align*}
				\Vert p_m((m-1)t_1)\Vert=\Vert\tilde{\phi}_{\mathbf{X}}\big((m-2)t_1,(m-1)t_1,p_{m-1}((m-1)t_1),\xi\big)\Vert\leq 1+\tilde{p}_{m-1}((m-1)t_1).
			\end{align*}
			Therefore, by substituting the latter inequality into \eqref{VBVNN}, we conclude that
			\begin{align*}
				&\tilde{p}_{m}(mt_1)\leq 2\exp(M_3t_1)-1+\Vert\xi\Vert_{\mathscr{D}_{\mathbf{X}}^{\beta}([-r,0])}(\exp(M_3t_1)-1)\\&\quad+ \tilde{p}_{m-1}((m-1)t_1)\exp(M_3t_1)+\sup_{1\leq i<m}\tilde{p}_{i}(it_1)(\exp(M_3t_1)-1).
			\end{align*}
			This yields
			\begin{align*}
				&\sup_{1\leq i<m+1} \tilde{p}_{i}(it_1)\leq 2\exp(M_3t_1)-1+\Vert\xi\Vert_{\mathscr{D}_{\mathbf{X}}^{\beta}([-r,0])}(\exp(M_3t_1)-1)\\&\quad+(2\exp(M_3t_1)-1)\sup_{1\leq i<m}\tilde{p}_{i}(it_1)=R_{1}+R_{2}\sup_{1\leq i<m}\tilde{p}_{i}(it_1).
			\end{align*}
			By applying Gronwall's Lemma in the discrete case, we obtain
			\begin{align*}
				\sup_{1\leq i<m+1} \tilde{p}_{i}(it_1)\leq R_{1}\frac{R_{2}^{m-1}-1}{R_{2}-1}+R_{2}^{m-1}\tilde{p}_{1}(t_1).
			\end{align*}
			By substituting the values of \( R_1 \) and \( R_2 \), along with the fact that \( \frac{1}{R_{2}-1} \leq \frac{1}{2M_3t_1} \) and using \eqref{HHGG}, we obtain
			\begin{align}\label{NNBBNNN}
				\begin{split}
					&\sup_{1\leq i<m+1} \tilde{p}_{i}(it_1)\leq \frac{2\exp(M_3t_1)-1+\Vert\xi\Vert_{\mathscr{D}_{\mathbf{X}}^{\beta}([-r,0])}(\exp(M_3t_1)-1)}{2M_{3}t_1}\big((2\exp(M_3t_1)-1)^{m-1}-1\big)\\&\quad+ (2\exp(M_3t_1)-1)^{m-1} \big(\vert\xi_0\vert\exp(M_3t_1)+\big(1+\Vert\xi\Vert_{\mathscr{D}_{\mathbf{X}}^{\beta}([-r,0])}\big)(\exp(M_3t_1)-1)\big).
				\end{split}
			\end{align}
			Note that: \(\sup_{t > 0}\left(2\exp(M_3 t) - 1\right)^{\frac{1}{t}} = \exp(2M_3)\). Therefore, based on our selection of \(t_1\) and \(N\) in \eqref{t_1}, and considering the fact that \(m \leq N\), as well as \eqref{NNBBNNN}, we conclude that \(\sup_{1 \leq m \leq N} \tilde{p}_m(m t_1)\) can be bounded by a polynomial. Finally, through simple concatenation, we can conclude that the solution to equation \eqref{equation+drift} for \((m-1)t_1 \leq t \leq \min(r, mt_1)\) with \(1 \leq m \leq N\) is given by
			\begin{align}\label{VBBZAw}
				\phi^{t}_{\mathbf{X}}(\xi)=\tilde{\phi}_{\mathbf{X}}((m-1)t_1,t,p_{m}(t),\xi).
			\end{align} 
			Also, from \eqref{dffvv} for $t\in [(m-1)t_1,mt_1]$, we have 
			\begin{align}\label{VBZCXSE}
				\vert\tilde{\phi}_{\mathbf{X}}((m-1)t_1,t,p_{m}(t),\xi)\vert\leq \tilde{p}_{m}(mt_1)+ t_{1}^{\beta}P_{1}(\Vert\xi\Vert_{\mathscr{D}_{\mathbf{X}}^{\beta}([-r,0])},\Vert\mathbf{X}\Vert_{\gamma,[0,r] }).
			\end{align}
			Since \(\sup_{1 \leq m \leq N} \tilde{p}_m(m t_1)\) is dominated by a polynomial, from \eqref{VBBZAw} and \eqref{VBZCXSE}, we conclude that \(\sup_{t \in [0, r]} \Vert \phi^{t}_{\mathbf{X}}(\xi) \Vert\) also has the same a priori bound; that is, it is almost polynomially bounded.\smallskip
			
			\textbf{Step 2.} In this step, we obtain the same type of a priori bound (polynomially) for the $\beta$-H\"older norm of the Gubinelli derivative. Since the Gubinelli derivative of $\phi^{\cdot}_{\mathbf{X}}(\xi)$ is equal to $G(\phi^{\cdot}_{\mathbf{X}}(\xi),\xi_{\cdot-r})$, and based on our assumption about $G$, to guarantee that $\|G(\phi^{\cdot}_{\mathbf{X}}(\xi),\xi_{\cdot-r})\|_{\beta,[0,r]}$ has a polynomial bound, it is sufficient to obtain the same type of bound for $\|\phi^{\cdot}_{\mathbf{X}}(\xi)\|_{\beta,[0,r]}$. \smallskip
			
			Assume $s,t\in [(m-1)t_1,mt_1]$.  Using the flow property \eqref{FFLLOOWW},
			\begin{align*}
				&\left\vert\phi^{t}_{\mathbf{X}}(\xi)-\phi^{s}_{\mathbf{X}}(\xi)\right\vert=\left\vert\tilde{\phi}_{\mathbf{X}}((m-1)t_1,t,p_{m}(t),\xi)-\tilde{\phi}_{\mathbf{X}}((m-1)t_1,s,p_{m}(s),\xi) \right\vert\\&\quad\leq \left\vert\tilde{\phi}_{\mathbf{X}}((m-1)t_1,t,p_{m}(t),\xi)-\tilde{\phi}_{\mathbf{X}}((m-1)t_1,t,p_{m}(s),\xi)  \right\vert\\&\quad +\left\vert \tilde{\phi}_{\mathbf{X}}((m-1)t_1,t,p_{m}(s),\xi)-\tilde{\phi}_{\mathbf{X}}((m-1)t_1,s,p_{m}(s),\xi)\right\vert\\&\qquad=\underbrace{\left\vert\int_{0}^{1}\frac{\partial\tilde{\phi}_{\mathbf{X}} }{\partial y}\big((m-1)t_1,t,\theta p_{m}(t)+(1-\theta)p_{m}(s),\xi\big)(p_{m}(t)-p_{m}(s))\mathrm{d}\theta\right\vert}_{\text{I}(s,t)}\\&\myquad[3]+\underbrace{\left\vert\tilde{\phi}_{\mathbf{X}}\big(s,t,\tilde{\phi}_{\mathbf{X}}((m-1)t_1,s,p_{m}(s),\xi),\xi\big)-\tilde{\phi}_{\mathbf{X}}((m-1)t_1,s,p_{m}(s),\xi)\right\vert}_{\text{II}(s,t)}.
			\end{align*}
			Note that
			\begin{align*}
				\sup_{s, t \in [(m-1)t_1, mt_1]} \frac{\text{I}(s,t)}{(t-s)^{\beta}} &\leq \sup_{y \in \mathbb{R}^n} \left\lvert \frac{\partial \tilde{\phi}_{\mathbf{X}}}{\partial y}((m-1)t_1, t, y, \xi) \right\rvert \frac{\left\lvert p_{m}(t) - p_{m}(s) \right\rvert}{(t-s)^{\beta}} \\
				&\leq r^{1-\beta} \sup_{y \in \mathbb{R}^n} \left\lvert \frac{\partial \tilde{\phi}_{\mathbf{X}}}{\partial y}((m-1)t_1, t, y, \xi) \right\rvert \sup_{\tau \in [(m-1)t_1, mt_1]} \left\lvert \frac{\mathrm{d} p_m(\tau)}{\mathrm{d} \tau} \right\rvert.
			\end{align*}
			Recall that \( p_{m} \) is defined in \eqref{flow-decomposition_m}. Using our conclusions about the a priori bounds for \( \sup_{1 \leq i \leq N} \tilde{p}_{i}(i t_i) \) and \( \sup_{t \in [0, r]} \vert \phi^{t}_{\mathbf{X}}(\xi) \vert \), and employing \eqref{dffvv}, \eqref{inverse22}, \eqref{flow-decomposition_m}, \eqref{XII_m}, and \eqref{VBBZAw}, we conclude that
			\[
			\sup_{\tau \in [(m-1)t_1, mt_1]} \left\lvert \frac{\mathrm{d} p_m(\tau)}{\mathrm{d} \tau} \right\rvert
			\]
			grows almost polynomially (independent of \( m \)). Therefore, from \eqref{inverse22}, we can obtain that
			\[
			\sup_{s, t \in [(m-1)t_1, mt_1]} \frac{\text{I}(s,t)}{(t-s)^{\beta}},
			\]
			is dominated by a polynomial that is independent of \( m \). Obtaining a polynomial a priori bound for \( \frac{\text{II}(s,t)}{(t-s)^{\beta}} \) follows from \eqref{dffvv}. Thus, we have shown that \( \sup_{1 \leq m \leq N} \Vert \phi^{\cdot}_{\mathbf{X}}(\xi) \Vert_{\beta, [(m-1)t_1, \min(mt_1, r)]} \) is dominated by a polynomial. Since \( N \) also has similar growth as obtained in \eqref{t_1}, using the fact that
			\begin{align*}
				\Vert \phi^{\cdot}_{\mathbf{X}}(\xi) \Vert_{\beta, [0, r]} \leq \sum_{1 \leq m \leq N} \Vert \phi^{\cdot}_{\mathbf{X}}(\xi) \Vert_{\beta, [(m-1)t_1, \min(mt_1, r)]},
			\end{align*}
			We can obtain the same type of bound (polynomially) for \( \Vert \phi^{\cdot}_{\mathbf{X}}(\xi) \Vert_{\beta, [0, r]} \). Note that since the Gubinelli derivative of \(\phi^{\cdot}_{\mathbf{X}}(\xi)\) is equal to \(G(\phi^{\cdot}_{\mathbf{X}}(\xi), \xi_{\cdot-r})\), this directly yields the boundedness of the derivative, which depends only on \(G\).
			\smallskip
			
			\textbf{Step 3.} In this step, we show that the remainder of the solution also grows almost polynomially. The remainder term is given by
			\begin{align*}
				&(\phi_{\mathbf{X}}(\xi))^{\#}_{s,t}:=\phi^{t}_{\mathbf{X}}(\xi)-\phi^{s}_{\mathbf{X}}(\xi)-G(\phi^{s}_{\mathbf{X}}(\xi),\xi_{s-r})X_{s,t}.
			\end{align*}
			Assume first $s,t\in [(m-1)t_1,mt_1]$, then
			\begin{align*}
				&\Vert(\phi_{\mathbf{X}}^{.}(\xi))^{\#}_{s,t}\Vert\\&=\Vert\tilde{\phi}_{\mathbf{X}}((m-1)t_1,t,p_{m}(t),\xi)-\tilde{\phi}_{\mathbf{X}}((m-1)t_1,s,p_{m}(s),\xi)-G(\tilde{\phi}_{\mathbf{X}}((m-1)t_1,s,p_{m}(s),\xi),\xi_{s-r})X_{s,t}\Vert\\&\leq \underbrace{\Vert\tilde{\phi}_{\mathbf{X}}((m-1)t_1,t,p_{m}(t),\xi)-\tilde{\phi}_{\mathbf{X}}((m-1)t_1,t,p_{m}(s),\xi)\Vert}_{\text{{III}}(s,t)}\\ &+\underbrace{\Vert \tilde{\phi}_{\mathbf{X}}((m-1)t_1,t,p_{m}(s),\xi)-\tilde{\phi}_{\mathbf{X}}((m-1)t_1,s,p_{m}(s),\xi)-G(\tilde{\phi}_{\mathbf{X}}((m-1)t_1,s,p_{m}(s),\xi),\xi_{s-r})X_{s,t}\Vert}_{\text{{IV}}(s,t)}.
			\end{align*}
			Note that
			\begin{align}\label{BNZUJS}
				\begin{split}
					&\frac{\text{{III}}(s,t)}{(t-s)^{2\beta}}\leq \sup_{y\in\mathbb{ R}^n}\left\vert \frac{\partial \tilde{\phi}_{\mathbf{X}}}{\partial y}((m-1)t_1,t,y,\xi)\right\vert\frac{\vert p_{m}(t)-p_{m}(s)\vert}{(t-s)^{2\beta}}\\&\leq r^{1-2\beta}\sup_{y\in\mathbb{ R}^n}\left\vert \frac{\partial \tilde{\phi}_{\mathbf{X}}}{\partial y}((m-1)t_1,t,y,\xi)\right\vert\sup_{\tau\in [(m-1)t_1,mt_1]}\left\vert \frac{\mathrm{d}p_m(\tau)}{\mathrm{d}\tau}\right\vert.
				\end{split}
			\end{align}
			So it is enough to repeat the same argument that we made in the previous step for estimating \(\sup_{s, t \in [(m-1)t_1, mt_1]} \frac{\text{I}(s,t)}{(t-s)^{\beta}}\), to obtain a polynomial bound (independent of \( m \)) for
			\[
			\sup_{s, t \in [(m-1)t_1, mt_1]} \frac{\text{III}(s,t)}{(t-s)^{\beta}}.
			\]
			Also, from \eqref{dffvv}
			\begin{align*}
				\text{{IV}}(s,t)\leq (t-s)^{2\beta} P_{1}(\Vert\xi\Vert_{\mathscr{D}_{\mathbf{X}}^{\beta}([-r,0])},\Vert\mathbf{X}\Vert_{\gamma,[0,r] }).
			\end{align*}
			Therefore, we conclude that the remainder term for \( s, t \in [(m-1)t_1, mt_1] \) can be bounded polynomially. To obtain the final estimate on \([0, r]\), we proceed as in the previous step. Indeed, since for \( s < u < t \), we have
			\[
			(\phi_{\mathbf{X}}(\xi))^{\#}_{s,t} = (\phi_{\mathbf{X}}(\xi))^{\#}_{s,u} + (\phi_{\mathbf{X}}(\xi))^{\#}_{u,t} + (\phi_{\mathbf{X}}(\xi))^{\#}_{s,u}X_{u,t},
			\]
			we can easily conclude that
			\begin{align*}
				\Vert\phi_{\mathbf{X}}(\xi)^{\#}\Vert_{2\beta,[0,r]}\leq (1+\Vert X \Vert_{\beta,[0,r]})\sum_{1\leq m\leq N}\Vert\phi_{\mathbf{X}}(\xi)^{\#}\Vert_{\beta,[(m-1)t_1,\min(mt_1,r)]}.
			\end{align*}
			As in the previous step from \( t_1 \), we know that \( N \) grows polynomially. This completes our argument for estimating the remainder term.
			\smallskip
			
			\textbf{Step 4.}
			In the previous steps, we proved the well-posedness of the solution and also found polynomial bounds for the elements of the norm \(\|\cdot\|_{\mathscr{D}_{X}^{\beta}}\) as defined in \eqref{FAS}. As argued, all of them are bounded by a polynomial, which yields \eqref{BUN}.
			So far, we have solved the equation up to time \( r \); however, we can easily use \((\varphi_{\mathbf{X}}^{t}(\xi))_{0\leq t\leq r}\) as a new initial value and solve the equation for \( r \leq t \leq 2r \), repeating the same procedure to obtain a global solution. Additionally, the continuity of the map defined in \eqref{STTAASX} can be easily derived from the method used in \eqref{flow-decomposition_m} to construct the solution. Indeed, since the construction method relies on a standard fixed-point argument over a short interval and then extends the solution to a larger interval, the continuity of the map \eqref{STTAASX} is evident. This continuity was even established (though not explicitly mentioned) during the first step.
		\end{proof}
		As mentioned in Theorem \ref{SSS}, our solution to equation \eqref{SSS} exhibits a polynomial growth with respect to the initial value and noise. This polynomial growth is crucial for our future purposes. In \cite[Theorem 2.10]{GVR21}, a similar result is obtained under the assumption that \( \pi \) is a Dirac measure. Our result in Theorem \ref{SSS} is more general as it does not impose such an assumption on \( \pi \) and involves more technical considerations. Establishing this growth allows us to derive two key bounds for the linearized equations, which are instrumental in applying the Multiplicative Ergodic Theorem. This, in turn, enables us to deduce the existence of invariant manifolds from abstract results.	Unlike Theorem \ref{SSS}, which necessitated numerous technical steps and subtle refinements beyond those found in \cite[Theorem 1.10]{GVR21}, the proofs of the next two theorems are quite similar to the case where $\pi$ is a Dirac measure, as investigated in \cite{GVR21}. The most challenging aspect of proving the upcoming results lies in ensuring that our solution exhibits the polynomial growth established in Theorem \ref{SSS}. Consequently, we omit the proofs, as they involve only minor modifications.
		\begin{theorem}\label{DCXSZ}
			Assume the same hypotheses as in Theorem \ref{SSS}. Then $\phi_{\mathbf{X}}(\xi)$ is is Fr\'echet differentiable (with respect to $\xi$). In addition, for 
			$D_{\xi}\phi_{\mathbf{X}}:\mathscr{D}_{\mathbf{X}}^{\gamma}([0,r]\rightarrow \mathscr{D}_{\mathbf{X}}^{\gamma}([0,r] $, we can find a polynomial $Q_1$ such that
			\begin{align*}
				&\Vert D_{\xi}\phi_{\mathbf{X}}[\zeta]\Vert_{\mathscr{D}_{\mathbf{X}}^{\gamma}([0,r]}\leq \exp\big(Q_{1}(\Vert\xi\Vert_{\mathscr{D}_{\mathbf{X}}^{\gamma}([-r,0])},\Vert\mathbf{X}\Vert_{\gamma,[0,r]})\big),
			\end{align*}
			for every $\xi,\zeta \in\mathscr{D}_{\mathbf{X}}^{\gamma}([-r,0])$.
		\end{theorem}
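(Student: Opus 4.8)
The plan is to establish both assertions by the decomposition-of-flows strategy already used to prove Theorem \ref{SSS}, thereby reducing everything to (i) the standard differentiability theory for linear rough differential equations without delay and (ii) Gr\"onwall estimates for the driving ordinary differential equation \eqref{flow-decomposition_m}. For differentiability I would fix $\xi$ and work from the representation \eqref{VBBZAw}: on each subinterval $[(m-1)t_1,\min(mt_1,r)]$ one has $\phi^{t}_{\mathbf{X}}(\xi)=\tilde{\phi}_{\mathbf{X}}((m-1)t_1,t,p_m(t),\xi)$, where $\tilde{\phi}_{\mathbf{X}}$ solves the drift-free equation \eqref{AZXSE} and $p_m$ solves \eqref{flow-decomposition_m}. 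Since $G\in C^{4}_{b}$, the map $(y,\xi)\mapsto\tilde{\phi}_{\mathbf{X}}(s,t,y,\xi)$ is Fr\'echet differentiable into the controlled-path space, with $\partial_y\tilde{\phi}_{\mathbf{X}}$ and $\partial^{2}_{yy}\tilde{\phi}_{\mathbf{X}}$ existing and again controlled (cf. \cite[Theorem 2.5, Theorem 2.6]{GVR21} and \cite[Theorem 10.53]{FV10}). Because the right-hand side of \eqref{flow-decomposition_m} is assembled from the linear map $A$, from $\tilde{\phi}_{\mathbf{X}}$ and $(\partial_y\tilde{\phi}_{\mathbf{X}})^{-1}$ (invertible on these short intervals by \eqref{inverse22}), and from the averaged segments $\int_{-r}^{0}\xi^{m}_{\theta+\cdot}\,\pi(\mathrm{d}\theta)$, all of which depend differentiably on $\xi$ with locally bounded derivatives, the classical theorem on differentiable dependence of ODE solutions on a parameter gives differentiability of $\xi\mapsto p_m$. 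Arguing inductively in $m$ and composing via the chain rule then yields Fr\'echet differentiability of $\xi\mapsto\phi_{\mathbf{X}}(\xi)\in\mathscr{D}_{\mathbf{X}}^{\gamma}([0,r])$, and one checks that $v:=D_{\xi}\phi_{\mathbf{X}}[\zeta]$, with $v_s=\zeta_s$ on $[-r,0]$, solves the linear rough delay equation
\begin{align*}
\mathrm{d}v_{t}=A\Big(v_{t},\int_{-r}^{0}v_{\theta+t}\,\pi(\mathrm{d}\theta)\Big)\mathrm{d}t+\Big[\frac{\partial G}{\partial x}\big(\phi^{t}_{\mathbf{X}}(\xi),\xi_{t-r}\big)v_{t}+\frac{\partial G}{\partial y}\big(\phi^{t}_{\mathbf{X}}(\xi),\xi_{t-r}\big)\zeta_{t-r}\Big]\mathrm{d}\mathbf{X}_{t}.
\end{align*}

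For the quantitative bound I would differentiate \eqref{VBBZAw} in $\xi$, so that on $[(m-1)t_1,\min(mt_1,r)]$
\begin{align*}
D_{\xi}\phi^{t}_{\mathbf{X}}[\zeta]=\frac{\partial\tilde{\phi}_{\mathbf{X}}}{\partial y}\big((m-1)t_1,t,p_m(t),\xi\big)\,D_{\xi}p_m(t)[\zeta]+D_{\xi}\tilde{\phi}_{\mathbf{X}}\big((m-1)t_1,t,p_m(t),\xi\big)[\zeta].
\end{align*}
On an interval of length at most $t_1$ the operator norm of $\partial_y\tilde{\phi}_{\mathbf{X}}$ is bounded by $1+M\exp(r^{1-\beta})$ by \eqref{inverse22}, and that of $D_{\xi}\tilde{\phi}_{\mathbf{X}}$ by an exponential in $\|\mathbf{X}\|_{\gamma,[0,r]}$ by the standard estimates for linear rough differential equations \cite[Theorem 10.53]{FV10}. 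Differentiating \eqref{flow-decomposition_m} in $\xi$ produces a linear ODE for $w_m(t):=D_{\xi}p_m(t)[\zeta]$ whose coefficients are built only from $\partial_y\tilde{\phi}_{\mathbf{X}}$, $(\partial_y\tilde{\phi}_{\mathbf{X}})^{-1}$, $\partial^{2}_{yy}\tilde{\phi}_{\mathbf{X}}$, $D_{\xi}\tilde{\phi}_{\mathbf{X}}$, $\|A\|$ and $\pi$; by the polynomial a priori bounds of Theorem \ref{SSS} together with \eqref{inverse22} these are all dominated, on each subinterval, by an exponential of a polynomial in $(\|\xi\|,\|\mathbf{X}\|_{\gamma})$, so Gr\"onwall's lemma on $[(m-1)t_1,\min(mt_1,r)]$ gives $|w_m(mt_1)|\le C_{\star}\big(|w_m((m-1)t_1)|+\|\zeta\|_{\mathscr{D}_{\mathbf{X}}^{\gamma}([-r,0])}\big)$ with $C_{\star}$ of that same exponential-of-polynomial size and independent of $m$. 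Iterating this recursion over $m=1,\dots,N$ exactly as in the derivation of \eqref{NNBBNNN}, and using that $N$ grows polynomially by \eqref{t_1}, would bound $\sup_{0\le t\le r}|D_{\xi}\phi^{t}_{\mathbf{X}}[\zeta]|$ by the required quantity. Finally, to pass from this supremum-norm bound to the full $\mathscr{D}_{\mathbf{X}}^{\gamma}$-norm of $v$ I would repeat Steps 2 and 3 of the proof of Theorem \ref{SSS}: the Gubinelli derivative of $v$ equals $\frac{\partial G}{\partial x}(\phi^{\cdot}_{\mathbf{X}}(\xi),\xi_{\cdot-r})v_{\cdot}+\frac{\partial G}{\partial y}(\phi^{\cdot}_{\mathbf{X}}(\xi),\xi_{\cdot-r})\zeta_{\cdot-r}$, whose H\"older norm is controlled once $\|v\|_{\beta}$ and $\|\phi_{\mathbf{X}}(\xi)\|_{\beta}$ are (the latter by Theorem \ref{SSS}), while $\|v\|_{\beta}$ and the remainder $\|v^{\#}\|_{2\beta}$ are handled by the same flow-decomposition splitting into terms of type $\mathrm{I},\mathrm{II}$ and $\mathrm{III},\mathrm{IV}$ used around \eqref{BNZUJS}.

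The hard part, exactly as for Theorem \ref{SSS}, is to make the final bound come out as an exponential of a \emph{polynomial} in the two variables, rather than as an iterated exponential: a direct Gr\"onwall argument on the linearized rough delay equation couples the unbounded drift $A$ with the rough driver, so that $\|\mathbf{X}\|_{\gamma}$ enters with an exponent growing in the number of time steps, which is too large for the later application of the Multiplicative Ergodic Theorem. The decomposition of flows is precisely what keeps the per-step constant $C_{\star}$ of the form $\exp(\mathrm{poly})$ and the number of steps $N$ polynomial, so that the $N$-fold product remains of the form $\exp(\mathrm{poly})$. Once the polynomial a priori bounds of Theorem \ref{SSS} are in hand, every remaining step is a routine adaptation of the corresponding argument in \cite{GVR21}, which is why the details are omitted.
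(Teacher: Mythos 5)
Your proposal is correct in spirit and essentially reconstructs, at a level of detail the paper chooses not to spell out, the argument that the paper defers to \cite[Theorem~2.12]{GVR21}. The paper's own proof consists of the single line that the argument is ``similar to \cite[Theorem~2.12]{GVR21} with some minor modifications,'' and the surrounding remark makes clear that once the polynomial a priori bound of Theorem~\ref{SSS} is in place, the differentiability estimate is a routine adaptation. Your decomposition-of-flows route --- differentiating the representation \eqref{VBBZAw} in $\xi$, using the ODE/flow decomposition $p_m$, $\tilde\phi_{\mathbf{X}}$, the propagator bound \eqref{inverse22}, and iterating over the $N$ subintervals of length $t_1$ --- is precisely the machinery Theorem~\ref{SSS} was built to supply, and your bookkeeping (per-step factor of exponential-of-polynomial type, $N$ polynomial, so the $N$-fold product is again of exponential-of-polynomial type) correctly explains why the derivative only achieves an $\exp(\text{poly})$ bound rather than the polynomial bound of Theorem~\ref{SSS}. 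Your identification of the linearized rough delay equation solved by $v = D_\xi\phi_{\mathbf{X}}[\zeta]$ on $[0,r]$ (with $v = \zeta$ on $[-r,0]$) is correct.

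Two small points worth being explicit about if you were to write this out in full. First, equation \eqref{flow-decomposition_m} is not a ``pure'' parameter-dependent ODE: its right-hand side involves the concatenated path $\xi^m$ of \eqref{inductively}, which on the current interval $[(m-1)t_1,mt_1]$ is itself built from $p_m$. So $p_m$ really solves a delay ODE coupled with its own flow-image, and the differentiability of $\xi\mapsto p_m$ requires the implicit-function / fixed-point version of the classical parameter-dependence theorem rather than the textbook statement; this is routine but should be stated. Second, the per-step bound you need for $D_\xi\tilde\phi_{\mathbf{X}}$ on intervals of length $\leq t_1$ is most cleanly obtained not by invoking the generic linear-RDE bound $\exp(C\|\mathbf{X}\|^{1/\gamma})$ (which on its own is not obviously controlled by the choice of $t_1$), but by Duhamel's formula with the propagator $\partial_y\tilde\phi_{\mathbf{X}}$, which \eqref{inverse22} already bounds by the absolute constant $1 + M\exp(r^{1-\beta})$ on such intervals; the forcing is then bounded by $\|G\|_{C^1}\|\zeta\|$. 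This keeps the per-step constant genuinely $O(1)$ (rather than merely $\exp(\text{poly})$), which is the cleaner route to the final $\exp(\text{poly})$ estimate, though as you note either reading of $C_\star$ closes the argument.
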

		\begin{proof}
			Similar to the proof of \cite[Theorem 2.12]{GVR21}, with some minor modifications.
		\end{proof}
		We also have:
		\begin{theorem}\label{ASXXZA}
			Assume the same hypotheses as in Theorem \ref{SSS}. Then in case $A\neq 0$ (and therefore $G\in C^{4}_{b}$), there exists a polynomial $Q_2$ such that
			\begin{align*}
				&\Vert \Vert D_{\xi}\phi_{\mathbf{X}}[\zeta]- D_{\tilde\xi}\phi_{\mathbf{X}}[\zeta]\Vert_{\mathscr{D}_{\mathbf{X}}^{\gamma}([0,r]}\leq \\ &\quad\Vert \xi-\tilde{\xi}\Vert_{\mathscr{D}_{\mathbf{X}}^{\gamma}([-r,0]}\Vert\zeta\Vert_{\mathscr{D}_{\mathbf{X}}^{\gamma}([-r,0]}\exp\big(Q_{2}(\Vert\xi\Vert_{\mathscr{D}_{\mathbf{X}}^{\gamma}([-r,0])},\Vert\tilde{\xi}\Vert_{\mathscr{D}_{\mathbf{X}}^{\gamma}([-r,0]},\Vert\mathbf{X}\Vert_{\gamma,[0,r]})\big),
			\end{align*}
			for every \( \xi, \tilde{\xi}, \zeta \in \mathscr{D}_{\mathbf{X}}^{\gamma}([-r,0]) \).
			In case \( A=0 \), and additionally, if for some \( 0 < \vartheta \leq 1 \), \( G \in C^{3+\vartheta}_{b} \), then there exists a polynomial \( Q_3 \) such that
			\begin{align*}
				&\Vert \Vert D_{\xi}\phi_{\mathbf{X}}[\zeta]- D_{\tilde\xi}\phi_{\mathbf{X}}[\zeta]\Vert_{\mathscr{D}_{\mathbf{X}}^{\gamma}([0,r]}\leq \\ &\quad\Vert \xi-\tilde{\xi}\Vert_{\mathscr{D}_{\mathbf{X}}^{\gamma}([-r,0]}^{\vartheta}\Vert\zeta\Vert_{\mathscr{D}_{\mathbf{X}}^{\gamma}([-r,0]}\exp\big(Q_{3}(\Vert\xi\Vert_{\mathscr{D}_{\mathbf{X}}^{\gamma}([-r,0])},\Vert\tilde{\xi}\Vert_{\mathscr{D}_{\mathbf{X}}^{\gamma}([-r,0])},\Vert\mathbf{X}\Vert_{\gamma,[0,r]})\big),
			\end{align*}
			for every \( \xi, \tilde{\xi}, \zeta \in \mathscr{D}_{\mathbf{X}}^{\gamma}([-r,0]) \).
		\end{theorem}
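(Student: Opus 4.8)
The plan is to realise $D_\xi\phi_{\mathbf{X}}[\zeta]$ as the solution of a linear delay rough differential equation and to reduce the claim to a perturbation (stability) estimate for such equations, fed by the a priori bounds already in hand. Write $\xi_1:=\xi$, $\xi_2:=\tilde\xi$, $y^{(i)}:=\phi_{\mathbf{X}}(\xi_i)$ and $w^{(i)}:=D_{\xi_i}\phi_{\mathbf{X}}[\zeta]$. Differentiating \eqref{equation+drift} in the initial datum, each $w^{(i)}$ is the unique element of $\mathscr{D}_{\mathbf{X}}^{\gamma}([0,r])$ solving
\begin{align*}
  \d w^{(i)}_t &= A\Big(w^{(i)}_t,\int_{-r}^{0}w^{(i)}_{\theta+t}\,\pi(\d\theta)\Big)\d t + \Big(\tfrac{\partial G}{\partial x}(y^{(i)}_t,y^{(i)}_{t-r})\,w^{(i)}_t+\tfrac{\partial G}{\partial y}(y^{(i)}_t,y^{(i)}_{t-r})\,w^{(i)}_{t-r}\Big)\d\mathbf{X}_t
\end{align*}
with $w^{(i)}\big|_{[-r,0]}=\zeta$; this is the linearised equation already used to prove Theorem \ref{DCXSZ}. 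Since $A$ is linear, the difference $v:=w^{(1)}-w^{(2)}$ then solves the \emph{same} linear delay equation with the coefficients of the $i=1$ problem, with vanishing initial segment on $[-r,0]$, and with an inhomogeneous forcing term $h$ equal to the rough integral (in the sense of Theorem \ref{DCVZVXZ}) of $\big(\tfrac{\partial G}{\partial x}(y^{(1)})-\tfrac{\partial G}{\partial x}(y^{(2)})\big)w^{(2)}_\cdot+\big(\tfrac{\partial G}{\partial y}(y^{(1)})-\tfrac{\partial G}{\partial y}(y^{(2)})\big)w^{(2)}_{\cdot-r}$ against $\mathbf{X}$.

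First I would record a Lipschitz estimate for the solution map itself: by Theorem \ref{DCXSZ} the operator norm of $D_\xi\phi_{\mathbf{X}}$ on $[0,r]$ is bounded by $\exp\big(Q_1(\Vert\xi\Vert_{\mathscr{D}_{\mathbf{X}}^{\gamma}([-r,0])},\Vert\mathbf{X}\Vert_{\gamma,[0,r]})\big)$ with $Q_1$ polynomial, so integrating along the segment $\theta\mapsto\theta\xi_1+(1-\theta)\xi_2$ and using that a polynomial of $\Vert\theta\xi_1+(1-\theta)\xi_2\Vert$ is dominated by a polynomial of $(\Vert\xi_1\Vert,\Vert\xi_2\Vert)$ yields $\Vert y^{(1)}-y^{(2)}\Vert_{\mathscr{D}_{\mathbf{X}}^{\gamma}([0,r])}\leq\Vert\xi_1-\xi_2\Vert_{\mathscr{D}_{\mathbf{X}}^{\gamma}([-r,0])}\exp(\tilde Q)$. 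Next, using the composition/Taylor formula for composed controlled paths recorded around \eqref{TYCZ}, I would bound $\Vert\tfrac{\partial G}{\partial x}(y^{(1)}_\cdot,y^{(1)}_{\cdot-r})-\tfrac{\partial G}{\partial x}(y^{(2)}_\cdot,y^{(2)}_{\cdot-r})\Vert_{\mathscr{D}_{\mathbf{X}}^{\gamma}}$ (and the analogous $\partial_y G$ difference) by a polynomial in $\Vert y^{(1)}\Vert_{\mathscr{D}_{\mathbf{X}}^{\gamma}},\Vert y^{(2)}\Vert_{\mathscr{D}_{\mathbf{X}}^{\gamma}}$ — which are polynomially bounded by Theorem \ref{SSS} — times $\Vert y^{(1)}-y^{(2)}\Vert_{\mathscr{D}_{\mathbf{X}}^{\gamma}}$ when $G\in C^4_b$, and times $\Vert y^{(1)}-y^{(2)}\Vert_{\mathscr{D}_{\mathbf{X}}^{\gamma}}^{\vartheta}$ when only $G\in C^{3+\vartheta}_b$; the fractional power appears because comparing the Gubinelli derivatives and, above all, the remainder terms of the two composed controlled paths requires evaluating $D^2G$ and $D^3G$ at distinct base points, so it is precisely the modulus of continuity of the top-order derivative of $G$ (Lipschitz versus $\vartheta$-H\"older) that is charged there. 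Combining these two steps with the exponential bound on $\Vert w^{(2)}\Vert_{\mathscr{D}_{\mathbf{X}}^{\gamma}}$ from Theorem \ref{DCXSZ} and the rough-integral estimate \eqref{REER} gives $\Vert h\Vert_{\mathscr{D}_{\mathbf{X}}^{\gamma}([0,r])}\leq\Vert\xi_1-\xi_2\Vert_{\mathscr{D}_{\mathbf{X}}^{\gamma}([-r,0])}^{\vartheta}\Vert\zeta\Vert_{\mathscr{D}_{\mathbf{X}}^{\gamma}([-r,0])}\exp(Q)$ (with $\vartheta=1$ in the first case).

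It then remains to apply the stability estimate for \emph{linear} delay rough differential equations: for such an equation with vanishing initial segment and inhomogeneity $h$, the solution is bounded in $\mathscr{D}_{\mathbf{X}}^{\gamma}([0,r])$ by $\Vert h\Vert_{\mathscr{D}_{\mathbf{X}}^{\gamma}([0,r])}\exp\big(Q(\Vert\xi\Vert_{\mathscr{D}_{\mathbf{X}}^{\gamma}([-r,0])},\Vert\mathbf{X}\Vert_{\gamma,[0,r]})\big)$. This is proved exactly as in Steps 1--3 of the proof of Theorem \ref{SSS}: the unbounded linear drift $A$ is absorbed by the decomposition of flows, the Gronwall argument over the $N$ subintervals of length $t_1$ from \eqref{t_1} is carried out verbatim, and the $C^3_b$-regularity of $\partial_x G$ and $\partial_y G$ is enough here since the equation for $v$ is linear. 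Applying this to $v=w^{(1)}-w^{(2)}$ produces the two displayed inequalities with suitable polynomials $Q_2$ and $Q_3$. The main obstacle is the second step — propagating the composition estimates through every component of the $\mathscr{D}_{\mathbf{X}}^{\gamma}$-norm, in particular through the remainder $\zeta^{\#}$, and pinpointing exactly where the top derivative of $G$ enters so that the exponent is sharp ($1$ versus $\vartheta$); the bookkeeping that keeps every constant exponential-of-polynomial rather than worse is the other delicate point, but it is a direct transcription of the corresponding arguments in \cite{GVR21}.
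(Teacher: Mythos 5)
Your proposal is correct and follows the same strategy as the paper, which defers the details to \cite[Theorem 2.13]{GVR21}: differentiate the flow to get the linear variational delay equation, write the difference $v = D_\xi\phi[\zeta]-D_{\tilde\xi}\phi[\zeta]$ as the solution of the same linear equation (with coefficients at $y^{(1)}$) with zero initial segment and an inhomogeneous rough forcing, bound the forcing through the difference of the composed controlled paths $DG(y^{(i)}_\cdot,y^{(i)}_{\cdot-r})$, and close via a linear stability estimate produced by the same decomposition-of-flows/Gronwall machinery as in Steps 1--3 of Theorem \ref{SSS}. Your diagnosis of where the exponent $\vartheta$ comes from is also essentially right, with one small imprecision: comparing the zeroth and first components (the path and the Gubinelli derivative of $DG(y^{(i)},\cdot)$) only requires $D^2G$ at distinct base points, and since $D^3G$ is bounded under $C^{3+\vartheta}_b$ this already yields a \emph{Lipschitz} estimate there; the $\vartheta$-H\"older power is forced solely by the remainder component, where one must compare $D^3G$ evaluated at the two families of intermediate points, and it is precisely the modulus of continuity of $D^3G$ (Lipschitz for $C^4_b$, $\vartheta$-H\"older for $C^{3+\vartheta}_b$) that gets charged. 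One should also note, as a bookkeeping point you do not make explicit, that when $\Vert\xi-\tilde\xi\Vert$ is large the Lipschitz terms dominate $\Vert\xi-\tilde\xi\Vert^\vartheta$; this is harmless since $\Vert\xi-\tilde\xi\Vert^{1-\vartheta}\le(\Vert\xi\Vert+\Vert\tilde\xi\Vert)^{1-\vartheta}$ can be absorbed into the exponential-of-polynomial prefactor.
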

		\begin{proof}
			Similar to the proof of \cite[Theorem 2.13]{GVR21}, with some minor modifications.
		\end{proof}
		\begin{remark}
			In Theorem \ref{SSS}, when \( A \neq 0 \), we assumed that \( G \in C^4_b \). This assumption is not optimal, and we believe it can be weakened to \( C^3_b \). However, the reason we initially made this assumption was primarily to obtain a polynomial a priori bound. In the case where \( G \in C^3_b \), the method we used (i.e., decomposition of flows) is not applicable. Alternatively, we might switch to the semigroup approach. While the semigroup approach could establish well-posedness under weaker assumptions, the bound may not be polynomial, and obtaining an integrable bound could require more sophisticated techniques such as those involving greedy points (cf. \cite{CLL13, GVR23C}). We wanted to avoid this complexity and make our article more straightforward and readable.
		\end{remark}
		\section{Random dynamical systems and invariant manifolds}\label{sec:RDS}
		In this section, we establish the connection between stochastic delay equations and Arnold's concept of a random dynamical system (RDS). Indeed, due to the deterministic nature of rough path theory and the processes involved in our problem, the RDS provides a natural framework for studying the dynamics of the solutions. We specifically address delay equations with fractional Brownian motions, defining integrals using rough path theory, and describing the delayed Levy area as given by \eqref{DElY}. After proving that our equations generate a random dynamical system, we will demonstrate the existence of invariant manifolds. In the next section, we specifically focus on stable manifolds and explore how our theory can yield several insightful stability results. For the remainder of this section, we assume that Assumption \ref{FFFDDCCA} holds.
		We can now prove that our fractional Brownian motion for every \(\frac{1}{3} < \gamma < H\) can be lifted to a  a delayed \(\gamma\)-rough path. We begin with the following lemma.
		\begin{lemma}\label{FRFR}. 
			For \( s, t \in \mathbb{R} \), set
			\[
			\mathbb{B}_{s,t}(-r) := \int_{s}^{t} B_{s-r,u-r} \otimes \mathrm{d}^{\circ} B_{u}.
			\] 
			Then for every closed interval \( J \subset \mathbb{R} \), on a set of full measure \( \tilde{\Omega} \subset \Omega \), we can obtain a delayed \( \gamma \)-rough path \( \overline{\mathbf{B}}(\omega) = (B(\omega), \mathbb{B}(\omega), \mathbb{B}(-r)(\omega)) \), such that \( (B(\omega), \mathbb{B}(\omega)) \) is a geometric rough path for every \( \omega \in \tilde{\Omega} \).
		\end{lemma}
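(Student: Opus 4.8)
The plan is to put the delayed rough path $\overline{\mathbf{B}}$ together out of its three levels $B$, $\mathbb{B}$, $\mathbb{B}(-r)$ --- whose relevant analytic and algebraic properties have, individually, already been obtained in the discussion preceding the lemma --- and then to arrange all the almost-sure statements on a single full-measure set $\tilde\Omega$ that is independent of the interval $J$, of the time points, and of the H\"older exponent $\gamma<H$. So the lemma is essentially a matter of collecting the pieces; the one point that needs care is upgrading the identities that hold ``for fixed $s<u<t$'' to identities that hold simultaneously for all $s<u<t$ on a fixed $\omega$.

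For the first two levels it is classical (cf.\ \cite[Theorem 15.33, Theorem 15.37]{FV10}) that $B$ admits a Stratonovich-type L\'evy area $\mathbb{B}$ which, almost surely, makes $(B,\mathbb{B})$ a geometric $\gamma$-rough path for every $\gamma<H$; alternatively this is contained in Proposition \ref{thm:approx_strat_delay_f} restricted to its first two levels, since geometricity is preserved under convergence in $d_{\gamma;J}$. For the delayed level, $\mathbb{B}_{s,t}(-r)$ defined by the symmetric integral \eqref{DElY} lies in the second Wiener chaos of $B$, so the second-moment bound \eqref{BBB_1} (together with the deterministic correction term displayed in \eqref{AAA_1}, which is $O(t-s)$ and hence dominated by $(t-s)^{2H}$ on bounded intervals since $H<\tfrac12$) combined with hypercontractivity yields $\E|\mathbb{B}_{s,t}(-r)|^{p}\lesssim_{p}(t-s)^{2Hp}$ for every $p$. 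Combining this with additivity of the symmetric integral over adjacent intervals --- which, for fixed $s<u<t$, gives the $L^{2}(\Omega)$-identity in \eqref{VBBN} via $\int_{u}^{t}B_{s-r,\cdot-r}\otimes\mathrm{d}^{\circ}B=\mathbb{B}_{u,t}(-r)+B_{s-r,u-r}\otimes B_{u,t}$ --- a two-parameter Kolmogorov (Garsia--Rodemich--Rumsey) argument furnishes a continuous modification of $\mathbb{B}(-r)$ with $\|\mathbb{B}(-r)\|_{2\gamma;J}<\infty$ almost surely for every $\gamma<H$, together with the integrability \eqref{key}. On the full-measure set where this modification exists and where \eqref{VBBN} holds for all rational triples $s<u<t$, continuity of $B$ and of $\mathbb{B}(-r)$ propagates \eqref{VBBN} to all real triples.

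It then remains to set $\tilde\Omega$ equal to the intersection of the two full-measure sets above, further intersected over an exhausting sequence of compact intervals $J_k$ with rational endpoints and a sequence $\gamma_k\uparrow H$, so that $\tilde\Omega$ is still of full measure as a countable intersection and the required bounds hold on every compact interval and for every $\gamma<H$. On $\tilde\Omega$ the triple $\overline{\mathbf{B}}(\omega)=(B(\omega),\mathbb{B}(\omega),\mathbb{B}(-r)(\omega))$ satisfies all the requirements of Definition \ref{SAZAS} and $(B(\omega),\mathbb{B}(\omega))$ is geometric, which is the assertion. A cleaner but equivalent route to the algebraic and geometric properties is to extract from Proposition \ref{thm:approx_strat_delay_f} a subsequence $\epsilon_n\to0$ along which $\mathbf{B}^{\epsilon_n}\to\overline{\mathbf{B}}$ almost surely in $d_{\gamma;J}$: each $\mathbf{B}^{\epsilon_n}$ is the genuine iterated-integral lift of the smooth path $B^{\epsilon_n}$, hence satisfies Chen's relations at all three levels and the integration-by-parts identity expressing geometricity, and all of these pass to the limit under the uniform convergence of two-parameter functions implied by $d_{\gamma;J}$-convergence (with finiteness of the limiting $2\gamma$-seminorm coming from \eqref{key} or from the triangle inequality for that seminorm). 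The only genuine obstacle, as noted, is precisely this passage from fixed-time to pathwise identities valid uniformly in $\omega\in\tilde\Omega$; the remainder is bookkeeping or has already been carried out in the preceding discussion.
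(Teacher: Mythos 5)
Your proposal follows essentially the same route as the paper's proof: the $L^2$ bound on $\mathbb{B}(-r)$ from \eqref{AAA_1}--\eqref{BBB_1}, upgrading to all moments via the second-chaos (hypercontractivity) property, Kolmogorov--Chentsov to get $2\gamma$-H\"older regularity for every $\gamma<H$, and the classical geometric lift for $(B,\mathbb{B})$. The additional details you supply — propagating the Chen-type identity \eqref{VBBN} from rational to all real triples by continuity, and forming $\tilde\Omega$ as a countable intersection over compact intervals and H\"older exponents — are correct and merely flesh out what the paper leaves implicit by citing \cite{NNT08}.
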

		\begin{proof}
			This result already appears in \cite{NNT08}, but for the reader's convenience, we repeat the main ideas here. It is known that we can lift \( B \) to a geometric rough path; cf. \cite[Corollary 10.10]{FH20}. Also, from \eqref{AAA_1} and \eqref{BBB_1}, we have  
			\[
			\sup_{s, t \in J} \mathbb{E} \vert \mathbb{B}_{s,t}(-r) \vert^2 \leq C (t-s)^{4H},
			\]
			where \(C > 0\) is a constant and \(H\) is the Hurst parameter of the fractional Brownian motion \(B\). Since \( \mathbb{B}(-r) \) is an element in the second Wiener chaos, this bound holds in the \( L^q \)-norm for any \( q \geq 1 \); see \cite[Proposition 15.22]{FV10}. Thus, we can apply the Kolmogorov–Chentsov theorem; cf. \cite[Theorem 3.1]{FH20}, to conclude that, on a set of full measure \( \tilde{\Omega} \), for the process \( \mathbb{B}(-r) \) and for every closed interval \( J \):
			\[
			\sup_{s,t \in J} \frac{|\mathbb{B}(-r)|}{(t-s)^{2\gamma}} < \infty,
			\]
			where \( \gamma \in (0, \frac{1}{2}) \) depends on \(H\).
			This completes our proof.
		\end{proof}
		\begin{remark}\label{REDDFDF}
			With a similar argument, we can also define \(\mathbb{B}\) as a stochastic process like \(\mathbb{B}(-r)\) using the symmetric integral (cf. Definition \ref{symmetric_integral}). Since \(\mathbb{B}\) is geometric, this definition coincides with the one introduced in \cite{FH20}[Corollary 10.10]. The proofs are analogous to those we provided for \(\mathbb{B}(-r)\). Therefore, we do not repeat the arguments, as only slight changes are required and the proofs are even simpler. For this process, similar to \(\mathbb{B}(-r)\), the following identity holds:
			\begin{align}\label{AAA_9}
				\begin{split}
					&\mathbb{B}_{s,t}:=\int_{s}^{t}B_{s,u}\otimes \mathrm{d}^{\circ} B_{u}\\&\quad=\sum_{1\leq i,j\leq d}\left(\delta^{B^{i}}(B^{j}_{s,.}\chi_{[s,t]}(.))+\delta_{i=j}\left(\frac{1}{2}((t-s)^{2H})\right)\right)e_i\otimes e_j.
				\end{split}
			\end{align}
			Like \(\mathbb{B}(-r)\), we can apply the Kolmogorov-Chentsov theorem to extend this process to the L\'evy area. In addition, from \eqref{key}, for every \(\gamma < H\), we have
			\begin{align}\label{JHNBZS}
				\Vert \mathbf{B} \Vert_{\gamma, [0,r]} \in \bigcap_{p \geq 1} L^{p}(\Omega).
			\end{align}
		\end{remark}
		The interesting aspect of our delayed rough path is its ability to generate an ergodic random dynamical system. Two key features that enable us to demonstrate this are: first, the fact that fractional Brownian motions possess stationary increments; and second, the observation that the Lévy area and the delayed Lévy area can be derived by regularizing the path in a limiting process. First, we need some technical definitions.
		
		\begin{definition}
			Assume $(\Omega, \mathcal{F}, \mathbb{P}, (\theta_t)_{t \in \mathbb{R}})$ is a measurable metric dynamical system and $r > 0$. A \emph{delayed $\gamma$-rough path cocycle $\mathbf{X}$ (with delay $r > 0$)} is a delayed $\gamma$-rough path-valued stochastic process $\mathbf{X}(\omega) = (X(\omega), \mathbb{X}(\omega), \mathbb{X}(-r)(\omega))$ that satisfies
			\begin{align}\label{eqn:cocycle_rough_delay}
				\mathbf{X}_{s,s+t}(\omega) = \mathbf{X}_{0,t}(\theta_s \omega)
			\end{align}
			for every $\omega \in \Omega$ and for all $s, t \in \mathbb{R}$. 
		\end{definition}
		\begin{definition}
			Let
			\begin{align*}
				\tilde{T}^{2}(\mathbb{R}^d):=\big{\lbrace}\big{(}1\oplus(\alpha,\beta)\oplus(\gamma,\theta)\big{)}\ \vert\ \alpha ,\beta\in \mathbb{R}^d \ \text{and} \ \gamma ,\theta\in \mathbb{R}^d\otimes \mathbb{R}^d\big{\rbrace}.
			\end{align*}
			We define projections $\Pi_i^j$ by
			\begin{align*}
				\Pi_i^j\big{(}1\oplus(\alpha,\beta)\oplus(\gamma,\theta)\big{)} := \begin{cases}
					\alpha &\text{if } i = 1,\, j = 1 \\
					\beta &\text{if } i = 1,\, j = 2 \\
					\gamma &\text{if } i = 2,\, j = 1 \\
					\theta &\text{if } i = 2,\, j = 2.
				\end{cases}
			\end{align*}
			We also define
			\begin{align*}
				\big{(}1\oplus(\alpha_{1},\beta_{1})\oplus(\gamma_{1},\theta_{1})\big{)}&\circledast \big{(}1\oplus(\alpha_{2},\beta_{2})\oplus(\gamma_{2},\theta_{2})\big{)}:=\\ &\big{(}1\oplus(\alpha_{1}+\alpha_{2},\beta_{1}+\beta_{2})\oplus(\gamma_{1}+\gamma_{2}+\alpha_{1}\otimes\alpha_{2},\theta_{1}+\theta_{2}+\beta_{1}\otimes\alpha_{2})\big{)}
			\end{align*}
			and $\mathbf{1} := (1,(0,0),(0,0))$. In this case, \((\tilde{T}^{2}(U), \circledast)\) is a topological group with identity \(\mathbf{1}\). In particular, a \emph{delayed \(\gamma\)-rough path} takes values in \(\tilde{T}^{2}(\mathbb{R}^d)\), and the cocycle property \eqref{eqn:cocycle_rough_delay} is equivalent to  $ \mathbf{X}_{0,t}(\theta_{s}(\omega))=\mathbf{X}_{0,s}^{-1}(\omega)\circledast\mathbf{X}_{t+s}(\omega) $ for every $s,t \in \R$ and every $\omega \in \Omega$.	
		\end{definition}
		We need another definition.
		\begin{definition}
			Let \( J \subset \mathbb{R} \) with \( 0 \in J \). The space \( \mathcal{C}_0^{0,1-\text{var}}(J, \mathbb{R}^d) \) is defined as the closure of the set of paths \( x \colon J \to \mathbb{R}^d \) that are arbitrarily often differentiable and satisfy \( x_0 = 0 \), under the \( 1 \)-variation norm.
			Let $p\geq 1$, then by \( C_{0}^{0,p-\text{var}}(J, \mathbb{R}^d) \), we denote the set of continuous maps \( \mathbf{x} \colon J \rightarrow \mathbb{R}^d \) such that \( \mathbf{x}_0 = 0 \), and there exists a sequence \( \{x_n\} \) with \( x_n \in \mathcal{C}_0^{0,1-\text{var}}(J, \mathbb{R}^d) \) satisfying
			\[
			d_{p-\text{var}}\big{(}\mathbf{x}, x_n\big{)} := \left( \sup_{\mathcal{P} \subset J} \sum_{t_k \in \mathcal{P}} \left| (\delta \mathbf{x})_{t_k, t_{k+1}} - (\delta x_n)_{t_k, t_{k+1}} \right|^{p} \right)^{\frac{1}{p}} \to 0
			\]
			as \( n \to \infty \), where \( \mathcal{P} \) denotes the set of finite partitions of \( J \). The space \( C_{0}^{0,p-\text{var}}(\mathbb{R}, \mathbb{R}^d) \) consists of all continuous paths \( \mathbf{x} \colon \mathbb{R} \rightarrow \mathbb{R}^d \) for which \( \mathbf{x}|_J \in C_{0}^{0,p-\text{var}}(J, \mathbb{R}^d) \) for every \( J \) as above.
			Similarly, the space \( C_{0}^{0,p-\text{var}}(J, \tilde{T}^{2}(\mathbb{R}^d)) \) consists of continuous maps \( \mathbf{x} \colon J \to \tilde{T}^{2}(\mathbb{R}^d) \) with \( \mathbf{x}_0 = \mathbf{1} \), for which there exists a sequence \( \{x_n\} \) with \( x_n \in \mathcal{C}_0^{0,1-\text{var}}(J, \mathbb{R}^d) \) such that
			\[
			d_{p-\text{var}}\big{(}\mathbf{x}, \tilde{S}_{2}(x_n)\big{)} := \sup_{i,j \in \{1,2\}} \left( \sup_{\mathcal{P} \subset J} \sum_{t_k \in \mathcal{P}} \left| \Pi_{i}^{j}\left(\mathbf{x}_{t_k, t_{k+1}} - \tilde{S}_{2}(x_n)_{t_k, t_{k+1}}\right) \right|^{\frac{p}{i}} \right)^{\frac{1}{p}} \to 0
			\]
			as \( n \to \infty \). Here, \( \mathcal{P} \) represents the set of finite partitions of \( J \), and \( \mathbf{x}_{s,t} := \mathbf{x}_s^{-1} \circledast \mathbf{x}_t \). The space \( C_{0}^{0,p-\text{var}}(\mathbb{R}, \tilde{T}^{2}(\mathbb{R}^d)) \) consists of all continuous paths \( \mathbf{x} \colon \mathbb{R} \to \tilde{T}^{2}(\mathbb{R}^d) \) such that for every interval \( J \subset \mathbb{R} \), the restriction \( \mathbf{x}|_J \) belongs to \( C_{0}^{0,p-\text{var}}(J, \tilde{T}^{2}(\mathbb{R}^d)) \).
		\end{definition}
		\begin{remark}
			A notable feature of \( C_{0}^{0,p-\text{var}}(\R, \mathbb{R}^d) \) and \( C_{0}^{0,p-\text{var}}(\R, \tilde{T}^{2}(\mathbb{R}^d)) \) is that both are Polish spaces.
		\end{remark}
		We are now ready to prove that $\overline{\mathbf{B}}$, has an indistinguishable version which is delayed $\gamma$-rough path cocycle.
		\begin{theorem}\label{thm:B_delayed_cocycle}
			Let us consider the process $\overline{\mathbf{B}}$ defined in Lemma \ref{FRFR}. For any $p \in \left(\frac{1}{3}, H\right)$, there exists an ergodic metric dynamical system $(\Omega, \mathcal{F}, \mathbb{P}, (\theta_t)_{t \in \mathbb{R}})$ and a random variable $\mathbf{B}$ on $\Omega$, taking values in $C_{0}^{0,p\text{-var}}(\mathbb{R}, \tilde{T}^{2}(U))$, which has the same law as $\overline{\mathbf{B}}$. Furthermore, this process $\mathbf{B}$ satisfies the cocycle property given by equation \eqref{eqn:cocycle_rough_delay}.
		\end{theorem}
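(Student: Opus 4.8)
The plan is to transport $\overline{\mathbf B}$ onto a canonical path space, where the metric dynamical system and the cocycle property become almost tautological, and then to import ergodicity from the fBm shift. First I would verify that $\overline{\mathbf B}(\omega)$ almost surely takes values in $C_{0}^{0,p\text{-var}}(\mathbb R,\tilde T^{2}(U))$: the regularised lifts $\mathbf B^{\epsilon}$ from Definition \ref{REGU} and Proposition \ref{thm:approx_strat_delay_f} are the step-$2$ (delayed) signatures of the smooth paths $B^{\epsilon}$, hence lie in the smooth class used to define this space, and by Proposition \ref{thm:approx_strat_delay_f} together with a H\"older$\rightarrow p$-variation interpolation and the uniform Gaussian-chaos bounds one gets $\mathbf B^{\epsilon_{n}}\to\overline{\mathbf B}$ in the relevant $p$-variation metric almost surely along a subsequence; membership follows. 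Since this path space is Polish, $\mathbb P':=\mathrm{law}(\overline{\mathbf B})$ is a well-defined Borel measure on $\Omega':=C_{0}^{0,p\text{-var}}(\mathbb R,\tilde T^{2}(U))$; I then let $\mathbf B$ be the coordinate process there and define the shift by $(\theta_{t}\mathbf x)_{0,s}:=\mathbf x_{t,t+s}=\mathbf x_{t}^{-1}\circledast\mathbf x_{t+s}$.

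\textbf{Metric dynamical system.} Next I would check that $\theta_{t}$ maps $\Omega'$ into itself (if $x_{n}\in\mathcal C_{0}^{0,1\text{-var}}$ approximates $\mathbf x$, then $x_{n}(\cdot+t)-x_{n}(t)\in\mathcal C_{0}^{0,1\text{-var}}$, its lift equals $\theta_{t}$ applied to the lift of $x_{n}$, and the $p$-variation metric is translation invariant), that $(t,\mathbf x)\mapsto\theta_{t}\mathbf x$ is measurable, and that $\theta_{0}=\operatorname{Id}$ and $\theta_{s+t}=\theta_{s}\circ\theta_{t}$, all directly from the definition of $\circledast$. The crucial point is $\theta$-invariance of $\mathbb P'$, equivalently stationarity of the increments, $\overline{\mathbf B}_{s,s+\cdot}\stackrel{d}{=}\overline{\mathbf B}_{0,\cdot}$ for every $s$. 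I would establish this first at the level of the regularisations: let $\Theta_{s}$ act on the fBm path space by $(\Theta_{s}\omega)(w):=\omega(s+w)-\omega(s)$, which preserves the two-sided fBm law by stationarity of increments. A change of variables $u=s+v$ in $B^{\epsilon}$, in $\mathbb B^{\epsilon}_{s,s+t}=\int_{s}^{s+t}B^{\epsilon}_{s,u}\otimes\mathrm dB^{\epsilon}_{u}$ and in $\mathbb B^{\epsilon}_{s,s+t}(-r)=\int_{s}^{s+t}B^{\epsilon}_{s-r,u-r}\otimes\mathrm dB^{\epsilon}_{u}$ (using $B^{\epsilon}_{s-r,s+v-r}=\tilde B^{\epsilon}_{-r,v-r}$ for the shifted path $\tilde B^{\epsilon}_{\cdot}=B^{\epsilon}_{s,s+\cdot}$) gives the pathwise identity $\mathbf B^{\epsilon}_{s,s+\cdot}(\omega)=\mathbf B^{\epsilon}_{0,\cdot}(\Theta_{s}\omega)$; since $\Theta_{s}$ is measure preserving this yields $\mathbf B^{\epsilon}_{s,s+\cdot}\stackrel{d}{=}\mathbf B^{\epsilon}_{0,\cdot}$, and Proposition \ref{thm:approx_strat_delay_f} lets me pass to the limit $\epsilon\to0$ to conclude $\overline{\mathbf B}_{s,s+\cdot}\stackrel{d}{=}\overline{\mathbf B}_{0,\cdot}$, hence $\mathbb P'\circ\theta_{t}^{-1}=\mathbb P'$.

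\textbf{Cocycle property and ergodicity.} On $\Omega'$ the cocycle identity is immediate: for every $\mathbf x$ and all $s,t$ one has $\mathbf B_{s,s+t}(\mathbf x)=\mathbf x_{s,s+t}=(\theta_{s}\mathbf x)_{0,t}=\mathbf B_{0,t}(\theta_{s}\mathbf x)$, which is exactly \eqref{eqn:cocycle_rough_delay} (equivalently $\mathbf B_{0,t}(\theta_{s}\mathbf x)=\mathbf B_{0,s}^{-1}(\mathbf x)\circledast\mathbf B_{t+s}(\mathbf x)$ in $(\tilde T^{2}(U),\circledast)$). For ergodicity, the same change-of-variables computation carried out without expectations shows that $\overline{\mathbf B}$ intertwines the two shifts, $\overline{\mathbf B}\circ\Theta_{s}=\theta_{s}\circ\overline{\mathbf B}$ a.s.; since the increment process of the two-sided fBm is a stationary Gaussian process whose correlations vanish at infinity, $\Theta$ is mixing, hence ergodic, so the factor $(\Omega',\mathcal F',\mathbb P',\theta)$ is ergodic as well, and relabelling $(\Omega',\mathcal F',\mathbb P',\theta,\mathbf B)$ as $(\Omega,\mathcal F,\mathbb P,\theta,\mathbf B)$ gives the statement. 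If instead one wants to stay on the original probability space, one proves the crude identity $\overline{\mathbf B}_{s,s+t}=\overline{\mathbf B}_{0,t}\circ\Theta_{s}$ for fixed $(s,t)$ as above and then applies a perfection theorem, using the a.s.\ continuity of $t\mapsto\overline{\mathbf B}_{0,t}$, to obtain an indistinguishable perfect cocycle.

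\textbf{Main obstacle.} The only non-formal ingredient is the $\theta$-invariance of $\mathbb P'$, i.e.\ the stationarity of the increments of the \emph{second-level} components $\mathbb B$ and $\mathbb B(-r)$: because these are defined through symmetric (Stratonovich-type) integrals and are not continuous images of $B$, one cannot invoke the stationary increments of $B$ directly, and the argument must be routed through the pathwise shift-covariance of the regularisations together with the quantitative $L^{q}$-approximation of Proposition \ref{thm:approx_strat_delay_f} — which is precisely where the Malliavin-calculus estimates of Section \ref{sec:delay_levy_area} enter.
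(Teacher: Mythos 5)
Your proposal takes essentially the same route as the paper. Both proofs (i) identify $\Omega$ with the canonical Polish path space $C_{0}^{0,p\text{-var}}(\mathbb R,\tilde T^{2}(\mathbb R^d))$ with $\mathbf B$ the coordinate process and $\theta_{s}\omega(\cdot)=\omega(s)^{-1}\circledast\omega(\cdot+s)$ the shift, (ii) obtain invariance of the law from stationarity of increments, which is verified pathwise for the regularised lifts $\mathbf B^{\epsilon}$ and then transported to $\overline{\mathbf B}$ through the quantitative $L^{q}$-convergence of Proposition \ref{thm:approx_strat_delay_f}, and (iii) deduce ergodicity by exhibiting $(\Omega,\mathcal F,\mathbb P,\theta)$ as a measurable factor of the ergodic fBm shift system via the lift map that intertwines $\hat\theta_{s}$ on the base path space with $\theta_{s}$ on $\Omega$. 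You also correctly isolate the real obstacle, namely that the second-level objects $\mathbb B$ and $\mathbb B(-r)$ are not continuous functionals of $B$, so the intertwining must be routed through the approximation result rather than invoked directly.

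The only place where the paper is slightly more careful is the passage from ``$S(\hat\theta_{s}x)=\theta_{s}(S(x))$ almost surely for each fixed $s$'' to ``almost surely, for all $s$ simultaneously'', which is what the factor-map argument for ergodicity actually requires; the paper handles this by working on a countable dense set of shifts and invoking continuity, and cites \cite[Lemma 3.6]{GVRS22}. Your proposal gestures at this with the perfection theorem remark, which is an acceptable alternative, but if you write this out you should make that step explicit since it is where the a.s.\ qualifiers have to be reconciled. Your observation that the fBm increment shift is in fact mixing (covariance decay) rather than merely ergodic is a valid and slightly stronger input than the paper uses, though ergodicity alone suffices.
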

		\begin{proof}
			First, note that for \(\frac{1}{H} < p < 3\), we can assume that our fractional Brownian motion is defined on the probability space \(\left(C_{0}^{0,p-\text{var}}(\R, \mathbb{R}^d), \hat{\mathcal{F}}, \hat{\mathbb{P}}\right)\), where \(\hat{\mathcal{F}}\) denotes the Borel \(\sigma\)-algebra on \(C_{0}^{0,p-\text{var}}(\R, \mathbb{R}^d)\), and \(\hat{\mathbb{P}}\) is the probability measure on this space that governs the law of our process.
			Let \(B^{\epsilon}\) be the regularized path in Definition \ref{REGU}. Recall that from Proposition \eqref{thm:approx_strat_delay_f}, for
			\begin{align*}
				\mathbf{B}^{\epsilon}_{s,t}:=\bigg{(}1\oplus\big{(}B^\epsilon_{s,t}, B^\epsilon_{s-r,t-r}\big{)}\oplus\big{(}\int_{s}^{t} B^\epsilon_{s,\tau}\otimes \mathrm{d}B^\epsilon_{\tau},\int_{s}^{t} B^\epsilon_{s-r,\tau-r}\otimes \mathrm{d}B^\epsilon_{\tau}\big{)}\bigg{)} \in \tilde{T}^{2}(\mathbb{R}^d),
			\end{align*}
			we have
			\begin{align}\label{FRFF}
				\lim_{\epsilon \rightarrow\infty}\sup_{q\geqslant 1}\frac{\left\|d_{\gamma;J}\big{(} \mathbf{B}^{\epsilon} , \overline{\mathbf{B}} \big{)}\right\|_{L^{q}}}{\sqrt{q}} = 0 .
			\end{align}
			By the Kolmogorov-Chentsov theorem, for every \(\frac{1}{H} < p < 3\), we can deduce that \(\overline{\mathbf{B}}\) has an indistinguishable version \({\mathbf{B}}\) that takes values in \(C_{0}^{0, p\text{-var}}(\mathbb{R}, \tilde{T}^{2}(\mathbb{R}^d))\). Additionally, \(\mathbf{B}^\epsilon\) exhibits stationary increments, meaning the law of the process \((\mathbf{B}_{t_0, t_0 + h}^\epsilon)_{h \in \mathbb{R}}\) is independent of \(t_0 \in \mathbb{R}\). Consequently, from \eqref{FRFF}, we conclude that \(\overline{\mathbf{B}}\) and therefore \({\mathbf{B}}\) also possess stationary increments. Now we set $\Omega:=C_{0}^{0,p-\text{var}}(\R,\tilde{T}^{2}(\mathbb{R}^d)) $, ${\mathcal{F}}$ the corresponding Borel $\sigma$-algebra and $\P$ be the law of $\overline{\mathbf{B}}$. We also define $\theta$ by
			\begin{align*}
				(\theta_{s}\omega)(t):=\omega(s)^{-1}\circledast\omega(t+s).
			\end{align*}
			The rest of the proof regarding the construction of the probability measure follows similarly to \cite[Theorem 5]{BRS17}. It remains to show the ergodicity of \(\theta\). This (informally) follows from the fact that fractional Brownian motion is ergodic under the classical shift map \(\hat{\theta}_s : C_{0}^{0,p-\text{var}}(\R, \mathbb{R}^d) \rightarrow C_{0}^{0,p-\text{var}}(\R, \mathbb{R}^d)\), defined by \(\hat{\theta}_s(\omega)(t) := \omega(t+s) - \omega(s)\). To make our claim regarding the ergodicity precise, for a \(x\in C_{0}^{0,1-\text{var}}(\R, \mathbb{R}^d) \), we define
			\begin{align*}
				S(x) = \bigg{(}1 \oplus \big{(}x_{u,v}, x_{u-r, v-r}\big{)} \oplus \big{(} \int_{u}^{v} x_{u,\tau} \otimes \mathrm{d}x_{\tau}, \int_{u}^{v} x_{u-r, \tau-r} \otimes \mathrm{d}x_{\tau} \big{)} \bigg{)}_{u \leq v}.
			\end{align*}
			Then, clearly, \( S(\hat{\theta}_s x) = \theta_s(S(x)) \). We extend this map to \(C_{0}^{0,p-\text{var}}(\mathbb{R}, \mathbb{R}^d)\), which can be defined as limits of Riemann sums on compact sets in the symmetric sense (cf. Definition \eqref{symmetric_integral}); otherwise, we set \( S(x) = \mathbf{1} \).
			Note that from \eqref{FRFF}, for every \( t \in \mathbb{R} \), we can find a subset \( \hat{\Omega}_t \subset C_{0}^{0,p\text{-var}}(\mathbb{R}, \mathbb{R}^d) \) with full measure such that \( S(\hat{\theta}_t x) = \theta_t(S(x)) \) for every \( x \in \hat{\Omega}_t \). Recall that $\left(C_{0}^{0,p\text{-var}}(\mathbb{R}, \mathbb{R}^d), \hat{\mathcal{F}}, \hat{\mathbb{P}}, (\hat{\theta}_t)_{t \in \mathbb{R}} \right)$ is an ergodic dynamical system. Therefore, our ergodicity claim for \( \theta \) follows from \cite[Lemma 3.6]{GVRS22}. See also \cite[Lemma 3]{GS11}.
		\end{proof}
		Now, we can prove that the solutions of the rough delay equation driven by fractional Brownian motions generate a random dynamical system.
		\begin{theorem}\label{SSRR}
			Assume \((\Omega, \mathcal{F}, \mathbb{P}, \theta)\), \(\frac{1}{3} < \beta \leq \gamma < H\), and let \(\mathbf{B}\) be the delayed \(\gamma\)-rough path cocycle constructed in Theorem \ref{thm:B_delayed_cocycle}. Assume \(\xi \in \mathscr{D}_{\mathbf{B}(\omega)}^{\beta}([-r,0])\) and consider the following equation:
			\begin{align}\label{equation+drift2}
				\begin{split}
					\mathrm{d}y_{t} &= A\left(y_t, \int_{-r}^{0} y_{\theta+t} \pi(\mathrm{d}\theta)\right) \mathrm{d}t + G \left(y_{t}, y_{t-r}\right) \mathrm{d}\mathbf{B}_{t}(\omega), \\
					y_{s} &= \xi_{s} \quad \text{for} \quad -r \leq s \leq 0,
				\end{split}
			\end{align}
			with the same conditions as in Theorem \ref{SSS}. Then this equation admits a unique solution globally in time. Assume \((\phi^{t}_{\mathbf{B}(\omega)}(\xi))_{t \geq 0}\) is this solution. For \(m \in \mathbb{N}\), define
			\[
			\varphi^{m}_{\omega}(\xi) := \left(\phi^{(m-1)r+t}_{\mathbf{B}(\omega)}(\xi)\right)_{0 \leq t \leq r}.
			\]
			In this case, we indeed have \(\varphi^{m}_{\omega}(\xi) \in \mathscr{D}_{\mathbf{B}(\theta_{mr}\omega)}^{\beta}([-r,0])\). Moreover, the following map
			\begin{align}\label{TVBN}
				\begin{split}
					&\varphi^{m}_{\omega}: \mathscr{D}_{\mathbf{B}(\omega)}^{\beta}([-r,0]) \rightarrow \mathscr{D}_{\mathbf{B}(\theta_{mr}\omega)}^{\beta}([-r,0]), \\
					& \xi \longrightarrow \varphi^{m}_{\omega}(\xi),
				\end{split}
			\end{align}
			is continuous and satisfies the cocycle property, i.e.,
			\begin{align}\label{VCCA}
				\varphi^{p+q}_{\omega} = \varphi^{p}_{\theta_{qr} \omega} \circ \varphi^{q}_{\omega},
			\end{align}
			for all \(p, q \in \mathbb{N}\).
		\end{theorem}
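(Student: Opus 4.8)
The plan is to push the whole statement down to the pathwise level and then exploit the cocycle structure of $\mathbf{B}$ together with the uniqueness part of Theorem \ref{SSS}. I work on the canonical space produced in Theorem \ref{thm:B_delayed_cocycle}, so that $\mathbf{B}(\omega)$ is an indistinguishable version of the identity and all statements below can be read for \emph{every} $\omega$. Fix such an $\omega$. Since $\mathbf{B}(\omega)$ is a delayed $\gamma$-rough path with $\gamma>\tfrac13$, Theorem \ref{SSS} applies verbatim and yields, for each $\xi\in\mathscr{D}_{\mathbf{B}(\omega)}^{\beta}([-r,0])$, a unique global solution $(\phi_{\mathbf{B}(\omega)}^{t}(\xi))_{t\ge 0}\in\mathscr{D}_{\mathbf{B}(\omega)}^{\beta}([0,\infty))$ of \eqref{equation+drift2} together with the polynomial a priori bound \eqref{BUN} on every window of length $r$; this already settles existence and uniqueness.

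The next step is a \emph{shift identification}. The cocycle property \eqref{eqn:cocycle_rough_delay} gives $\mathbf{B}_{qr+s,\,qr+t}(\omega)=\mathbf{B}_{s,t}(\theta_{qr}\omega)$ for all $s,t\in\mathbb{R}$ and all $q\in\mathbb{N}$, simultaneously for the three components $B$, $\mathbb{B}$, and the delayed Lévy area $\mathbb{B}(-r)$. Let $T_{qr}$ be the operator that sends a controlled path $z$ over an interval $I$, based on $\mathbf{B}(\omega)$, to the path $s\mapsto z_{qr+s}$ over $I-qr$ with Gubinelli derivative shifted accordingly. Every seminorm entering the norm \eqref{FAS} is translation invariant, and the transformation of the rough-path data above shows that $T_{qr}$ is an isometric isomorphism onto the corresponding controlled-path space based on $\mathbf{B}(\theta_{qr}\omega)$. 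Since the restriction of $\phi_{\mathbf{B}(\omega)}^{\cdot}(\xi)$ to $[(m-1)r,mr]$ lies in $\mathscr{D}_{\mathbf{B}(\omega)}^{\beta}([(m-1)r,mr])$, applying $T_{mr}$ to it --- which reparametrizes $[(m-1)r,mr]$ onto $[-r,0]$ --- yields exactly $\varphi_{\omega}^{m}(\xi)$, whence $\varphi_{\omega}^{m}(\xi)\in\mathscr{D}_{\mathbf{B}(\theta_{mr}\omega)}^{\beta}([-r,0])$.

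For the cocycle identity, fix $p,q\in\mathbb{N}$ and set $\tilde y_t:=\phi_{\mathbf{B}(\omega)}^{qr+t}(\xi)$ for $t\ge-r$. I claim that $(\tilde y_t)_{t\ge0}$ solves \eqref{equation+drift2} driven by $\mathbf{B}(\theta_{qr}\omega)$ with initial segment $\varphi_{\omega}^{q}(\xi)$: the drift term is a Lebesgue integral, hence invariant under the substitution $\tau\mapsto qr+\tau$, while in the rough-integral term one compares the compensated Riemann sums of the definition \eqref{dfn} (applied to the delayed controlled path $\tau\mapsto G(\tilde y_\tau,\tilde y_{\tau-r})$) and uses $\mathbf{B}_{qr+s,qr+t}(\omega)=\mathbf{B}_{s,t}(\theta_{qr}\omega)$ for all three components at once, obtaining $\int_s^tG(\tilde y_\tau,\tilde y_{\tau-r})\,\mathrm d\mathbf{B}_\tau(\theta_{qr}\omega)=\int_{qr+s}^{qr+t}G(\phi^\sigma_{\mathbf{B}(\omega)}(\xi),\phi^{\sigma-r}_{\mathbf{B}(\omega)}(\xi))\,\mathrm d\mathbf{B}_\sigma(\omega)$. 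Combined with the $\omega$-equation this shows that $\tilde y$ satisfies the shifted equation, and the uniqueness part of Theorem \ref{SSS} forces $\phi_{\mathbf{B}(\omega)}^{qr+t}(\xi)=\phi_{\mathbf{B}(\theta_{qr}\omega)}^{t}(\varphi_{\omega}^{q}(\xi))$ for all $t\ge0$ (on $[-r,0]$ both sides are $\varphi_\omega^q(\xi)$ by construction). Inserting this into the definition of $\varphi^{p}$,
\[
\varphi_{\theta_{qr}\omega}^{p}\big(\varphi_{\omega}^{q}(\xi)\big)=\Big(\phi_{\mathbf{B}(\theta_{qr}\omega)}^{(p-1)r+t}\big(\varphi_{\omega}^{q}(\xi)\big)\Big)_{0\le t\le r}=\Big(\phi_{\mathbf{B}(\omega)}^{(p+q-1)r+t}(\xi)\Big)_{0\le t\le r}=\varphi_{\omega}^{p+q}(\xi),
\]
which is \eqref{VCCA}.

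Finally, continuity of $\varphi_\omega^m$ follows by induction on $m$: taking $p=1$, $q=m-1$ in the identity just proved gives $\varphi_\omega^m=\varphi_{\theta_{(m-1)r}\omega}^{1}\circ\varphi_\omega^{m-1}$, and each $\varphi_{\omega'}^{1}$ is the composition of the continuous solution map $\phi_{\mathbf{B}(\omega')}\colon\mathscr{D}_{\mathbf{B}(\omega')}^{\beta}([-r,0])\to\mathscr{D}_{\mathbf{B}(\omega')}^{\beta}([0,r])$ of Theorem \ref{SSS} with the isometry $T_{r}$, hence continuous. The only genuinely delicate point in the whole argument is the rough-integral transformation in the shift identification: one has to be sure the cocycle relation is used for $B$, $\mathbb{B}$ \emph{and} $\mathbb{B}(-r)$ simultaneously when passing to the limit in \eqref{dfn}; everything else reduces to translation invariance of the controlled-path norms and to the uniqueness statement of Theorem \ref{SSS}.
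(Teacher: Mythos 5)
Your argument is correct and, while the paper's own proof is essentially a pointer to \cite[Theorem 3.12]{GVRS22}, what you have written out is precisely the standard argument that such a reference encodes: existence and uniqueness come from Theorem~\ref{SSS}; the cocycle identity is reduced to a shift identification using \eqref{eqn:cocycle_rough_delay} applied \emph{simultaneously} to $B$, $\mathbb{B}$, and $\mathbb{B}(-r)$ in the compensated Riemann sums of \eqref{dfn}, combined with pathwise uniqueness; and continuity follows from the cocycle factorization together with the continuity of the one-step solution map from Theorem~\ref{SSS}. The one point worth stating more explicitly is that $\varphi^m_\omega(\xi)$ as written is indexed by $t\in[0,r]$, so the inclusion $\varphi^m_\omega(\xi)\in\mathscr{D}_{\mathbf{B}(\theta_{mr}\omega)}^{\beta}([-r,0])$ is to be read after the reparametrization $s=t-r$, which is exactly what your isometry $T_{mr}$ performs; your formulation makes this implicit identification precise, which is an improvement in readability over the source.
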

		\begin{proof}
			First, note that from Theorem \ref{SSS}, the solution exists. The rest of the proof is almost identical to the proof in \cite[Theorem 3.12]{GVRS22}.
		\end{proof}
		\begin{remark}
			In equation \eqref{equation+drift2}, the diffusion term, denoted by \(G\), can be assumed to be a linear function. Specifically, we can assume that \(G \in L(\mathbb{R}^n \times \mathbb{R}^n, L(\mathbb{R}^d, \mathbb{R}^n))\). Under this assumption, our solution exists globally in time for every initial value \(\xi \in \mathscr{D}_{\mathbf{B}(\omega)}^{\beta}[-r,0]\), and the cocycle property \eqref{VCCA} holds. We do not provide an explicit proof here, as this result is a straightforward modification of Theorem \ref{SSS} and Theorem \ref{SSRR}.
		\end{remark}
		While the collection of spaces \((\mathscr{D}_{\mathbf{B}(\omega)}^{\beta}[-r,0])_{\omega \in \Omega}\) allows us to define a generalized cocycle, these spaces are not separable and thus are not suitable for further investigations. To remedy this problem, we need to slightly modify these spaces and define our cocycle on some invariant subspace of \(\mathscr{D}_{\mathbf{B}(\omega)}^{\beta}[-r,0]\) (under \(\varphi\)). This technique was first carry out in \cite{GVRS22}, for the Brownian motions. Before introducing these modified spaces, we need to recall the following abstract definition:  
		\begin{definition}\label{MESSSAA}
			Let $(\Omega,\mathcal{F})$ be a measurable space. We call a family of Banach spaces $\{E_{\omega}\}_{\omega \in \Omega}$ a \emph{measurable field of Banach spaces} if there is a set of sections
			\begin{align*}
				\Delta \subset \prod_{\omega \in \Omega} E_{\omega}
			\end{align*}
			with the following properties:
			\begin{itemize}
				\item[(i)] $\Delta$ is a linear subspace of $\prod_{\omega \in \Omega} E_{\omega}$.
				\item[(ii)] There is a countable subset $\Delta_0 \subset \Delta$ such that for every $\omega \in \Omega$, the set $\{g(\omega)\, :\, g \in \Delta_0\}$ is dense in $E_{\omega}$.
				\item[(iii)] For every $g \in \Delta$, the map $\omega \mapsto \| g(\omega) \|_{E_{\omega}}$ is measurable.
			\end{itemize}
			\begin{remark}
				Alternatively, when \(\Omega\) is a metric space, we can define a stronger concept called \emph{continuous fields of Banach spaces}. In this case, the third condition in Definition \ref{MESSSAA} must be replaced by:
				\begin{itemize}
					\item[(iii)$^{\prime}$] For every \(g \in \Delta\), the map \(\omega \mapsto \| g(\omega) \|_{E_{\omega}}\) is continuous.
				\end{itemize}
			\end{remark}
		\end{definition} 
		We have the following theorem.
		\begin{theorem}\label{BETA}
			Assume \((\Omega, \mathcal{F}, \mathbb{P}, \theta)\), \(\frac{1}{3} < \beta < \gamma < H\), and let \(\mathbf{B}\) be the delayed \(\gamma\)-rough path cocycle constructed in Theorem \ref{thm:B_delayed_cocycle}. For every \(\omega \in \Omega\), define
			\[
			\mathscr{D}_{\mathbf{B}(\omega)}^{\beta, \gamma}([-r,0]) := \overline{\mathscr{D}_{\mathbf{B}(\omega)}^{\beta}([-r,0])}^{\mathscr{D}_{\mathbf{B}(\omega)}^{\gamma}([-r,0])},
			\]
			where \(\overline{\mathscr{D}_{\mathbf{B}(\omega)}^{\beta}([-r,0])}^{\mathscr{D}_{\mathbf{B}(\omega)}^{\gamma}([-r,0])}\) denotes the closure of \(\mathscr{D}_{\mathbf{B}(\omega)}^{\beta}([-r,0])\) in \(\mathscr{D}_{\mathbf{B}(\omega)}^{\gamma}([-r,0])\). Then \(\{\mathscr{D}_{\mathbf{B}(\omega)}^{\beta, \gamma}([-r,0])\}_{\omega \in \Omega}\) forms a measurable field of Banach spaces.
		\end{theorem}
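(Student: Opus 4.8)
The plan is to build an explicit family of sections and verify the three items of Definition \ref{MESSSAA} one by one, running parallel to the Brownian construction of \cite[Section 3]{GVRS22} but keeping the dependence on the delayed rough path cocycle $\mathbf{B}$ from Theorem \ref{thm:B_delayed_cocycle} visible throughout. Note first that each $E_\omega := \mathscr{D}_{\mathbf{B}(\omega)}^{\beta,\gamma}([-r,0])$ is a closed subspace of a Banach space and hence Banach; separability will come out of item (ii) below.

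\textbf{The sections.} For smooth $f \in C^\infty([-r,0],\mathbb{R}^n)$ and $h \in C^\infty([-r,0],L(\mathbb{R}^d,\mathbb{R}^n))$ I would set
\[
g_{f,h}(\omega) := \Big(\, t \mapsto f(t) + h(t)\,(\delta B(\omega))_{-r,t}\,,\; h\,\Big),
\]
the controlled path based on $B(\omega)$ whose Gubinelli derivative is $h$; a one-line computation gives that its remainder at $(s,t)$ equals $(\delta f)_{s,t} + (\delta h)_{s,t}(\delta B(\omega))_{-r,t}$, which is Lipschitz up to the factor $\|B(\omega)\|_{\gamma,[-r,0]}$, so indeed $g_{f,h}(\omega) \in \mathscr{D}_{\mathbf{B}(\omega)}^{\gamma}([-r,0]) \subset \mathscr{D}_{\mathbf{B}(\omega)}^{\beta,\gamma}([-r,0])$. (If a genuinely $2\gamma$-Hölder remainder turns out to be more convenient for the density step, one replaces $h(t)(\delta B(\omega))_{-r,t}$ by $\int_{-r}^{t} h \,\mathrm{d}\mathbf{B}(\omega)$ in the sense of Theorem \ref{DCVZVXZ}.) Since $g_{f_1,h_1}(\omega) + \lambda\, g_{f_2,h_2}(\omega) = g_{f_1+\lambda f_2,\, h_1+\lambda h_2}(\omega)$, the collection $\Delta := \{\, g_{f,h} : f,h \text{ smooth}\,\}$ is a linear subspace of $\prod_{\omega}E_\omega$, which settles (i), and $\Delta_0 := \{\, g_{f,h} : f,h \text{ with rational polynomial coefficients}\,\}$ is a countable subset of $\Delta$.

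\textbf{Measurability (iii).} For a fixed $g = g_{f,h} \in \Delta$ the number $\|g(\omega)\|_{E_\omega}$ is assembled from $\|f\|_\infty$, $\|h\|_\infty$, $\|h\|_{\beta}$ and the sup-norm and $2\beta$-Hölder seminorm of $t \mapsto f(t) + h(t)(\delta B(\omega))_{-r,t}$ and of its remainder; each of these is a countable supremum of expressions built from the values of $B(\omega)$ at finitely many times divided by powers of time differences, hence a measurable function of $\omega$, because $\mathbf{B}$ is (up to indistinguishability) the canonical process on the Polish space $\Omega$. Thus $\omega \mapsto \|g(\omega)\|_{E_\omega}$ is measurable; with the interpolation estimates of \cite[Chapter 15]{FV10} one in fact gets continuity, i.e. condition (iii)$'$, so the field is even a continuous field of Banach spaces.

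\textbf{Density (ii) --- the main obstacle.} This is the only substantial point. Since $\mathscr{D}_{\mathbf{B}(\omega)}^{\beta}([-r,0])$ is dense in $E_\omega$ by the very definition of $E_\omega$, it suffices to show that the paths $g_{f_k,h_j}(\omega)$ approximate arbitrary controlled paths over $[-r,0]$ in the relevant controlled-path norm; this is a purely deterministic statement, proved exactly as in \cite{GVRS22}. One first approximates the Gubinelli derivative $\zeta'$ by a smooth $h_j$ --- here the strict inequality $\beta < \gamma$ is essential, because a $\gamma$-Hölder map is approximated by its mollifications in $\beta$-Hölder norm but not in $\gamma$-Hölder norm --- and then approximates the level-zero part $\zeta - \zeta'(\delta B(\omega))_{-r,\cdot}$, together with the resulting remainder, by a smooth $f_k$, using the standard rough-path estimates to control the $2\beta$-Hölder norm of the error. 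The one feature that is genuinely random, and the reason the sections $g_{f,h}(\omega)$ have to be written as explicit expressions in $\mathbf{B}(\omega)$, is that the \emph{same} countable family $(f_k,h_j)$ must be simultaneously dense in every fibre $E_\omega$; this is automatic from the explicit form of $g_{f_k,h_j}(\omega)$ together with the fact that the deterministic approximation bounds depend on $\omega$ only through $\|\mathbf{B}(\omega)\|_{\gamma,[-r,0]}$. Once (i)--(iii) are in place, Definition \ref{MESSSAA} is satisfied and the theorem follows. I expect essentially all of the difficulty to lie in making the two-step Hölder approximation of the density argument rigorous; everything else is bookkeeping.
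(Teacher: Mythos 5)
The overall skeleton — build explicit sections from smooth data and the rough path cocycle, take a countable family with rational parameters, verify Definition \ref{MESSSAA} items (i)--(iii), and observe that density is the only genuinely delicate step — is exactly the strategy the paper follows. The linearity and measurability parts of your argument are fine, and your observation that the field is actually \emph{continuous} (condition (iii)$^{\prime}$) matches a remark the authors make afterwards.

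However, your choice of sections is genuinely different from the paper's, and this matters for the density step. The paper takes
\[
\Gamma_{\omega}(f,g)(t) := \int_{-r}^{t} f(\tau)\,\mathrm{d}B_{\tau}(\omega) + g(t),
\]
a Young integral plus a path, with $f,g \in C^{0,1-\epsilon}$; its Gubinelli derivative is $f$ and its remainder is $\int_{s}^{t}(f-f_s)\,\mathrm{d}B+(\delta g)_{s,t}$. Your $g_{f,h}(\omega)= f + h\,(\delta B(\omega))_{-r,\cdot}$ has Gubinelli derivative $h$ and remainder $(\delta f)_{s,t} + (\delta h)_{s,t}(\delta B(\omega))_{-r,t}$. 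The two families are related by integration by parts, but only up to an $\omega$-dependent rearrangement, so neither is contained in the other. The paper then discharges density by citing \cite[Theorem 3.10(ii)]{GVRST22}; you instead sketch a two-step mollification. That is where I see a gap.

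Concretely: fix $\zeta$ with Gubinelli derivative $\zeta'$ and remainder $\zeta^{\#}$, and a candidate section $g=g_{f,h}(\omega)$. The remainder error is
\[
\zeta^{\#}_{s,t} - g^{\#}_{s,t} \;=\; \zeta^{\#}_{s,t} - (\delta f)_{s,t} - (\delta h)_{s,t}(\delta B(\omega))_{-r,t}.
\]
For density you must make $\|\zeta^{\#}-g^{\#}\|_{2\beta}$ small. Mollifying $\zeta'$ at scale $\mu$ to produce $h$ makes $\|\zeta'-h\|_{\beta}$ small (this part is fine, and your remark that $\beta<\gamma$ is essential is exactly right), but then $\|h\|_{\mathrm{Lip}}\sim \mu^{\gamma-1}$ blows up, so $\|(\delta h)(\delta B)_{-r,\cdot}\|_{2\gamma}$ blows up, and the interpolation estimate $\|E\|_{2\beta}\lesssim \|E\|_{\infty}^{1-\beta/\gamma}\|E\|_{2\gamma}^{\beta/\gamma}$ does not close. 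Tracking the exponents, the straightforward two-scale argument needs something like $\beta<2\gamma/3$, whereas in the regime of the theorem ($\tfrac{1}{3}<\beta<\gamma<\tfrac{1}{2}$) one always has $\beta>2\gamma/3$. The phrase ``using the standard rough-path estimates to control the $2\beta$-Hölder norm of the error'' hides exactly this difficulty; the sewing/Gubinelli estimate also does not help directly, because the cochain $\delta(\zeta^{\#}-g^{\#})_{s,u,t}=(\delta(\zeta'-h))_{s,u}(\delta B)_{u,t}$ is only $O(|t-s|^{\beta+\gamma})$ and $\beta+\gamma<1$. So you cannot simply assert the density for your family of sections; you would have to either re-derive the argument of \cite[Theorem 3.10]{GVRST22} for the form $f+h(\delta B)_{-r,\cdot}$, or convert your sections to the paper's Young-integral form $\int_{-r}^{\cdot}f\,\mathrm{d}B+g$ and invoke that reference.

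In short: linearity and measurability are fine; the section family is a legitimate alternative; but the density argument, which you correctly identify as the crux, is not closed by the sketch you give, and the estimates break down precisely in the parameter range the theorem covers.
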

		\begin{proof}
			For \(0 < \epsilon < \gamma\), let \(C^{0,1-\epsilon}=C^{0,1-\epsilon}[-r,0]\) denote the space obtained by taking the closure of \(C^\infty\) with respect to the \((1-\epsilon)\)-Hölder norm. It is not hard to see that 
			this space is a separable Banach space. Set
			\begin{align}\label{GGBBGG}
				\begin{split}
					&\Gamma_{\omega}:=(\prod_{d\times n}C^{0,1-\epsilon})\times \prod_{n}C^{0,1-\epsilon}\longrightarrow \mathscr{D}_{\mathbf{B}(\omega)}^{\beta, \gamma}([-r,0]),
					\\& \Gamma_{\omega}((f,g))(t):=\int_{-r}^{t}f(\tau)\mathrm{d}B_\tau+g(\tau):=\sum_{1\leq i\leq d}\int_{-r}^{t}f_{i}(\tau)\mathrm{d}B^{i}_{\tau}(\omega)+g(\tau),
				\end{split}
			\end{align}
			where the integrals are understood in the usual Young's sense and $f_{i}:[-r,0]\rightarrow\mathbb{R}^n$. Note that since \(1 - \epsilon + \gamma > 1\), the integrals are well-defined. Now we define
			\begin{align*}
				\Delta:=\left\lbrace \prod_{\omega\in\Omega}\Gamma_{\omega}(h)\big|\ \  h=(f,g)\in (\prod_{d\times n}C^{0,1-\epsilon})\times \prod_{n}C^{0,1-\epsilon} \right\rbrace.
			\end{align*}
			Select a countable dense subset \((\prod_{d\times n}C^{0,1-\epsilon})\times \prod_{n}C^{0,1-\epsilon}\) and define
			\begin{align*}
				\Delta_0:=\left\lbrace \prod_{\omega\in\Omega}\Gamma_{\omega}(h)\big|\ \  h\in \mathcal{S}_{0} \right\rbrace.
			\end{align*}
			We claim that by these constructions, we can fulfill the conditions of Definition \ref{MESSSAA} for \(\prod_{\omega \in \Omega} \mathscr{D}_{\mathbf{B}(\omega)}^{\beta, \gamma}([-r,0])\).
			\begin{itemize}
				\item[(i)] By definition, it is clear that $\Delta$ is a linear subspace of $\prod_{\omega \in \Omega} \mathscr{D}_{\mathbf{B}(\omega)}^{\beta, \gamma}([-r,0])$.
				\item [(ii)] The density claim for \(\Delta_{0}\) follows from item (ii) in \cite[Theorem 3.10]{GVRST22}.
				\item [(iii)] Let $h\in (\prod_{d\times n}C^{0,1-\epsilon})\times \prod_{n}C^{0,1-\epsilon}$, we must check that the following function is (Borel) measurable
				\begin{align*}
					\omega\longrightarrow\Vert \Gamma_{\omega}(h)\Vert_{\mathscr{D}_{\mathbf{B}(\omega)}^{\beta,\gamma}([-r,0])}.
				\end{align*}
				This is clear since, in \eqref{GGBBGG}, the integrals are defined in the Young sense. By the usual partition approximation, the measurability can be easily verified (see \cite[Proposition 3.15]{GVRS22} for more details).
			\end{itemize}
			So, our claim is proved.
		\end{proof} 
		\begin{remark}
			Since our probability space is \(\Omega := C_{0}^{0,p\text{-var}}(\mathbb{R}, \tilde{T}^{2}(\mathbb{R}^d))\) (cf. Theorem \ref{thm:B_delayed_cocycle}), and this space is a Polish space, one can show that \(\{\mathscr{D}_{\mathbf{B}(\omega)}^{\beta, \gamma}([-r,0])\}_{\omega \in \Omega}\) forms a continuous field of Banach spaces, which is clearly a stronger structure. See \cite[Theorem 3.10]{GVRST22}, where a much more general statement is proven for the space of branched rough paths.
		\end{remark}
		\begin{remark}
			A similar result for the case where \(B\) is a Brownian motion is proven in \cite[Proposition 3.15]{GVRS22}. However, this result is specific to Brownian motion, and unlike Theorem \ref{BETA}, the condition on \(\beta\) is more restrictive (see \cite[Equation 3.2]{GVRS22}). Theorem \ref{BETA} not only generalizes this result to fractional Brownian motion but also relaxes the restriction on the choice of \(\beta\).
		\end{remark}
		\begin{remark}
			In the case where \( H = \frac{1}{2} \), i.e., the Brownian motion case, we can alternatively define
			\begin{align*}
				\mathbf{B}_{s,t}^{\text{It\=o}} := \left( B_{s,t}, \mathbb{B}_{s,t}^{\text{It\=o}}, \mathbb{B}_{s,t}^{\text{It\=o}}(-r) \right) := \left(B_t - B_s, \int_s^t (B_u - B_s) \otimes \mathrm{d}B_u, \int_s^t (B_{u - r} - B_{s - r}) \otimes \mathrm{d}B_u \right),
			\end{align*}
			where the stochastic integrals are understood in the It\=o sense. In this context, the Kolmogorov-Chentsov theorem allows us to recover a delayed \(\gamma\)-rough path, which is not geometric, as discussed in \cite{GVRS22}[Proposition 2.2]. Moreover, as explored in \cite{GVRS22}, all the results in this paper can also be expressed in terms of \(\mathbf{B}^{\text{It\=o}}\).
		\end{remark}
		Recall that our solution to Equation \ref{equation+drift2} defines a cocycle over the family of random Banach spaces \(\{\mathscr{D}_{\mathbf{B}(\omega)}^{\beta}([-r,0])\}_{\omega \in \Omega}\) for every \(\frac{1}{3}<\beta \leq\gamma< H\). As previously mentioned, this family of random Banach spaces possesses advantageous properties that enable us to define a generalized cocycle for the solution to Equation \ref{equation+drift2}. However, these Banach spaces are not separable, which presents a technical challenge: it hinders our ability to prove certain measurability conditions necessary for the Multiplicative Ergodic Theorem.
		To overcome this obstacle, we consider the parameters \(\frac{1}{3} < \beta < \gamma < H\) and work with \(\mathscr{D}_{\mathbf{B}(\omega)}^{\beta, \gamma}([-r,0])\) instead. By definition, these spaces are invariant under the solution, meaning that the properties \eqref{TVBN} and \eqref{VCCA} hold when we replace \(\mathscr{D}_{\mathbf{B}(\omega)}^{\beta}([-r,0])\) with \(\mathscr{D}_{\mathbf{B}(\omega)}^{\beta, \gamma}([-r,0])\). We are now ready to present our main results in this chapter regarding the existence of invariant manifolds. Similar to deterministic dynamical systems, we are interested in certain invariant subsets around the equilibrium. In the context of random dynamical systems, the counterpart to an equilibrium point is the stationary point, which will be defined in the following definition.
		\begin{definition}\label{STATA}
			Let \(\frac{1}{3} < \beta < \gamma < H\) be fixed parameters. We define \(E_\omega := \mathscr{D}_{\mathbf{B}(\omega)}^{\beta, \gamma}([-r, 0])\). Let \(\varphi\) be the solution of equation \ref{equation+drift2}, which defines a cocycle with respect to the metric dynamical system \((\Omega, \mathcal{F}, \P, \theta)\). A collection of random points \(\{Y_\omega\}_{\omega \in \Omega}\) is called a \textit{stationary point} of \(\varphi\) if the following conditions are satisfied:
			\begin{itemize}
				\item[(i)] \(Y_\omega \in E_\omega\),
				\item[(ii)] \(\varphi^{m}_{\omega}(Y_\omega) = Y_{\theta_{mr}\omega}\), 
				\item[(iii)] \(\omega \mapsto \|Y_\omega\|_{E_\omega}\) is measurable.
			\end{itemize}
		\end{definition}
		\begin{remark}
			A simple example of a stationary point is the zero path when \( G(0,0) = 0 \).
		\end{remark}
		The following result guarantees that when equation \ref{equation+drift2} admits a stationary point, the linearized equation generates certain deterministic values (Lyapunov exponents) that determine the dynamical behavior of our solution.
		\begin{proposition}\label{MEEETTTT}
			Consider the equation in Theorem \ref{SSRR} and assume that \(\varphi\) is the cocycle defined for this equation (restricted to \((E_{\omega})_{\omega \in \Omega}\)). Further, assume that \(\{Y_\omega\}_{\omega \in \Omega}\) is a stationary point of this cocycle such that
			\begin{align}\label{STATAT}
				\Vert Y_\omega\Vert_{E_\omega} \in \bigcap_{p \geq 1} L^{p}(\Omega).
			\end{align}
			Then \(\varphi\) is differentiable (in the Fréchet sense), and for \(\psi^{m}_{\omega} := D_{Y_{\omega}}\varphi^{m}\), the linear cocycle property holds. That is, for \(p, q \in \mathbb{N}\), we have
			\[
			\psi^{p+q}_{\omega} = \psi^{p}_{\theta_{rq} \omega} \circ \psi^{q}_{\omega}.
			\]
			Furthermore, for every \(m \in \mathbb{N}\), the function \(\psi^{m}_{\omega}: E_\omega \to E_{\theta_{mr} \omega}\) is linear. For \(\mu \in \mathbb{R}\), set
			\[
			F_{\mu}(\omega) := \left\{ x \in E_\omega : \ \limsup_{m \rightarrow \infty} \frac{1}{m} \log \Vert \psi^{m}_{\omega}(x) \Vert_{E_{\theta_{mr}\omega}} \leq \mu \right\}.
			\]
			Then, we have:
			\begin{itemize}
				\item There exists a measurable $\theta_r$-invariant set $\tilde{\Omega} \subset \Omega$ of full measure, along with a decreasing sequence $\{\mu_i\}_{i \geq 1}$ of Lyapunov exponents, $\mu_i \in [-\infty, \infty)$, which have the properties $\lim_{n \to \infty} \mu_n = -\infty$ and either $\mu_i > \mu_{i+1}$ or $\mu_i = \mu_{i+1} = -\infty$. For each $\omega \in \tilde{\Omega}$, we have $F_{\mu_1}(\omega) = E_{\omega}$, and
				\begin{align}
					x \in F_{\mu_{i}}(\omega) \setminus F_{\mu_{i+1}}(\omega) \quad \text{if and only if} \quad \lim_{m \rightarrow \infty} \frac{1}{m} \log \Vert \psi^m_{\omega}(x) \Vert_{E_{\theta_{mr}\omega}} = \mu_{i}.
				\end{align}
				Moreover, there exist integers $m_1, m_2, \ldots$ such that $\operatorname{codim} F_{\mu_j}(\omega) = m_1 + \ldots + m_{j-1}$ for each $\omega \in \tilde{\Omega}$. Furthermore, for every $i \geq 1$ with $\mu_i > \mu_{i+1}$ and $\omega \in \tilde{\Omega}$, there exists an $m_i$-dimensional subspace $H^i_\omega$ with the following characteristics:
				\begin{itemize}
					\item[(i)] \textbf{(Invariance)}\ \ $\psi_{\omega}^k(H^i_{\omega}) = H^i_{\theta_{kr} \omega}$ for each $k \geq 0$.
					\item[(ii)] \textbf{(Splitting)}\ \ $H_{\omega}^i \oplus F_{\mu_{i+1}}(\omega) = F_{\mu_i}(\omega)$. Specifically,
					\begin{align*}
						E_{\omega} = H^1_{\omega} \oplus \cdots \oplus H^i_{\omega} \oplus F_{\mu_{i+1}}(\omega).
					\end{align*}
					\item[(iii)] \textbf{(Fast-growing subspace)}\ \ For every $h_{\omega} \in H^{i}_{\omega} \setminus \{0\}$,
					\begin{align*}
						\lim_{m \rightarrow \infty} \frac{1}{m} \log \Vert \psi^{m}_{\omega}(h_{\omega}) \Vert_{E_{\theta_{mr}\omega}} = \mu_{i},
					\end{align*}
					and
					\begin{align*}
						\lim_{m \rightarrow \infty} \frac{1}{m} \log \Vert (\psi^{m}_{\theta_{-mr}\omega})^{-1}(h_{\omega}) \Vert_{E_{\omega}} = -\mu_{i}.
					\end{align*}
				\end{itemize} 
			\end{itemize}
		\end{proposition}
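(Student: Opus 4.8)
The plan is to obtain this statement as a direct application of the Multiplicative Ergodic Theorem for linear cocycles acting on a measurable field of Banach spaces proved in \cite{GVR23}, applied to the linearization of $\varphi$ at the stationary point. Concretely, I would proceed in four steps: (1) show that $\varphi^m_\omega$ is Fr\'echet differentiable at $Y_\omega$ and that $\psi^m_\omega := D_{Y_\omega}\varphi^m$ defines a \emph{measurable} linear cocycle over $(\Omega,\mathcal F,\P,\theta_r)$ acting between the separable Banach spaces $E_\omega = \mathscr D^{\beta,\gamma}_{\mathbf B(\omega)}([-r,0])$; (2) verify the integrability hypothesis $\omega\mapsto \log^+\|\psi^1_\omega\|_{E_\omega\to E_{\theta_r\omega}}\in L^1(\Omega)$; (3) verify the quasi-compactness hypothesis, namely that $\psi^m_\omega$ is a compact operator for all $m$ large enough; then (4) invoke \cite{GVR23} to obtain the Lyapunov exponents $\{\mu_i\}$, the Oseledets filtration $\{F_{\mu_i}(\omega)\}$ and the invariant finite-dimensional fast subspaces $H^i_\omega$ with properties (i)--(iii), the spectrum being deterministic because $(\Omega,\mathcal F,\P,\theta_r)$ is ergodic (Theorem \ref{thm:B_delayed_cocycle}).

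\textbf{Differentiability, cocycle property, measurability.}
Fr\'echet differentiability of $\xi\mapsto\phi_{\mathbf X}(\xi)$, and hence of each $\varphi^m_\omega$, is supplied by Theorem \ref{DCXSZ}; since the family $\{\mathscr D^{\beta,\gamma}_{\mathbf B(\omega)}([-r,0])\}_{\omega}$ is invariant under $\varphi$ and each $E_\omega$ is a closed subspace of $\mathscr D^{\beta}_{\mathbf B(\omega)}([-r,0])$, the derivative restricts to bounded linear maps $\psi^m_\omega\colon E_\omega\to E_{\theta_{mr}\omega}$. Differentiating the cocycle identity \eqref{VCCA}, i.e.\ $\varphi^{p+q}_\omega=\varphi^p_{\theta_{qr}\omega}\circ\varphi^q_\omega$, at $Y_\omega$ and using the stationarity $\varphi^q_\omega(Y_\omega)=Y_{\theta_{qr}\omega}$ together with the chain rule yields $\psi^{p+q}_\omega=\psi^p_{\theta_{qr}\omega}\circ\psi^q_\omega$. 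The measurability of $\omega\mapsto\psi^1_\omega$ in the sense required for a cocycle on a measurable field of Banach spaces follows from the continuity of the solution map (Theorem \ref{SSS}), the differentiability statement, and the explicit description of the generating sections $\Delta$ from the proof of Theorem \ref{BETA}, exactly as in \cite[Section 3]{GVRS22}.

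\textbf{Integrability.}
By Theorem \ref{DCXSZ} there is a polynomial $Q_1$ with
\[
\log^+\|\psi^1_\omega\|_{E_\omega\to E_{\theta_r\omega}}\;\le\;Q_1\bigl(\|Y_\omega\|_{E_\omega},\,\|\mathbf B(\omega)\|_{\gamma,[0,r]}\bigr).
\]
By the standing assumption \eqref{STATAT} we have $\|Y_\omega\|_{E_\omega}\in\bigcap_{p\ge1}L^p(\Omega)$, and by \eqref{JHNBZS} we have $\|\mathbf B(\omega)\|_{\gamma,[0,r]}\in\bigcap_{p\ge1}L^p(\Omega)$; since $Q_1$ is a polynomial, H\"older's inequality gives $\log^+\|\psi^1_\omega\|\in L^1(\Omega)$. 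Using the cocycle property and the $\theta_r$-invariance of $\P$, the analogous bound holds for every fixed $m$.

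\textbf{Compactness and conclusion.}
The key --- and technically most delicate --- point is that $\psi^m_\omega$ is a \emph{compact} operator for $m$ large enough; this is precisely where passing to the spaces $\mathscr D^{\beta,\gamma}$ with $\beta<\gamma$ is used. The mechanism, identical to the Brownian singular case in \cite{GVR21,GVRS22}, is the smoothing effect of the delay: after evolving over two delay periods the solution segment on $[r,2r]$ has Gubinelli derivative $G(\phi^{\cdot}_{\mathbf B(\omega)}(\xi),\phi^{\cdot-r}_{\mathbf B(\omega)}(\xi))$, whose second argument is the \emph{solution} rather than the rough initial datum, so $\varphi^2_\omega$ (and likewise $\psi^2_\omega$, via the linear rough delay equation satisfied by the derivative process) maps bounded subsets of $E_\omega$ into bounded subsets of $\mathscr D^{\gamma}_{\mathbf B(\theta_{2r}\omega)}([-r,0])$; combining this with the compact embedding $\mathscr D^{\gamma}\hookrightarrow\mathscr D^{\beta,\gamma}$ (a bounded set of $\gamma$-H\"older controlled paths is relatively compact in the $\beta$-H\"older controlled-path topology, by Arzel\`a--Ascoli applied to the path, the Gubinelli derivative and the remainder) shows that $\psi^m_\omega$ is compact for all $m\ge2$, so the index of compactness of the cocycle equals $-\infty$. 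Given integrability and this compactness, the Multiplicative Ergodic Theorem of \cite{GVR23} applies to $(\psi^m_\omega)$ over $(\Omega,\mathcal F,\P,\theta_r)$ and produces exactly the asserted structure: the $\theta_r$-invariant full-measure set $\tilde\Omega$, the decreasing sequence $\mu_i\downarrow-\infty$ with the dichotomy $\mu_i>\mu_{i+1}$ or $\mu_i=\mu_{i+1}=-\infty$, the filtration $F_{\mu_i}(\omega)$ with $F_{\mu_1}(\omega)=E_\omega$ and $\operatorname{codim}F_{\mu_j}(\omega)=m_1+\dots+m_{j-1}$, together with the invariant splitting $E_\omega=H^1_\omega\oplus\dots\oplus H^i_\omega\oplus F_{\mu_{i+1}}(\omega)$ and the exact exponential growth/contraction rates in (iii) (the inverse $(\psi^m_{\theta_{-mr}\omega})^{-1}$ being well defined on the finite-dimensional $H^i_\omega$). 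The local Lipschitz bound on the derivative from Theorem \ref{ASXXZA} is not needed here but is recorded for the subsequent invariant-manifold constructions. The only genuine obstacle is the compactness of an iterate of the linearized cocycle; everything else is a verification of the hypotheses of an already available MET.
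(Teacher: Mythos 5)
Your argument matches the paper's essentially line by line: establish that $\psi^m_\omega := D_{Y_\omega}\varphi^m_\omega$ is a measurable linear cocycle on the field of Banach spaces $\{E_\omega\}$ (Theorem \ref{BETA}, Theorem \ref{SSRR}, chain rule), verify the log-integrability of $\|\psi^1_\omega\|_{L(E_\omega,E_{\theta_r\omega})}$ via the polynomial bound of Theorem \ref{DCXSZ} together with \eqref{STATAT} and \eqref{JHNBZS}, check compactness of an iterate, and invoke \cite[Theorem 1.21]{GVR23}. The only deviation is in the compactness step: you give a self-contained two-step smoothing argument concluding $\psi^m_\omega$ compact for $m\geq 2$, whereas the paper simply cites \cite[Proposition 1.9]{GVRS22} to assert that $\psi^1_\omega$ is already compact --- the one-step gain is available because the Gubinelli derivative $G(\phi^t(\xi),\xi_{t-r})$ on $[0,r]$ depends on the initial datum $\xi$ only through its $C^\gamma$-path values and not through the full controlled-path data $(\xi',\xi^\#)$, so Arzel\`a--Ascoli supplies the $\gamma\to\beta$ compact gain after a single delay period; both variants satisfy the quasi-compactness hypothesis of the MET.
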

		\begin{proof}
			From Theorem \ref{BETA}, the family of random spaces \(\lbrace E_\omega \rbrace_{\omega \in \Omega}\) forms a measurable (continuous) field of Banach spaces. Additionally, Theorem \ref{SSRR} establishes that \(\varphi\) is a nonlinear cocycle, and Theorem \ref{DCXSZ} shows that \(\varphi\) is also differentiable. By the definition of a stationary point, it is straightforward to see that \(\psi_{\omega}^{m} := D_{Y_{\omega}}\varphi^{m}_{\omega}\) is also a cocycle. This can be verified using the fact that \(\varphi^{p+q}_{\omega}(x) = \varphi^{p}_{\theta_{qr} \omega} \circ \varphi^{q}_{\omega}(x)\) and applying the chain rule (by taking derivatives around the stationary points). Furthermore, Theorem \ref{DCXSZ} ensures that for a polynomial \(Q_1\), the following holds:
			\begin{align}\label{AZMOPPLLM}
				\log^{+}(\Vert\psi^{1}_{\omega}\Vert_{L(E_\omega,E_{\theta_r\omega})}) \leq Q_{1}(\Vert Y_\omega\Vert_{E_\omega}, \Vert\mathbf{B}(\omega)\Vert_{\gamma,[0,r]}) \in L^{1}(\Omega),
			\end{align}
			where \(\log^+ x \coloneqq \max \{0, \log x\}\). Note that the integrability claim in the above equation follows from \eqref{STATAT} and the fact that \(\Vert \mathbf{B}(\omega) \Vert_{\gamma, [0,r]} \in \bigcap_{p \geq 1} L^{p}(\Omega)\); cf. \eqref{JHNBZS}. Note that, from \cite[Proposition 1.9]{GVRS22}, \(\psi^{1}_{\omega}\) is also a compact map for every \(\omega \in \Omega\). Now our claim follows from \cite{GVR23}[Theorem 1.21].
		\end{proof}
		Having established the Multiplicative Ergodic Theorem, we can infer the existence of invariant manifolds (stable, center, and unstable) around the stationary points. The existence of stable and unstable manifolds in the case of Brownian motion is already established in \cite{GVR21}. The contribution of the following results is twofold: first, we establish the same results (existence of stable and unstable manifolds) in more general cases, including fractional Brownian motion; second, we deduce the existence of center manifolds, which has been missing in the literature.
		Note that, due to the foundational work we have established, we can prove these results with relative ease by applying some abstract results. The nice part of these three results is that their proofs rely on (almost) the same conditions. The next result guarantees the existence of an invariant stable manifold around the stationary point. This result, in particular, is important because it enables us to conclude local exponential stability when all the Lyapunov exponents are negative.
		\begin{theorem}[Local stable manifolds]\label{thm:stable_manifold}
			Consider the equation in Theorem \ref{SSRR}, and assume that \(\varphi\) is the cocycle defined for this equation, restricted to \((E_{\omega})_{\omega \in \Omega}\). Furthermore, suppose that \(\{Y_\omega\}_{\omega \in \Omega}\) is a stationary point of this cocycle, and that the assumptions made in Theorem \ref{ASXXZA} regarding \(A\), $\pi$ and \(G\) are also satisfied. Let \(\{\mu_j\}_{j \geq 1}\) be the Lyapunov exponents corresponding to the linear cocycle \(\psi^{1}_{\omega} := D_{Y_\omega}\varphi^{1}_{\omega}\), and define \(\mu^{-} := \max\{\mu_j : \mu_j < 0\}\).
			Then, there exists a \(\theta_r\)-invariant set of full measure \(\tilde{\Omega}\) and a family of immersed submanifolds \(S^{\upsilon}_{\text{loc}}(\omega)\) of \(E_\omega\), where \(0 < \upsilon < -\mu^{-} \) and \(\omega \in \tilde{\Omega}\), satisfying the following properties for every \(\omega \in \tilde{\Omega}\):
			\begin{itemize}
				\item[(i)] There exist random variables $\rho_{1}^{\upsilon}(\omega)$ and $\rho_{2}^{\upsilon}(\omega)$, which are positive and finite on $\tilde{\Omega}$, such that
				\begin{align}\label{eqn:rho_temp_1}
					\liminf_{p \to \infty} \frac{1}{p} \log \rho_i^{\upsilon}(\theta_{pr} \omega) \geq 0, \quad i = 1,2,
				\end{align}
				and the following inclusions hold:
				\begin{align*}
					\big{\lbrace} \xi_\omega \in E_\omega \, :\, \sup_{m\geqslant 0}\exp(m\upsilon)&\Vert \varphi^{m}_{\omega}(\xi_\omega) - Y_{\theta_{mr}\omega}\Vert_{E_{\theta_{mr}\omega}} < \rho_{1}^{\upsilon}(\omega)\big{\rbrace} \subseteq S^{\upsilon}_{\text{loc}}(\omega)\\
					&\subseteq \big{\lbrace} \xi_\omega \in E_\omega \, :\, \sup_{m\geqslant 0}\exp(m\upsilon)\Vert \varphi^{m}_{\omega}(\xi_\omega) - Y_{\theta_{mr}\omega}\Vert_{E_{\theta_{mr}\omega}}<\rho_{2}^{\upsilon}({\omega})\big{\rbrace}.
				\end{align*}
				
				\item[(ii)] The tangent space at $Y_{\omega}$ satisfies
				\begin{align*}
					T_{Y_{\omega}}S^{\upsilon}_{\text{loc}}(\omega) = S_{\omega}:=F_{\mu^{-} }(\omega).
				\end{align*}
				
				\item[(iii)] For $m \geqslant M(\omega)$,
				\begin{align*}
					\varphi^{m}_{\omega}(S^{\upsilon}_{\text{loc}}(\omega)) \subseteq S^{\upsilon}_{\text{loc}}(\theta_{mr}\omega).
				\end{align*}
				
				\item[(iv)] For $0 < \upsilon_{1} \leqslant \upsilon_{2} < -\mu^{-} $,
				\begin{align*}
					S^{\upsilon_{2}}_{\text{loc}}(\omega) \subseteq S^{\upsilon_{1}}_{\text{loc}}(\omega).
				\end{align*}
				Additionally, for $m \geqslant M(\omega)$,
				\begin{align*}
					\varphi^{m}_{\omega}(S^{\upsilon_{1}}_{\text{loc}}(\omega)) \subseteq S^{\upsilon_{2}}_{\text{loc}}(\theta_{mr}(\omega)),
				\end{align*}
				and consequently, for $\xi_\omega \in S^{\upsilon}_{\text{loc}}(\omega)$,
				\begin{align}\label{eqn:contr_char_1}
					\limsup_{m\rightarrow\infty}\frac{1}{m}\log\Vert\varphi^{m}_{\omega}(\xi_\omega) - Y_{\theta_{mr}\omega}\Vert_{E_{\theta_{mr}\omega}} \leqslant \mu^{-} .
				\end{align}
				
				\item[(v)] The following holds:
				\begin{align*}
					\limsup_{m\rightarrow\infty} \frac{1}{m} \log\bigg{[}\sup\bigg{\lbrace}\frac{\Vert\varphi^{m}_{\omega}(\xi_\omega) - \varphi^{m}_{\omega}(\tilde{\xi_\omega}) \Vert_{E_{\theta_{mr}\omega}} }{\Vert \xi_\omega - \tilde{\xi_\omega} \Vert_{E_{\omega}}},\ \ \xi_\omega \neq\tilde{\xi}_\omega ,\ \xi,\tilde{\xi}_\omega \in S^{\upsilon}_{\text{loc}}(\omega)\bigg{\rbrace}\bigg{]} \leqslant \mu^{-} .
				\end{align*}
			\end{itemize}
		\end{theorem}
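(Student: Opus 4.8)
The plan is to deduce Theorem~\ref{thm:stable_manifold} from the abstract local stable manifold theorem for nonlinear random dynamical systems on measurable fields of Banach spaces (developed for singular Brownian delay equations in \cite{GVR21}, and for rough ordinary differential equations in \cite{GVR23A}, whose proof carries over to the present setting) by checking that the cocycle $\varphi$ and its linearisation fit into that framework. The analytic input is already available: by Theorem~\ref{BETA} the family $\{E_\omega\}_{\omega\in\Omega}$ with $E_\omega=\mathscr{D}_{\mathbf{B}(\omega)}^{\beta,\gamma}([-r,0])$ is a measurable (indeed continuous) field of Banach spaces; by Theorem~\ref{SSRR} the solution operator $\varphi$ is a continuous cocycle preserving it; and by Theorem~\ref{DCXSZ} the map $\varphi^1_\omega$ is Fr\'echet differentiable with $\psi^1_\omega:=D_{Y_\omega}\varphi^1_\omega$ a compact linear cocycle (compactness as in \cite[Proposition~1.9]{GVRS22}). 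First I would recentre at the stationary point by writing $x=Y_\omega+\xi_\omega$, and invoke Proposition~\ref{MEEETTTT} (which needs, and here we assume, the moment bound \eqref{STATAT} on $Y_\omega$, automatic when $Y\equiv 0$) to obtain a $\theta_r$-invariant full-measure set, the Lyapunov spectrum $\{\mu_j\}$, the Oseledets filtration $F_{\mu_j}(\omega)$ and the finite-dimensional fast subspaces; on the event that some exponent is negative, $\mu^-=\max\{\mu_j:\mu_j<0\}$ is well defined and $S_\omega:=F_{\mu^-}(\omega)$ is the prospective tangent space of (ii).

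The one genuinely new verification is the tameness of the nonlinear remainder, which is where Theorem~\ref{ASXXZA} enters. Setting $R_\omega(x):=\varphi^1_\omega(Y_\omega+x)-Y_{\theta_r\omega}-\psi^1_\omega(x)$, Theorem~\ref{ASXXZA} bounds the Lipschitz constant of $DR_\omega$ on every ball $\{\|x\|_{E_\omega}\le\varrho\}$ by $\varrho\exp\big(Q_2(\varrho+\|Y_\omega\|_{E_\omega},\varrho+\|Y_\omega\|_{E_\omega},\|\mathbf{B}(\omega)\|_{\gamma,[0,r]})\big)$ when $A\neq 0$ (with $G\in C^4_b$), and by the corresponding $\vartheta$-H\"older expression when $A=0$ and $G\in C^{3+\vartheta}_b$, $0<\vartheta\le 1$; in either case its $\log^+$ is integrable for each fixed $\varrho$ by \eqref{STATAT} and the moment bound \eqref{JHNBZS} on $\|\mathbf{B}(\omega)\|_{\gamma,[0,r]}$. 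Together with $\log^+\|\psi^1_\omega\|_{L(E_\omega,E_{\theta_r\omega})}\le Q_1(\|Y_\omega\|_{E_\omega},\|\mathbf{B}(\omega)\|_{\gamma,[0,r]})\in L^1(\Omega)$, already used in Proposition~\ref{MEEETTTT}, this supplies exactly the hypotheses of the abstract theorem: a compact linear part admitting an Oseledets splitting, plus an integrably controlled $C^{1,\vartheta}$ perturbation.

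I would then apply the abstract theorem verbatim. It produces, for each $0<\upsilon<-\mu^-$ and each $\omega$ in a $\theta_r$-invariant full-measure set, an immersed submanifold $S^\upsilon_{\mathrm{loc}}(\omega)\subset E_\omega$ through the shifted origin with tangent space $F_{\mu^-}(\omega)$, positive finite random radii $\rho_1^\upsilon(\omega)\le\rho_2^\upsilon(\omega)$ tempered from below in the sense of \eqref{eqn:rho_temp_1} and sandwiching $S^\upsilon_{\mathrm{loc}}(\omega)$ between the inner and outer $\upsilon$-contracting sets of (i), a random time $M(\omega)$ after which the forward invariance (iii) and the inclusions of (iv) hold, the nesting in $\upsilon$, the exponential contraction rate \eqref{eqn:contr_char_1}, and the Lipschitz-in-initial-condition estimate (v). Undoing the shift $x\mapsto Y_\omega+x$ translates all of these into statements about $\Vert\varphi^m_\omega(\xi_\omega)-Y_{\theta_{mr}\omega}\Vert_{E_{\theta_{mr}\omega}}$, which is precisely (i)--(v).

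The hard part is not this reduction but the estimates it rests on. The abstract theorem is applicable only because the solution, its Fr\'echet derivative in the initial datum, and the modulus of continuity of that derivative grow at most polynomially in $\Vert\xi\Vert_{\mathscr{D}_{\mathbf{X}}^{\beta}}$ and $\Vert\mathbf{X}\Vert_{\gamma}$ (Theorems~\ref{SSS}, \ref{DCXSZ}, \ref{ASXXZA}); a direct Gr\"onwall treatment of the unbounded linear drift $A$ would instead yield a doubly exponential bound that fails to be integrable against $\mathbf{B}$, which is why the flow-decomposition argument underlying Theorem~\ref{SSS} is essential. A secondary point is that the non-separable spaces $\mathscr{D}_{\mathbf{B}(\omega)}^{\beta}([-r,0])$ do not carry the measurability structure required by the multiplicative ergodic theorem; this is circumvented by passing to the invariant separable subspaces $\mathscr{D}_{\mathbf{B}(\omega)}^{\beta,\gamma}([-r,0])$ of Theorem~\ref{BETA}, as already built into the definition of $E_\omega$. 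Once these are in place nothing beyond citing the abstract machinery remains, so only a sketch is needed.
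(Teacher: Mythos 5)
Your proposal follows the paper's own proof strategy quite closely: both recentre at the stationary point, define the nonlinear remainder (your $R_\omega$ is the paper's $P_\omega$, since $\varphi^1_\omega(Y_\omega)=Y_{\theta_r\omega}$), invoke Theorem~\ref{ASXXZA} to obtain the Lipschitz/H\"older modulus of the remainder, isolate the $\omega$-dependent factor, verify its $\log^+$ is in $L^1(\Omega)$ via \eqref{STATAT} and \eqref{JHNBZS}, apply the Birkhoff ergodic theorem for the tempered-growth condition, and then cite an abstract stable-manifold theorem over measurable fields of Banach spaces. The structure and the key inputs (Theorems~\ref{BETA}, \ref{SSRR}, \ref{DCXSZ}, \ref{ASXXZA}, Proposition~\ref{MEEETTTT}) are the same.

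Two small imprecisions worth noting. First, the paper actually applies \cite[Theorem~2.10]{GVR23}, whereas you cite the analogues in \cite{GVR21} and \cite{GVR23A}; these are companion results by the same authors and the hypotheses to be verified are essentially identical, so this is a bibliographic rather than mathematical gap — but if you are writing this up you should point to the precise abstract theorem being invoked. Second, you phrase the estimate from Theorem~\ref{ASXXZA} as a bound on \emph{the Lipschitz constant of $DR_\omega$} by $\varrho\,\exp(Q_2(\cdot))$: strictly speaking, \ref{ASXXZA} gives a Lipschitz modulus for the map $\xi\mapsto D_\xi\varphi^1_\omega$ of the form $\exp(Q_2(\cdot))$ (no leading $\varrho$); the extra factor $\varrho$ then appears when you bound the \emph{operator norm} $\|DR_\omega(x)\|$ on the ball $\{\|x\|\le\varrho\}$, or equivalently the Lipschitz constant of $R_\omega$ itself. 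The paper in fact does the latter directly, writing $\|P_\omega(\xi)-P_\omega(\tilde\xi)\|$ as an integral of $(D_{\tau\xi+(1-\tau)\tilde\xi+Y_\omega}\varphi^1_\omega-D_{Y_\omega}\varphi^1_\omega)[\xi-\tilde\xi]$ and bounding it by $\|\xi-\tilde\xi\|(\|\xi\|+\|\tilde\xi\|)\exp(Q_2(\cdot))$, then splitting the polynomial inside the exponential into a $\xi$-dependent part $R_0$ and an $\omega$-dependent part $R_1$ to expose the tempered factor $f(\omega)=\exp(R_1(\cdot))$. That explicit separation (and the resulting monotone gauge $h(x)=x\exp(R_0(x))$) is the form required by \cite[Theorem~2.10]{GVR23}; your sketch contains the same content but would need to be expanded to that level of detail before the abstract theorem can be applied ``verbatim''.
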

		\begin{proof}
			Set 
			\begin{align*}
				&P_{\omega}: E_\omega\rightarrow E_{\theta_{r}\omega},\\
				&P_{\omega}(\xi_\omega):=\varphi^{1}_{\omega}(\xi_\omega+Y_\omega)-\varphi^{1}_{\omega}(Y_\omega)-\psi^{1}_{\omega}(\xi_\omega).
			\end{align*}
			Then from Theorem \ref{ASXXZA}
			\begin{itemize}
				\item [(i)] If $A\neq 0$ and $G\in C^{4}_{b}$
				\begin{align}\label{4512sdds}
					\begin{split}
						&\Vert P_{\omega}(\xi_\omega)-P_{\omega}(\tilde{\xi}_\omega)\Vert_{E_{\theta_{r}\omega}}\leq\int_{0}^{1}\left\Vert (D_{\tau\xi_\omega+(1-\tau)\tilde{\xi}_\omega+Y_\omega}\varphi^{1}_{\omega}-D_{Y_\omega}\varphi^{1}_{\omega})[\xi_\omega-\tilde{\xi}_\omega]\right\Vert_{E_{\theta_{r}\omega}}\mathrm{d}\tau\\&\leq \Vert \xi_\omega-\tilde{\xi}_\omega\Vert_{E_\omega}(\Vert\xi_\omega\Vert_{E_\omega}+\Vert\tilde{\xi}_\omega\Vert_{E_\omega})\exp\left(\sup_{\tau\in [0,1]}Q_{2}\left(\Vert\tau\xi_\omega+(1-\tau)\tilde{\xi}_\omega\Vert_{E_\omega},\Vert Y_\omega\Vert_{E_\omega},\Vert\mathbf{B}(\omega)\Vert_{\gamma,[0,r]}\right)\right)
					\end{split}
				\end{align}
				\item [(ii)] If $A= 0$ and $G\in C^{3+\vartheta}_{b}$, for some $0<\vartheta\leq 1$
				\begin{align}
					\begin{split}
						&\Vert P_{\omega}(\xi_\omega)-P_{\omega}(\tilde{\xi}_\omega)\Vert_{E_{\theta_{r}\omega}}\leq\int_{0}^{1}\Vert (D_{\tau\xi_\omega+(1-\tau)\tilde{\xi}_\omega+Y_\omega}\varphi^{1}_{\omega}-D_{Y_\omega}\varphi^{1}_{\omega})[\xi_\omega-\tilde{\xi}_\omega]\Vert_{E_{\theta_{r}\omega}}\mathrm{d}\tau\\&\leq \Vert \xi_\omega-\tilde{\xi}_\omega\Vert_{E_\omega}(\Vert\xi_\omega\Vert_{E_\omega}^{\vartheta}+\Vert\tilde{\xi_\omega}\Vert_{E_\omega}^{\vartheta})\exp\left(\sup_{\tau\in [0,1]}Q_{3}\left(\Vert\tau\xi_\omega+(1-\tau)\tilde{\xi}_\omega\Vert_{E_\omega},\Vert Y_\omega\Vert_{E_\omega},\Vert\mathbf{B}(\omega)\Vert_{\gamma,[0,r]}\right)\right)
					\end{split}
				\end{align}
			\end{itemize}
			where \(Q_2\) and \(Q_3\) are polynomials. Note that there exist an increasing polynomial \(R_0\) and a polynomial \(R_1\) such that, for \(i=2,3\),
			\begin{align}\label{VBssN}
				\begin{split}
					&\sup_{\tau\in [0,1]}Q_{i}\left(\Vert\tau\xi_\omega+(1-\tau)\tilde{\xi}_\omega\Vert_{E_\omega},\Vert Y_\omega\Vert_{E_\omega},\Vert\mathbf{B}(\omega)\Vert_{\gamma,[0,r]}\right)\\&\leq R_{0}(\Vert\xi_\omega\Vert_{E_\omega}+\Vert\tilde{\xi}_{\omega}\Vert_{E_\omega})+ R_{1}(\Vert Y_\omega\Vert_{E_\omega},\Vert\mathbf{B}(\omega)\Vert_{\gamma,[0,r]}).
				\end{split}
			\end{align}
			Since the rest of the argument is the same in both cases \((i)\) and \((ii)\), we will complete the proof for case \((i)\). We plug inequality \eqref{VBssN} into \eqref{4512sdds}, then 
			\begin{align*}
				\Vert P_{\omega}(\xi_\omega) - P_{\omega}(\tilde{\xi}_\omega)\Vert_{E_{\theta_{r}\omega}} 
				&\leq \Vert \xi_\omega - \tilde{\xi}_\omega \Vert_{E_\omega} \left( \Vert \xi_\omega \Vert_{E_\omega} + \Vert \tilde{\xi}_\omega \Vert_{E_\omega} \right) \exp\left(R_0 \left( \Vert \xi_\omega \Vert_{E_\omega} + \Vert \tilde{\xi}_\omega \Vert_{E_\omega} \right)\right) \\
				&\quad \times \underbrace{\exp\left( R_1 \left(\Vert Y_\omega \Vert_{E_\omega}, \Vert\mathbf{B}(\omega)\Vert_{\gamma,[0,r]} \right)\right)}_{f(\omega)}.
			\end{align*}
			We define \(h(x): \mathbb{R}^+ \rightarrow \mathbb{R}^+\) by setting \(h(x) = x\exp(R_0(x))\). Then, \(h(x)\) is an increasing and differentiable function. Also, since \(R_1\) is a polynomial, similar to \eqref{AZMOPPLLM}, we have
			\begin{align*}
				\log^{+}(f(\omega)) \in L(\Omega).
			\end{align*}
			So, from the Birkhoff ergodic theorem, we have
			\begin{align*}
				\lim_{m \rightarrow \infty} \frac{1}{m} \log^{+}\left(f(\theta_{mr}\omega)\right) = 0, \text{ a.s.}
			\end{align*}
			Therefore, we have verified the condition of \cite[Theorem 2.10]{GVR23} and can apply this result to conclude our claim.
		\end{proof}
		This result immediately gives the following exponential stability result when all the Lyapunov exponents are negative.
		\begin{corollary}\label{STTTAASDSF}
			Assume that \(\mu_{1} < 0\). Then, for every \(0 < \upsilon < -\mu_1\), there exists a random variable \(R^{\upsilon}: \Omega \to (0, \infty)\) such that
			\[
			\liminf_{p \to \infty} \frac{1}{p} \log R^{\upsilon}(\theta_{pr} \omega) \geq 0
			\]
			and
			\begin{align}\label{neighborhood_1}
				\left\{\xi_\omega \in E_\omega \mid \Vert \xi_\omega - Y_{\omega} \Vert_{E_\omega} < R^{\upsilon}(\omega)\right\} = S^{\upsilon}_{loc}(\omega).
			\end{align}
		\end{corollary}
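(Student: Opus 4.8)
The plan is to run Theorem~\ref{thm:stable_manifold} in the degenerate regime where the whole space is the stable subspace and to read off that the local stable manifold is then literally a metric ball. First I would observe that $\mu_1<0$ forces $\mu^-=\mu_1$: since $\mu_1$ is the largest Lyapunov exponent, all of them are negative, so $\{\mu_j:\mu_j<0\}$ is the full spectrum and its maximum is $\mu_1$. By Proposition~\ref{MEEETTTT} we have $F_{\mu_1}(\omega)=E_\omega$ for every $\omega$ in the $\theta_r$-invariant full-measure set of that proposition, which we intersect with the full-measure set $\tilde{\Omega}$ supplied by Theorem~\ref{thm:stable_manifold}. Consequently the stable subspace appearing in Theorem~\ref{thm:stable_manifold}(ii) is $S_\omega=F_{\mu^-}(\omega)=F_{\mu_1}(\omega)=E_\omega$, i.e.\ $T_{Y_\omega}S^{\upsilon}_{\mathrm{loc}}(\omega)=E_\omega$ for $0<\upsilon<-\mu_1$.

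Next I would recall how $S^{\upsilon}_{\mathrm{loc}}(\omega)$ is produced in \cite[Theorem~2.10]{GVR23}: one fixes a measurable splitting $E_\omega=S_\omega\oplus H_\omega$, where $H_\omega$ is the closed complement of the stable subspace $S_\omega$ spanned by those $H^i_\omega$ with $\mu_i>\mu^-$, and realises $S^{\upsilon}_{\mathrm{loc}}(\omega)$ as the translate by $Y_\omega$ of a graph $\{v+h_\omega(v)\,:\,v\in B_{S_\omega}(0,R^{\upsilon}(\omega))\}$ of a Lipschitz map $h_\omega\colon B_{S_\omega}(0,R^{\upsilon}(\omega))\to H_\omega$ with $h_\omega(0)=0$, the radius $R^{\upsilon}$ being a positive, measurable random variable satisfying $\liminf_{p\to\infty}\tfrac1p\log R^{\upsilon}(\theta_{pr}\omega)\geq0$ (both facts being part of that construction). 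In our situation there is no Lyapunov exponent strictly larger than $\mu^-=\mu_1$, so $H_\omega=\{0\}$, the map $h_\omega$ vanishes identically, and the graph collapses to $B_{S_\omega}(0,R^{\upsilon}(\omega))=B_{E_\omega}(0,R^{\upsilon}(\omega))$. Translating back by $Y_\omega$ gives precisely the identity \eqref{neighborhood_1} with this $R^{\upsilon}$, while the $\liminf$ bound is the one just quoted. As a cross-check (or an alternative route to temperedness) one can also sandwich $R^{\upsilon}$: evaluating the inclusions of Theorem~\ref{thm:stable_manifold}(i) at $m=0$ yields $R^{\upsilon}(\omega)\leq\rho_2^{\upsilon}(\omega)$, and a matching lower bound of the form $R^{\upsilon}(\omega)\geq\delta(\omega)$ with $\delta$ tempered follows from the Lyapunov--Perron estimate underlying Theorem~\ref{thm:stable_manifold} together with the fact that the linearised cocycle $\psi^{m}_{\omega}$ contracts at rate $e^{(\mu_1+\varepsilon)m}$ with $\mu_1+\varepsilon+\upsilon<0$ and the nonlinear remainder $P_\omega$ has a locally small, tempered Lipschitz constant; then $\liminf_p\tfrac1p\log R^{\upsilon}(\theta_{pr}\omega)\geq0$ follows from \eqref{eqn:rho_temp_1}.

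I expect the only genuine subtlety to be the step ``immersed submanifold with full tangent space $\Rightarrow$ honest metric ball''. A priori an immersed submanifold of $E_\omega$ whose tangent space at $Y_\omega$ equals all of $E_\omega$ is merely an open neighbourhood of $Y_\omega$, and Theorem~\ref{thm:stable_manifold} itself only pins $S^{\upsilon}_{\mathrm{loc}}(\omega)$ down between two ``contraction level sets''; to get the clean equality \eqref{neighborhood_1} one really has to open up the graph construction of \cite{GVR23} and verify that the ``unstable/centre'' coordinate is literally absent in the case $S_\omega=E_\omega$, so that the manifold is the domain ball of the graph rather than a general open set. Once this bookkeeping is in place, the remaining assertions---measurability of $R^{\upsilon}$, its positivity and finiteness on the invariant full-measure set, and the subexponential growth bound---are inherited verbatim from Theorem~\ref{thm:stable_manifold} and \cite[Theorem~2.10]{GVR23}, so the corollary follows.
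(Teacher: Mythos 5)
Your proof is correct and follows essentially the same route as the paper: both hinge on the observation that $\mu_1 < 0$ forces $F_{\mu_1}(\omega) = E_\omega$, so the unstable complement is trivial and the graph construction of \cite[Theorem 2.10]{GVR23} collapses to a metric ball around $Y_\omega$. The paper's proof is terse, deferring the bookkeeping you spell out (including the point that the graph domain, not merely an open set, is what survives when $S_\omega = E_\omega$) to \cite[Corollary 2.17]{GVR23A}.
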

		\begin{proof}
			The proof follows from the fact that, in this case, \( F_{\mu_{1}}(\omega) = E_\omega \) with only minor modifications to \cite[Theorem 2.10]{GVR23}. For details on these modifications, we refer to \cite[Corollary 2.17]{GVR23A}.
		\end{proof}
		The following result guarantees the existence of an unstable manifolds.
		\begin{theorem}[Local unstable manifolds]\label{thm:unstable_manifold}
			Assume the same setting as in Theorem \ref{thm:stable_manifold}. Furthermore, suppose that \( \mu_1 > 0 \) holds for the first Lyapunov exponent and $\mu^{+}:=\min\lbrace\mu_j : \ \mu_j>0\rbrace$. Then there exists a $\theta_r$-invariant set of full measure $\tilde{\Omega}$ and a family of immersed submanifolds \( U^{\upsilon}_{loc}(\omega)\) of \( E_\omega \), for \( 0 < \upsilon < \mu^{+} \) and \( \omega \in \tilde{\Omega} \), satisfying the following properties for every \( \omega \in \tilde{\Omega} \):
			\begin{itemize}
				\item[(i)] There exist random variables \( \tilde{\rho}_{1}^{\upsilon}(\omega) \) and \( \tilde{\rho}_{2}^{\upsilon}(\omega) \), positive and finite on \( \tilde{\Omega} \), such that
				\begin{align*}
					\liminf_{p \to \infty} \frac{1}{p} \log \tilde{\rho}_i^{\upsilon}(\theta_{-pr} \omega) \geq 0, \quad i = 1, 2,
				\end{align*}
				and
				\begin{align*}
					&\left\lbrace \xi_{\omega} \in E_\omega\, :\, \exists \lbrace \xi_{\theta_{-pr}\omega}\rbrace_{p\geqslant 1} \text{ s.t. } \varphi^{m}_{\theta_{-pr}\omega}(\xi_{\theta_{-pr}\omega}) = \xi_{\theta_{(m-p)r}\omega} \text{ for all } 0 \leq m \leq p, \right. \\
					&\quad \left.\sup_{p \geqslant 0} \exp(p\upsilon)\Vert \xi_{\theta_{-pr}\omega} - Y_{\theta_{-pr}\omega} \Vert_{E_{\theta_{-pr}\omega}} < \tilde{\rho}_{1}^{\upsilon}(\omega)\right\rbrace \subseteq U^{\upsilon}_{loc}(\omega) \\
					&\quad \subseteq \left\lbrace\xi_{\omega} \in E_\omega\, :\, \exists \lbrace \xi_{\theta_{-pr}\omega}\rbrace_{p\geqslant 1} \text{ s.t. } \varphi^{m}_{\theta_{-pr}\omega}(\xi_{\theta_{-pr}\omega}) = \xi_{\theta_{(m-p)r}\omega} \text{ for all } 0 \leq m \leq p, \right. \\
					&\quad \left. \sup_{p\geqslant 0} \exp(p\upsilon)\Vert \xi_{\theta_{-pr}\omega} - Y_{\theta_{-pr}\omega} \Vert_{E_{\theta_{-pr}\omega}} < \tilde{\rho}_{2}^{\upsilon}(\omega)\right\rbrace.
				\end{align*}
				
				\item[(ii)] The tangent space at \( Y_{\omega} \) to \( U^{\upsilon}_{loc}(\omega) \) is given by
				\begin{align*}
					T_{Y_{\omega}}U^{\upsilon}_{loc}(\omega) = {U}_{\omega}:=\oplus_{i: \mu_{i}>0}H^{i}_{\omega}.
				\end{align*}
				
				\item[(iii)] For \( m \geqslant M(\omega) \),
				\begin{align*}
					U^{\upsilon}_{loc}(\omega) \subseteq \varphi^{m}_{\theta_{-mr}\omega}( U^{\upsilon}_{loc}(\theta_{-mr}\omega)).
				\end{align*}
				
				\item[(iv)] For \( 0 < \upsilon_{1} \leqslant \upsilon_{2} < \mu^{+} \),
				\begin{align*}
					U^{\upsilon_{2}}_{loc}(\omega) \subseteq U^{\upsilon_{1}}_{loc}(\omega).
				\end{align*}
				Also, for \( m \geqslant M(\omega) \),
				\begin{align*}
					U^{\upsilon_{1}}_{loc}(\omega) \subseteq \varphi^{m}_{\theta_{-mr}\omega}( U^{\upsilon_2}_{loc}(\theta_{-mr}\omega)),
				\end{align*}
				and consequently, for \( \xi_{\omega} \in U^{\upsilon}_{loc}(\omega) \),
				\begin{align*}
					\limsup_{p\rightarrow \infty} \frac{1}{p}\log\Vert \xi_{\theta_{-pr}\omega} - Y_{\theta_{-pr}\omega}\Vert_{E_{\theta_{-pr}\omega}} \leqslant -\mu^{+}.
				\end{align*}
				
				\item[(v)] 
				\begin{align*}
					\limsup_{p \rightarrow \infty} \frac{1}{p}\log\left[\sup\left\lbrace\frac{\Vert \xi_{\theta_{-pr}\omega} - \tilde{\xi}_{\theta_{-pr}\omega}\Vert_{E_{\theta_{-pr}\omega}}}{\Vert \xi_{\omega} - \tilde{\xi}_{\omega}\Vert_{E_{\omega}}}\, :\, \xi_{\omega} \neq \tilde{\xi}_{\omega},\ \xi_{\omega}, \tilde{\xi}_{\omega} \in U^{\upsilon}_{loc}(\omega)\right\rbrace\right] \leqslant -\mu^{+}.
				\end{align*}
			\end{itemize}
		\end{theorem}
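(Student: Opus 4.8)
The plan is to deduce Theorem~\ref{thm:unstable_manifold} from the abstract local unstable manifold theorem on measurable fields of Banach spaces established in \cite{GVR23}, in complete parallel to the way Theorem~\ref{thm:stable_manifold} was obtained from the corresponding abstract stable manifold theorem. The structural input is provided by Proposition~\ref{MEEETTTT}: the linearization $\psi^{m}_{\omega} = D_{Y_{\omega}}\varphi^{m}$ is a compact linear cocycle over $\theta_{r}$ possessing a discrete Lyapunov spectrum accumulating only at $-\infty$, so that $\mu^{+} := \min\{\mu_j : \mu_j > 0\}$ is well defined once $\mu_1 > 0$. Since there are only finitely many positive exponents and each $H^i_\omega$ is finite dimensional, the unstable subspace $U_\omega := \oplus_{i:\mu_i>0} H^i_\omega$ is finite dimensional, $\psi$ restricts to an \emph{invertible} cocycle on it, and by Proposition~\ref{MEEETTTT}(iii) the norms of the inverses grow no faster than $e^{-(\mu^{+}-\varepsilon)m}$ along orbits; these are exactly the hypotheses the abstract theorem needs.

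First I would introduce, exactly as in the proof of Theorem~\ref{thm:stable_manifold}, the nonlinear remainder
\[
P_{\omega}(\xi_\omega) := \varphi^{1}_{\omega}(\xi_\omega + Y_\omega) - \varphi^{1}_{\omega}(Y_\omega) - \psi^{1}_{\omega}(\xi_\omega),
\]
and invoke Theorem~\ref{ASXXZA} to get the local Lipschitz bound
\[
\Vert P_{\omega}(\xi_\omega) - P_{\omega}(\tilde\xi_\omega)\Vert_{E_{\theta_r\omega}} \leq \Vert \xi_\omega - \tilde\xi_\omega\Vert_{E_\omega}\, h\bigl(\Vert \xi_\omega\Vert_{E_\omega} + \Vert \tilde\xi_\omega\Vert_{E_\omega}\bigr)\, f(\omega),
\]
with $h(x) = x\exp(R_0(x))$ increasing and $f(\omega) = \exp\bigl(R_1(\Vert Y_\omega\Vert_{E_\omega}, \Vert\mathbf{B}(\omega)\Vert_{\gamma,[0,r]})\bigr)$ (and the analogous estimate with exponent $\vartheta$ in the case $A = 0$, $G \in C^{3+\vartheta}_b$). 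As in that proof, $\log^{+} f \in L^{1}(\Omega)$ by \eqref{STATAT} and \eqref{JHNBZS}; applying the Birkhoff ergodic theorem to $\theta_{-r}$ then gives $\frac{1}{m}\log^{+} f(\theta_{-mr}\omega) \to 0$ almost surely, which is precisely the temperedness along the \emph{backward} orbit required for the unstable statement and yields the bounds on $\tilde\rho^{\upsilon}_i$ analogous to \eqref{eqn:rho_temp_1}.

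Finally I would verify that all remaining hypotheses of the abstract unstable manifold theorem in \cite{GVR23} (see also the corresponding results in \cite{GVR23A}) are in place — the measurable (continuous) field of Banach spaces from Theorem~\ref{BETA}, the cocycle and Fr\'echet-differentiability properties from Theorem~\ref{SSRR} and Proposition~\ref{MEEETTTT}, compactness of $\psi^1_\omega$ from \cite[Proposition~1.9]{GVRS22}, and the tempered Lipschitz bound above — and read off properties (i)--(v), including $T_{Y_\omega}U^{\upsilon}_{loc}(\omega) = U_\omega$. The one genuinely new point compared with the stable case, and what I expect to be the main obstacle, is the treatment of \emph{pre-orbits}: whereas the stable manifold is a forward-invariant graph over $F_{\mu^-}(\omega)$, the unstable manifold must be built as a graph over the finite-dimensional $U_\omega$ and consists of points admitting a full backward history $\{\xi_{\theta_{-pr}\omega}\}_{p\geq 1}$ with controlled exponential growth. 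Making the abstract machinery applicable therefore requires the invertibility of $\psi$ on $U_\omega$ (Proposition~\ref{MEEETTTT}(iii)) together with the existence of the relevant inverse branches of $\varphi$ on a small tempered neighbourhood of $\{Y_\omega\}$ — a structure that the Lyapunov-block decomposition in \cite{GVR23} is designed to supply, so that no estimate beyond those already proved in Theorems~\ref{SSS}, \ref{DCXSZ}, and \ref{ASXXZA} is needed, and the routine verification may again be omitted.
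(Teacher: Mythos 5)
Your proposal is correct and follows essentially the same route as the paper: the paper likewise observes that one verifies the same tempered Lipschitz estimate on $P_\omega$ (via Theorem~\ref{ASXXZA} and the Birkhoff ergodic theorem) as in the stable case, and then invokes the abstract unstable manifold theorem \cite[Theorem~2.17]{GVR23}. Your additional discussion of pre-orbits, the finite dimensionality of $U_\omega$, and the invertibility of $\psi$ on $U_\omega$ fills in details that the paper leaves implicit inside the cited abstract result, but it is the same argument.
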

		\begin{proof}
			The proof is almost the same as the proof of Theorem \ref{thm:stable_manifold}. Indeed, we must verify the same conditions and then apply \cite[Theorem 2.17]{GVR23}.
		\end{proof}
		So far, we have established the existence of stable and unstable manifolds. Analogous to the deterministic case, when one of the Lyapunov exponents is zero, we naturally expect the presence of invariant sets that neither decay nor grow exponentially. We refer to these sets as the center manifolds. The proof of such invariant sets is somewhat more subtle, and we can only guarantee their invariance up to a certain number of iterations.
		\begin{theorem}\label{thm:center_manifold}
			Assume the same setting as in Theorem \ref{thm:stable_manifold}. Suppose that at least one Lyapunov exponent is zero, denoted by $\mu_{i_c}$. Therefore, $\mu_{i_c} = 0$, $\mu^{-} = \max\{\mu_{i} : \mu_{i} < 0\} = \mu_{i_c+1}$, and if $i_{c} > 1$, then $\mu^{+} = \min\{\mu_{i} : \mu_{i} > 0\} = \mu_{i_c-1}$; otherwise, we set $\mu^{+} := \infty$. We also set $C_\omega := H^{i_c}_{\omega}$.
			Then we can find a continuous cocycle $\tilde{\varphi}$ defined on $\{E_\omega\}_{\omega \in \Omega}$. That is, for every $p, q \in \mathbb{N}$ and $\omega \in \Omega$, the map $\tilde{\varphi}^p_{\omega}: E_\omega \to E_{\theta_{rp}\omega}$ is continuous, and we have $\tilde{\varphi}^{p+q}_{\omega} = \tilde{\varphi}^p_{\theta_{qr}\omega} \circ \tilde{\varphi}^q_{\omega}$. For this new cocycle, and for every $0 < \upsilon < \min\{\mu^{+}, -\mu^{-}\}$, on a set of full measure $\tilde{\Omega}$, which is $\theta_r$-invariant, the following items hold:
			\begin{itemize}
				\item[(i)] There exists a positive random variable $\rho^c:\Omega \to (0, \infty)$ such that
				\begin{align*}
					\liminf_{m \rightarrow \pm \infty} \frac{1}{m} \log \rho^c(\theta_{mr}\omega) \geq 0,
				\end{align*}
				and if $\|\xi_{\omega} - Y_\omega\|_{E_\omega} \leq \rho^c(\omega)$, then $\tilde{\varphi}^{1}_{\omega}(Y_\omega + \xi_\omega) = \varphi^{1}_{\omega}(Y_\omega + \xi_\omega)$.
				
				\item[(ii)] We can find a map
				\begin{align*}
					h_{\omega}: C_{\omega} \rightarrow \mathcal{M}^{c,\upsilon}_{\omega},
				\end{align*}
				such that this map is a homeomorphism, Lipschitz continuous, and differentiable at zero.
				
				\item[(iii)] The manifold $\mathcal{M}^{c,\upsilon}_{\omega}$ obtained from the previous item is a topological Banach manifold modeled on $C_\omega$.
				
				\item[(iv)] $\mathcal{M}^{c,\upsilon}_{\omega}$ is $\tilde{\varphi}$-invariant, i.e., for every $m \in \mathbb{N}_0$, $\tilde{\varphi}^{m}_{\omega}(\mathcal{M}^{c,\upsilon}_{\omega}) \subset \mathcal{M}^{c,\upsilon}_{\theta_{mr}\omega}$.
				
				\item[(v)] For every $\xi_\omega \in \mathcal{M}^{c,\upsilon}_{\omega}$, we can find a sequence $\{\xi_{\theta_{-mr}\omega}\}_{m \geq 1}$ such that if we set
				\begin{align*}
					\tilde{\varphi}^{jr}_{\omega}(\xi_\omega) \coloneqq \xi_{\theta_{jr}\omega}
				\end{align*}
				for $j \leq 0$, then
				\begin{align*}
					\tilde{\varphi}^p_{\theta_{qr}\omega}(\tilde{\varphi}^q_{\omega}(\xi_\omega)) = \tilde{\varphi}^{p+q}_{\omega}(\xi_\omega)
				\end{align*}
				for all $(p, q) \in \mathbb{N} \times \mathbb{Z}$, and
				\begin{align*}
					\sup_{j \in \mathbb{Z}} \exp(-\nu |j|) \| \tilde{\varphi}^{jr}_{\omega}(\xi_\omega) - Y_{\theta_{jr}\omega} \|_{E_{\theta_{rj}\omega}} < \infty.
				\end{align*}
			\end{itemize}
		\end{theorem}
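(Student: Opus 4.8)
The plan is to mimic the strategy used for the stable and unstable manifold theorems, namely to reduce the statement to an abstract center manifold theorem for linear-plus-Lipschitz cocycles on a measurable field of Banach spaces. First I would recall from Proposition \ref{MEEETTTT} that the linearized cocycle $\psi^1_\omega := D_{Y_\omega}\varphi^1_\omega$ admits the Oseledets splitting, so that near the Lyapunov exponent $\mu_{i_c}=0$ we have the three-way decomposition $E_\omega = U_\omega \oplus C_\omega \oplus S_\omega$ with $U_\omega=\oplus_{i:\mu_i>0}H^i_\omega$, $C_\omega=H^{i_c}_\omega$, and $S_\omega=F_{\mu^-}(\omega)$, all invariant under $\psi^1_\omega$, and with the sharp exponential dichotomy estimates on each piece recorded there. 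Exactly as in the proof of Theorem \ref{thm:stable_manifold}, I would introduce the nonlinear remainder
\begin{align*}
P_\omega(\xi_\omega) := \varphi^1_\omega(\xi_\omega + Y_\omega) - \varphi^1_\omega(Y_\omega) - \psi^1_\omega(\xi_\omega),
\end{align*}
and invoke Theorem \ref{ASXXZA} to obtain, in both the case $A\neq 0$, $G\in C^4_b$ and the case $A=0$, $G\in C^{3+\vartheta}_b$, the local Lipschitz bound on $P_\omega$ with the splitting $\sup_{\tau}Q_i(\cdots) \leq R_0(\|\xi_\omega\|+\|\tilde\xi_\omega\|) + R_1(\|Y_\omega\|_{E_\omega},\|\mathbf B(\omega)\|_{\gamma,[0,r]})$, where $f(\omega):=\exp(R_1(\cdots))$ satisfies $\log^+ f(\omega)\in L^1(\Omega)$ by \eqref{STATAT} and \eqref{JHNBZS}, hence $\frac1m \log^+ f(\theta_{mr}\omega)\to 0$ a.s.\ by Birkhoff. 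This is precisely the hypothesis format needed by the abstract machinery.

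The second step is the truncation that produces the modified cocycle $\tilde\varphi$. Since $P_\omega$ is only \emph{locally} Lipschitz and the center manifold (unlike the stable one) must be constructed using both forward and backward orbits, I would fix a smooth cut-off and define $\rho^c(\omega)$ as a tempered random radius — tempered from both $+\infty$ and $-\infty$, i.e.\ $\liminf_{m\to\pm\infty}\frac1m\log\rho^c(\theta_{mr}\omega)\geq 0$ — chosen small enough (depending on $f(\omega)$, on the polynomial growth radii from Theorem \ref{SSS}, and on the dichotomy constants at exponent $0$) that inside the ball of radius $\rho^c(\omega)$ the truncated map agrees with $\varphi^1_\omega$; this gives item (i). The truncation yields a globally Lipschitz perturbation with arbitrarily small Lipschitz constant, so $\tilde\varphi^m_\omega$ is a well-defined continuous cocycle on $\{E_\omega\}_{\omega\in\Omega}$ satisfying $\tilde\varphi^{p+q}_\omega = \tilde\varphi^p_{\theta_{qr}\omega}\circ\tilde\varphi^q_\omega$ for all $p,q\in\mathbb N$. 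Then items (ii)–(v) follow by applying the corresponding abstract center manifold theorem for cocycles on measurable fields of Banach spaces from \cite{GVR23} (the center-manifold counterpart of \cite[Theorem 2.10, Theorem 2.17]{GVR23}): the center manifold is realized as the graph of a Lipschitz map $h_\omega:C_\omega\to E_\omega$, differentiable at $0$ with $D_0 h_\omega = 0$, $\mathcal M^{c,\upsilon}_\omega := \{Y_\omega + \xi + h_\omega(\xi) : \xi\in C_\omega\}$ is a topological Banach manifold modeled on $C_\omega$, it is $\tilde\varphi$-invariant, and every point on it admits a full bi-infinite orbit with the sub-exponential growth bound $\sup_{j\in\mathbb Z}\exp(-\upsilon|j|)\|\tilde\varphi^{jr}_\omega(\xi_\omega)-Y_{\theta_{jr}\omega}\|_{E_{\theta_{rj}\omega}}<\infty$, for every $0<\upsilon<\min\{\mu^+,-\mu^-\}$.

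The main obstacle — and the reason this theorem is genuinely more delicate than Theorems \ref{thm:stable_manifold} and \ref{thm:unstable_manifold} — is the construction of the cut-off radius $\rho^c(\omega)$ with the required two-sided temperedness, together with the fact that invariance can only be asserted for the \emph{truncated} cocycle $\tilde\varphi$, not for $\varphi$ itself. For the stable manifold one only needs forward orbits and a forward-tempered radius, so a single Birkhoff argument suffices; here the backward orbits of points on $\mathcal M^{c,\upsilon}_\omega$ must also stay in the region where $\tilde\varphi=\varphi$, which forces $\rho^c$ to be controlled along $\theta_{mr}$ for $m\to-\infty$ as well, and this interacts with the fact that $(\psi^m_{\theta_{-mr}\omega})^{-1}$ need not be globally defined on $E_\omega$ (only on the finite-codimensional pieces). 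I would handle this exactly as in the abstract theorem: restrict attention to the (finite-dimensional) center bundle $C_\omega$ for the graph construction, use the invertibility of $\psi^1_\omega$ on $U_\omega\oplus C_\omega$ established in Proposition \ref{MEEETTTT}(iii), and absorb the loss into the exponential window $0<\upsilon<\min\{\mu^+,-\mu^-\}$. The Lipschitz estimate from Theorem \ref{ASXXZA}, the polynomial a priori bound \eqref{BUN}, and the integrability \eqref{STATAT}–\eqref{JHNBZS} are exactly the ingredients that make the abstract hypotheses verifiable, so once the truncation is set up correctly the remaining work is routine and we omit it, referring to \cite{GVR23} and to the analogous arguments in \cite{GVR23A, NK21}.
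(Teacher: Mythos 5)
Your proposal matches the paper's approach: the paper likewise reduces the statement to verifying the same Lipschitz/integrability conditions already established in the stable and unstable manifold proofs (via Theorem \ref{ASXXZA}, the polynomial bound \eqref{BUN}, \eqref{STATAT}, \eqref{JHNBZS}, and Birkhoff) and then invokes an abstract center manifold theorem — in the paper's case \cite[Theorem 2.14]{GVR24}, whereas you point to the center-manifold counterpart of \cite[Theorems 2.10, 2.17]{GVR23}. Your extra discussion of the two-sided tempered cut-off and the role of the truncated cocycle $\tilde\varphi$ is a correct elaboration of what that abstract result supplies and is entirely consistent with the paper's (much terser) argument.
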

		\begin{proof}
			We must again check the same conditions as Theorem \ref{thm:stable_manifold} and Theorem \ref{thm:unstable_manifold}. Then the results follow from \cite[Theorem 2.14]{GVR24}.
		\end{proof}
		\section{\texorpdfstring{Intermezzo: An MET-approach to exponential stability of $\mathcal{C}_0$-semigroups}{}}\label{sec:MET_approach}
		We have established a general framework for analyzing the dynamics of fractional delay equations. In this section, we discuss several general results related to the spectrum of compact semigroups. Although many of these results have been previously established in other contexts, such as those detailed in \cite[Chapter 7]{HL93} and \cite[Chapter 4]{MR18}, we offer new proofs by leveraging Lyapunov exponents to provide concise and streamlined arguments. These results will later be integrated into our framework to investigate the exponential stability of rough delay equations. As noted, the generalized version of the MET is crucial for achieving these outcomes. We will first provide some background on the semigroup associated with delay equations.
		\begin{definition}
			Let $\mathcal{B}$ be a real Banach space. A strongly continuous semigroup of linear operators, or a $C_0$-semigroup, is a one-parameter family $\{T_t\}_{t \geq 0}$ of bounded linear operators on $\mathcal{B}$ that satisfies the following properties:
			
			\begin{enumerate}
				\item \textbf{Semigroup Property:} For all $t, s \geq 0$, we have
				\[
				T_{t+s} = T_t\circ T_s.
				\]
				\item \textbf{Identity at Zero:} The operator $T_0$ is the identity operator on $\mathcal{B}$, i.e.,
				\[
				T_0 = I.
				\]
				\item \textbf{Strong Continuity:} For every $x \in \mathcal{B}$, the map $t \mapsto T_t x$ is continuous from $[0, \infty)$ to $\mathcal{B}$.
			\end{enumerate}
			To every $C_0$-semigroup $\{T(t)\}_{t \geq 0}$ on a Banach space $\mathcal{B}$, we can associate an infinitesimal generator $L$ defined by
			
			\[
			{L} z = \lim_{t \to 0^+} \frac{T(t) z - z}{t},
			\]
			
			for all $z \in \mathcal{D}(L)$. The domain of the generator $L$, denoted by $\mathcal{D}(L)$, is given by
			
			\[
			\mathcal{D}(L) = \left\{ \varphi \in \mathcal{B} \mid \lim_{t \to 0^+} \frac{T(t) \varphi - \varphi}{t} \text{ exists and is finite} \right\}.
			\]
			
			In other words, $\mathcal{D}(L)$ consists of all elements $\varphi \in \mathcal{B}$ for which the limit above exists. The generator $\mathcal{L}$ is a densely defined linear operator mapping $\mathcal{D}(L)$ into $\mathcal{B}$.
		\end{definition}
		\begin{definition}
			The resolvent set $\rho(L)$ of an operator $L$ is the set of all complex numbers $\lambda$ such that the operator $\lambda I - L$ has a bounded inverse, and the domain of this inverse is dense in $\mathcal{B}$. This inverse will be denoted by $R(\lambda, L)$. The complement of $\rho(L)$ in the complex plane is called the spectrum of $L$ and is denoted by $\sigma(L)$. The spectrum of an operator may consist of three different types of points:
			\begin{enumerate}
				\item \textbf{Residual Spectrum} $\sigma_r(L)$: These are the values $\lambda \in \sigma(L)$ for which the resolvent operator $R(\lambda, L)$ exists, but its domain $\mathcal{D}(R(\lambda, L))$ is not dense in $\mathcal{B}$.
				
				\item \textbf{Continuous Spectrum} $\sigma_c(L)$: These are the values $\lambda \in \sigma(L)$ for which $\lambda I - L$ has an unbounded inverse with a dense domain.
				
				\item \textbf{Point Spectrum} $\sigma_p(L)$: These are the values $\lambda \in \sigma(L)$ for which $\lambda I - L$ does not have an inverse.
			\end{enumerate}
		\end{definition}
		The theory of semigroups is well-established, with many abstract results, particularly for \( C_0 \)-semigroups. Since this manuscript focuses on delay equations, we will primarily address this type of equation. Specifically, we aim to demonstrate how classical stability in the deterministic case can be leveraged to obtain path-wise stability for stochastic delay equations. We denote by \( C([-r,0], \mathbb{R}^n) \) the space of continuous paths from \([-r,0]\) to \(\mathbb{R}^n\). For every element \(\xi \in C([-r,0], \mathbb{R}^n)\), we recall the norm
		\begin{align*}
			\|\xi\|_{\infty,[-r,0]} := \Vert\xi\Vert_{ C([-r,0], \mathbb{R}^n)}=\sup_{\tau \in [-r,0]} |\xi_{\tau}|,
		\end{align*}
		With this norm, \( C([-r,0], \mathbb{R}^n) \) is a Banach space. Let us now consider the following classical result; its proof can be found in \cite{HL93}.
		\begin{lemma}\label{PII}
			Assume \( A \in {L}(\mathbb{R}^n \times \mathbb{R}^n, \mathbb{R}^n) \), and let \( \pi = (\pi^{i,j})_{1 \leq i,j \leq n} \) be a collection of finite signed measures.
			For $y\in C([-r,0],\mathbb{R}^n)$, assume $\int_{-r}^{0} y_{\theta+t} \pi(\mathrm{d}\theta)$ denotes the vector 
			\[
			\left(\sum_{1 \leq i \leq n} \int_{-r}^{0} y^i_{\theta+t} \pi^{i,j}(\mathrm{d}\theta)\right)_{1 \leq j \leq n}.
			\]
			Assume that the semigroup \( T_t: C([-r,0], \mathbb{R}^n) \rightarrow C([-r,0], \mathbb{R}^n) \) is defined by
			\begin{align}\label{delay-semigroup}
				(T_t\xi)(\tau) := \begin{cases}
					\xi_{t+\tau} & \text{if } t + \tau \leq 0, \\
					\xi_0 + \int_{0}^{t+\tau} A\left((T_s \xi)(0), \int_{-r}^{0} (T_s \xi)(\theta) \pi(\mathrm{d}\theta)\right) \, \mathrm{d}s & \text{if } t + \tau > 0.
				\end{cases}
			\end{align}
			Then the associated generator of this semigroup is \( L \xi := \frac{\mathrm{d}}{\mathrm{d}\theta} \xi \) on \( C([-r,0], \mathbb{R}^n) \), where
			\begin{align}\label{operator}
				\mathcal{D}(L) := \left\{ \xi \in C^1([-r,0], \mathbb{R}^n) \mid \frac{\mathrm{d}\xi}{\mathrm{d}\theta}(0) = A\left(\xi_0, \int_{-r}^{0} \xi_{\theta} \pi(\mathrm{d}\theta) \right) \right\}.
			\end{align}
			Also, \((T_t)_{t \geq 0}\) is a \(\mathcal{C}_0\)-semigroup, and for \( t \geq r \), this semigroup is compact. Additionally, \( L \) is a closed, densely defined operator on \( C([-r,0], \mathbb{R}^n) \). Also for every $\xi\in C([-r,0], \mathbb{R}^n)$, $T_{t}(\xi)$, is solution to the following equation
			\begin{align}\label{linear delay}
				\begin{split}
					&\mathrm{d}y_{t} = A\left(y_t, \int_{-r}^{0} y_{\theta+t} \pi(\mathrm{d}\theta)\right) \mathrm{d}t, \\
					&\quad y_{\theta} = \xi_{\theta} \ \text{for} \ \theta \in [-r,0] \ \text{and} \ \xi \in C([-r,0], \mathbb{R}^n).
				\end{split}
			\end{align}
		\end{lemma}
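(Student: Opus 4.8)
\textbf{Proof proposal for Lemma \ref{PII}.}

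The plan is to verify each assertion in turn, treating the construction of the semigroup \eqref{delay-semigroup} as the definition and then identifying its generator, establishing the \(\mathcal{C}_0\)-property, compactness, and closedness, and finally checking that \(T_t\xi\) solves \eqref{linear delay}. First I would check the semigroup property \(T_{t+s} = T_t \circ T_s\): for \(t+\tau \le 0\) this is immediate, and for \(t+\tau > 0\) one rewrites the integral in \eqref{delay-semigroup} by splitting at the point where the integrand switches branches and uses uniqueness of the solution to the (linear, hence Lipschitz) integral equation \(\frac{\mathrm{d}}{\mathrm{d}u} z_u = A(z_u, \int_{-r}^0 z_{u+\theta}\,\pi(\mathrm{d}\theta))\). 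Since \(A\) is linear and \(\pi\) is a finite signed measure, Picard iteration on \(C([-r,T],\mathbb{R}^n)\) gives global existence and uniqueness, which simultaneously yields that \(t \mapsto T_t\xi\) is well-defined for all \(t \ge 0\) and that \(T_t\xi\) restricted to \([-r,0]\) is exactly the history segment of this solution; this last observation is precisely the claim that \(T_t(\xi)\) solves \eqref{linear delay}.

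Next I would prove strong continuity: for fixed \(\xi \in C([-r,0],\mathbb{R}^n)\) the map \(t \mapsto y_t\) (the solution of \eqref{linear delay}) is continuous into \(\mathbb{R}^n\) and locally bounded, hence \(t \mapsto T_t\xi\) is continuous into \(C([-r,0],\mathbb{R}^n)\) by uniform continuity of \(y\) on compacts; at \(t=0\) this gives \(\|T_t\xi - \xi\|_{\infty,[-r,0]} \to 0\). To identify the generator, I would compute \(\lim_{t\to 0^+} t^{-1}(T_t\xi - \xi)(\tau)\): for \(\tau < 0\) and \(t\) small the difference quotient is \(t^{-1}(\xi_{t+\tau} - \xi_\tau)\), which converges (uniformly on \([-r,-\delta]\)) iff \(\xi\) is \(C^1\), with limit \(\xi'(\tau)\); at \(\tau = 0\) the difference quotient is \(t^{-1}\int_0^t A(y_s, \int_{-r}^0 y_{s+\theta}\,\pi(\mathrm{d}\theta))\,\mathrm{d}s \to A(\xi_0, \int_{-r}^0 \xi_\theta\,\pi(\mathrm{d}\theta))\) by continuity of the integrand. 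Matching the one-sided derivative at \(0\) forces the boundary condition in \eqref{operator}, so \(\mathcal{D}(L)\) is exactly that set and \(L\xi = \xi'\). Closedness and density of \(\mathcal{D}(L)\) are then automatic from the general theory of \(\mathcal{C}_0\)-semigroups (the generator of a \(\mathcal{C}_0\)-semigroup is always closed and densely defined), so I would simply cite that, or alternatively check density directly by mollifying a given \(\xi\) and correcting the value near \(\tau=0\) to satisfy the boundary condition.

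For compactness when \(t \ge r\): if \(t \ge r\) then for every \(\tau \in [-r,0]\) we have \(t + \tau \ge 0\), so \((T_t\xi)(\tau) = \xi_0 + \int_0^{t+\tau} A(y_s, \int_{-r}^0 y_{s+\theta}\,\pi(\mathrm{d}\theta))\,\mathrm{d}s\). On the unit ball of \(C([-r,0],\mathbb{R}^n)\) the a priori linear bound gives \(\|y\|_{\infty,[-r,t]} \le C(t)\) uniformly, hence the integrand is uniformly bounded, which makes \(\{T_t\xi : \|\xi\| \le 1\}\) uniformly bounded and equi-Lipschitz in \(\tau\); by Arzelà–Ascoli this set is relatively compact, so \(T_t\) is a compact operator. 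I expect the main obstacle to be bookkeeping in the semigroup identity \(T_{t+s} = T_t \circ T_s\) — one must carefully track the three regimes (\(t+\tau \le 0\), \(s + (t+\tau) \le 0 < t+\tau\) after peeling off \(T_t\), etc.) and invoke uniqueness at the right moment — but this is routine rather than deep. Since the lemma is classical and its proof is in \cite{HL93}, I would keep the exposition brief and lean on the Picard-iteration/uniqueness argument and Arzelà–Ascoli, citing \cite{HL93} for the details.
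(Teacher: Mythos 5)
The paper does not give its own proof of this lemma; it simply refers to \cite{HL93}, and your sketch is an accurate outline of the standard argument found there (Picard iteration for well-posedness, difference quotients to identify the generator with the boundary condition at \(\tau=0\) arising from matching the two branches, Arzel\`a--Ascoli for compactness when \(t\ge r\), and the general theory of \(\mathcal{C}_0\)-semigroups for closedness and density). One point worth stating more carefully: to get uniform convergence of \(t^{-1}(T_t\xi-\xi)\) on all of \([-r,0]\), you need to handle the boundary layer \(\tau\in(-t,0)\), where the difference quotient mixes both branches of \eqref{delay-semigroup}; there, writing \(\alpha=t+\tau\) and Taylor-expanding \(\xi(0)-\xi(\tau)\approx -\tau\,\xi'(0)\) and \(\int_0^{\alpha}A(\cdots)\,\mathrm{d}s\approx \alpha\,A(\xi_0,\int_{-r}^0\xi_\theta\,\pi(\mathrm{d}\theta))\), one sees that the boundary condition in \eqref{operator} is exactly what makes the two contributions recombine into \(\xi'(0)+o(1)\) uniformly. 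Your phrase ``uniformly on \([-r,-\delta]\)'' leaves this step implicit, but the observation that matching the one-sided derivative at \(0\) forces the boundary condition shows you have the right idea.
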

		Since we are primarily interested in stability, we will focus particularly on the spectrum of \( L \).
		\begin{remark}\label{FVCC}
			We define the characteristic equation as follows:
			\begin{align}\label{characteristic}
				\begin{split}
					&\Delta(z):\mathbb{R}^n\rightarrow\mathbb{R}^n, \\
					&\Delta(z)C = zC - A\left(C, \int_{-r}^{0} C \exp(z\theta) \pi(\mathrm{d}\theta)\right).
				\end{split}
			\end{align}
			From \cite[Lemma 2.1, Theorem 4.2 (Chapter 7)]{HL93}, for the elements of spectrum $L$, we have
			\begin{align}\label{RR}
				z\in \sigma(L)\Longleftrightarrow \det(\Delta(z))=0.
			\end{align} 
			Also, $\sigma(L)=\sigma_p(L)$.
		\end{remark}
		Remark \ref{FVCC} gives a necessary and sufficient condition to find the spectrum of the operator \(L\). We need an auxiliary Lemma to give a sufficient condition for establishing the stability. The next Proposition is a direct consequence of the Multiplicative Ergodic Theorem in Banach spaces
		\begin{proposition}\label{MEEET}
			Assume \(\mathcal{B}\) is a Banach space and \((T_{t})_{t \geq 0}\) is a \(\mathcal{C}_0\)-semigroup defined on \(\mathcal{B}\). Also assume for \(t > r\), this semigroup is also compact. Then there exists a decreasing sequence \(\{\lambda_i\}_{i \geq 1}\), \(\lambda_i \in (-\infty, \infty)\) (Lyapunov exponents) with the properties that \(\lim_{n \to \infty} \lambda_n = -\infty\) and either \(\lambda_i > \lambda_{i+1}\) or \(\lambda_i = \lambda_{i+1} = -\infty\) such that if we set
			\[ 
			F_{\lambda_{i}}:=\{ x \in \mathcal{B} : \limsup_{t \rightarrow \infty} \frac{1}{t} \log \Vert T_{t}x \Vert_{\mathcal{B}} \leq \lambda_{i} \},
			\]
			Assume \(\mathcal{B}\) is a Banach space and \((T_{t})_{t \geq 0}\) is a \(\mathcal{C}_0\)-semigroup defined on \(\mathcal{B}\). Also assume for \(t > r\), this semigroup is also compact. Then there exists a decreasing sequence \(\{\lambda_i\}_{i \geq 1}\), \(\lambda_i \in [-\infty, \infty)\) (Lyapunov exponents) with the properties that \(\lim_{n \to \infty} \lambda_n = -\infty\) and either \(\lambda_i > \lambda_{i+1}\) or \(\lambda_i = \lambda_{i+1} = -\infty\) such that if we set
			\[ 
			F_{\lambda_{i}}:=\{ x \in \mathcal{B} : \limsup_{t \rightarrow \infty} \frac{1}{t} \log \Vert T_{t}x \Vert_{\mathcal{B}} \leq \lambda_{i} \},
			\]
			then
			\begin{align}\label{nest}
				\begin{split}
					& \ldots \subset F_{\lambda_{2}} \subset F_{\lambda_{1}} = \mathcal{B},\\
					& x \in F_{\lambda_{i}} \setminus F_{\lambda_{i+1}} \quad \text{if and only if} \quad \lim_{t \rightarrow \infty} \frac{1}{t} \log \Vert T_{t} x \Vert_{\mathcal{B}} = \lambda_{i}.
				\end{split}
			\end{align}
			Also, there are numbers \(m_1, m_2, \ldots\) such that \(\operatorname{codim} F_{\lambda_j} = m_1 + \ldots + m_{j-1}\). In addition, for every \(i \geq 1\) with \(\lambda_i > \lambda_{i+1}\), there is a unique \(m_i\)-dimensional subspace \(H^i\) such that the following statements are true:
			\begin{itemize}
				\item[(i)] \textbf{(Invariance)}\ \ \(T_{t}(H^i) = H^i\) for every \(t \geq 0\).
				\item[(ii)] \textbf{(Splitting)}\ \ \(H^i \oplus F_{\lambda_{i+1}} = F_{\lambda_i}\). In particular,
				\begin{align*}
					\mathcal{B} = H^1 \oplus \cdots \oplus H^i \oplus F_{\lambda_{i+1}}.
				\end{align*}
				\item[(iii)] \textbf{('Fast-growing' subspace)}\ \ For each \(h \in H^{i} \setminus \{0\}\),
				\begin{align*}
					\lim_{t \rightarrow \infty} \frac{1}{t} \log \Vert T_t(h) \Vert_{\mathcal{B}} = \lambda_{i}
				\end{align*}
				and
				\begin{align*}
					\lim_{t \rightarrow \infty} \frac{1}{t} \log \Vert (T_t)^{-1}(h) \Vert_{\mathcal{B}} = -\lambda_{i}.
				\end{align*}
			\end{itemize}
		\end{proposition}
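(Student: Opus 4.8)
The plan is to realize the deterministic semigroup as a (degenerate) linear cocycle and to invoke the Banach-space Multiplicative Ergodic Theorem of \cite{GVR23} that already underlies Proposition \ref{MEEETTTT}. Concretely, fix $s>r$, so that $T_s$ is compact by Lemma \ref{PII}. Take the one-point metric dynamical system $\Omega=\{\ast\}$, $\theta\equiv\operatorname{Id}$ (trivially ergodic), the constant field of Banach spaces $E_\ast=\mathcal{B}$, and the discrete linear cocycle $\psi^m:=T_{ms}=(T_s)^m$, $m\in\mathbb{N}$. The cocycle property is just the semigroup property, measurability is vacuous, $\psi^1=T_s$ is compact, and the integrability hypothesis $\log^+\|\psi^1\|\in L^1$ holds because $\log^+\|T_s\|$ is a finite constant. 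Hence \cite{GVR23}[Theorem 1.21] applies and yields a decreasing sequence of Lyapunov exponents $\mu_i$, multiplicities $m_i$, a nested flag $F_{\mu_i}=\{x\in\mathcal{B}:\limsup_m\frac1m\log\|T_{ms}x\|\le\mu_i\}$ with $\operatorname{codim}F_{\mu_j}=m_1+\cdots+m_{j-1}$ and $F_{\mu_1}=\mathcal{B}$, finite-dimensional subspaces $H^i$ with $T_{ms}(H^i)=H^i$, the splitting $\mathcal{B}=H^1\oplus\cdots\oplus H^i\oplus F_{\mu_{i+1}}$, and the two-sided exponential growth characterizations of points of $H^i$ and of $F_{\mu_i}\setminus F_{\mu_{i+1}}$.

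The second step is to pass from discrete time $ms$ to continuous time. Set $\lambda_i:=\mu_i/s$. Using that $M_s:=\sup_{0\le u\le s}\|T_u\|<\infty$ (strong continuity together with the uniform boundedness principle), for $t\in[ms,(m+1)s]$ one has $M_s^{-1}\|T_{(m+1)s}x\|\le\|T_t x\|\le M_s\|T_{ms}x\|$; dividing logarithms by $t$ and letting $m\to\infty$ shows $\limsup_t\frac1t\log\|T_tx\|=\frac1s\limsup_m\frac1m\log\|T_{ms}x\|$, and likewise with $\liminf$. Therefore the sets $F_{\lambda_i}$ of the statement coincide with $F_{\mu_i}$, so the nesting, the codimension formula, and $F_{\lambda_1}=\mathcal{B}$ transfer verbatim, and $x\in F_{\lambda_i}\setminus F_{\lambda_{i+1}}$ iff $\lim_t\frac1t\log\|T_tx\|=\lambda_i$. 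The forward statement in item (iii) follows the same way; its backward statement follows from the analogous sandwich estimate applied to the finite-dimensional (hence invertible) maps $T_t|_{H^i}$, combined with the MET's $-\mu_i$ rate, where $(T_t)^{-1}$ is read as the inverse of the bijection $T_t|_{H^i}$.

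The remaining, and really the only delicate, point is upgrading the discrete invariance $T_{ms}(H^i)=H^i$ to $T_t(H^i)=H^i$ for \emph{every} $t\ge0$, together with uniqueness of $H^i$. The cleanest route is to note that, for a compact operator, the Oseledets space $H^i$ is precisely the direct sum of the generalized eigenspaces $\ker(T_s-\lambda)^{k_\lambda}$ of $T_s$ over the finitely many eigenvalues $\lambda$ with $|\lambda|=e^{\mu_i}$; since $T_t$ commutes with $T_s$ it commutes with every polynomial in $T_s$, so each such generalized eigenspace, hence $H^i$, is $T_t$-invariant, and $T_t|_{H^i}$ is injective (because $T_s|_{H^i}=T_{s-t}|_{H^i}\circ T_t|_{H^i}$ is bijective), hence a bijection of the finite-dimensional space $H^i$; uniqueness of $H^i$ as the unique $T_s$-invariant complement of $F_{\mu_{i+1}}$ inside $F_{\mu_i}$ is inherited from the MET. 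If one prefers to avoid the spectral structure of compact operators, the same inclusion $T_t(H^i)\subseteq H^i$ can be obtained directly: for $x\in H^i$ write $T_tx=g+f$ with $g\in H^1\oplus\cdots\oplus H^i$ and $f\in F_{\mu_{i+1}}$, and use the spectral-gap bound $\limsup_m\frac1m\log\|T_{ms}|_{F_{\mu_{i+1}}}\|_{\mathrm{op}}\le\mu_{i+1}<\mu_i$ from the MET together with the $-\mu_i$ backward rate on $H^i$ to force first the $\bigoplus_{j<i}H^j$-component of $g$ and then $f$ itself to vanish (letting $m\to\infty$ in $\|f\|=\|T_{ms}f_m\|$ for suitable backward iterates $f_m$). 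Either way, once $T_t(H^i)\subseteq H^i$ is known, combining it with $T_{s-t}(H^i)\subseteq H^i$ and $T_s(H^i)=H^i$ gives $T_t(H^i)=H^i$. I expect this discrete-to-continuous upgrade to be the main obstacle; everything else is a mechanical translation of the Banach-space MET.
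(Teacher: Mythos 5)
Your proposal reaches the same conclusion via the same engine --- the Banach-space MET of \cite{GVR23} applied to the discrete iterates of the semigroup --- but the crucial step, upgrading the discrete invariance $T_{ms}(H^i)=H^i$ to continuous-time invariance $T_t(H^i)=H^i$, is handled by a genuinely different mechanism than in the paper. The paper fixes dyadic step sizes $t_0^m=2^{-m}$, argues that the Oseledets spaces obtained from these nested discretizations coincide (by uniqueness of the fast-growing complement, since $T_{2^{-m}}$-invariance follows from $T_{2^{-m-1}}$-invariance), and then invokes density of dyadic rationals and strong continuity of the semigroup on the finite-dimensional spaces $H^i$; this stays entirely within MET language and is the analogue of the discrete-to-continuous argument in \cite[Thm.~3.3, Lem.~3.4]{LL10}. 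You instead fix a single $s>r$, identify the Oseledets spaces with spectral subspaces of the compact operator $T_s$ (a direct sum of generalized eigenspaces for eigenvalues of modulus $e^{\mu_i}$), and then use that every $T_t$ commutes with $T_s$, hence with the real polynomial in $T_s$ whose kernel is $H^i$, to get $T_t(H^i)\subseteq H^i$; bijectivity then follows from $T_s|_{H^i}=T_{s-t}T_t|_{H^i}$. Your argument is algebraically cleaner and avoids the density bookkeeping, at the cost of importing the Riesz spectral theory of compact operators and the (standard but unstated) fact that, for a single compact map, Oseledets fast spaces coincide with these spectral subspaces; your alternative spectral-gap route is closer in spirit to \cite{LL10}. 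Both are valid; you should, as the paper implicitly does, also note that separability of $\mathcal{B}$ (normally required in \cite{GVR23}) can be dropped here because the base is a single point so all measurability conditions are vacuous, and that the sandwich estimates behind the $\limsup$ identification rely on $\sup_{0\le u\le s}\|T_u\|<\infty$, which is exactly the strong-continuity plus uniform-boundedness step you cite.
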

		\begin{proof}
			Cf.\eqref{FDCXX}.
		\end{proof}
		\begin{remark}\label{FVAACC}
			From Lemma \ref{MEEET}, for every \(j \geq 1\), we can find a unique and finite-dimensional subspace \(H^j\) such that item [iii] holds true. Note that if we fix a basis \(\{ \xi_k \}_{1 \leq k \leq m_j}\) for \(H^j\), then for an invertible matrix \(A_j\), our semigroup with respect to this basis can be represented by
			\begin{align}\label{VCXZAQ}
				\begin{split}
					&T_t: H^j \to H^j, \\
					&T_t = \exp(A_j t).
				\end{split}
			\end{align}
			In particular, if \(\tilde{\lambda}\) is a complex eigenvalue for \(A_j\), then we must have \(\operatorname{Re}(\tilde{\lambda}) = \lambda_j\)
		\end{remark}
		We can now state the following result.
		\begin{theorem}\label{LLTT}
			Assume \(\mathcal{B}\) is a Banach space, and let \((T_t)_{t \geq 0}\) be a \(\mathcal{C}_0\)-semigroup acting on \(\mathcal{B}\). Assume further that for \(t \geq r\), this semigroup is compact. Let \(L: \mathcal{D}(L) \to \mathcal{B}\) denote the infinitesimal generator of this semigroup. Then, if \(\mu \in P\sigma(L)\), there exists some \(j \geq 1\) and \(b \in \mathbb{R}\) such that \(\mu = \lambda_j + ib\), where \(\lambda_j\) is the Lyapunov exponent of the semigroup as stated in the previous proposition. Furthermore, for every \(\lambda_j \neq -\infty\), there exists \(b \in \mathbb{R}\) such that \(\lambda_j + ib \in P\sigma(L)\).
			
		\end{theorem}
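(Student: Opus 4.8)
The plan is to deduce both assertions from Proposition \ref{MEEET} and Remark \ref{FVAACC}, working on the complexification $\mathcal{B}_{\mathbb{C}} := \mathcal{B} \oplus i\mathcal{B}$ equipped with a norm invariant under multiplication by unit complex scalars -- the Taylor complexification norm $\|x+iy\|_{\mathbb{C}} = \sup_{\theta}\|x\cos\theta + y\sin\theta\|$ works and is equivalent to $\max(\|x\|,\|y\|)$ -- together with the complexified semigroup $T_t^{\mathbb{C}}(x+iy) := T_t x + iT_t y$ and its generator $L_{\mathbb{C}}$. Throughout, $P\sigma(L)$ is understood as $P\sigma(L_{\mathbb{C}})$, consistently with Remark \ref{FVCC}.

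\emph{First assertion.} Let $\mu = a + ib \in P\sigma(L)$. Then $\mu I - L_{\mathbb{C}}$ is not injective, so there is $z = x + iy \in \mathcal{D}(L_{\mathbb{C}}) \setminus \{0\}$ with $L_{\mathbb{C}} z = \mu z$; consequently $T_t^{\mathbb{C}} z = e^{\mu t} z$ and $\|T_t^{\mathbb{C}}z\|_{\mathbb{C}} = e^{at}\|z\|_{\mathbb{C}}$. By the norm equivalence this forces $\frac{1}{t}\log\big(\|T_t x\| \vee \|T_t y\|\big) \to a$, and since $a$ is finite at least one of $x,y$ -- say $x$ -- is nonzero with $\limsup_{t\to\infty}\frac{1}{t}\log\|T_t x\| = a$. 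I would then apply the dichotomy of Proposition \ref{MEEET} to $x$: by \eqref{nest} one has $x \in F_{\lambda_1} = \mathcal{B}$, and because the $F_{\lambda_i}$ are nested with $\lambda_i \to -\infty$, the set $\bigcap_i F_{\lambda_i}$ carries exponential growth rate $-\infty \neq a$, so there is a largest index $i$ with $x \in F_{\lambda_i}$, i.e. $x \in F_{\lambda_i}\setminus F_{\lambda_{i+1}}$. For that $i$, \eqref{nest} gives $\lim_{t\to\infty}\frac{1}{t}\log\|T_t x\| = \lambda_i$, hence $a = \lambda_i =: \lambda_j$ and $\mu = \lambda_j + ib$.

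\emph{Second assertion.} Fix $\lambda_j \neq -\infty$; then $\lambda_j > \lambda_{j+1}$, so Proposition \ref{MEEET} supplies the $T_t$-invariant subspace $H^j \subset \mathcal{B}$ of finite dimension $m_j \geq 1$. Being finite-dimensional and $T_t$-invariant, $(T_t|_{H^j})_{t\geq 0}$ is uniformly continuous, whence $H^j \subseteq \mathcal{D}(L)$ and $L|_{H^j} = A_j$, where $A_j \in L(H^j)$ is the matrix of Remark \ref{FVAACC} with $T_t|_{H^j} = \exp(tA_j)$. Since $H^j \neq \{0\}$, the real matrix $A_j$ has at least one eigenvalue $\tilde\lambda \in \mathbb{C}$, and by Remark \ref{FVAACC} $\operatorname{Re}(\tilde\lambda) = \lambda_j$; writing $\tilde\lambda = \lambda_j + ib$ and taking an eigenvector $w \in (H^j)_{\mathbb{C}} \subseteq \mathcal{D}(L_{\mathbb{C}})$ yields $L_{\mathbb{C}} w = A_{j,\mathbb{C}} w = \tilde\lambda w$, i.e. $\lambda_j + ib \in P\sigma(L)$.

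The routine parts are the complexification bookkeeping, the identification of $L|_{H^j}$ with the matrix generator $A_j$, and the elementary comparison against \eqref{nest}. The step needing genuine care is in the first assertion: one must ensure that the real part $a$ of the complex eigenvalue is actually realized as a \emph{finite} exponential growth rate along a nonzero real orbit -- so that the MET dichotomy applies and $a$ must coincide with some $\lambda_j$ rather than lying strictly between two of them or below all of them. This is precisely where the structural properties of the filtration $\{F_{\lambda_i}\}$ from Proposition \ref{MEEET} (exhaustion of $\mathcal{B}$, nesting, and $\lambda_i \to -\infty$) enter.
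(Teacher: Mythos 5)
Your proof is correct and follows essentially the same route as the paper's: decompose a (complexified) eigenvector of $L$ into real and imaginary parts, deduce an exponential growth rate $a$ for the real orbit, and match it to a Lyapunov exponent via the filtration $\{F_{\lambda_i}\}$ from Proposition~\ref{MEEET}; for the converse, use that $H^j\subset\mathcal{D}(L)$ is finite dimensional with $T_t|_{H^j}=\exp(A_jt)$ and take a complex eigenvector of $A_j$. Your passage through the complexification norm and the limsup-to-limit upgrade via \eqref{nest} is a slightly more careful version of the paper's direct computation of $T_t\xi_1$, $T_t\xi_2$ via Euler's formula, but it is not a genuinely different argument.
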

		\begin{proof}
			Assume \(\mu = a + ib \in P\sigma(L)\). For two independent vectors \(\xi_1, \xi_2 \in \mathcal{D}(L)\) (if \(b \ne 0\)), with \(\xi_1 + i\xi_2 \ne 0\), we have \(L(\xi_1 + i\xi_2) = \mu (\xi_1 + i\xi_2)\). Therefore, by the definition of the semigroup, we have
			\[
			T_t(\xi_1 + i\xi_2) = \exp(\mu t)(\xi_1 + i\xi_2).
			\]
			Expanding this, we get
			\[
			T_t(\xi_1 + i\xi_2) = \exp((a + ib)t)(\xi_1 + i\xi_2) = \exp(at) \exp(ibt)(\xi_1 + i\xi_2).
			\]
			Using Euler's formula, \(\exp(ibt) = \cos(bt) + i \sin(bt)\), we can write
			\[
			T_t(\xi_1 + i\xi_2) = \exp(at) \left[(\cos(bt) + i \sin(bt)) (\xi_1 + i\xi_2)\right].
			\]
			Simplifying this, we obtain
			\[
			T_t(\xi_1 + i\xi_2) = \exp(at) \left[(\cos(bt) \xi_1 - \sin(bt) \xi_2) + i (\sin(bt) \xi_1 + \cos(bt) \xi_2)\right].
			\]
			Thus,
			\[
			T_t(\xi_1) = \exp(at) \left[\cos(bt) \xi_1 - \sin(bt) \xi_2 \right],
			\]
			\[
			T_t(\xi_2) = \exp(at) \left[\sin(bt) \xi_1 + \cos(bt) \xi_2 \right].
			\]
			Consequently,
			\[
			\lim_{t \rightarrow \infty} \frac{1}{t} \log \Vert T_t(\xi_1) \Vert = \lim_{t \rightarrow \infty} \frac{1}{t} \log \Vert T_t(\xi_2) \Vert = a.
			\]
			Therefore, from \eqref{nest}, for some \(j \geq 1\), we must have \(a = \lambda_j\). 
			Now, assume \(\lambda_j \ne -\infty\). From Remark \ref{FVAACC} for every \(j \geq 1\), we have \(H^j \subset \mathcal{D}(L)\). Assume \(\tilde{\lambda}_j = \lambda_j + ib\) is a complex eigenvalue for the matrix \(A_j\) in Remark \ref{FVAACC}. Then, for two nonzero elements \(\tilde{\xi}_1, \tilde{\xi}_2\) of \(H^j\), which are independent when \(b \ne 0\), we have
			\begin{align*}
				T_{t}(\tilde{\xi}_1+i\tilde{\xi}_2)=\exp(\lambda_jt+itb)(\tilde{\xi}_1+i\tilde{\xi}_2).
			\end{align*}
			Since $H^j\subset \mathcal{D}(L)$, this yields $L(\tilde{\xi}_1+i\tilde{\xi}_2)=(\lambda_{j}+ib)(\tilde{\xi}_1+i\tilde{\xi}_2)$.
		\end{proof}
		We then have the following corollary
		\begin{corollary}\label{TTRR}
			Assume that for every \(\lambda \in P\sigma(L)\), we have \(\operatorname{Re}(\lambda) < 0\). Then \((T_t)_{t \geq 0}\) is exponentially stable, i.e., for every \(\xi \in \mathcal{B}\), we have
			\[
			\limsup_{t \to \infty} \frac{1}{t} \log \|T_t \xi\|_{\mathcal{B}} \leq \lambda_1 < 0,
			\]
			where 
			\[
			\lambda_1 = \max \{ \operatorname{Re}(\lambda) \mid \lambda \in P\sigma(L) \}.
			\]
		\end{corollary}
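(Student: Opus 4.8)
The plan is to extract the quantitative stability directly from Proposition~\ref{MEEET} together with Theorem~\ref{LLTT}. The key observation is that Proposition~\ref{MEEET} already furnishes a decreasing Lyapunov spectrum $\{\lambda_i\}_{i\geq 1}$ with $F_{\lambda_1}=\mathcal{B}$, and by \eqref{nest} every $x\in\mathcal{B}$ satisfies $\limsup_{t\to\infty}\frac{1}{t}\log\Vert T_t x\Vert_{\mathcal{B}}\leq\lambda_1$. So it suffices to show $\lambda_1<0$, which I will do by identifying $\lambda_1$ with $\max\{\operatorname{Re}(\lambda):\lambda\in P\sigma(L)\}$ and invoking the hypothesis.

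First I would argue that $\lambda_1\neq-\infty$. Indeed, if the whole spectrum were empty the resolvent would be entire and, by compactness of $T_t$ for $t\geq r$ and the spectral mapping-type relation \eqref{RR} (the characteristic equation $\det\Delta(z)=0$ always has roots, since $\Delta(z)$ for large real $z$ is dominated by $zI$ while for $z\to-\infty$ along the real axis $\det\Delta(z)$ changes behaviour — more simply, $P\sigma(L)=\sigma(L)\neq\emptyset$ by Remark~\ref{FVCC} because an exponentially growing or at least non-trivial semigroup generated by a delay equation always has point spectrum), $P\sigma(L)$ is nonempty; alternatively one simply notes that if $\lambda_1=-\infty$ then $T_t\equiv 0$ for $t$ large, contradicting $T_0=I$ and the semigroup property. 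Hence $\lambda_1\in\mathbb{R}$. Then by the second half of Theorem~\ref{LLTT}, applied to $j=1$, there exists $b\in\mathbb{R}$ with $\lambda_1+ib\in P\sigma(L)$; by the hypothesis of the corollary, $\operatorname{Re}(\lambda_1+ib)=\lambda_1<0$. Conversely, the first half of Theorem~\ref{LLTT} shows every $\mu\in P\sigma(L)$ has $\operatorname{Re}(\mu)=\lambda_j$ for some $j\geq 1$, so $\operatorname{Re}(\mu)\leq\lambda_1$; combined with the existence of $\lambda_1+ib\in P\sigma(L)$ this gives $\lambda_1=\max\{\operatorname{Re}(\lambda):\lambda\in P\sigma(L)\}$, matching the displayed formula in the statement.

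Putting it together: for arbitrary $\xi\in\mathcal{B}=F_{\lambda_1}$, \eqref{nest} gives $\limsup_{t\to\infty}\frac{1}{t}\log\Vert T_t\xi\Vert_{\mathcal{B}}\leq\lambda_1<0$, which is exactly the claimed exponential stability. The only genuinely delicate point is the non-vacuity of the point spectrum, i.e. ruling out $\lambda_1=-\infty$; I would handle this by the elementary semigroup argument above (a nonzero $C_0$-semigroup cannot be eventually zero) rather than by a spectral computation, so the proof stays short. Everything else is bookkeeping with the spectrum–Lyapunov-exponent dictionary already established in Theorem~\ref{LLTT} and Remark~\ref{FVAACC}.
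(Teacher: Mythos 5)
Your proof is essentially the paper's proof written out in full: the paper dispenses with the corollary by declaring it ``a direct consequence of Lemma \ref{MEEET} and Theorem \ref{LLTT},'' and you have simply unpacked what that means, namely $F_{\lambda_1}=\mathcal{B}$ plus the spectrum--Lyapunov-exponent dictionary of Theorem \ref{LLTT}, which forces $\lambda_1<0$. So the main structure is sound and matches the intended argument.

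One side point, however, is both unnecessary and incorrectly argued. You try to rule out $\lambda_1=-\infty$ by claiming that this would force $T_t\equiv 0$ for $t$ large, ``contradicting $T_0=I$ and the semigroup property.'' Neither step holds. First, $\lambda_1=-\infty$ only means that every orbit decays faster than every exponential; it does not make $T_t$ eventually the zero operator. Second, even an eventually-zero $C_0$-semigroup does not contradict $T_0=I$ or the semigroup law (the nilpotent left-shift semigroup on $L^2[0,1]$, with $T_t=0$ for $t\geq 1$, is a standard example). Fortunately you do not need this detour at all: if $\lambda_1=-\infty$ then the desired inequality $\limsup_{t\to\infty}\frac{1}{t}\log\|T_t\xi\|\leq\lambda_1<0$ holds trivially for every $\xi\in\mathcal{B}$, and moreover Theorem \ref{LLTT} shows $P\sigma(L)$ is then empty (every $\mu\in P\sigma(L)$ would have $\operatorname{Re}\mu=\lambda_j$ for some finite $\lambda_j$, which cannot happen), which is consistent with the vacuous hypothesis. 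If instead $\lambda_1\in\mathbb{R}$, the second half of Theorem \ref{LLTT} produces $\lambda_1+ib\in P\sigma(L)$ and the hypothesis gives $\lambda_1<0$, exactly as you argue. Dropping the flawed aside and splitting into these two cases makes the proof airtight.
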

		\begin{proof}
			A direct consequence of Lemma \ref{MEEET} and Theorem \ref{LLTT}.
		\end{proof}
		\begin{corollary}\label{TRFFFFD}
			Consider Equation \ref{linear delay} and recall the definition of the characteristic equation in Remark \ref{FVCC}. Assume that for every \(z \in \mathbb{C}\) such that \(\det(\Delta(z)) = 0\), we have \(\operatorname{Re}(z) < 0\). Then, for every initial value \(\xi \in C([-r,0], \mathbb{R}^n)\), we have
			\[
			\limsup_{t \to \infty} \frac{1}{t} \log \|T_t \xi\|_{\infty,[-r,0]} \leq \lambda_1 < 0,
			\]
			where \(\lambda_1\) is equal to
			\[
			\lambda_1 = \max \{ \operatorname{Re}(z) \mid \det(\Delta(z)) = 0 \}.
			\]
		\end{corollary}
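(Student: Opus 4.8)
The plan is to deduce the statement from Corollary \ref{TTRR} by translating the hypothesis on the characteristic equation into the spectral hypothesis on the generator, and then reading off the value of the exponential rate. First I would invoke Lemma \ref{PII}: the family $(T_t)_{t\geq 0}$ introduced in \eqref{delay-semigroup} is a $\mathcal{C}_0$-semigroup on the Banach space $\mathcal{B} := (C([-r,0],\mathbb{R}^n),\|\cdot\|_{\infty,[-r,0]})$, it is compact for $t \geq r$, its infinitesimal generator is the operator $L$ with domain \eqref{operator}, and for every $\xi \in \mathcal{B}$ the path $t \mapsto T_t\xi$ is exactly the solution of Equation \eqref{linear delay}. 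Hence the quantity to be estimated equals $\limsup_{t\to\infty}\tfrac{1}{t}\log\|T_t\xi\|_{\mathcal{B}}$, and the abstract results of Section \ref{sec:MET_approach} apply directly. By Remark \ref{FVCC} one has $\sigma(L) = \sigma_p(L) = P\sigma(L)$ together with the equivalence $z \in \sigma(L) \iff \det(\Delta(z)) = 0$. Therefore the assumption ``$\operatorname{Re}(z) < 0$ for every $z$ with $\det(\Delta(z)) = 0$'' is verbatim the assumption ``$\operatorname{Re}(\lambda) < 0$ for every $\lambda \in P\sigma(L)$'' required by Corollary \ref{TTRR}.

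Applying Corollary \ref{TTRR} then gives, for every $\xi \in \mathcal{B}$,
\[
\limsup_{t\to\infty}\frac{1}{t}\log\|T_t\xi\|_{\infty,[-r,0]} \;\leq\; \lambda_1 \;<\; 0, \qquad \lambda_1 = \max\{\operatorname{Re}(\lambda)\,:\,\lambda \in P\sigma(L)\},
\]
and using the identification $P\sigma(L) = \{z \in \mathbb{C} : \det(\Delta(z)) = 0\}$ from Remark \ref{FVCC} once more rewrites $\lambda_1$ as $\max\{\operatorname{Re}(z) : \det(\Delta(z)) = 0\}$, which is the asserted value. This completes the reduction.

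I do not anticipate a genuine obstacle here: the real work was carried out in Section \ref{sec:MET_approach}, and the corollary is essentially an act of assembly. The only point that merits a sentence is the legitimacy of the ``$\max$'': one must know that $\sigma(L)$ meets every half-plane $\{\operatorname{Re}(z) \geq c\}$ in only finitely many points, so that the supremum of real parts is attained. This is the discreteness of the spectrum of an eventually compact semigroup, which in our setting is encoded in Proposition \ref{MEEET}: the real parts occurring in $P\sigma(L)$ are, by Theorem \ref{LLTT}, precisely the finite Lyapunov exponents $\lambda_i$, and these form a decreasing sequence with $\lambda_i \to -\infty$, so the largest one realizes the maximum. (Should $P\sigma(L)$ happen to be empty, every Lyapunov exponent equals $-\infty$ and the estimate holds a fortiori with $\lambda_1 = -\infty$; but for a nondegenerate delay equation $z\mapsto\det(\Delta(z))$ is a nonconstant exponential polynomial, hence has zeros, so this degenerate case does not arise.)
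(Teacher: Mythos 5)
Your proof is correct and follows the same route as the paper: the paper's own proof is simply the one-line statement that the corollary is a direct consequence of Remark \ref{FVCC} and Corollary \ref{TTRR}, which is exactly the assembly you carry out. Your additional remark about the attainability of the $\max$ (via the discreteness of the Lyapunov spectrum from Proposition \ref{MEEET} and Theorem \ref{LLTT}) is a legitimate point that the paper leaves implicit.
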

		\begin{proof}
			A direct consequence of Remark \ref{FVCC} and Corollary \ref{TTRR}.
		\end{proof}
		\begin{remark}\label{ADCZXSA}
			Set \( \lambda_1 = \max \{ \operatorname{Re}(z) \mid \det(\Delta(z)) = 0 \} \), where \(\Delta\) is defined in \ref{characteristic}. Then, from Proposition \ref{MEEET} and Theorem \ref{LLTT}, \( \lambda_1 \) is actually the first Lyapunov exponent. For the first Lyapunov exponent, since our semigroup is actually a deterministic cocycle, thanks to \cite[Theorem 4.17]{GVRS22}, we have
			\begin{align}
				\lambda_1 = \lim_{t \rightarrow \infty} \frac{1}{t} \log \Vert T_{t} \Vert_{L(\mathcal{B},\mathcal{B})}.
			\end{align}
			Therefore, from this simple observation, we can even enhance Corollary \ref{TTRR}.
		\end{remark}
		
		As previously mentioned, the recent results are well-established and classical. However, our contribution is to provide new proofs, to the best of our knowledge, using the Multiplicative Ergodic Theorem, which are both very short and concise. In the next section, we will apply these results to our delay equations and, in particular, explore how our approach leads to several non-obvious pathwise stability results.
		
		\section{Pathwise stability results}\label{sec:stability}
		In this section, we obtained several exponential stability results for both linear and nonlinear cases. It is natural to expect that when the drift part of an equation is exponentially stable, the solution will remain stable under stochastic perturbations from the diffusion term, provided the diffusion is not too large. Demonstrating this stability, especially for non-Markovian equations and even for standard stochastic equations in finite-dimensional settings, is highly non-trivial. For delay equations, the situation is even more complex due to the infinite-dimensional Banach spaces in which we work. When the noise is driven by Brownian motion, we can use classical Itô integration, with many established tools for studying the asymptotic behavior of solutions, such as Lyapunov functions. However, a limitation of the classical approach is its lack of pathwise results. In this article, we explored a pathwise approach, raising the natural question of how these two approaches relate, particularly in the context of Brownian motion. Therefore, before presenting our stability results, we aim to bridge the gap between these approaches. This connection allows us to enlarge the space of initial values and obtain stronger stability results in a pathwise manner.
		\begin{assumption}
			In this section, unless explicitly stated otherwise, the noise \( B = (B^1, \ldots, B^d) \colon \mathbb{R} \to \mathbb{R}^d \) refers to a two-sided Brownian motion valued in \(\mathbb{R}^d\). This process is defined on a probability space \((\Omega, \mathcal{F}, \mathbb{P})\) and is adapted to a two-parameter filtration \((\mathcal{F}^t_s)_{s \leq t}\). Specifically, \((B_{t+s} - B_s)_{t \geq 0}\) constitutes a standard \((\mathcal{F}_s^{t+s})_{t \geq 0}\)-Brownian motion for every \(s \in \mathbb{R}\), with \(B_0 = 0\) almost surely. We also assume that \((\Omega, \mathcal{F}, \mathbb{P}, \theta)\) forms the ergodic metric dynamical system that exists according to Theorem \ref{thm:B_delayed_cocycle} for \(\mathbf{B}^{\text{It\=o}}\). Additionally, we will use \(\mathbf{B}^{\text{It\=o}}\) to solve our equations. According to \cite[Proposition 2.3]{GVRS22}, using \(\mathbf{B}^{\text{It\=o}}\) to compute the integrals yields results that align with classical It\=o integration.
		\end{assumption}
		\begin{lemma}\label{rough de}
			Assume $A$ and $\pi$ satisfy the same conditions as in Lemma \ref{PII}, and let either $G \in {L}(\mathbb{R}^n\times \mathbb{R}^n,L(\mathbb{R}^d,\mathbb{R}^n))$ or $G\in C(\mathbb{R}^n\times \mathbb{R}^n,L(\mathbb{R}^d,\mathbb{R}^n))$ be a Lipschitz function. Consider the following equation:
			\begin{align}\label{linear_delay_stochastic}
				\begin{split}
					\mathrm{d}y_{t} &= A\left(y_t, \int_{-r}^{0} y_{\theta + t} \pi(\mathrm{d}\theta)\right) \mathrm{d}t + G\left(y_{t}, y_{t-r}\right) \mathrm{d}B_t(\omega), \\
					y_{\theta} &= \xi_{\theta} \quad \text{for} \quad \theta \in [-r, 0], \quad \xi \in C([-r, 0], \mathbb{R}^n),
				\end{split}
			\end{align}
			where the integral is defined in the classical It\=o sense.
			We use \((y^{\xi}_{t}(\omega))_{t \geq 0}\) to denote the solution. For every initial value \(\xi \in C([-r,0], \mathbb{R}^n)\), let \((y_t^\xi(\omega), (y^{\xi}_t)^{\prime}(\omega)) = (y^\xi_t(\omega), G(y^{\xi}_t(\omega), y^{\xi}_{t-r}(\omega)))\). Then, on a set of full measure \(\Omega_{\xi}\), and for every \(\frac{1}{3}<\gamma < \frac{1}{2}\),
			\begin{align*}
				(y_{t+2r}^\xi(\omega), (y^{\xi})^{\prime}_{t+2r}(\omega))_{-r \leq t \leq 0} \in \mathscr{D}_{\mathbf{B}(\theta_{r}\omega)}^{\gamma}([-r,0]).
			\end{align*}
			In addition, \(\Vert (y_{.+2r}^\xi(\omega), (y^{\xi})^{\prime}_{.+2r}(\omega))\Vert_{\mathscr{D}_{\mathbf{B}(\theta_{2r}\omega)}^{\gamma}([-r,0])} \in {L}^{p}(\Omega)\) for every \(p \geq 1\).
		\end{lemma}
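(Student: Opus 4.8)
The plan is to show that the classical It\=o solution of \eqref{linear_delay_stochastic}, restricted to the interval $[r,2r]$ and re-indexed via the cocycle property, is a genuine controlled path based on $\mathbf{B}$ with Gubinelli derivative $G(y_\cdot,y_{\cdot-r})$, and that each of the four contributions to the $\mathscr{D}^{\gamma}$-norm has finite moments of all orders. The shift by $2r$ rather than $r$ is dictated by the delay: on $[r,2r]$ the delayed argument $y_{\cdot-r}$ ranges over $[0,r]$, where $y$ is already an It\=o process and hence $\gamma$-H\"older, whereas on $[0,r]$ the delayed argument is the merely continuous datum $\xi$, and no controlled structure can be expected there.

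\textbf{Step 1 (classical solution and moment bounds).} First I would invoke the standard theory of stochastic delay equations: \eqref{linear_delay_stochastic} has a unique adapted solution $(y_t^{\xi}(\omega))_{t\geq 0}$, obtained by solving successively on $[0,r],[r,2r],\dots$, and $\E\big[\sup_{0\leq t\leq T}|y_t^{\xi}|^p\big]<\infty$ for every $p\geq 1$, $T>0$. This uses the linearity of $A$ and finiteness of $\pi$ (so $|A(y_t,\int_{-r}^0 y_{\theta+t}\pi(\d\theta))|\leq C\sup_{0\leq u\leq t}|y_u|$), the linear growth and Lipschitz continuity of $G$, the Burkholder--Davis--Gundy (BDG) inequality and Gronwall's lemma. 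Writing $(\delta y)_{s,t}=\int_s^t A(\dots)\,\d u+\int_s^t G(y_u,y_{u-r})\,\d B_u$ on $[0,2r]$ (with $y_{u-r}=\xi_{u-r}$ when $u\in[0,r]$, still bounded in $L^p$ uniformly in $u$), BDG together with the moment bounds gives $\E|(\delta y)_{s,t}|^p\lesssim_p|t-s|^{p/2}$ uniformly in $s,t\in[0,2r]$. A Kolmogorov--Chentsov / Garsia--Rodemich--Rumsey argument then yields that a.s. $y^{\xi}|_{[0,2r]}$ is $\gamma$-H\"older for every $\gamma<\tfrac12$, with $\|y^{\xi}\|_{\gamma;[0,2r]}\in\bigcap_{p\geq 1}L^p(\Omega)$; in particular $\E|y^{\xi}_u-y^{\xi}_s|^q\lesssim_q|u-s|^{q/2}$.

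\textbf{Step 2 (controlled structure on $[r,2r]$).} Put $y'_t:=G(y_t,y_{t-r})$. Since $G$ is Lipschitz and $y|_{[0,2r]}$ is $\gamma$-H\"older, $|y'_t-y'_s|\leq L(|y_t-y_s|+|y_{t-r}-y_{s-r}|)$ gives $\|y'\|_{\gamma;[r,2r]}\in\bigcap_p L^p(\Omega)$, and $|y'_t|\leq|G(0,0)|+L(|y_t|+|y_{t-r}|)$ gives $|y'|_{\infty;[r,2r]}\in\bigcap_p L^p(\Omega)$. The associated remainder on $[r,2r]$ is, using that $y'_s$ is $\mathcal{F}_s$-measurable,
\[
y^{\#}_{s,t}:=(\delta y)_{s,t}-y'_s(\delta B)_{s,t}=\int_s^t A(\dots)\,\d u+\int_s^t\big(y'_u-y'_s\big)\,\d B_u .
\]
The drift term is $O(|t-s|)$ with $L^p$-coefficient; for the stochastic term, BDG, Minkowski's integral inequality, the Lipschitz bound $|y'_u-y'_s|\leq L(|y_u-y_s|+|y_{u-r}-y_{s-r}|)$ and the estimate $\E|y_u-y_s|^q\lesssim_q|u-s|^{q/2}$ give $\E\big|\int_s^t(y'_u-y'_s)\,\d B_u\big|^q\lesssim_q|t-s|^q$ for every $q$. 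Hence $\E|y^{\#}_{s,t}|^q\lesssim_q|t-s|^q$ for all $q$, so $y^{\#}$ (which is continuous and satisfies the Chen-type relation $y^{\#}_{s,t}-y^{\#}_{s,u}-y^{\#}_{u,t}=(\delta y')_{s,u}(\delta B)_{u,t}$) is, by Kolmogorov--Chentsov with $q$ large, $2\gamma$-H\"older for every $\gamma<\tfrac12$, with $\|y^{\#}\|_{2\gamma;[r,2r]}\in\bigcap_p L^p(\Omega)$. Thus $(y^{\xi}_t(\omega))_{t\in[r,2r]}\in\mathscr{D}_{\mathbf{B}(\omega)}^{\gamma}([r,2r])$ with Gubinelli derivative $G(y^{\xi}_\cdot,y^{\xi}_{\cdot-r})$ and $\mathscr{D}^{\gamma}$-norm in $\bigcap_p L^p(\Omega)$.

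\textbf{Step 3 (shift and conclusion).} By the cocycle identity $\mathbf{B}(\theta_{2r}\omega)_{s,t}=\mathbf{B}_{s+2r,t+2r}(\omega)$ (Theorem \ref{thm:B_delayed_cocycle}), time-translation by $2r$ is an isometry from $\mathscr{D}_{\mathbf{B}(\omega)}^{\gamma}([r,2r])$ onto $\mathscr{D}_{\mathbf{B}(\theta_{2r}\omega)}^{\gamma}([-r,0])$, so Step 2 is exactly the assertion $(y^{\xi}_{t+2r}(\omega),(y^{\xi})'_{t+2r}(\omega))_{-r\leq t\leq 0}\in\mathscr{D}_{\mathbf{B}(\theta_{2r}\omega)}^{\gamma}([-r,0])$ together with the stated $L^p$ bound. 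Intersecting the full-measure sets obtained for a sequence $\gamma_n\uparrow\tfrac12$ yields a single full-measure $\Omega_{\xi}$ on which the claim holds for all $\gamma<\tfrac12$ (using the continuous embedding $\mathscr{D}^{\gamma_n}\hookrightarrow\mathscr{D}^{\gamma}$ for $\gamma<\gamma_n$); that this controlled path coincides with the rough-path solution of \eqref{linear_delay_stochastic} driven by $\mathbf{B}^{\text{It\=o}}$ follows from the identification of the rough and It\=o integrals in \cite[Proposition 2.3]{GVRS22}. The main obstacle is Step 1: proving $\gamma$-H\"older regularity of $y$ on the full interval $[0,2r]$, with H\"older seminorm in every $L^p$, despite the non-H\"older datum $\xi$ entering the coefficients on $[0,r]$ --- which is precisely why the shift must be $2r$; once this is available, the controlled-path verification and the remainder estimate via BDG and Minkowski are routine.
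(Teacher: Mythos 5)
Your proposal is correct and follows essentially the same route as the paper. The paper establishes moment bounds by BDG and Gr\"onwall (its (5.3)--(5.6)), then bounds $\E|(\delta y)_{s,t}|^{2m}\lesssim|t-s|^m$ on $[0,2r]$ and $\E|(\delta y)_{s,t}-G(y_s,y_{s-r})(\delta B)_{s,t}|^{2m}\lesssim|t-s|^{2m}$ on $[r,2r]$, and concludes via Kolmogorov; this is exactly your Steps 1--2 (you just carry out the passage to a.s.\ H\"older regularity of $y$ as a separate intermediate step before estimating $y^{\#}$, whereas the paper keeps everything at the moment level and applies Kolmogorov once at the end, but the estimates are identical, since the Lipschitz bound on $G$ is precisely what converts $|(\delta y')_{s,u}|$ into $|(\delta y)_{s,u}|+|(\delta y)_{s-r,u-r}|$). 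Your Step 3 on the shift by $2r$ and the identification with $\mathbf{B}^{\text{It\=o}}$ is left implicit in the paper but is correct and needed.
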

		\begin{proof}
			First, note that the existence and uniqueness of the solution for this equation follow from standard results in stochastic analysis. Given our assumption on \(G\), the Burkholder-Davis-Gundy inequality implies the existence of a constant \(\alpha_{m,1}\), which depends on \(A\), \(r\), \(\pi\), and \(G\), such that
			\begin{align*}
				\forall t\in [0,r]:\ \ \ & \mathbb{E}\left(\sup_{0 \leq \tau \leq t} \vert y_{\tau}^{\xi} \vert^{2m}\right) 
				\\&\leq \alpha_{m,1}\left(1 + \mathbb{E}\left(\sup_{\theta \in [-r,0]} \vert y_{\theta}^{\xi} \vert^{2m}\right) + \mathbb{E}\left(\int_{0}^{t} \sup_{0 \leq \tau \leq \sigma} \vert y_{\tau}^{\xi} \vert \, \mathrm{d}\sigma \right)^{2m} + \mathbb{E}\left(\int_{0}^{t} \sup_{0 \leq \tau \leq \sigma} \vert y_{\tau}^{\xi} \vert^2 \, \mathrm{d}\sigma\right)^{m}\right).
			\end{align*}
			
			By applying Grönwall's lemma and performing some basic calculations, we obtain that, for a constant \(\alpha_{m,2}\),
			\begin{align}\label{UNI}
				\mathbb{E}\left(\sup_{\theta \in [0,r]} \vert y_{\theta}^{\xi} \vert^{2m}\right) 
				\leq \alpha_{m,2}\left(1 + \mathbb{E}\left(\sup_{\theta \in [-r,0]} \vert y_{\theta}^{\xi} \vert^{2m}\right)\right) 
				= \alpha_{m,2}\left(1 + \Vert \xi \Vert_{\infty,[-r,0]}^{2m}\right).
			\end{align}
			Therefore, by repeating the same argument over the interval \([r, 2r]\) and using the previous inequality, for a constant \(\alpha_{m,3}\), we obtain
			\begin{align}\label{UNI2}
				\mathbb{E}\left(\sup_{\theta \in [r,2r]} \vert y_{\tau}^{\xi} \vert^{2m}\right) 
				\leq \alpha_{m,2}\left(1 + \mathbb{E}\left(\sup_{\theta \in [0,r]} \vert y_{\theta}^{\xi} \vert^{2m}\right)\right) 
				\leq \alpha_{m,3}\left(1 + \Vert \xi \Vert_{\infty,[-r,0]}^{2m}\right).
			\end{align}
			
			Again, from the Burkholder-Davis-Gundy inequality, for a constant \(\beta_{m,1}\), we have
			\begin{align}\label{AAATTG}
				\begin{split}
					\forall s,t \in [0,2r]: \quad &\mathbb{E}\left(\vert (\delta y^\xi)_{s,t} \vert^{2m}\right) 
					\\	&\leq \beta_{m,1}\left(\mathbb{E}\left(\int_{s}^{t} \sup_{\theta \in [-r,0]} \vert y^{\xi}_{\sigma+\theta} \vert \, \mathrm{d}\sigma\right)^{2m} + \mathbb{E}\left(\int_{s}^{t} \left(1 + \sup_{\theta \in [-r,0]} \vert y^{\xi}_{\sigma+\theta} \vert^2 \right) \, \mathrm{d}\sigma\right)^{m}\right).
				\end{split}
			\end{align}
			
			The latter inequality, combined with \eqref{UNI2}, yields that, for a constant \(\beta_{m,2}\),
			\begin{align}\label{NMMMLIO}
				\forall s,t \in [0,2r]: \quad \mathbb{E}\left(\vert (\delta y^\xi)_{s,t} \vert^{2m}\right) 
				\leq \beta_{m,2} (t-s)^{m} \left(1 + \Vert \xi \Vert_{\infty,[-r,0]}^{2m}\right).
			\end{align}
			
			Again, from the Burkholder-Davis-Gundy inequality, for a constant \(\gamma_{m,1}\),
			\begin{align*}
				\forall s,t \in [r,2r]: \quad &\mathbb{E}\left(\left\vert (\delta y^\xi)_{s,t} - G(y^{\xi}_s, y^{\xi}_{s-r})(\delta B)_{s,t}\right\vert^{2m}\right)
				\\&\leq \gamma_{m,1} \left(\mathbb{E}\left(\int_{s}^{t} \sup_{\theta \in [-r,0]} \vert y^{\xi}_{\sigma+\theta} \vert \, \mathrm{d}\sigma\right)^{2m}  \quad + \mathbb{E}\left(\int_{s}^{t} \left(\vert (\delta y^{\xi})_{s,\sigma} \vert^2 + \vert (\delta y^{\xi})_{s-r,\sigma-r} \vert^2\right) \, \mathrm{d}\sigma\right)^m\right).
			\end{align*}
			By applying \eqref{UNI2} and \eqref{NMMMLIO}, for a constant \({\gamma}_{m,2}\),
			\begin{align}\label{TTR2}
				\forall s,t\in [r,2r] :\mathbb{E}\left(\left\vert (\delta y^\xi)_{s,t}-  G(y^{\xi}_s,y^{\xi}_{s-r})(\delta B)_{s,t}\right\vert^{2m}\right)\leq {\gamma}_{m,2}(t-s)^{2m}(1+\Vert\xi\Vert_{\infty,[-r,0]}^{2m})
			\end{align}
			Here we are fixing a \(\xi \in C([-r,0], \mathbb{R}^n)\) and \(m \in \mathbb{N}\) can be selected arbitrarily large. Set \((y^{\xi})^{\#}_{s,t} := (\delta y^{\xi})_{s,t} - G(y^{\xi}_s, y^{\xi}_{s-r})(\delta B)_{s,t}\) and \((y^{\xi})^{\prime}_s := G(y^{\xi}_s, y^{\xi}_{s-r})\). Then, by the Kolmogorov continuity theorem, for a probability space \(\Omega_{\xi}\) with full measure and $\theta_r$-invariant, we can conclude that for every $\omega\in\Omega_{\xi} $
			\begin{align*}
				(y_{t+2r}^\xi(\omega), (y^{\xi})^{\prime}_{t+2r}(\omega))_{-r \leq t \leq 0} \in \mathscr{D}_{\mathbf{B}(\theta_{2r} \omega)}^{\gamma}([-r,0]).
			\end{align*} 
		\end{proof}
		\begin{remark}
			Note that in the case where \( G(0,0) = 0 \), for the inequalities \eqref{UNI2}, \eqref{NMMMLIO}, and \eqref{TTR2}, on their right-hand sides, we can replace \((1 + \Vert \xi \Vert_{\infty,[-r,0]}^{2m})\) with \(\Vert \xi \Vert_{\infty,[-r,0]}^{2m}\).
		\end{remark}
		\begin{lemma}\label{deteministic}
			Consider the equation \eqref{linear delay} and recall the definition of $\Delta$ from \eqref{characteristic}. Set
			\begin{align}\label{TRFVV}
				\lambda_1 = \max \{ \operatorname{Re}(z) \mid \det(\Delta(z)) = 0 \}.
			\end{align}
			Then, for every $\xi \in C([-r,0], \mathbb{R}^n)$, we have:
			\begin{align}\label{GBVZCXS}
				\limsup_{t \to \infty} \frac{1}{t} \log \| T_{t} \xi \|_{C^1([-r,0], \mathbb{R}^n)} \leq \lambda_1.
			\end{align}
			Here, \((T_t)_{t \geq 0}\) denotes the semigroup solution as defined in \eqref{delay-semigroup}, and \(\|\cdot\|_{C^1([-r,0], \mathbb{R}^n)}\) is the usual \(C^1([-r,0], \mathbb{R}^n)\) norm, i.e., the supremum norm of the path and its derivative. Also 
			\begin{align}\label{GBVZCXS1}
				\lim_{t \geq r, t\rightarrow\infty}\frac{1}{t}\log\sup_{\|\xi\|_{\infty,[-r,0]}=1}\log \| T_{t} \xi \|_{C^1([-r,0], \mathbb{R}^n)} = \lambda_{1}.
			\end{align}
		\end{lemma}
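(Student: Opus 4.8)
The plan is to deduce both assertions from the spectral identification of $\lambda_1$ already available and from the smoothing effect of the delay semigroup, so that no fresh spectral or ergodic input is required. First I would record that, by Remark \ref{FVCC}, $\sigma(L) = \sigma_p(L)$ and $z \in \sigma(L)$ if and only if $\det(\Delta(z)) = 0$, so the quantity $\lambda_1$ in \eqref{TRFVV} equals $\max\{\operatorname{Re} z : z \in \sigma(L)\}$. Applying Theorem \ref{LLTT} and Proposition \ref{MEEET} to the $\mathcal{C}_0$-semigroup $(T_t)_{t\ge 0}$ on $\mathcal{B} := C([-r,0],\mathbb{R}^n)$ (which is compact for $t \ge r$ by Lemma \ref{PII}), this same number is the top Lyapunov exponent, $F_{\lambda_1} = \mathcal{B}$, and by Remark \ref{ADCZXSA}
\[
\lambda_1 = \lim_{t\to\infty}\frac1t\log\|T_t\|_{L(\mathcal B,\mathcal B)} \qquad (\lambda_1 = -\infty \text{ if the limit is } -\infty).
\]
In particular $\limsup_{t\to\infty}\frac1t\log\|T_t\xi\|_{\infty,[-r,0]} \le \lambda_1$ for every $\xi \in \mathcal{B}$.

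Next I would upgrade this $\|\cdot\|_{\infty}$-estimate to the $C^1$-estimate \eqref{GBVZCXS}. For $t \ge r$ the path $\theta \mapsto (T_t\xi)(\theta) = y_{t+\theta}$ is the restriction to $[t-r,t]$ of the solution $y$ of Equation \eqref{linear delay}, which is $C^1$ on $[0,\infty)$ and satisfies $\dot y_u = A\big(y_u, \int_{-r}^0 y_{u+\theta}\,\pi(\mathrm d\theta)\big)$ there. Since $A$ is linear and $\pi$ has finite total variation, for $t \ge 2r$ this gives
\[
\Big\|\tfrac{\mathrm d}{\mathrm d\theta}(T_t\xi)\Big\|_{\infty,[-r,0]} \le \|A\|\,(1+\|\pi\|_{\mathrm{TV}})\sup_{u\in[t-2r,t]}|y_u| \le \|A\|\,(1+\|\pi\|_{\mathrm{TV}})\big(\|T_{t-r}\xi\|_{\infty} + \|T_t\xi\|_{\infty}\big),
\]
since $[t-2r,t]$ is covered by the time windows of $T_{t-r}\xi$ and of $T_t\xi$. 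Using the semigroup property $T_t\xi = T_r(T_{t-r}\xi)$ together with $\|T_t\xi\|_\infty \le \|T_r\|_{L(\mathcal B,\mathcal B)}\|T_{t-r}\xi\|_\infty$, I obtain a constant $\tilde C$, independent of $\xi$ and of $t \ge 2r$, with
\[
\|T_t\xi\|_{C^1([-r,0],\mathbb{R}^n)} \le \tilde C\,\|T_{t-r}\xi\|_{\infty,[-r,0]}.
\]
Taking $\frac1t\log$, letting $t \to \infty$, and invoking the first step yields $\limsup_{t\to\infty}\frac1t\log\|T_t\xi\|_{C^1([-r,0],\mathbb{R}^n)} \le \lambda_1$, which is \eqref{GBVZCXS}.

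For the operator-norm identity \eqref{GBVZCXS1} (understood as $\frac1t\log\sup_{\|\xi\|_{\infty,[-r,0]}=1}\|T_t\xi\|_{C^1([-r,0],\mathbb{R}^n)}$, i.e.\ $\frac1t\log\|T_t\|_{L(\mathcal B,C^1)}$) I would take the supremum over the unit ball of $\mathcal B$ in the last display to get $\|T_t\|_{L(\mathcal B,C^1)} \le \tilde C\,\|T_{t-r}\|_{L(\mathcal B,\mathcal B)}$ for $t \ge 2r$, hence $\limsup_{t\to\infty}\frac1t\log\|T_t\|_{L(\mathcal B,C^1)} \le \lambda_1$ by Remark \ref{ADCZXSA}; and for the reverse inequality use $\|\cdot\|_{C^1([-r,0],\mathbb{R}^n)}\ge\|\cdot\|_{\infty,[-r,0]}$ on $C^1$ functions to get $\|T_t\|_{L(\mathcal B,C^1)} \ge \|T_t\|_{L(\mathcal B,\mathcal B)}$ for $t \ge r$, whence $\liminf_{t\to\infty}\frac1t\log\|T_t\|_{L(\mathcal B,C^1)} \ge \lambda_1$, again by Remark \ref{ADCZXSA}. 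The two estimates together give the limit and its value $\lambda_1$.

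I expect the only genuinely non-routine point to be the second step: one has to control the \emph{derivative} $\frac{\mathrm d}{\mathrm d\theta}T_t\xi$, not just $T_t\xi$ itself, which forces a careful tracking of which window of the underlying solution $y$ feeds into $\dot y$ on $[t-r,t]$ and a re-expression of $\sup_{[t-2r,t]}|y|$ through semigroup states at time $t-r$, with a constant uniform in $\xi$ and $t$. Once that bookkeeping and the uniform constant $\tilde C$ are in place, both conclusions are immediate from the spectral description of $\lambda_1$ in Remark \ref{ADCZXSA}. A minor additional point is the degenerate case $\lambda_1 = -\infty$, where only the upper bounds are needed and the statements hold trivially.
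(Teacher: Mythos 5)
Your proposal is correct and follows essentially the same route as the paper: use Theorem \ref{LLTT} / Remark \ref{ADCZXSA} to get the decay of $\|T_t\xi\|_{\infty}$, then differentiate the explicit formula for $T_t\xi$ to bound $\sup_\sigma|\tfrac{\mathrm d}{\mathrm d\sigma}(T_t\xi)(\sigma)|$ by $M\big(\|T_t\xi\|_{\infty}+\|T_{t-r}\xi\|_{\infty}\big)$, which is exactly the paper's inequality \eqref{AZA8}, and deduce \eqref{GBVZCXS}. The only difference is that you spell out the argument for \eqref{GBVZCXS1} (taking the supremum over the unit ball for the upper bound, and using $\|\cdot\|_{C^1}\ge\|\cdot\|_{\infty}$ plus Remark \ref{ADCZXSA} for the matching lower bound), which the paper compresses into ``similarly''.
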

		\begin{proof}
			First note that from Theorem \ref{LLTT} and Remark \ref{ADCZXSA}, 
			\begin{align}\label{BNM<LO}
				\limsup_{t \to \infty} \frac{1}{t} \log \|T_t \xi\|_{\infty,[-r,0]} \leq \lim_{t \to \infty}\frac{1}{t}\log\Vert T_{t}\Vert_{L(C([-r,0], \mathbb{R}^n),C([-r,0], \mathbb{R}^n))}=\lambda_1 .
			\end{align}
			Also for $t\geq r$,  and $\sigma\in [-r,0]$
			\begin{align}
				\frac{\mathrm{d} T_{t}(\xi)}{\mathrm{d}\sigma}(\sigma)=A\big((T_{t}\xi)(\sigma),\int_{-r}^{0}(T_{t}\xi)(\sigma+\theta)\pi(\mathrm{d}\theta)\big).
			\end{align}
			This in particular implies that for a constant \( M \),
			\begin{align}\label{AZA8}
				\sup_{\sigma \in [-r,0]} \left| \frac{\mathrm{d} T_{t}(\xi)}{\mathrm{d} \sigma}(\sigma) \right| \leq M \left( \Vert T_{t} \xi \Vert_{\infty,[-r,0]} + \Vert T_{t-r} \xi \Vert_{\infty,[-r,0]} \right).
			\end{align}
			Therefore, from \eqref{BNM<LO},
			\begin{align*}
				\limsup_{t \to \infty} \frac{1}{t} \log \sup_{\sigma \in [-r,0]} \left| \frac{\mathrm{d} T_{t}(\xi)}{\mathrm{d} \sigma}(\sigma) \right| \leq \lambda_1.
			\end{align*}
			Consequently, \ref{GBVZCXS} is proved. Similarly, we can argue to obtain \ref{GBVZCXS1}.
		\end{proof}
		In the following result, we obtain a pathwise stability result in the linear case.
		\begin{proposition}\label{BBSTAB}
			Assume that \( A \) and \( \pi \) satisfy the same conditions as in Lemma \ref{PII}, and let \( G \in {L}(\mathbb{R}^n \times \mathbb{R}^n, {L}(\mathbb{R}^d, \mathbb{R}^n)) \). Consider the following linear stochastic differential equation:
			\[
			\begin{aligned}
				\mathrm{d}y_t &= A\left( y_t, \int_{-r}^{0} y_{\theta+t} \, \pi(\mathrm{d}\theta) \right) \mathrm{d}t + \epsilon G(y_t, y_{t-r}) \, \mathrm{d}B_t(\omega), \\
				y_{\theta} &= \xi_{\theta} \quad \text{for} \quad \theta \in [-r, 0], \quad \xi \in C([-r, 0], \mathbb{R}^n),
			\end{aligned}
			\]
			where \( y^{\xi, \epsilon}_t \) denotes the solution. Let \( \lambda_1 \) be as defined in \eqref{TRFVV}, and assume \( \lambda_1 < 0 \). Then, there exists \( \epsilon_0 > 0 \) such that for every \( 0 < \epsilon < \epsilon_0 \), there exists \( \lambda^\epsilon < 0 \) such that for any \( \xi \in C([-r, 0], \mathbb{R}^n) \), the following holds:
			\[
			\limsup_{n \to \infty} \frac{1}{t} \log | y^{\xi, \epsilon}_t(\omega) |\leq \lambda^\epsilon, \quad \text{almost surely}.
			\]
			This yields in particular a pathwise stability result on $C([-r,0],\mathbb{R}^n)$
		\end{proposition}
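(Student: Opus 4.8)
The plan is to reduce the assertion to the behaviour of the top Lyapunov exponent of the linear cocycle generated by the $\epsilon$-equation and then to transfer from the controlled-path fibres back to $C([-r,0],\mathbb{R}^n)$ via Lemma \ref{rough de}.

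\emph{Step 1 (the linear cocycle and Kingman).} Since $G$ is linear we have $G(0,0)=0$, the whole equation is linear, and $Y_\omega\equiv 0$ is a stationary point with $\|Y_\omega\|=0$. Fix $\beta<\gamma$ as in Definition \ref{STATA} and set $E_\omega:=\mathscr{D}_{\mathbf{B}(\omega)}^{\beta,\gamma}([-r,0])$. For each $\epsilon\in(0,1]$, Theorem \ref{SSRR} together with the remark following it and Theorem \ref{BETA} (and the invariance of $\mathscr{D}^{\beta,\gamma}$ noted before Definition \ref{STATA}) show that the $\epsilon$-equation generates a \emph{linear} cocycle $\varphi^{m,\epsilon}_\omega\colon E_\omega\to E_{\theta_{mr}\omega}$ over $(\Omega,\mathcal{F},\P,\theta_r)$. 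Writing $a^\epsilon_m(\omega):=\log\|\varphi^{m,\epsilon}_\omega\|_{L(E_\omega,E_{\theta_{mr}\omega})}$, the cocycle property and submultiplicativity give $a^\epsilon_{m+n}(\omega)\le a^\epsilon_m(\omega)+a^\epsilon_n(\theta_{mr}\omega)$, and since $\|\epsilon\mathbf{B}(\omega)\|_{\gamma,[0,r]}\le\|\mathbf{B}(\omega)\|_{\gamma,[0,r]}$ for $\epsilon\le 1$, the bound of Theorem \ref{DCXSZ} (cf.\ \eqref{AZMOPPLLM}) together with \eqref{JHNBZS} yields $(a^\epsilon_1)^+\in L^1(\Omega)$ uniformly in $\epsilon\in(0,1]$. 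Kingman's subadditive ergodic theorem then gives, for $\P$-a.e.\ $\omega$,
\[
\tfrac1m a^\epsilon_m(\omega)\xrightarrow[m\to\infty]{}\mu^\epsilon_1:=\inf_{m\ge1}\tfrac1m\,\E\!\big[a^\epsilon_m\big]\in[-\infty,\infty),
\]
so that $\limsup_{m\to\infty}\tfrac1m\log\|\varphi^{m,\epsilon}_\omega(x)\|_{E_{\theta_{mr}\omega}}\le\mu^\epsilon_1$ for every $x\in E_\omega$, almost surely (alternatively one may invoke the Furstenberg--Kesten part of Proposition \ref{MEEETTTT}).

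\emph{Step 2 (continuity of $\mu^\epsilon_1$ at $\epsilon=0$).} I claim $\limsup_{\epsilon\downarrow0}\mu^\epsilon_1\le r\lambda_1<0$. Fix $m\ge1$. As $\epsilon\downarrow0$ the $\epsilon$-equation converges to the deterministic delay equation \eqref{linear delay}, whose time-$mr$ solution operator is $T_{mr}$ from Lemma \ref{PII}; by continuity of the solution map of the singular rough delay equation (Theorem \ref{SSS} and its a priori bounds, using linearity) we obtain, for $\P$-a.e.\ $\omega$, $\|\varphi^{m,\epsilon}_\omega-T_{mr}\|_{L(E_\omega,E_{\theta_{mr}\omega})}\to0$, where $T_{mr}\xi$ is regarded as a controlled path based on $B(\theta_{mr}\omega)$ with vanishing Gubinelli derivatives. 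Since $T_{mr}\xi\in C^1([-r,0],\mathbb{R}^n)$ has zero Gubinelli derivatives, \eqref{AZA8} bounds its $E$-norm by $C\big(\|T_{mr}\xi\|_{\infty,[-r,0]}+\|T_{(m-1)r}\xi\|_{\infty,[-r,0]}\big)$, whence $\|T_{mr}\|_{L(E_\omega,E_{\theta_{mr}\omega})}\le K_m:=C\big(\|T_{mr}\|_{L(C,C)}+\|T_{(m-1)r}\|_{L(C,C)}\big)$, a \emph{deterministic} constant. With the $\epsilon$-uniform integrable majorant $a^\epsilon_m(\omega)\le\sum_{j=0}^{m-1}Q_1\big(0,\|\mathbf{B}(\theta_{jr}\omega)\|_{\gamma,[0,r]}\big)\in L^1(\Omega)$ and the reverse Fatou lemma, $\limsup_{\epsilon\downarrow0}\E[a^\epsilon_m]\le\log K_m$, hence $\limsup_{\epsilon\downarrow0}\mu^\epsilon_1\le\tfrac1m\log K_m$ for every $m$. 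By Remark \ref{ADCZXSA}, $\tfrac1t\log\|T_t\|_{L(C,C)}\to\lambda_1$, so $\tfrac1m\log K_m\to r\lambda_1$; letting $m\to\infty$ proves the claim. In particular there is $\epsilon_0\in(0,1]$ with $\mu^\epsilon_1<0$ for all $0<\epsilon<\epsilon_0$; put $\lambda^\epsilon:=\mu^\epsilon_1/r<0$ (and if $\mu^\epsilon_1=-\infty$ the stated bound holds a fortiori).

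\emph{Step 3 (back to $C([-r,0],\mathbb{R}^n)$).} Fix $\xi\in C([-r,0],\mathbb{R}^n)$ and $0<\epsilon<\epsilon_0$. By Lemma \ref{rough de}, on a full-measure set $\Omega_\xi$ the shifted solution $\eta_\omega:=\big(y^{\xi,\epsilon}_{2r+s}(\omega),(y^{\xi,\epsilon})'_{2r+s}(\omega)\big)_{-r\le s\le0}$ lies in $E_{\theta_{2r}\omega}$. By uniqueness and the cocycle property \eqref{VCCA}, $\varphi^{m,\epsilon}_{\theta_{2r}\omega}(\eta_\omega)=\big(y^{\xi,\epsilon}_{(2+m)r+s}(\omega),\dots\big)_{-r\le s\le0}$ for every $m\ge0$, so for $t\in[(m+1)r,(m+2)r]$,
\[
|y^{\xi,\epsilon}_t(\omega)|\le\big\|\varphi^{m,\epsilon}_{\theta_{2r}\omega}(\eta_\omega)\big\|_{E_{\theta_{(2+m)r}\omega}}.
\]
The almost-sure set from Step 1 may be taken $\theta_r$-invariant, hence applies at the base point $\theta_{2r}\omega$; combined with $\eta_\omega\in E_{\theta_{2r}\omega}$ this gives $\limsup_{m\to\infty}\tfrac1m\log\big\|\varphi^{m,\epsilon}_{\theta_{2r}\omega}(\eta_\omega)\big\|_{E_{\theta_{(2+m)r}\omega}}\le\mu^\epsilon_1$ for $\P$-a.e.\ $\omega$, and translating $m\leftrightarrow t\sim(m+2)r$ yields $\limsup_{t\to\infty}\tfrac1t\log|y^{\xi,\epsilon}_t(\omega)|\le\mu^\epsilon_1/r=\lambda^\epsilon<0$ almost surely. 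This is the asserted pathwise stability, and since the supremum of $|y^{\xi,\epsilon}_\cdot|$ over any window of length $r$ is controlled by the same quantities, it upgrades to stability on $C([-r,0],\mathbb{R}^n)$.

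The main obstacle is Step 2: one must show that the flow of the \emph{singular} (unbounded-drift) rough delay equation depends continuously on $\epsilon$ as $\epsilon\downarrow0$ in the operator norm over the $\omega$-dependent fibres $E_\omega$, and identify the limiting operator norm with that of the deterministic semigroup on $C([-r,0],\mathbb{R}^n)$. This amounts to checking that the decomposition-of-flows construction of Theorem \ref{SSS} is stable under $\epsilon\downarrow0$ and that its polynomial a priori bounds are uniform for $\epsilon\in(0,1]$, so that the reverse Fatou step is legitimate; everything else is bookkeeping built on the results already established.
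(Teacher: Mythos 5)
Your proposal is correct and follows essentially the same route as the paper: reduce to the top Lyapunov exponent of the linear cocycle on the fibres $E_\omega$ via the Furstenberg--Kesten/Kingman formula $\lambda^\epsilon=\inf_m\frac{1}{mr}\E[\log\|\varphi^{m,\epsilon}_\omega\|]$, pass to the limit $\epsilon\downarrow 0$ for each fixed $m$ using an $\epsilon$-uniform integrable majorant, bound the deterministic limit operator norm $\|T_{mr}\|_{L(E_\omega,E_{\theta_{mr}\omega})}$ through the $C^1$-regularising property of the delay semigroup, invoke $\lambda_1<0$, and finally transfer from $C([-r,0],\mathbb{R}^n)$ initial data into the fibre via Lemma \ref{rough de}. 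The only cosmetic difference is that you use the reverse Fatou lemma where the paper invokes dominated convergence (the paper also asserts continuity in $\epsilon$, which you correctly flag as the point that needs checking), and you appeal to Remark \ref{ADCZXSA} where the paper uses \eqref{GBVZCXS1}; these are interchangeable.
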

		\begin{proof}
			By Lemma \eqref{rough de}, for every $\xi\in C([-r,0],\mathbb{ R}^n)$ and $\frac{1}{3}<\beta<\gamma<\frac{1}{2}$, on set of full measure, 
			\begin{align*}
				(y_{t+2r}^{\xi,\epsilon}(\omega), G(y_{t+2r}^{\xi,\epsilon}(\omega),\xi_t)_{-r \leq t \leq 0} \in \mathscr{D}_{\mathbf{B}^{\text{It\=o}}(\theta_{2r} \omega)}^{\gamma}([-r,0])\subset \mathscr{D}_{\mathbf{B}^{\text{It\=o}}(\theta_{2r} \omega)}^{\gamma,\beta}([-r,0])\ .
			\end{align*} 
			Therefore, since we are interested in the asymptotic behavior of the solution, we can assume that our initial value is controlled by the Brownian noise, and we solve the equation using the rough path approach by considering \(\mathbf{B}^{\text{It\=o}}\). As \eqref{TVBN}, we use $(\varphi_{\omega}^{m,\epsilon}(\xi))_{m\in\N}$, to denote our solution, where $\varphi^{m,\epsilon}_{\omega}(\xi):=\left(y_{(m-1)r+t}^{\xi,\epsilon}(\omega)\right)_{0\leq t\leq r}$. In this case, our linear equations now satisfy the assumptions of the Multiplicative Ergodic Theorem, cf. Proposition \ref{MEEETTTT}. Let us use $\lambda^\epsilon$, to denote the first Lyapunov exponent which is deterministic and finite. Therefore, be definition of first Lyapunov exponent on set of full measure $\Omega^\xi$ and for $E_{\omega}:=\mathscr{D}_{\mathbf{B}^{\text{It\=o}}(\omega)}^{\gamma,\beta}([-r,0])$
			\begin{align*}
				\limsup_{t\rightarrow\infty}\frac{1}{t} \log | y^{\xi, \epsilon}_t(\omega) | \leq \lim_{m\rightarrow \infty}\frac{1}{mr}\log\Vert \varphi_{\omega}^{m,\epsilon}(\xi)\Vert_{E_{\theta_{mr} \omega}}\leq \lambda^\epsilon.
			\end{align*}
			From \cite[ Theorem 3.3.2]{Arn98},  and \cite[Theorem 4.17]{GVRS22},
			\begin{align}\label{FR_1}
				\lambda^\epsilon= \inf_{m \geq 0} \frac{1}{mr} \int_{\Omega}\log \Vert \varphi_{\omega}^{m,\epsilon}\Vert_{L(E_{ \omega},E_{\theta_{mr} \omega})}\ \mathbb{P}(\mathrm{d}\omega)=\lim_{m\rightarrow\infty}\frac{1}{mr} \int_{\Omega}\log \Vert \varphi_{\omega}^{m,\epsilon}\Vert_{L(E_{ \omega},E_{\theta_{mr} \omega})}\ \mathbb{P}(\mathrm{d}\omega).
			\end{align}
			Note that for every \( m \in \mathbb{N} \), \( \log \Vert \varphi_{\omega}^{m,\epsilon} \Vert_{L(E_{ \omega},E_{\theta_{mr} \omega})} \) depends continuously on \( \epsilon \). Additionally, we can find a uniformly integrable bound that dominates \( \log \Vert \varphi_{\omega}^{m,\epsilon} \Vert_{L(E_{ \omega},E_{\theta_{mr} \omega})} \) for every \( \epsilon \in [0,1] \). Therefore, by the Dominated Convergence Theorem for every $m\in\N$,
			\begin{align}\label{DSCXVSVSA}
				\lim_{\epsilon\rightarrow 0}\frac{1}{mr} \int_{\Omega}\log \Vert \varphi_{\omega}^{m,\epsilon}\Vert_{L(E_{ \omega},E_{\theta_{mr} \omega})}\ \mathbb{P}(\mathrm{d}\omega)=\frac{1}{mr}\int_{\Omega}\log \Vert T_{mr}\Vert_{L(E_{ \omega},E_{\theta_{mr} \omega})}\ \mathbb{P}(\mathrm{d}\omega),
			\end{align}
			where \( (T_{t})_{t \geq 0} \) is the semigroup generated by \( A \). Recall that for \( t \geq r \), we have \( T_{t}\left(C([-r,0],\mathbb{R}^n)\right) \subset C^{1}([-r,0],\mathbb{R}^n) \). Therefore, for any \( \xi \in E_\omega \), by definition, there exists a deterministic constant \( R \) such that
			\[
			\| T_{mr}\xi \|_{E_{\theta_{mr}\omega}} \leq R \| T_{mr}\xi \|_{C^{1}([-r,0],\mathbb{R}^n)},
			\]
			for every \( \omega \in \Omega \). Also, it is evident that \( \|\xi\|_{\infty,[-r,0]} \leq \|\xi\|_{E_\omega} \) holds for every \( \omega \in \Omega \). Therefore, we can conclude that
			\[
			\log \| T_{mr}\|_{L(E_{ \omega},E_{\theta_{mr} \omega})} \leq \log\left(R \sup_{\|\xi\|_{\infty,[-r,0]}=1} \| T_{mr} \xi \|_{C^1([-r,0], \mathbb{R}^n)}\right).
			\]
			In particular, from \eqref{GBVZCXS1},
			\[
			\limsup_{m \rightarrow \infty} \frac{1}{mr} \int_{\Omega} \log \| T_{mr}\|_{L(E_{ \omega},E_{\theta_{mr} \omega})}\ \mathbb{P}(\mathrm{d}\omega) \leq \lambda_1.
			\]
			Therefore, since \( \lambda_1 < 0 \), we can choose \( m_0 \) large enough such that \( \frac{1}{m_0r} \int_{\Omega} \log \| T_{m_0r}\|_{L(E_{ \omega},E_{\theta_{m_0r} \omega})}\ \mathbb{P}(\mathrm{d}\omega) < 0 \). Thanks to \eqref{FR_1} and \eqref{DSCXVSVSA}, by considering \( \epsilon_0 \) small, we can argue that for every \( \epsilon \leq \epsilon_0 \),
			\[
			\lambda^{\epsilon} \leq \frac{1}{m_0r} \int_{\Omega} \log \| \varphi_{\omega}^{m_0,\epsilon}\|_{L(E_{ \omega},E_{\theta_{mr} \omega})}\ \mathbb{P}(\mathrm{d}\omega) < 0.
			\]
			This finishes the proof.
		\end{proof}
		So far, we have considered Brownian motion. Note that since we have the Markov property, we can obtain similar results using classical tools, such as the Borel-Cantelli theorem, in stochastic analysis and probability. However, an advantage of our theory is that it can also be applied when the linear equation is driven by a fractional Brownian motion, where the Markov property is lost. In this case, since we initially solved our equation path-wise, to compensate for this issue, we must take the initial value to be smoother. We can state the following results, which are analogous to the previous result.
		\begin{proposition}\label{ASsSCAAS}
			Assume \( \frac{1}{3}<\beta< \gamma< H\) and let \(\mathbf{B}\) be the delayed \(\gamma\)-rough path cocycle constructed in Theorem \ref{thm:B_delayed_cocycle}. Consider the same conditions on \(A\), \(\pi\), and \(G\) as specified in Proposition \ref{BBSTAB}. We consider the following linear rough delay equation:
			\begin{equation}
				\begin{aligned}
					\mathrm{d}y_t &= A \left( y_t, \int_{-r}^{0} y_{\theta+t} \, \pi(\mathrm{d}\theta) \right) \mathrm{d}t + \epsilon G(y_t, y_{t-r}) \, \mathrm{d}\mathbf{B}_t(\omega), \\
					y_{\theta} &= \xi_{\theta} \quad \text{for} \quad \theta \in [-r, 0], \quad \xi \in \mathscr{D}_{\mathbf{B}(\omega)}^{\beta, \gamma}([-r,0]),
				\end{aligned}
			\end{equation}
			where \( y^{\xi, \epsilon}_t(\omega) \) denotes the solution. Let \( \lambda_1 \) be as defined in \eqref{TRFVV}, and assume \( \lambda_1 < 0 \). Then, there exists \( \epsilon_0 > 0 \) such that for every \( 0 < \epsilon < \epsilon_0 \), there exists \( \lambda^\epsilon < 0 \) such that for any \( \xi \in  \mathscr{D}_{\mathbf{B}(\omega)}^{\beta, \gamma}([-r,0]) \), the following holds:
			\begin{equation}
				\limsup_{t \to \infty} \frac{1}{t} \log | y^{\xi, \epsilon}_t(\omega) | \leq \lambda^\epsilon.
			\end{equation}
			In particular, our stability result holds for every \(\xi \in C^{2\gamma}([-r, 0], \mathbb{R}^n)\), where \(\frac{1}{3} < \gamma < H\) and \(\gamma\) can be chosen arbitrarily close to \(\frac{1}{3}\).
		\end{proposition}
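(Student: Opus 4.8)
Since $G$ is linear we have $G(0,0)=0$ and $A(0,0)=0$, so the zero section $Y_\omega\equiv 0$ is a stationary point of the (now linear) solution cocycle, and $\Vert Y_\omega\Vert_{E_\omega}=0\in\bigcap_{p\ge 1}L^p(\Omega)$; by the remark following Theorem \ref{SSRR} the cocycle $\varphi^{m,\epsilon}_\omega$ is globally defined on $E_\omega=\mathscr{D}_{\mathbf{B}(\omega)}^{\beta,\gamma}([-r,0])$, so Proposition \ref{MEEETTTT} applies. Because $G$ is linear, $\psi^{m,\epsilon}_\omega:=D_0\varphi^{m,\epsilon}_\omega=\varphi^{m,\epsilon}_\omega$, and the proposition produces a deterministic, finite top Lyapunov exponent $\lambda^\epsilon$ with $F_{\lambda^\epsilon}(\omega)=E_\omega$; finiteness uses the bound of Theorem \ref{DCXSZ} together with $\Vert\mathbf{B}(\omega)\Vert_{\gamma,[0,r]}\in\bigcap_p L^p(\Omega)$ from \eqref{JHNBZS}. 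Since the $E_\omega$-norm dominates $\Vert\cdot\Vert_{\infty,[-r,0]}$, this already yields $\limsup_{t\to\infty}\frac1t\log|y^{\xi,\epsilon}_t(\omega)|\le\lambda^\epsilon$ for every $\xi\in E_\omega$ on a $\theta_r$-invariant set of full measure, and the whole task reduces to showing $\lambda^\epsilon<0$ for $\epsilon$ small.

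For this I would use the ergodic-theoretic formula for the top Lyapunov exponent of a linear cocycle, \cite[Theorem 3.3.2]{Arn98} and \cite[Theorem 4.17]{GVRS22},
\begin{align*}
\lambda^\epsilon=\inf_{m\ge 1}\frac{1}{mr}\int_\Omega\log\Vert\varphi^{m,\epsilon}_\omega\Vert_{L(E_\omega,E_{\theta_{mr}\omega})}\,\mathbb{P}(\mathrm{d}\omega)=\lim_{m\to\infty}\frac{1}{mr}\int_\Omega\log\Vert\varphi^{m,\epsilon}_\omega\Vert_{L(E_\omega,E_{\theta_{mr}\omega})}\,\mathbb{P}(\mathrm{d}\omega),
\end{align*}
and then pass to the limit $\epsilon\to 0$. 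Writing $\epsilon G(\cdot)\,\mathrm{d}\mathbf{B}=G(\cdot)\,\mathrm{d}(\epsilon\mathbf{B})$ with $\epsilon\mathbf{B}=(\epsilon B,\epsilon^2\mathbb{B},\epsilon^2\mathbb{B}(-r))$, continuity of the solution map in the rough driver (Theorem \ref{SSS}) shows $\epsilon\mapsto\log\Vert\varphi^{m,\epsilon}_\omega\Vert$ is continuous, while the polynomial a priori bounds of Theorem \ref{SSS}/\ref{DCXSZ}, applied on $[0,mr]$ via the cocycle property, furnish a $\mathbb{P}$-integrable dominating function uniform in $\epsilon\in[0,1]$ (again by \eqref{JHNBZS}). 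Dominated convergence then gives $\lim_{\epsilon\to 0}\frac{1}{mr}\int_\Omega\log\Vert\varphi^{m,\epsilon}_\omega\Vert\,\mathrm{d}\mathbb{P}=\frac{1}{mr}\int_\Omega\log\Vert T_{mr}\Vert_{L(E_\omega,E_{\theta_{mr}\omega})}\,\mathrm{d}\mathbb{P}$, where $(T_t)_{t\ge0}$ is the $\mathcal{C}_0$-semigroup generated by $A$ (indeed $\varphi^{m,0}_\omega$ is just the shifted segment of the deterministic delay evolution, cf. Lemma \ref{PII}).

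To close the loop I would estimate $\Vert T_{mr}\Vert_{L(E_\omega,E_{\theta_{mr}\omega})}$. For $mr\ge r$ the delay semigroup maps $C([-r,0],\mathbb{R}^n)$ into $C^1([-r,0],\mathbb{R}^n)$ by Lemma \ref{PII}, and a $C^1$ path, viewed as a controlled path with vanishing Gubinelli derivative, lies in $E_{\theta_{mr}\omega}$ with $\Vert\eta\Vert_{E_{\theta_{mr}\omega}}\le R\Vert\eta\Vert_{C^1([-r,0],\mathbb{R}^n)}$ for a deterministic $R$ (here one uses $1-2\gamma>0$, together with $\beta<\gamma$ to approximate by smooth controlled paths and land in the closure). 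Combined with $\Vert\xi\Vert_{\infty,[-r,0]}\le\Vert\xi\Vert_{E_\omega}$ this yields $\log\Vert T_{mr}\Vert_{L(E_\omega,E_{\theta_{mr}\omega})}\le\log R+\log\sup_{\Vert\xi\Vert_{\infty,[-r,0]}=1}\Vert T_{mr}\xi\Vert_{C^1([-r,0],\mathbb{R}^n)}$, whose $\frac{1}{mr}$-normalized limit equals $\lambda_1$ by \eqref{GBVZCXS1}. Hence $\limsup_m\frac{1}{mr}\int_\Omega\log\Vert T_{mr}\Vert\,\mathrm{d}\mathbb{P}\le\lambda_1<0$; fixing $m_0$ with this integral negative and then $\epsilon_0$ small enough via the previous paragraph, the formula for $\lambda^\epsilon$ gives $\lambda^\epsilon\le\frac{1}{m_0 r}\int_\Omega\log\Vert\varphi^{m_0,\epsilon}_\omega\Vert\,\mathrm{d}\mathbb{P}<0$ for all $\epsilon\le\epsilon_0$. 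The last assertion follows because any $\xi\in C^{\alpha}([-r,0],\mathbb{R}^n)$ with $\alpha>\frac23$ embeds (with zero Gubinelli derivative) into $\mathscr{D}_{\mathbf{B}(\omega)}^{\beta,\gamma}([-r,0])$ once $\gamma$ is chosen with $\frac13<\gamma<\min\{\alpha/2,H\}$, since then $\xi\in C^{0,2\gamma}$.

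I expect the genuine obstacle to be the middle step: producing an $\epsilon$-uniform, $\mathbb{P}$-integrable domination for $\log\Vert\varphi^{m,\epsilon}_\omega\Vert$ and verifying joint continuity of the solution operator in $\epsilon$ and $\omega$ in the non-separable field-of-Banach-spaces setting. Everything else mirrors the proof of Proposition \ref{BBSTAB} and Lemma \ref{deteministic} essentially verbatim; the only place where the fractional case genuinely differs from the Brownian one is that the integrability and continuity inputs must now come from the rough-path a priori bounds of Theorem \ref{SSS}/\ref{DCXSZ} and the $L^p$-moments \eqref{JHNBZS} of the delayed fractional rough driver, rather than from the Burkholder--Davis--Gundy inequality, and that the loss of the Markov property forces the initial datum into the smoother space $\mathscr{D}_{\mathbf{B}(\omega)}^{\beta,\gamma}([-r,0])$.
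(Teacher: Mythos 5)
Your proof is correct and follows essentially the same route as the paper: the paper's own proof of this proposition simply refers to Proposition \ref{BBSTAB} (``The proof is similar\dots the primary difference being the initial values''), and what you have written is precisely that proof of \ref{BBSTAB} transcribed to the fractional setting, with the change of initial datum from $C([-r,0],\mathbb R^n)$ to $\mathscr{D}_{\mathbf B(\omega)}^{\beta,\gamma}([-r,0])$ and the $C^{2\gamma}\subset\mathscr{D}_{\mathbf B(\omega)}^{\beta,\gamma}([-r,0])$ embedding handled at the end exactly as the paper intends. The one place you add genuine detail beyond the paper is the dilation identity $\epsilon G\,\mathrm{d}\mathbf B=G\,\mathrm{d}(\epsilon\mathbf B)$ with $\epsilon\mathbf B=(\epsilon B,\epsilon^2\mathbb B,\epsilon^2\mathbb B(-r))$ to make the continuity in $\epsilon$ of $\log\Vert\varphi^{m,\epsilon}_\omega\Vert$ transparent, which is a clean way to justify what the paper simply asserts.
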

		\begin{proof}
			The proof is similar to that of Proposition \ref{BBSTAB}, with the primary difference being the initial values. In the previous result, due to the Markov property, we could take the initial values in \(C([-r, 0], \mathbb{R}^n)\). However, for fractional Brownian motions, the noise is less regular and we do not have the Markov property, so we take the initial values in \(\mathscr{D}_{\mathbf{B}(\omega)}^{\beta, \gamma}([-r,0])\). Note that our final claim is obvious since \(C^{2\gamma}([-r, 0], \mathbb{R}^n) \subset \mathscr{D}_{\mathbf{B}(\omega)}^{\beta, \gamma}([-r,0])\).
		\end{proof}
		
		\subsection{The nonlinear case}
		We can now obtain similar stability results for the nonlinear case; however, in this situation, we can only derive a local result. The key component here is our stable manifold theorem (cf. Theorem \ref{thm:stable_manifold}), which applies locally and leads to Corollary \ref{STTTAASDSF}. Unlike the linear case, where the stable manifold exists globally, in this scenario, particularly for the Brownian case, we cannot extend the space of initial values to achieve exponential stability results in \(C([-r,0],\mathbb{R}^n)\). However, when the initial values are sufficiently smooth(i.e. ${C^{\frac{2}{3}+\epsilon}([-r,0],\R^n)}$), local stability can still be established.
		\begin{proposition}\label{AAASSW}
			Assume \( \frac{1}{3} < \beta < \gamma < H \), and let \(\mathbf{B}\) be the delayed \(\gamma\)-rough path cocycle constructed in Theorem \ref{thm:B_delayed_cocycle}. Consider the following equation:
			\[
			\begin{aligned}
				\mathrm{d}y_t &= A\left( y_t, \int_{-r}^{0} y_{\theta+t} \, \pi(\mathrm{d}\theta) \right) \mathrm{d}t + G(y_t, y_{t-r}) \, \mathrm{d}\mathbf{B}_t(\omega), \\
				y_{\theta} &= \xi_{\theta}, \quad \text{for } \theta \in [-r, 0], \quad \xi \in \mathscr{D}_{\mathbf{B}(\omega)}^{\beta, \gamma}([-r,0]),
			\end{aligned}
			\]
			where we assume that the conditions in Theorem \ref{ASXXZA} regarding \(A\), \(\pi\), and \(G\) are satisfied, and that \(G(0,0) = 0\). Let \( \lambda_1 \) be as defined in \eqref{TRFVV}. Then, there exists \(l > 0\) such that if \(\Vert D_{(0,0)}G \Vert < l\), the zero path is locally exponentially stable. More specifically, we can find a random variable \(R^{l}: \Omega \to (0, \infty)\) such that
			\[
			\liminf_{p \to \infty} \frac{1}{p} \log R^{l}(\theta_{pr} \omega) \geq 0,
			\]
			and for every \(\xi \in \mathscr{D}_{\mathbf{B}(\omega)}^{\beta, \gamma}([-r,0])\) with
			\[
			\Vert \xi \Vert_{\mathscr{D}_{\mathbf{B}(\omega)}^{\beta, \gamma}([-r,0])} \leq R^{l}(\omega),
			\]
			the following holds:
			\[
			\limsup_{t \to \infty} \frac{1}{t} \log | y^{\xi}_t(\omega) | \leq \lambda^\epsilon < 0,
			\]
			where $y^{\xi}_{t}(\omega)$ is our solution. In particular, our local stability result holds for every \(\xi \in C^{2\gamma}([-r, 0], \mathbb{R}^n)\), where \(\frac{1}{3} < \gamma < H\) and \(\gamma\) can be chosen arbitrarily close to \(\frac{1}{3}\).
		\end{proposition}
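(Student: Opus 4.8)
~~The plan is to combine the nonlinear stable manifold theorem with the linear stability result already obtained. The starting point is Theorem~\ref{thm:stable_manifold} applied to the stationary point $Y_\omega = 0$ (which is a stationary point because $G(0,0)=0$), together with its consequence Corollary~\ref{STTTAASDSF}: if the first Lyapunov exponent $\mu_1$ of the linearized cocycle $\psi^1_\omega = D_0 \varphi^1_\omega$ is strictly negative, then for every $0 < \upsilon < -\mu_1$ there is a random radius $R^\upsilon(\omega) > 0$ with $\liminf_{p\to\infty}\frac1p\log R^\upsilon(\theta_{pr}\omega)\geq 0$ such that every $\xi$ with $\|\xi - 0\|_{E_\omega} < R^\upsilon(\omega)$ lies in the local stable manifold $S^\upsilon_{\mathrm{loc}}(\omega)$, and hence by \eqref{eqn:contr_char_1} satisfies $\limsup_{m\to\infty}\frac1m\log\|\varphi^m_\omega(\xi)\|_{E_{\theta_{mr}\omega}} \leq \mu_1 < 0$. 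Passing from the discrete-time cocycle estimate to the continuous-time statement $\limsup_{t\to\infty}\frac1t\log|y^\xi_t(\omega)| \leq \lambda^\epsilon < 0$ is done exactly as in the proof of Proposition~\ref{BBSTAB}, using that $\|\cdot\|_{\infty,[-r,0]} \lesssim \|\cdot\|_{E_\omega}$ and the a priori polynomial bound of Theorem~\ref{SSS} to control the solution between grid points $mr$.

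The remaining ingredient is to show that the hypothesis $\mu_1 < 0$ of Corollary~\ref{STTTAASDSF} can be guaranteed by a smallness condition on $\|D_{(0,0)}G\|$. This is the analogue of the continuity/dominated-convergence argument in Proposition~\ref{BBSTAB}. First I would observe that the linearization of the equation around $0$ is itself a \emph{linear} rough delay equation, driven by $\mathbf{B}(\omega)$ with diffusion coefficient $\xi \mapsto D_{(0,0)}G[\cdot]\,\xi$ (plus the delayed component), which is of the form treated in Proposition~\ref{ASsSCAAS} with $\epsilon G$ replaced by a coefficient of operator norm $\|D_{(0,0)}G\|$. The first Lyapunov exponent $\lambda^\epsilon$ of that linear cocycle is, by \cite[Theorem 3.3.2]{Arn98} and \cite[Theorem 4.17]{GVRS22}, given by $\inf_{m}\frac{1}{mr}\int_\Omega \log\|\psi^m_\omega\|_{L(E_\omega,E_{\theta_{mr}\omega})}\,\mathbb{P}(\mathrm{d}\omega)$; as the diffusion coefficient tends to $0$ in operator norm, $\log\|\psi^m_\omega\|$ converges (for each fixed $m$) to $\log\|T_{mr}\|_{L(E_\omega,E_{\theta_{mr}\omega})}$ for the deterministic semigroup $T$ generated by $A$, with a uniformly integrable dominating bound coming from Theorems~\ref{DCXSZ} and \ref{SSS}. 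Since $\lambda_1 = \lim_m \frac1{mr}\int_\Omega \log\|T_{mr}\|\,\mathbb{P}(\mathrm d\omega) < 0$ (by \eqref{GBVZCXS1} and the hypothesis $\lambda_1<0$), we may fix $m_0$ making the $m_0$-th term strictly negative, and then choose $l>0$ so small that $\|D_{(0,0)}G\| < l$ forces the corresponding $\mu_1 = \lambda^{D_{(0,0)}G}$ to be negative as well.

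Putting these together: given such $l$, apply Corollary~\ref{STTTAASDSF} with, say, $\upsilon = -\mu_1/2$, set $R^l(\omega) := R^\upsilon(\omega)$, and conclude. The final sentence about $C^{2\gamma}([-r,0],\mathbb{R}^n)$ follows verbatim as in Proposition~\ref{ASsSCAAS}, since $C^{2\gamma}([-r,0],\mathbb{R}^n)\subset\mathscr{D}_{\mathbf{B}(\omega)}^{\beta,\gamma}([-r,0])$ (the canonical controlled-path structure on a $2\gamma$-Hölder path has Gubinelli derivatives zero), and $\gamma$ may be taken close to $1/3$, giving the regularity threshold $2/3+\epsilon$.

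The main obstacle I anticipate is the continuity-in-coefficient step for the Lyapunov exponent, i.e.\ justifying $\lim_{\|D_{(0,0)}G\|\to 0}\frac1{mr}\int_\Omega\log\|\psi^m_\omega\|\,\mathbb P(\mathrm d\omega) = \frac1{mr}\int_\Omega\log\|T_{mr}\|\,\mathbb P(\mathrm d\omega)$: one needs the solution map of the linear rough delay equation to depend continuously on its diffusion coefficient \emph{in the operator norm on $E_\omega$}, uniformly enough to invoke dominated convergence, and one needs a genuinely integrable dominating function — this is where the polynomial a priori bounds of Theorem~\ref{SSS} and the exponential-of-polynomial derivative bounds of Theorem~\ref{DCXSZ}, together with $\|\mathbf{B}(\omega)\|_{\gamma,[0,r]}\in\bigcap_{p\geq 1}L^p(\Omega)$ from \eqref{JHNBZS}, do the work, exactly mirroring the argument already carried out in Proposition~\ref{BBSTAB}. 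Everything else is a direct citation of Theorem~\ref{thm:stable_manifold}, Corollary~\ref{STTTAASDSF}, and the continuous-time passage from Proposition~\ref{BBSTAB}, so I would keep the write-up short and refer back to those proofs rather than repeating them.~~
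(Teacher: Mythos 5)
Your proposal is correct and follows essentially the same route as the paper: identify $0$ as a stationary point, linearize to obtain a linear rough delay equation with diffusion coefficient $D_{(0,0)}G$, invoke Proposition~\ref{ASsSCAAS} (equivalently, the continuity/dominated-convergence argument of Proposition~\ref{BBSTAB}) to force $\mu_1<0$ when $\|D_{(0,0)}G\|$ is small, and then apply Corollary~\ref{STTTAASDSF}. The paper's proof is terser — it simply cites Corollary~\ref{STTTAASDSF} and Proposition~\ref{ASsSCAAS} — whereas you re-derive the continuity-in-coefficient step in detail, but the substance is identical.
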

		\begin{proof}
			Fisrt note that by our assumption the zero path is stationary solution to our equation. In addition the linearzed equation around the  zero path is given by
			\begin{align}\label{aaaASX}
				\begin{split}
					\mathrm{d}y_t &= A\left( y_t, \int_{-r}^{0} y_{\theta+t} \, \pi(\mathrm{d}\theta) \right) \mathrm{d}t + D_{(0,0)}G(y_t, y_{t-r}) \, \mathrm{d}\mathbf{B}_t(\omega), \\
					y_{\theta} &= \xi_{\theta} \quad \text{for } \theta \in [-r, 0], \quad \xi \in \mathscr{D}_{\mathbf{B}(\omega)}^{\beta, \gamma}([-r,0]).
				\end{split}
			\end{align}
			Our claim follows from Corollary \ref{STTTAASDSF}, if we show that the first Lyapunov exponent that arises from Equation \eqref{aaaASX} is negative when \(\Vert D_{(0,0)}G \Vert_{L(\mathbb{R}^n \times \mathbb{R}^n, \mathbb{R}^n)}\) is small, which follows from Proposition \ref{ASsSCAAS}.
		\end{proof}
		
		As we have mentioned, all the results of this paper, including Proposition \ref{AAASSW}, hold when our process is a Brownian motion and we use the rough path approach by considering \(\mathbf{B}^{\text{Ito}}\), which coincides with classical stochastic analysis. Our stability results, regardless of considering fractional Brownian motions, also apply to the Brownian case, showing that the solutions decay pathwise exponentially in the norm \(\Vert \cdot \Vert_{\mathscr{D}_{\mathbf{B}(\omega)}^{\beta, \gamma}([-r,0])}\), which is stronger compared to the \(C([-r,0], \mathbb{R}^d)\) norm. Obtaining similar results using only classical It\=o calculus in the Brownian case seems to be very challenging.
		
		\appendix

		\section{Elements of Malliavin Calculus and fractional calculus}
		In this section, we provide some necessary definitions and theorems from Malliavin's calculus and fractional integrals. Most of these results, along with their proofs, can be found in \cite{YM08} and \cite{Nua05}. Throughout, we will assume that $0<H<\frac{1}{2}$.
		Let's begin with fractional calculus. For $0<\alpha<1$, the right and left-sided Riemann–Liouville fractional integrals on $\mathbb{R}$ are defined as follows:
		\begin{align}\label{fractional integral}
			&(I^{\alpha}_{-}f)(x):=\frac{1}{\Gamma(\alpha)}\int_{x}^{\infty}f(t)(t-x)^{\alpha-1}\mathrm{d}t,\\
			&(I^{\alpha}_{+}f)(x):=\frac{1}{\Gamma(\alpha)}\int_{-\infty}^{x}f(t)(x-t)^{\alpha-1}\mathrm{d}t.
		\end{align}
		receptively.
		We say that $f\in D(I^{\alpha}_{-})$, if \eqref{fractional integral}is well-defined for a.s  $x\in\mathbb{R}$. It is known that for $0\leq p<\frac{1}{\alpha}$, we have $L^{p}(\mathbb{R})\subset D(I^{\alpha}_{-}) $.
		The right and left-sided Riemann– Liouville fractional derivatives of order $\alpha$, are defined by
		\begin{align*}
			&(I^{-\alpha}_{-}f)(x):=\frac{-1}{\Gamma(1-\alpha)}\frac{\mathrm{d}}{\mathrm{d}x}\int_{x}^{\infty}f(t)(t-x)^{-\alpha}\mathrm{d}t,\\
			&(I^{-\alpha}_{+}f)(x):=\frac{1}{\Gamma(1-\alpha)}\frac{\mathrm{d}}{\mathrm{d}x}\int_{-\infty}^{x}f(t)(x-t)^{-\alpha}\mathrm{d}t
		\end{align*}
		repetitively.
		For $p\geq 1$, by $I^{\alpha}_{-}(L^{p}(\mathbb{R}))$ ($I^{\alpha}_{+}(L^{p}(\mathbb{R}))$), we mean the class of functions such that $f=I^{\alpha}_{-}(\phi)$ for some $\phi\in L^{p}(\mathbb{R})$. If $p>1$, then for every $f\in I^{\alpha}_{-}(L^{p}(\mathbb{R}))$ ($f\in I^{\alpha}_{+}(L^{p}(\mathbb{R}))$), the right and left-sided Riemann–Liouville derivatives are equal to the Marchaud fractional derivatives, which are defined by:
		\begin{align*}
			&(\mathcal{D}^{\alpha}_{-}f)(x):=\frac{\alpha}{\Gamma(1-\alpha)}\int_{0}^{\infty}\frac{f(x)-f(x+t)}{t^{1+\alpha}}\mathrm{d}t,\\
			&	(\mathcal{D}^{\alpha}_{+}f)(x):=\frac{\alpha}{\Gamma(1-\alpha)}\int_{0}^{\infty}\frac{f(x)-f(x-t)}{t^{1+\alpha}}\mathrm{d}t .
		\end{align*}
		It is easy to check that $\langle I^{-\alpha}_{-}f,g\rangle=\langle f,I^{-\alpha}_{+}g\rangle$ .
		Obviously if $ \text{supp}(f)\subset (-\infty,b) $, then $\text{supp}(\mathcal{D}^{\alpha}_{-}f)\subset (-\infty,b)$  and 
		\begin{align*}
			x\in(-\infty,b): \ \ (\mathcal{D}^{\alpha}_{-}f)(x)=\frac{f(x)}{\Gamma(1-\alpha)(b-x)^{\alpha}}+\frac{\alpha}{\Gamma(1-\alpha)}\int_{x}^{b}\frac{f(x)-f(t)}{(t-x)^{1+\alpha}}\mathrm{d}t  .
		\end{align*}
		For $\alpha=\frac{1}{2}-H$ and $C_H=\big(\int_{0}^{\infty}((1+s)^{-\alpha}-s^{-\alpha})^2\mathrm{d}s+\frac{1}{2H}\big)^{-\frac{1}{2}}\Gamma(H+\frac{1}{2})$,  we set
		\begin{align*}
			\mathcal{M}^{H}_-(f):=C_{H}I^{-\alpha}_{-}(f).
		\end{align*}
		We define \( \mathcal{H}=L_{2}^{H}(\mathbb{R}) := \{ f : \mathcal{M}^{H}_-(f) \in L^{2}(\mathbb{R}) \} \). For \( H \in \left(0, \frac{1}{2}\right) \), the operator \( \mathcal{M}^{H} \) is an isometric isomorphism from \(  \mathcal{H} \) to \( L^{2}(\mathbb{R}) \). In fact, \(  \mathcal{H} \) is a Hilbert space, and it can be characterized as the closure of the set of step functions with respect to the norm induced by \(  \mathcal{H} \) . In more details,
		suppose $\mathcal{E}$ be the set of step-functions from $\mathbb{R}$ to $\mathbb{R}^d$. Define the Hilbert space $ \mathcal{H}^{ d} $ as the closure of step-functions with the following inner product
		\begin{align*}
			\langle(\chi_{[s_{1},t_{1}]},...,\chi_{[s_{d},t_{d}]}),&(\chi_{[u_{1},v_{1}]},...,\chi_{[u_{d},v_{d}]})\rangle=\\ &\sum_{1\leqslant i\leqslant d}\big{(}R_{H}(s_{i},u_{i})+R_{H}(t_{i},v_{i})-R_{H}(s_{i},v_{i})-R_{H}(t_{i},u_{i})\big{)} .
		\end{align*}
		Where
		\begin{align*}
			R_{H}(s,t)=\frac{1}{2}(\vert t\vert^{2H}+\vert s\vert^{2H}-\vert t-s\vert^{2H})\ .
		\end{align*}
		We also can define the isonormal stochastic Gaussian  process  $B=\lbrace B(\phi)=(B^{i}(\phi^i))_{1\leq i\leq d}, \phi=(\phi^{i})_{1\leq i\leq d}\in\mathcal{H}^d\rbrace $ on a complete probability space $ (\Omega,\mathcal{F},\mathbb{P}) $ such that 
		\begin{align*}
			\E(\langle B(\phi),B(\psi)\rangle)=\langle\phi,\psi\rangle_{\mathcal{H}^{ d}}=\sum_{1\leq i\leq d}\langle\phi^i,\psi^i\rangle_{\mathcal{H}}, \ \ \phi,\psi\in\mathcal{H}^{ d} .
		\end{align*}
		In particular, $ B(\chi_{[s_{1},t_{1}]},...,\chi_{[s_{d},t_{d}]}) =(B^{i}(t_{i})-B^{i}(s_{i}))_{1\leqslant i\leqslant d}$ .
		Note that for $\phi=(\phi^i)_{1\leq i\leq d}$ and  $\psi=(\psi^i)_{1\leq i\leq d}$, we have
		\begin{align}\label{FD}
			\phi,\psi\in\mathcal{H}^{ d}: \ \ 	\langle\phi,\psi\rangle_{\mathcal{H}^{ d}}=\langle D^{\frac{1}{2}-H}_{-}\phi,D^{\frac{1}{2}-H}_{-}\psi\rangle_{L^{2}(\mathbb{R}^{d})}:=\sum_{1\leq i\leq d}\langle D^{\frac{1}{2}-H}_{-}\phi^{i},D^{\frac{1}{2}-H}_{-}\psi^{i}\rangle_{L^{2}(\mathbb{R})}  .
		\end{align}
		Consequently, for $ \phi\in \mathcal{H}^{ d} $ with $ \text{supp}(\phi)\subset (a,b)$
		\begin{align}\label{AA_1}
			\Vert\phi\Vert^{2}_{\mathcal{H}^{ d}}\leq M\left[\int_{a}^{b}\frac{\vert\phi(u)\vert^2}{(b-u)^{2\alpha}}\mathrm{d}u+\int_{-\infty}^{a}\left\vert\int_{a}^{b}\frac{\phi(x)}{(x-u)^{1+\alpha}}\mathrm{d}x\right\vert^{2}\mathrm{d} u +\int_{a}^{b}\left(\int_{u}^{b}\frac{\vert\phi(u)-\phi(x)\vert}{(x-u)^{1+\alpha}}\mathrm{d}x\right)^{2}\mathrm{d} u\right].
		\end{align}
		where $M$ is a constant.
		We set
		\begin{align}\label{BB_1}
			\Vert \phi\Vert^{2}_{\vert\mathcal{H}\vert^{d}}:=\int_{a}^{b}\frac{\vert\phi(u)\vert^2}{(b-u)^{2\alpha}}\mathrm{d}u+\int_{-\infty}^{a}\left|\int_{a}^{b}\frac{\phi(x)}{(x-u)^{1+\alpha}}\mathrm{d}x\right|^{2}\mathrm{d} u +\int_{a}^{b}\left(\int_{u}^{b}\frac{\vert\phi(u)-\phi(x)\vert}{(x-u)^{1+\alpha}}\mathrm{d}x\right)^{2}\mathrm{d} u .
		\end{align}
		Let ${\mathcal{G}} $ be the set of smooth cylindrical random variables
		\begin{align*}
			F=f(B(\phi_{1}),...,B(\phi_{k})), \ \ \ \phi_{i}\in\mathcal{H}^{ d}, \ \ 1\leqslant i\leqslant k,
		\end{align*}
		where $ f \in C^{\infty}(\R^{dm},\R)$ is bounded with bounded derivatives. Define the derivative operator by
		\begin{align*}
			D^{B}F=(D^{B^i}F)_{1\leq i\leq d}=\sum_{1\leqslant j\leqslant k}\frac{\partial f}{\partial x_{j}}(B(\phi_{1}),...,B(\phi_{k}))\phi_{j}\in\mathcal{H}^{ d}.
		\end{align*}
		Note that this operator is closable from $ L^{p}(\Omega) $ into $ L^{p}(\Omega,\mathcal{H}^d) $. 
		Also, we define $\mathbb{D}^{1,2}(\mathcal{H}^{ d})$ as the closure of real smooth cylindrical random variables with respect to the following norm
		\begin{align*}
			\Vert F\Vert_{\mathbb{D}^{1,2}(\mathcal{H}^{ d})}:=\E(\vert F\vert^{2})+\E(\Vert D^{B}F\Vert_{\mathcal{H}^{ d}}^{2})  .
		\end{align*}
		Similarly, \( \mathbb{D}^{1,2}(\lvert \mathcal{H} \rvert^{d}) \) can be defined. We also use \( \delta^{B} \) to denote the adjoint operator of \( D^{B} \). The domain of \( \delta^{B} \) is defined by
		\begin{align*}
			Dom (\delta^{B}):=\lbrace u\in L^{2}(\Omega,\mathcal{H}^{ d}): \sup_{F\in\mathbb{D}^{1,2}}\frac{\vert \E(\langle D^{B}F,u\rangle_{\mathcal{H}^{ d}})\vert}{\Vert F\Vert_{\mathbb{D}^{1,2}}(\mathcal{H}^{ d})}<\infty\rbrace .
		\end{align*}
		Also, for $ G\in Dom(\delta^{B}) $ and $ F\in\mathbb{D}^{1,2}(\mathcal{H}^{ d}) $
		\begin{align}\label{MAL_A}
			\E( F\delta^{B}(G))=\E(\langle D^{B}F,G\rangle_{\mathcal{H}^{ d}}) .
		\end{align}
		$\delta^{B}$ is called Skorohod integral. We use $\delta^{B}$ to denote the Skorohod integral. In particular, for $u = (u^{i})_{1 \leq i \leq d}\in\mathcal{H}^d$, we write
		\[
		\delta^{B}(u) = \sum_{1 \leq i \leq d}	\delta^{B^{i}}(u^i)=\int_{\mathbb{R}} u(x) \, \mathrm{d}B_{x} =\sum_{1 \leq i \leq d}=  \int_{\mathbb{R}} u^i(x) \, \mathrm{d}B^i_x.
		\]
		Also, $\delta^{B}\otimes(u):=\delta^{B^i}(u^j)e_{i}\otimes e_j$ .
		We have the following relations between $D^{B}$ and $\delta^{B}$:
		\begin{itemize}
			\item Let $u\in Dom (\delta^{B})$ and $ F\in\mathbb{D}^{1,2}(\mathcal{H}^{ d}) $ and $Fu\in L^{2}(\Omega,\mathcal{H}^{ d})$, then
			\begin{align*}
				\delta^{B} (Fu)=F\delta^{B}(u)-\langle D^{B}F,u \rangle_{\mathcal{H}^{ d}}.
			\end{align*}
			\item Let $u: \mathbb{R}\rightarrow \mathcal{H}^{ d}$ such that $\E(\Vert u\Vert_{\mathcal{H}^{ d}}^2)+\E(\Vert D^{B}u\Vert^{2}_{\mathcal{H}^{ d}\otimes \mathcal{H}^{ d}})<\infty$ and  $\delta^{B}(D^{B}u(x))\in L^2(\Omega,\mathcal{H}^{ d})$ holds for every $x\in\mathbb{ R}$. Then for $(D^{B}u)(x)(z):=(D^{B}u(z))(x)$
			\begin{align}\label{MAL_B}
				D^{B}(\delta^{B}u)(x)=u(x)+\delta^{B}_{}\big((D^{B}u)(x)\big).
			\end{align}
		\end{itemize}
		
		\subsection{Stochastic integrals}
		Suppose $B$ is a fractional Brownian motion with $H<\frac{1}{2}$. To define the stochastic integral with respect to $B$, it is more natural to work with the symmetric integral introduced by Russo and Vallois.
		\begin{definition}\label{symmetric_integral}
			For a given process $u=(u^i)_{1\leq i\leq d}:[s,t]\rightarrow \mathbb{R}^d$, the symmetric integral is defined as 
			\begin{align*}
				\int_{s}^{t}u(x) \mathrm{d}^{\circ} B_{x}:=L^{2}(\Omega)-\sum_{1\leq i\leq d}\lim_{\epsilon\rightarrow 0}\int_{s}^{t}u^i(x)\frac{B_{r+\epsilon}^i-B_{x-\epsilon}^i}{2\epsilon} \mathrm{d}r.
			\end{align*} 
			when it exists. We also set
			\begin{align*}
				\int_{s}^{t}u(x)\otimes \mathrm{d}^{\circ} B_{x}:=L^{2}(\Omega)-\sum_{1\leq i,j\leq d}\lim_{\epsilon\rightarrow 0}\int_{s}^{t}u^i(x)\frac{B_{r+\epsilon}^j-B_{x-\epsilon}^j}{2\epsilon} \mathrm{d}r\  e_{i}\otimes e_j,
			\end{align*}  
			when it exists.
		\end{definition}
		\begin{remark}
			Assume that $u:[s,t]\rightarrow \mathbb{R}^d$ be a $C^1$ function, then it is straightforward to see that
			\begin{align*}
				\int_{s}^{t}u(x)\otimes \mathrm{d} B_{x}=\int_{s}^{t}u(x)\otimes \mathrm{d}^{\circ} B_{x},  \ \ \text{in}\ \  L^{2}(\Omega,\R^d).
			\end{align*}
			Where the left integral is defined pathwise.
		\end{remark}
		The following proposition relates the symmetric integral to the Skorohod integral.
		\begin{proposition}\label{MAL}
			Let $\Phi: [s,t]\rightarrow \mathbb{R}^d$ be a stochastic process such that $\phi\in\mathcal{H}^d$ and
			\begin{align*}
				\E (\Vert \Phi\Vert_{\vert\mathcal{H}^d\vert}^2)+
				\sum_{1\leq i\leq d}\int_{s}^{t}\E(\Vert D^{B}\Phi^{i}(r)\Vert_{\vert\mathcal{H}^d\vert}^2) \mathrm{d}r<\infty .
			\end{align*}
			Assume that \( \text{Tr}_{[s,t]} D^B \Phi =\sum_{{1\leq i\leq d}} \text{Tr}_{[s,t]}D^{B^i}\Phi^{i}\) exists, where
			\begin{align}\label{Trace}
				\text{Tr}_{[s,t]} D^{B^j} \Phi^i := \lim_{\epsilon \rightarrow 0} \int_{s}^{t} \frac{\langle D^{B^j} \Phi^i(x), \chi_{[x-\epsilon, x+\epsilon] \cap [s,t]} \rangle_{\mathcal{H}}}{2\epsilon} \, \mathrm{d}x,
			\end{align}
			where \( e_k \) denotes the usual unit vector in \(\mathbb{R}^d\).
			Then the symmetric integral is well-defined and
			\begin{align*}
				\int_{s}^{t}\Phi(x)\mathrm{d}^{\circ}B_{x}=\delta^{B}(\Phi)+Tr_{[s,t]} D^{B}\Phi .
			\end{align*}
		\end{proposition}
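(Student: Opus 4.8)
The plan is to express the symmetric difference quotient as a pathwise Skorokhod integral and then let $\epsilon\to 0$ in the algebraic identities recalled in the appendix. Fix $\epsilon>0$. Since the Skorokhod integral of a deterministic element coincides with the Wiener integral, $\tfrac{B^{i}_{x+\epsilon}-B^{i}_{x-\epsilon}}{2\epsilon}=\tfrac{1}{2\epsilon}\delta^{B^{i}}(\chi_{[x-\epsilon,x+\epsilon]})$, and because $\Phi^{i}(x)\in\mathbb{D}^{1,2}$ and $\chi_{[x-\epsilon,x+\epsilon]}\in\mathcal{H}$, the product rule $\delta^{B}(Fu)=F\delta^{B}(u)-\langle D^{B}F,u\rangle_{\mathcal{H}^{d}}$ from the appendix gives, componentwise,
\[
\Phi^{i}(x)\,\frac{B^{i}_{x+\epsilon}-B^{i}_{x-\epsilon}}{2\epsilon}
=\frac{1}{2\epsilon}\,\delta^{B^{i}}\!\big(\Phi^{i}(x)\chi_{[x-\epsilon,x+\epsilon]}\big)
+\frac{1}{2\epsilon}\big\langle D^{B^{i}}\Phi^{i}(x),\chi_{[x-\epsilon,x+\epsilon]}\big\rangle_{\mathcal{H}}.
\]
Integrating over $x\in[s,t]$, summing over $i$, and commuting the deterministic integral $\int_{s}^{t}(\cdot)\,\mathrm{d}x$ with $\delta^{B^{i}}$ by a stochastic Fubini theorem then yields
\[
\sum_{i}\int_{s}^{t}\Phi^{i}(x)\,\frac{B^{i}_{x+\epsilon}-B^{i}_{x-\epsilon}}{2\epsilon}\,\mathrm{d}x
=\delta^{B}(\Phi_{\epsilon})+\sum_{i}\int_{s}^{t}\frac{\langle D^{B^{i}}\Phi^{i}(x),\chi_{[x-\epsilon,x+\epsilon]}\rangle_{\mathcal{H}}}{2\epsilon}\,\mathrm{d}x,
\]
where $\Phi_{\epsilon}:=\big(\tfrac{1}{2\epsilon}\int_{s}^{t}\Phi^{i}(x)\chi_{[x-\epsilon,x+\epsilon]}(\cdot)\,\mathrm{d}x\big)_{1\le i\le d}$ is the moving average of $\Phi$, which lies in $L^{2}(\Omega,|\mathcal{H}^{d}|)$ and is smoother than $\Phi$.

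Next I would pass to the limit in each term. For the divergence term, the key point is that mollification converges in the fractional space $|\mathcal{H}^{d}|$: using the explicit form of $\|\cdot\|_{|\mathcal{H}^{d}|}$ in \eqref{BB_1} together with the hypotheses $\mathbb{E}\|\Phi\|_{|\mathcal{H}^{d}|}^{2}<\infty$ and $\int_{s}^{t}\mathbb{E}\|D^{B}\Phi^{i}(r)\|_{|\mathcal{H}^{d}|}^{2}\,\mathrm{d}r<\infty$, one checks that $\Phi_{\epsilon}\to\Phi$ and $D^{B}\Phi_{\epsilon}\to D^{B}\Phi$ in the relevant $L^{2}(\Omega)$-senses, so that $\Phi\in\operatorname{Dom}\delta^{B}$ and, by the $L^{2}$-continuity of $\delta^{B}$ built on \eqref{MAL_A}, $\delta^{B}(\Phi_{\epsilon})\to\delta^{B}(\Phi)$ in $L^{2}(\Omega)$. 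For the remaining term, the assumed existence of $\operatorname{Tr}_{[s,t]}D^{B^{i}}\Phi^{i}$ says precisely that the version with $\chi_{[x-\epsilon,x+\epsilon]\cap[s,t]}$ in \eqref{Trace} converges in $L^{2}(\Omega)$; replacing it by $\chi_{[x-\epsilon,x+\epsilon]}$ only changes the integrand for $x\in[s,s+\epsilon]\cup[t-\epsilon,t]$, and this boundary contribution is absorbed using $\|\chi_{[a,b]}\|_{\mathcal{H}}^{2}=(b-a)^{2H}$, Cauchy--Schwarz in $x$, and the absolute continuity of $x\mapsto\mathbb{E}\|D^{B^{i}}\Phi^{i}(x)\|_{\mathcal{H}}^{2}$ near the endpoints. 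Taking $\epsilon\to 0$ then gives both the existence of the symmetric integral and the identity $\int_{s}^{t}\Phi(x)\,\mathrm{d}^{\circ}B_{x}=\delta^{B}(\Phi)+\operatorname{Tr}_{[s,t]}D^{B}\Phi$.

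I expect the main obstacle to be the functional-analytic bookkeeping rather than any single deep step. Concretely, one has to justify the stochastic Fubini exchange of $\int_{s}^{t}(\cdot)\,\mathrm{d}x$ with $\delta^{B^{i}}$ (which requires Bochner-integrability of $x\mapsto\Phi^{i}(x)\chi_{[x-\epsilon,x+\epsilon]}$ into the appropriate Malliavin--Sobolev space), and, more delicately, one must verify that the moving averages of $\Phi$ and $D^{B}\Phi$ converge not merely in $L^{2}$ but in the stronger norm $\|\cdot\|_{|\mathcal{H}|}$ governed by \eqref{AA_1}--\eqref{BB_1}; this is exactly where the hypotheses of the proposition are used and is the reason the $|\mathcal{H}|$-seminorm, rather than the inner-product norm of $\mathcal{H}$, appears there. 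The boundary correction in the trace term is of lower order and, in the applications within this paper --- e.g. in Lemma \ref{malliavin_2}, where $D^{B^{j}}\Phi^{i}(x)$ is in fact uniformly bounded in $x$ --- it is immediate.
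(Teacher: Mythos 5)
The paper offers no proof of this proposition at all; it simply cites \cite[Proposition~5.2.4]{Nua05}. Your reconstruction is the standard argument behind that citation, so in that sense you take ``essentially the same approach'': write $\tfrac{1}{2\epsilon}(B^{i}_{x+\epsilon}-B^{i}_{x-\epsilon})$ as a Wiener integral of $\tfrac{1}{2\epsilon}\chi_{[x-\epsilon,x+\epsilon]}$, invoke the product rule $F\delta^{B}(u)=\delta^{B}(Fu)+\langle D^{B}F,u\rangle_{\mathcal{H}}$ componentwise, swap $\int_{s}^{t}\cdot\,\mathrm{d}x$ with $\delta^{B^{i}}$ via stochastic Fubini, and pass to the $\epsilon\to 0$ limit. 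The bookkeeping you defer --- Bochner integrability for the Fubini step, convergence of the moving averages $\Phi_{\epsilon}\to\Phi$ and $D^{B}\Phi_{\epsilon}\to D^{B}\Phi$ in the $|\mathcal{H}|$-topology, and the endpoint correction in the trace --- is exactly the content of the cited proof.

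One point deserves sharper attention than you give it: the boundary correction between $\chi_{[x-\epsilon,x+\epsilon]}$ and $\chi_{[x-\epsilon,x+\epsilon]\cap[s,t]}$ is \emph{not} automatically of lower order under the stated hypotheses when $H<\tfrac12$. Over $x\in[s,s+\epsilon]$ the discrepancy is $\chi_{[x-\epsilon,s]}$ with $\|\chi_{[x-\epsilon,s]}\|_{\mathcal{H}}\le\epsilon^{H}$, and Cauchy--Schwarz in $x$ gives a bound of order
\[
\epsilon^{H-\frac12}\Bigl(\int_{s}^{s+\epsilon}\|D^{B^{i}}\Phi^{i}(x)\|_{\mathcal{H}}^{2}\,\mathrm{d}x\Bigr)^{\!1/2},
\]
and since $\epsilon^{H-\frac12}\to\infty$, mere $L^{2}$-integrability of $x\mapsto\|D^{B^{i}}\Phi^{i}(x)\|_{\mathcal{H}}$ does not force this to vanish; you need, e.g., essential boundedness of $\|D^{B^{i}}\Phi^{i}(x)\|_{\mathcal{H}}$ near the endpoints or some H\"older-type decay of the local $L^{2}$-mass so that $\int_{s}^{s+\epsilon}\|D^{B^{i}}\Phi^{i}(x)\|_{\mathcal{H}}^{2}\,\mathrm{d}x=o(\epsilon^{1-2H})$. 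You do flag this implicitly by observing that in the paper's application (Lemma~\ref{malliavin_2}) $D^{B^{j}}\Phi^{i}(x)=\chi_{[x+\tau,x+\sigma]}\chi_{[s,t]}(x)\delta_{i=j}$ is uniformly bounded, in which case the boundary term is indeed $O(\epsilon^{H})$ and the argument closes; but as a proof of the proposition at the stated generality, this is where a genuine hypothesis (present in Nualart's formulation) must be invoked and should not be dismissed as mere absolute continuity.
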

		\begin{proof}
			Cf.\cite[Proposition 5.2.4]{Nua05}.
		\end{proof}
		\begin{remark}
			The norm we defined in \eqref{AA_1} is stronger than the norm defined in \cite{Nua05}. However, if $\text{supp}(\phi)\subset (a_1,b)$ and $a-a_1>0$, it is easy to see that in \eqref{AA_1}, the second term can be dominated by the first and third term. Nonetheless, we used the fractional derivative to represent the fractional Brownian motion because we found it easier to follow and more direct compared to the Volterra representation on a compact interval. Therefore, we prefer to use our norm derived from the fractional derivative without losing any generality.
		\end{remark}
		\section{\texorpdfstring{Proof of Proposition \eqref{MEEET}}{}}\label{FDCXX}
		\begin{proof}
			The proof is a direct consequence of \cite[Theorem 1.21]{GVR23} and the semigroup property with some minor modifications. First note that since our transformations are deterministic, we can relax the separability assumption of the bases space, i.e $\mathcal{B}$. Also, in \cite[Theorem 1.21]{GVR23}, the theorem is stated in discrete version, so we can apply on this theorem by iterating the semogroup with a fix step. i.e
			\begin{align}\label{AAAASWA}
				T_{nt_0}=T_{t_0}\circ...\circ T_{t_0}
			\end{align}
			$T_{nt_0}=T_{t_0}\circ...\circ T_{t_0}$. However one can easily verify for every $t_0>0$
			\begin{align*}
				F_{\lambda}:=\lbrace x\in\mathcal{B}:\ \limsup_{n\rightarrow\infty}\frac{1}{nt_0}\log \Vert T_{nt_0}x\Vert_{\mathcal{B}}\leq \lambda\rbrace
			\end{align*}
			is independent from the choice of $t_{0}$ (Because of the semigroup property).
			The complementary spaces introduced in \cite[Theorem 1.21]{GVR23}, denoted by \(\{ H^i \}_{i \geq 1}\), are indeed unique. Therefore, if we choose \( t_{0}^{m} = \frac{1}{2^m} \) as a fixed step in \eqref{AAAASWA}, the complementary spaces we obtain remain consistent. By a simple continuity argument and the fact that the set \(\left\{ \frac{p}{2^m} \mid m, p \in \mathbb{N} \right\}\) is dense in \([0, \infty)\), we can extend the proof to show that items \((\text{ii})\) and \((\text{iii})\) hold for every \( t > 0 \), thereby proving the claim. For further details, see \cite[Theorem 3.3 and Lemma 3.4]{LL10}, where the authors derived the continuous version of the Multiplicative Ergodic Theorem from the discrete case in a random setting. Although their problem is more complex due to the presence of randomness, they assumed injectivity, which cannot be directly applied to our situation.
		\end{proof}
		\subsection*{Acknowledgements}
		\label{sec:acknowledgements}
		Mazyar Ghani Varzaneh acknowledges financial support by the DFG via Research Unit DFG CRC/TRR 388 “Rough Analysis, Stochastic Dynamics and Related Topics”.
		\bibliographystyle{alpha}
		\bibliography{refs}

	\end{document}